\newtheorem{theorem}{Theorem}[section]
\newtheorem{corollary}[theorem]{Corollary}
\newtheorem{lemma}[theorem]{Lemma}
\newtheorem{proposition}[theorem]{Proposition}
\newtheorem{definition}{Definition}[section]
\newtheorem{example}{Example}[section]
\newtheorem{remark}{Remark}[section]
\numberwithin{equation}{section}
\title{F-schemes}
\date{}
\author{Masood Aryapoor \footnote{masood.aryapoor@mail.ipm.ir}
\footnote{This research was in part supported by a grant from IPM.}
\\ \\
School of Mathematics, \\
Institute for Research in Fundamental Sciences (IPM),\\
P.O.Box: 19395-5746, Tehran, Iran.
}
\begin{document}
\maketitle
\begin{abstract}
\noindent
In this paper, the notion of F-schemes, a ``generalization'' of schemes,
is introduced to cover
unitary noncommutative rings.
\end{abstract}
\begin{section}{Introduction}
The modern theory of (commutative) algebraic geometry began
with the work of Alexander Grothendieck.
In this theory, the main idea is to associate a locally ringed space
$(\text{Spec}(A),\mathcal{O}_{\text{Spec}(A)})$, called
an affine scheme
(which is a geometric object), to any commutative ring $A$.  This provides a
powerful method  to study commutative rings
using geometric ideas. So it is very tempting to apply the same approach to study
noncommutative rings.
There have been attempts to find a theory of ``noncommutative algebraic geometry'',
see \cite{Kap,KR,Ro,SV,Va,VV}
for example.
However, the construction of ``affine noncommutative schemes'' for general
noncommutative rings  has not been quite successful.

To construct an affine scheme, there are two main ingredients, namely ``(prime) ideals'' and
``localization''. One of the
main obstacles in constructing ``affine noncommutative schemes''  is the lack of
a relation between localization and (prime) ideals in the noncommutative case.
For example, a simple ring has no nontrivial ideals whereas
it might be possible to find a nontrivial localization of it. So it seems that in order
to construct such an affine noncommutative scheme,
one should discard one of these ingredients. Keeping localization, one is led
to the notion of F-schemes.

The idea behind the construction of F-schemes is as follows: recall that an affine
variety is constructed from the set of
maximal ideals of certain
commutative rings. In order to construct an affine scheme, one must extend the set of
maximal ideals to the set of prime ideals. Then affine schemes include all commutative rings.
In the same way, by extending the set of prime ideals to a bigger set
(namely the set of fully invertible systems defined
in this paper), one constructs  affine F-schemes which will include all unitary noncommutative rings.
Affine F-schemes are certain (not necessarily commutative or locally) ringed spaces and
hence can be glued together to obtain some other spaces called ``F-schemes''.

As mentioned earlier, F-schemes are less geometrical
in nature than schemes. For example, the stalks of their structure sheaf
are not necessarily local rings. This makes it harder to apply geometrical
ideas to F-schemes. Nevertheless, some of the familiar notions and constructions in the
theory of schemes can be generalized to F-schemes.  Among these are the notions of
``closed subscheme, separated schemes and projective schemes'' which are considered in this paper.
These natural generalizations  make one believe 
that it is worth studying F-schemes.

There are connections between this work and some other works in ``noncommutative
algebraic geometry''. However there is little explanation on these connections in this paper
which I hope to discuss somewhere else. 

Here is a few words about the structure of this paper. In writing this paper, I
have  more or less followed the style of Hartshorne's book (\cite{Ha}) on algebraic geometry.
This paper has five parts. 
Part one includes the background materials.   
In part two, F-schemes are introduced. In part three, the relation between schemes
and F-schemes is discussed. The notions of closed sub-F-schemes and
separated F-schemes are introduced in part four. Finally, in part five,
the construction of projective F-schemes is discussed.

\textbf{Acknowledgement}: I would like to thank M. M. Kapranov for
pointing out the reference
\cite{Va}.

\end{section}


\begin{section}{Preliminaries}\label{localization S}

In this part the background materials required in defining F-schemes, are given.
Some knowledge of ring theory is assumed.

\noindent \textbf{Notations and conventions}: throughout this paper, all rings are assumed to be
unitary but not necessarily commutative.
For a ring $A$, the notation $\text{U}(A)$  stands for the group of
invertible elements in $A$.
Given rings $A$ and $B$,  a homomorphism $\phi:A\to B$ means a
ring homomorphism such that $\phi(1)=1$. Given a homomorphism $\phi:A\to B$ and
a subset $S\subset B$,  the notation $A\cap S$ is sometimes used
instead of $\phi^{-1}(S)$ if no confusion arises.


\begin{subsection}{Localization in ring theory}
One would like to ``invert'' elements in unitary rings. In general,
this process is not as
simple as in the case of commutative rings. Nevertheless, for any set
of elements in a ring,
there is a ring (which might be
the zero ring) in which any element of that set is invertible and has
the obvious universal property.
More precisely  (see \cite{Co})\\
\noindent \textbf{Localization rings}:
suppose that $A$ is a ring and $S\subset A$ is a subset of $A$. Then
there is a  ring $A_S$ and a
homomorphism $i_S:A\to A_S$ such that
\begin{enumerate}
\item for every $s\in S$, the element $i_S(s)$ is invertible in $A_S$ and
\item $A_S$ has the following universal property: given any
homomorphism $\phi:A\to B$ with $\phi(S)\subset\text{U}(B)$,
there is a unique homomorphism $\phi_S:A_S\to B$ such that $\phi=\phi_Si_S$,
i.e. the following
diagram is commutative
\begin{displaymath}
\xymatrix{ & A \ar[dr]_{\phi}  \ar[r]^{i_S} & A_S \ar[d]^{\phi_S} \\
           &   & B}
 \end{displaymath}
\end{enumerate}
The ring $A_S$ is called the \textbf{localization} of $A$ at $S$.
If $S=\{a\}$ ($a\in A$),
then the localization ring
  $A_S$ is simply denoted by $A_a$.
The ring $A_S$ can be presented as follows
$$A_S=\langle A,\{x_s\}_{s\in S}|\, x_ss=sx_s=1,\,\text{for any}\,
s\in S \rangle. $$
This is the presentation of $A_S$ that I will consider throughout this paper.
Note that elements of $A_S$ can be presented as $f(a_1,...,a_m,
x_{s_1},...,x_{s_n})$ where
$f(Y_1,...,Y_m,Z_1,...,Z_n)\in \mathbb{Z}\langle Y_1,...,Y_m,
Z_1,...,Z_n\rangle$ is a
noncommutative polynomial with
coefficients in $\mathbb{Z}$,
 $a_1,...,a_m \in A$, and $s_1,...,s_n\in S$.
\begin{remark}
It does not seem easy to describe the kernel of the   homomorphism
$i_S:A\to A_S$. However, it is clear that this kernel contains the ideal generated
by all elements $a$ of $A$ such that
$as=0$ or $sa=0$ for some $s\in S$.
\end{remark}
Regarding the localization rings $A_S$, one of the main issues is that $A_S$ might be
 the zero ring (for example if $0\in S$) which in this case, the localization is not
 interesting!
\begin{definition}
Suppose that $A$ is a   ring. A subset $S$ of $A$ is called \textbf{properly invertible}
if
there is a  homomorphism $\phi:A\to B$, from $A$  to a \underline{nonzero}  ring $B$,
such that  $\phi(S)\subset \text{U}(B)$.
\end{definition}
It  is easy to see that a subset $S\subset A$ is properly invertible iff $A_S\neq 0$.
 When  inverting elements in $S$,   some other elements of $A$ might also become invertible.
 If it is possible to only invert elements of $S$ then  $S$ is called fully invertible.
 More precisely
\begin{definition}
A  subset $S$ of $A$ is called \textbf{fully invertible} if  there is a
ring homomorphism $\phi:A\to B$, from $A$  to a nonzero ring $B$,
such that $ S=\phi^{-1}(\text{U}(B))$.
\end{definition}
In particular, taking  $\phi=\text{Id}:A\to A$ implies that $\text{U}(A)$ is a fully
invertible subset of $A$.
Clearly any fully invertible subset is properly invertible.
\begin{lemma}\label{fully invertiblesubsets L}
A  subset $S$ of $A$ is fully invertible iff  $S$ is properly invertible and
$S=A\cap \text{U}(A_S)$.
\end{lemma}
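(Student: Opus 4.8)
The plan is to prove the two implications separately, with the universal property of the localization ring $A_S$ as the only real tool.

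For the implication ``$\Leftarrow$'', suppose $S$ is properly invertible and $S=A\cap\text{U}(A_S)$. Since $S$ is properly invertible we have $A_S\neq 0$, so the homomorphism $i_S:A\to A_S$ goes into a nonzero ring, and by assumption $S=i_S^{-1}(\text{U}(A_S))$. This is exactly the condition required for $S$ to be fully invertible (with $\phi=i_S$ and $B=A_S$), so this direction is immediate.

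For the implication ``$\Rightarrow$'', assume $S$ is fully invertible, witnessed by a homomorphism $\phi:A\to B$ with $B\neq 0$ and $S=\phi^{-1}(\text{U}(B))$. Then $\phi(S)\subset\text{U}(B)$ with $B\neq 0$, so $S$ is properly invertible (equivalently $A_S\neq 0$). It remains to show $S=A\cap\text{U}(A_S)$, i.e. $S=i_S^{-1}(\text{U}(A_S))$. The inclusion $S\subseteq i_S^{-1}(\text{U}(A_S))$ holds for any subset, since $i_S(s)$ is invertible in $A_S$ for every $s\in S$ by property (1) of the localization. For the reverse inclusion, apply the universal property of $A_S$ to $\phi$ (which is legitimate because $\phi(S)\subset\text{U}(B)$): we get a homomorphism $\phi_S:A_S\to B$ with $\phi=\phi_S i_S$. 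Now if $a\in A$ is such that $i_S(a)\in\text{U}(A_S)$, then $\phi(a)=\phi_S(i_S(a))\in\text{U}(B)$ because ring homomorphisms send units to units; hence $a\in\phi^{-1}(\text{U}(B))=S$. This gives $i_S^{-1}(\text{U}(A_S))\subseteq S$, completing the equality.

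I do not expect a genuine obstacle here; the argument is a direct unwinding of definitions together with one application of the universal property. The only point requiring a little care is getting the direction of the factorization right ($\phi$ factors \emph{through} $i_S$, not the other way around), which is what forces $A_S$ to invert ``as few elements as possible'': for every $\phi$ with $\phi(S)\subset\text{U}(B)$ one always has $i_S^{-1}(\text{U}(A_S))\subseteq\phi^{-1}(\text{U}(B))$, and full invertibility of $S$ is precisely the statement that this chain of inclusions, together with $S\subseteq i_S^{-1}(\text{U}(A_S))$, collapses to equalities.
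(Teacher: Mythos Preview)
Your proof is correct and follows essentially the same approach as the paper: both directions are handled identically, using the universal property of $A_S$ to factor $\phi=\phi_S i_S$ and then observing that units pull back along $\phi_S$ to deduce $i_S^{-1}(\text{U}(A_S))\subseteq \phi^{-1}(\text{U}(B))=S$. The paper phrases this last step set-theoretically as $S=i_S^{-1}(\phi_S^{-1}(\text{U}(B)))\supseteq i_S^{-1}(\text{U}(A_S))$, while you argue elementwise, but the content is the same.
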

\begin{proof}
Let $S$ be fully invertible. Then there is a homomorphism $\phi:A\to B$ ($B\neq 0$),
 such that $S=\phi^{-1}(\text{U}(B))$. Therefore there is a  ring homomorphism
 $\phi_S:A_S\to B$
 such that $\phi=\phi_Si_S$.
So $S=\phi^{-1}(\text{U}(B))=i_S^{-1}(\phi_S^{-1}(\text{U}(B)))$ contains
$i_S^{-1}(\text{U}(A_S))$. Since
$S\subset i_S^{-1}(\text{U}(A_S))$, we have $S=i_S^{-1}(\text{U}(A_S))$.
The converse is clear.
\end{proof}
The following properties of
fully invertible subsets are easy to prove.
\begin{proposition}\label{irrinv P}
Suppose that $S$ and $T$ are two fully invertible  subsets of $A$. Then
\begin{enumerate}
\item    $\text{U}(A)\subset S$.
\item $S$ is multiplicatively closed, i.e. if $s,t\in S$ then
 $st\in S$.
\item If $aba\in S$ for some $a,b\in A$ then $a,b\in S$. In particular,
we have $A_{aba}=A_{\{a,b\}}$.
\item $S\subset T$ iff there is a homomorphism $A_S\to A_T$ making the
following diagram commutative
\begin{displaymath}
\xymatrix{ & A \ar[dr]_{i_T}  \ar[r]^{i_S} & A_S \ar[d] \\
           &   & A_T}
 \end{displaymath}
\end{enumerate}
\end{proposition}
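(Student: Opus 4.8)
The plan is to check the four items one at a time, in each case reducing to an elementary fact about the behaviour of units under a ring homomorphism. Fix, throughout, a homomorphism $\phi:A\to B$ with $B\neq 0$ witnessing the full invertibility of $S$, so $S=\phi^{-1}(\text{U}(B))$, and recall from Lemma \ref{fully invertiblesubsets L} the alternative description $S=A\cap\text{U}(A_S)$. Items (1) and (2) are then immediate: any homomorphism carries units to units, since $\phi(u)\phi(u^{-1})=\phi(u^{-1})\phi(u)=\phi(1)=1$, so $\text{U}(A)\subseteq\phi^{-1}(\text{U}(B))=S$; and if $s,t\in S$ then $\phi(st)=\phi(s)\phi(t)$ is a product of units in $B$, hence $st\in S$.

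Item (3) rests on the following small lemma, which I would isolate first: in any ring, if $xyx$ is invertible then both $x$ and $y$ are invertible. Indeed, if $w$ is the two-sided inverse of $xyx$, then $x(yxw)=1$ and $(wxy)x=1$ exhibit a right and a left inverse of $x$, so $x$ is invertible, and then $y=x^{-1}(xyx)x^{-1}$ is a product of invertibles. Applying this with $x=\phi(a)$ and $y=\phi(b)$, using $\phi(aba)\in\text{U}(B)$, gives $\phi(a),\phi(b)\in\text{U}(B)$ and hence $a,b\in S$. For the ``in particular'' statement I would run the same lemma inside $A_{aba}$: the image of $aba$ there is invertible, so the images of $a$ and of $b$ are invertible, and the universal property of $A_{\{a,b\}}$ supplies a homomorphism $A_{\{a,b\}}\to A_{aba}$ compatible with the maps from $A$; conversely, inverting $a$ and $b$ in particular inverts $aba$, yielding a homomorphism $A_{aba}\to A_{\{a,b\}}$ over $A$; the uniqueness clauses in the two universal properties force the composites to be identities, so the two localizations are canonically identified.

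For (4), suppose first $S\subseteq T$. Then each $i_T(s)$ (for $s\in S$) is invertible in $A_T$, so the universal property of $A_S$ applied to $i_T:A\to A_T$ yields a (unique) homomorphism $A_S\to A_T$ through which $i_T$ factors, which is exactly the asserted commuting triangle. Conversely, if $\psi:A_S\to A_T$ satisfies $\psi\circ i_S=i_T$, then for $s\in S$ we have $i_T(s)=\psi(i_S(s))\in\text{U}(A_T)$, so $s\in A\cap\text{U}(A_T)=T$ by Lemma \ref{fully invertiblesubsets L}; this is the one place where the hypothesis that $T$ (not merely $S$) is fully invertible gets used. The only step with any real content is the $xyx$-lemma in item (3) and its reuse inside $A_{aba}$; everything else is a direct unwinding of the universal property of localization and of the definition of full invertibility.
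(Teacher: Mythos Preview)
Your proof is correct and follows essentially the same approach as the paper's own (very terse) proof, which likewise reduces items (2) and (3) to the characterization $S=A\cap\text{U}(A_S)$ and the elementary fact that $xyx$ is invertible iff both $x$ and $y$ are, and dismisses (1) and (4) as trivial/easy. You simply spell out the details the paper omits, including the universal-property argument for $A_{aba}=A_{\{a,b\}}$, which the paper does not write out at all.
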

\begin{proof}
1. Trivial.\\
2. It follows from the facts that $S=A\cap \text{U}(A_S)$ and the product
of two invertible   elements is invertible.\\
3. It follows from the simple fact that   $xyx$ ($x,y\in B$) is invertible in a ring $B$ iff
 both $x$ and $y$ are invertible in $B$.\\
4. Easy.
\end{proof}
\begin{remark}
\begin{enumerate}
\item Part 1 of the above proposition tells us that $\text{U}(A)$ is the smallest fully
invertible subset of $A$.
\item Note that part 3 of the above proposition cannot be replaced with
``if $ab\in S$ for some $a,b\in A$ then $a,b\in S$'' (but it is true for
commutative rings).
\end{enumerate}
\end{remark}
\begin{proposition}\label{smallestfully invertible P}
\begin{enumerate}
\item For any family $\{S_i\}$ of fully invertible subsets of $A$,
the subset $\underset{i}{\cap}S_i$ is fully invertible.
\item For any properly invertible subset $S$ of $A$, the subset $A\cap \text{U}(A_S)$ is
the smallest
fully invertible subset of $A$ which contains $S$.
\end{enumerate}
\end{proposition}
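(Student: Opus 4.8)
The plan is to treat the two parts separately; both reduce to short manipulations of the universal property of localization, with Lemma~\ref{fully invertiblesubsets L} carrying most of the weight.

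For part~1 I would exhibit a single homomorphism witnessing that $\underset{i}{\cap}S_i$ is fully invertible, by passing to a product. Write $S_i = \phi_i^{-1}(\text{U}(B_i))$ with $\phi_i : A \to B_i$ and each $B_i \neq 0$ (this tacitly uses that the family is nonempty; note that $A$ itself is never fully invertible, since $0$ cannot map to a unit of a nonzero ring). Put $B = \prod_i B_i$ and let $\phi : A \to B$ be the homomorphism whose components are the $\phi_i$. Then $B \neq 0$, and an element of a product ring is invertible precisely when each of its components is; hence $\phi^{-1}(\text{U}(B)) = \underset{i}{\cap}\,\phi_i^{-1}(\text{U}(B_i)) = \underset{i}{\cap}S_i$, which is exactly the assertion. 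I do not foresee any difficulty here.

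For part~2, set $T = A \cap \text{U}(A_S) = i_S^{-1}(\text{U}(A_S))$; since $i_S(s)$ is a unit for every $s \in S$, we have $S \subseteq T$. To show $T$ is fully invertible I would apply Lemma~\ref{fully invertiblesubsets L}: it suffices to check that $T$ is properly invertible and that $T = A \cap \text{U}(A_T)$. Properness is immediate, since $i_S : A \to A_S$ maps into the \emph{nonzero} ring $A_S$ (this is the only point where the hypothesis on $S$ is used) and $i_S(T) \subseteq \text{U}(A_S)$. The inclusion $T \subseteq A \cap \text{U}(A_T)$ holds because $i_T$ inverts every element of $T$. For the reverse inclusion, the one step that is genuinely sensitive to noncommutativity and which I expect to require the most care, I would invoke the universal property of $A_T$: as $i_S$ sends $T$ into $\text{U}(A_S)$, it factors as $i_S = f \circ i_T$ for a (unique) homomorphism $f : A_T \to A_S$; then for $a \in A$ with $i_T(a) \in \text{U}(A_T)$ we get $i_S(a) = f(i_T(a)) \in \text{U}(A_S)$, so $a \in T$. (Feeding $i_T$ through the universal property of $A_S$ produces a map the other way, and a routine uniqueness argument shows the two are mutually inverse, so $A_S \cong A_T$ over $A$; this is worth recording but is not needed.)

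It remains to prove minimality in part~2. Let $S'$ be any fully invertible subset with $S \subseteq S'$, say $S' = \psi^{-1}(\text{U}(B))$ with $B \neq 0$. Since $\psi$ inverts $S$, it factors as $\psi = \psi_S \circ i_S$ through $A_S$; hence for $t \in T = i_S^{-1}(\text{U}(A_S))$ we have $\psi(t) = \psi_S(i_S(t)) \in \text{U}(B)$, i.e. $t \in \psi^{-1}(\text{U}(B)) = S'$. Therefore $T \subseteq S'$, so $T$ is the smallest fully invertible subset of $A$ containing $S$. (One could instead quote Proposition~\ref{irrinv P}(4) in place of this explicit factorization.)
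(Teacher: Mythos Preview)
Your argument is correct in both parts, but it diverges from the paper's in interesting ways. For part~1 the paper works entirely with localizations: since $\bigcap_i S_i \subset S_j$ there are maps $A_{\bigcap_i S_i}\to A_{S_j}$, and one checks via Lemma~\ref{fully invertiblesubsets L} that $a\in A$ is invertible in $A_{\bigcap_i S_i}$ iff it lies in every $S_i$. Your product construction $\phi:A\to\prod_i B_i$ is more direct and bypasses the lemma altogether; it has the small advantage of making the nonemptiness hypothesis visible, and it avoids having to argue that $A_{\bigcap_i S_i}\neq 0$. For part~2 you take an unnecessary detour: you invoke Lemma~\ref{fully invertiblesubsets L} and verify $T=A\cap\text{U}(A_T)$ via a factorization $A_T\to A_S$, whereas the paper simply observes that $T=i_S^{-1}(\text{U}(A_S))$ with $A_S\neq 0$ is \emph{already} the definition of a fully invertible subset, so there is nothing to prove. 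Your minimality argument matches the paper's essentially verbatim, and the aside that $A_S\cong A_T$ over $A$ is a pleasant bonus the paper does not record.
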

\begin{proof}
1. It is easy to see that $a\in A$ is invertible in
$A_{\underset{i}{\cap}S_i}$ iff it is invertible
in every $A_{S_i}$ iff it belongs to every
$S_i$ because every $S_i$ is fully invertible.
 So $\underset{i}{\cap}S_i=A\cap \text{U}(A_{\underset{i}{\cap}S_i})$
is fully invertible.\\
2. Clearly $A\cap \text{U}(A_S)$ is fully invertible and $S\subset A\cap
\text{U}(A_S)$. Moreover
it follows form the universal property of $A_S$ that any fully invertible subset
of $A$ which contains
$S$, must contain $A\cap \text{U}(A_S)$ as well.
So $A\cap \text{U}(A_S)$ is the smallest fully invertible subset containing $A$.
\end{proof}
The following proposition explains the behavior of fully invertible subsets
under ring homomorphisms.
\begin{proposition}\label{pull back P}
Suppose that $\phi:A\to B$ is a ring homomorphism and $S$ is a fully invertible
subset of $B$.
Then $ A\cap S=\phi^{-1}(S)$ is a fully invertible subset of $A$.
Moreover there is a unique homomorphism  $\phi^S:A_{A\cap S}\to B_S$ making the following
diagram commutative
\begin{displaymath}
\xymatrix{ & A \ar[d]_{i_{A\cap S}}  \ar[r]^{\phi} & B \ar[d]^{i_S} \\
           & A_{A\cap S}\ar[r]^{\phi^S}  & B_S}
 \end{displaymath}
\end{proposition}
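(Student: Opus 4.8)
The plan is to handle the two assertions in turn; both are obtained by unwinding universal properties, and no substantial new idea is needed. For the first assertion, I would start from a witness for the full invertibility of $S$ in $B$: by definition there is a ring homomorphism $\psi:B\to C$ with $C\neq 0$ and $S=\psi^{-1}(\text{U}(C))$. Post-composing with $\phi$ yields $\psi\phi:A\to C$, still a homomorphism into a nonzero ring, and
\[
(\psi\phi)^{-1}(\text{U}(C))=\phi^{-1}\big(\psi^{-1}(\text{U}(C))\big)=\phi^{-1}(S)=A\cap S .
\]
Hence $\psi\phi$ exhibits $A\cap S$ as the preimage of the units of a nonzero ring, which is exactly the definition of a fully invertible subset of $A$. (One could instead invoke Lemma \ref{fully invertiblesubsets L}, but this direct route is shorter.) In particular $A\cap S$ is properly invertible, so $A_{A\cap S}\neq 0$.

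For the second assertion I would apply the universal property of the localization $A_{A\cap S}$ to the composite $i_S\circ\phi:A\to B_S$. Since $S$ is fully invertible in $B$, it is properly invertible, so $B_S\neq 0$. For any $a\in A\cap S=\phi^{-1}(S)$ we have $\phi(a)\in S$, hence $i_S(\phi(a))\in\text{U}(B_S)$ by the first defining property of the localization ring $B_S$; therefore $(i_S\circ\phi)(A\cap S)\subset\text{U}(B_S)$. The universal property of $A_{A\cap S}$ then produces a \emph{unique} homomorphism $\phi^S:A_{A\cap S}\to B_S$ with $\phi^S\circ i_{A\cap S}=i_S\circ\phi$, and this identity is precisely the commutativity of the square in the statement; uniqueness of $\phi^S$ is part of that universal property.

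There is essentially no obstacle: the whole argument is a formal diagram chase, and its only content is the identification of the correct auxiliary homomorphisms, namely $\psi\phi$ for the first part and $i_S\circ\phi$ for the second. The single point that deserves a moment's attention is the nonzero-target requirement in the definitions of ``properly invertible'' and ``fully invertible'' — one must note that $C\neq 0$ and $B_S\neq 0$ — but both are immediate, the first because $C$ is the target of the chosen witness and the second because full invertibility of $S$ implies proper invertibility of $S$.
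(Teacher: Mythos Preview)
Your argument is correct and follows the same route as the paper: first pick a witness $\psi:B\to C$ for the full invertibility of $S$ and use $\psi\phi$ to exhibit $A\cap S$ as fully invertible, then apply the universal property of $A_{A\cap S}$ to the composite $i_S\circ\phi$ to obtain the unique $\phi^S$. Your explicit attention to the nonzero-target conditions is a minor elaboration, but the substance is identical.
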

\begin{proof}
Since $S$ is a fully invertible subset of $B$, there is a ring homomorphism
$\psi:B\to C$ ($C\neq 0$) such that $B\cap \text{U}(C)=S$. Then
$\psi\phi:A\to C$ is a  ring homomorphism and $A\cap \text{U}(C)=A\cap S$.
Hence $A\cap S$ is fully invertible. In particular,
if $\psi=i_S:B\to B_S$ then $i_S\phi:A\to B_S$ sends $A\cap S$ to $\text{U}(B_S)$ and
 hence there is a unique homomorphism $\phi^S:A_{A\cap S}\to B_S$
 making the following diagram commutative
\begin{displaymath}
\xymatrix{ & A \ar[d]_{i_{A\cap S}}  \ar[r]^{\phi} & B \ar[d]^{i_S} \\
           & A_{A\cap S}\ar[r]^{\phi^S}  & B_S}
 \end{displaymath}
Note that $\phi^S=(i_S\phi)_{A\cap S}$.
\end{proof}
\begin{example}
Let  $k\langle x, y \rangle$ be the free $k$-algebra generated by
two elements where $k$ is a commutative field.
Suppose that $\phi:k\langle x, y \rangle \to k$ is the natural
$k$-algebra homomorphism such that $\phi(x)=\phi(y)=0$.
Clearly $k\setminus \{0\}$ is a fully invertible subset of
$k$. So $\phi^{-1}(k\setminus \{0\})$ is a fully invertible subset
of  $k\langle x, y \rangle$. Note that
$ \phi^{-1}(k\setminus \{0\})$
 is the set of all elements in
$k\langle x, y \rangle$
whose constant terms are not zero.
\end{example}
\textbf{Quasi-nilpotent elements:}
suppose that $A$ is a ring and $a\in A$. It might happen that $A_a$ is the zero
ring which
in this case, the element $a\in A$ is called  \textbf{quasi-nilpotent}
\underline{in $A$}.
Clearly, $a\in A$ is   quasi-nilpotent in $A$ if for any ring homomorphism
$\phi:A\to B$ ($B\neq 0$),
 $\phi(a)$   is not invertible in $B$. In the following lemma  another
characterization of quasi-nilpotent elements is given.
\begin{lemma}\label{quasi-fieldmodule L}
An element $a\in A$ is quasi-nilpotent in $A$ iff for any nonzero left
(or right) module $M$ over $A$,
either $aM\neq M$ or $\text{Ann}_M(a)\neq 0$ where $\text{Ann}_M(a)=
\{m\in M|\, am=0\, \}$.
\end{lemma}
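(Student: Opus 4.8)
The plan is to reduce everything to the universal property of $A_a$, applied not to $A$ itself but to endomorphism rings of modules. Recall (as noted just before the lemma) that $a$ is quasi-nilpotent in $A$ precisely when $A_a=0$. Next observe that the asserted right-hand condition —``for every nonzero left module $M$, $aM\neq M$ or $\text{Ann}_M(a)\neq 0$''— is exactly the negation of the statement ``there is a nonzero left $A$-module $M$ on which left multiplication by $a$ is bijective'', since $aM=M$ says this multiplication is surjective and $\text{Ann}_M(a)=0$ says it is injective. So it suffices to prove: $A_a\neq 0$ if and only if there is a nonzero left $A$-module on which multiplication by $a$ is a bijection.

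For one direction, suppose such a module $M$ exists. Its module structure is a unital ring homomorphism $\lambda:A\to \text{End}_{\mathbb{Z}}(M)$ sending $a$ to the map $m\mapsto am$, and by hypothesis $\lambda(a)$ is an invertible element of $\text{End}_{\mathbb{Z}}(M)$. By the universal property of $A_a$ there is then a unital homomorphism $\lambda_a:A_a\to \text{End}_{\mathbb{Z}}(M)$ with $\lambda_a i_a=\lambda$. If $A_a$ were the zero ring, $\text{End}_{\mathbb{Z}}(M)$ would receive a unital homomorphism from $0$, forcing $1=0$ in $\text{End}_{\mathbb{Z}}(M)$, i.e. $M=0$, a contradiction. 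Hence $A_a\neq 0$.

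For the converse, suppose $A_a\neq 0$ and view $A_a$ as a left $A$-module via $i_a:A\to A_a$. It is nonzero, and for $m\in A_a$ we have $am=i_a(a)m$ with $i_a(a)$ invertible in $A_a$; hence left multiplication by $a$ on this module is a bijection. So $M=A_a$ is the required module, and the equivalence for left modules is complete. The case of right modules is entirely parallel: a right $A$-module structure on $M$ is a unital homomorphism $A\to \text{End}_{\mathbb{Z}}(M)^{\mathrm{op}}$, right multiplication by $a$ is bijective iff its image is invertible there, and for the converse one takes $M=A_a$ as a right $A$-module via $i_a$.

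This argument has essentially no hard step; the only point that needs attention is to apply the universal property of $A_a$ with target $\text{End}_{\mathbb{Z}}(M)$ (respectively $\text{End}_{\mathbb{Z}}(M)^{\mathrm{op}}$) rather than with a commutative ring, together with the elementary fact that a unital ring homomorphism out of the zero ring forces the target ring to be zero. Everything else is a direct translation of the definitions of quasi-nilpotency, $aM$, and $\text{Ann}_M(a)$.
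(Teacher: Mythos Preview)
Your proof is correct and follows essentially the same approach as the paper: both restate the claim via its contrapositive as ``$a$ is not quasi-nilpotent iff there exists a nonzero module on which $a$ acts bijectively'', use $M=A_a$ for one direction, and use the representation $A\to \text{End}_{\mathbb{Z}}(M)$ for the other. Your write-up is somewhat more explicit about the universal property and the right-module variant, but the argument is the same.
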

\begin{proof}
I prove the equivalent statement `` $a\in A$ is not quasi-nilpotent in $A$ iff there is
a nonzero module $M$
over $A$ such that $a:M\to M$ is an isomorphism.'' If $a$ is not quasi-nilpotent in
$A$ then $M=A_a$
does the job. Conversely if such a module $M$ exists then there is the  homomorphism
$\phi:A\to \text{End}(M)$
coming from the module structure of $M$,
such that $\phi(a)$ is invertible in \text{End}(M). So $a$ is not quasi-nilpotent in $A$. Here
$  \text{End}(M)$ is the ring of
$\mathbb{Z}$-module homomorphisms $M\to M$.
\end{proof}
It is easy to see that the image of a quasi-nilpotent element is quasi-nilpotent.
More precisely,
suppose that $\phi:A\to B$ is a ring homomorphism. If $a\in A$ is quasi-nilpotent in
$A$ then $\phi(a)$ is quasi-nilpotent in $B$. In fact if $\phi(a)$ is not
quasi-nilpotent in $B$ then $B_{\phi(a)}$ is a
nonzero ring in which $a$ is invertible.
\begin{example}
Clearly any nilpotent element is quasi-nilpotent. The converse is true
in the commutative case. But the converse is generally not valid
in the noncommutative case. For example, the matrix
$$X=\begin{pmatrix}
1 & 0\\
0 & 0
\end{pmatrix}$$
is quasi-nilpotent in $\text{M}_2(\mathbb{Q})$ but not nilpotent. Note  that $X$
is not quasi-nilpotent \underline{in the subring of diagonal matrices}. So
``being quasi-nilpotent
or not''
is not universal in the sense that it depends on the ring in which the element is considered.
\end{example}
\textbf{Local homomorphisms:}
 A  ring homomorphism $\phi:A\to B$ is called  \textbf{local} if $
 \text{U}(A)=\phi^{-1}(\text{U}(B))$. In other words,  $\phi:A\to B$
 is local if $\phi(a)\in \text{U}(B)$ implies that $a\in\text{U}(A)$
 for any $a\in A$.
The following proposition, regarding local maps, is easy to prove.
\begin{proposition}\label{localmaps P}
Suppose that $\phi:A\to B$ and $\psi:B\to C$ are ring homomorphisms. Then
\begin{enumerate}
\item If $\phi$ and $\psi$ are local then so is $\psi\phi$.
\item If $\psi\phi$ is local then so is $\phi$.
\end{enumerate}
\end{proposition}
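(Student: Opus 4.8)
The plan is to unwind the definition of a local homomorphism and observe that it is essentially a statement about preimages of unit groups, which interacts well with composition. The key preliminary remark I would record first: for \emph{any} ring homomorphism $\phi:A\to B$ one has $\phi(\mathrm{U}(A))\subseteq \mathrm{U}(B)$, since $\phi(1)=1$ forces $\phi(a)\phi(a^{-1})=\phi(a^{-1})\phi(a)=1$. Consequently $\mathrm{U}(A)\subseteq \phi^{-1}(\mathrm{U}(B))$ automatically, and ``$\phi$ is local'' is equivalent to the reverse inclusion, i.e. to the implication ``$\phi(a)\in\mathrm{U}(B)\Rightarrow a\in\mathrm{U}(A)$''. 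I would phrase both parts of the proposition in terms of this implication.

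For part 1, I would take $a\in A$ with $(\psi\phi)(a)\in\mathrm{U}(C)$ and chase: writing $(\psi\phi)(a)=\psi(\phi(a))$, locality of $\psi$ gives $\phi(a)\in\mathrm{U}(B)$, and then locality of $\phi$ gives $a\in\mathrm{U}(A)$. Combined with the automatic inclusion above, this yields $\mathrm{U}(A)=(\psi\phi)^{-1}(\mathrm{U}(C))$, so $\psi\phi$ is local.

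For part 2, I would take $a\in A$ with $\phi(a)\in\mathrm{U}(B)$. Applying $\psi$ and using the preliminary remark, $(\psi\phi)(a)=\psi(\phi(a))\in\mathrm{U}(C)$; then locality of $\psi\phi$ forces $a\in\mathrm{U}(A)$. Hence $\phi$ is local.

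There is no real obstacle here: both statements are one-line diagram chases once the definition is rewritten as the single implication ``invertible image $\Rightarrow$ invertible element,'' and the only ingredient beyond that is the trivial fact that homomorphisms carry units to units. The ``hardest'' point, if any, is merely remembering to invoke that trivial fact in part 2 (so that $(\psi\phi)(a)$ is known to be a unit before locality of $\psi\phi$ is applied); everything else is formal.
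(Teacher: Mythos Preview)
Your proof is correct and essentially identical to the paper's. The paper phrases part 1 via the chain of equalities $(\psi\phi)^{-1}(\mathrm{U}(C))=\phi^{-1}\psi^{-1}(\mathrm{U}(C))=\phi^{-1}(\mathrm{U}(B))=\mathrm{U}(A)$ rather than element-chasing, and for part 2 it argues exactly as you do; the content is the same.
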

\begin{proof}
\begin{enumerate}
\item  We have
$$(\psi\phi)^{-1}(\text{U}(C))=\phi^{-1}\psi^{-1}(\text{U}(C))=\phi^{-1}(\text{U}(B))
=\text{U}(A).$$
\item  If $\phi(a)\in\text{U}(B)$ then $\psi\phi(a)\in\text{U}(C)$.
Since $\psi\phi$
is local, we must have $a\in\text{U}(A)$.
\end{enumerate}
\end{proof}
\begin{lemma}\label{localideal L}
Suppose that $I$ is an ideal of the ring $A$. Then the quotient
homomorphism $\pi: A\to A/I$
is a local homomorphism iff $I\subset \text{J}(A)$.
Here $\text{J}(A)$ is the Jacobson radical of
$A$.
\end{lemma}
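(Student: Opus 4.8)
The plan is to reduce the statement to two standard facts about the Jacobson radical $\text{J}(A)$: (i) an element $z\in A$ lies in $\text{J}(A)$ if and only if $1-az\in\text{U}(A)$ for every $a\in A$ (equivalently, $1-azb\in\text{U}(A)$ for all $a,b\in A$), and (ii) in particular $z\in\text{J}(A)$ implies $1-z\in\text{U}(A)$. I would also use the elementary fact that an element of a monoid possessing both a left and a right inverse is invertible. First note that since $\pi$ is a unital homomorphism, $\pi(\text{U}(A))\subset\text{U}(A/I)$ automatically, so $\text{U}(A)\subset\pi^{-1}(\text{U}(A/I))$ always holds; hence $\pi$ is local precisely when the reverse inclusion holds, i.e. when $\pi(a)\in\text{U}(A/I)$ forces $a\in\text{U}(A)$.

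For the implication $I\subset\text{J}(A)\ \Rightarrow\ \pi$ local, I would argue as follows. Suppose $\pi(a)$ is a unit of $A/I$, and pick $b\in A$ with $\pi(b)$ its inverse. Then $1-ab\in I$ and $1-ba\in I$, so both lie in $\text{J}(A)$, and by fact (ii) both $ab$ and $ba$ are units of $A$. Consequently $a$ has the right inverse $b(ab)^{-1}$ and the left inverse $(ba)^{-1}b$, whence $a\in\text{U}(A)$; thus $\pi$ is local.

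For the converse, assume $\pi$ is local and fix $x\in I$. For arbitrary $a,b\in A$ the element $axb$ again lies in $I$, since $I$ is a two-sided ideal, so $\pi(1-axb)=1\in\text{U}(A/I)$, and locality gives $1-axb\in\text{U}(A)$. As $a,b$ were arbitrary, fact (i) yields $x\in\text{J}(A)$; hence $I\subset\text{J}(A)$.

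I do not expect a genuine obstacle here: the argument is essentially a dictionary translation between ``local homomorphism'' and the multiplicative description of the radical. The only point requiring a little care is to invoke the correct form of (i) — that the a priori one-sided test ``$1-ax\in\text{U}(A)$ for all $a$'' already detects membership in the genuinely two-sided ideal $\text{J}(A)$ — and, in the first implication, to handle the left and right inverses of $a$ symmetrically rather than tacitly assuming $A$ is commutative.
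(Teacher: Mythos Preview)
Your proof is correct and follows essentially the same route as the paper: both directions hinge on the equivalence between $I\subset\text{J}(A)$ and $1+I\subset\text{U}(A)$, with the forward implication obtained by showing $ab,ba\in\text{U}(A)$ forces $a\in\text{U}(A)$. The only cosmetic difference is that the paper invokes the packaged criterion ``$I\subset\text{J}(A)$ iff $1+I\subset\text{U}(A)$'' by reference, whereas you unfold it via the elementwise characterization of $\text{J}(A)$.
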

\begin{proof}
First suppose that $I\subset \text{J}(A)$. It is easy to see that  $\phi(a)$ is
invertible in $A/I$
iff there is some $b\in A$ such that $ab,ba\in 1+I$. Since $I\subset \text{J}(A)$,
we have $1+I\subset \text{U}(A)$.
So $ab,ba\in \text{U}(A)$ which implies that $a\in \text{U}(A)$. Conversely, if $\pi$
is local then
$1+I\subset \text{U}(A)$. But it is easy to see that this implies that
$I\subset \text{J}(A)$, see \cite{La} (note
 that, in general, $I\subset \text{J}(A)$ iff $1+I\subset \text{U}(A)$).
\end{proof}
\begin{proposition}\label{kernel P}
Suppose that $\phi:A\to B$ is a local ring homomorphism. Then
$\phi^{-1}(\text{J}(B))\subset \text{J}(A)$. In particular, we have $\ker (\phi)\subset \text{J}(A)$.
\end{proposition}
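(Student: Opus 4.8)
The plan is to reduce everything to Lemma \ref{localideal L} together with the fact recalled in its proof that, for an ideal $I$ of a ring, $I\subset \text{J}(A)$ if and only if $1+I\subset \text{U}(A)$. The first thing to check is that $I:=\phi^{-1}(\text{J}(B))$ really is an ideal of $A$: it is the preimage under a ring homomorphism of the two-sided ideal $\text{J}(B)$, hence a two-sided ideal of $A$. So it is legitimate to test $I\subset \text{J}(A)$ via the criterion $1+I\subset \text{U}(A)$.

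Next I would take an arbitrary $a\in I$ and show $1+a\in \text{U}(A)$. By definition $\phi(a)\in \text{J}(B)$, so $\phi(1+a)=1+\phi(a)\in 1+\text{J}(B)\subset \text{U}(B)$, using the standard fact that $1+\text{J}(B)$ consists of units. Now invoke locality of $\phi$: since $\phi(1+a)$ is invertible in $B$, the element $1+a$ must already be invertible in $A$. As $a\in I$ was arbitrary, $1+I\subset \text{U}(A)$, and therefore $I=\phi^{-1}(\text{J}(B))\subset \text{J}(A)$. For the ``in particular'' clause, note that $0\in \text{J}(B)$, so $\ker(\phi)=\phi^{-1}(\{0\})\subset \phi^{-1}(\text{J}(B))\subset \text{J}(A)$.

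I do not expect a genuine obstacle here; the only points needing care are making sure one may apply Lemma \ref{localideal L} (i.e. that $\phi^{-1}(\text{J}(B))$ is an ideal) and quoting the right direction of the equivalence $I\subset \text{J}(A)\Leftrightarrow 1+I\subset \text{U}(A)$ rather than reproving it. If one preferred to bypass Lemma \ref{localideal L}, an equally short direct argument uses the characterization ``$x\in \text{J}(A)$ iff $1-bx$ is left invertible for every $b\in A$'': for $a\in \phi^{-1}(\text{J}(B))$ and any $b\in A$, the element $\phi(1-ba)=1-\phi(b)\phi(a)$ is a unit in $B$ since $\phi(a)\in \text{J}(B)$, hence $1-ba$ is a unit (in particular left invertible) in $A$ by locality, so $a\in \text{J}(A)$.
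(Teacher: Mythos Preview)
Your argument is correct. It differs from the paper's proof in style: the paper sets up the commutative square
\[
\xymatrix{ & A \ar[d]^{\pi}  \ar[r]^{\phi} & B \ar[d]^{\pi'} \\
           & A/\phi^{-1}(\text{J}(B)) \ar[r]^{\psi}  & B/\text{J}(B)}
\]
observes that $\pi'$ is local by Lemma~\ref{localideal L}, so $\pi'\phi=\psi\pi$ is local by Proposition~\ref{localmaps P} part~1, hence $\pi$ is local by Proposition~\ref{localmaps P} part~2, and then applies Lemma~\ref{localideal L} again to conclude $\phi^{-1}(\text{J}(B))\subset\text{J}(A)$. You instead bypass the diagram and Proposition~\ref{localmaps P} entirely, going straight to the criterion $1+I\subset\text{U}(A)\Leftrightarrow I\subset\text{J}(A)$ recorded in the proof of Lemma~\ref{localideal L}. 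Your route is shorter and more elementary; the paper's route has the virtue of illustrating how the composition properties of local maps package this kind of argument once and for all.
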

\begin{proof}
There is the following commutative diagram
\begin{displaymath}
\xymatrix{ & A \ar[d]^{\pi}  \ar[r]^{\phi} & B \ar[d]^{\pi'} \\
           & A/\phi^{-1}(\text{J}(B)) \ar[r]^{\psi}  & B/\text{J}(B)}
 \end{displaymath}
where $\psi$ is the induced homomorphism by $\phi$ and, $\pi$ and $\pi'$ are
 the quotient homomorphisms. By Lemma
\ref{localideal L}, $\pi'$ is local. So $\psi\pi=\pi' \phi$ is local by Proposition
 \ref{localmaps P} part 1. By Proposition \ref{localmaps P}
 part 2,   $\pi$ is local. So by Lemma \ref{localideal L},
  $\phi^{-1}(\text{J}(B))\subset \text{J}(A)$.
\end{proof}
\textbf{Self-localized rings:} A nonzero ring $A$ is called  \textbf{self-localized}
if the only
fully invertible subset of $A$ is $\text{U}(A)$.
In other words, a ring $A$ is self-localized iff any   homomorphism $A\to B$,
from $A$ to a nonzero ring $B$, is a local homomorphism.
Equivalently, $A$ is  self-localized iff  any $a\in A$  is either invertible
or quasi-nilpotent in $A$.

It is easy to see that any homomorphic image of a self-localized ring is self-localized.
This is because the image of an invertible (quasi-nilpotent) element
is invertible (quasi-nilpotent).
\begin{proposition}\label{radical P}
Suppose that $A$ is a self-localized ring. Then $A/\text{J}(A)$ is a simple ring.
\end{proposition}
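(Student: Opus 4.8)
The plan is to reduce the statement to a short argument about two-sided ideals of $B := A/\text{J}(A)$ and then invoke Proposition \ref{kernel P}. First I would record the two preliminary facts that make this work. Since $A\neq 0$, the Jacobson radical $\text{J}(A)$ is a proper ideal, so $B$ is nonzero; and being a (nonzero) homomorphic image of the self-localized ring $A$, the ring $B$ is again self-localized, as observed earlier in this section (the image of an invertible element is invertible and the image of a quasi-nilpotent element is quasi-nilpotent). Secondly, I would use the standard fact of ring theory that $\text{J}(B)=\text{J}(A/\text{J}(A))=0$.

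The key step is then the following. Let $I$ be any two-sided ideal of $B$ with $I\neq B$, and consider the quotient homomorphism $\pi:B\to B/I$. Since $I$ is proper, $B/I$ is a nonzero ring, so by self-localizedness of $B$ the map $\pi$ is local. Applying Proposition \ref{kernel P} to $\pi$ gives $I=\ker(\pi)\subset \text{J}(B)=0$, hence $I=0$. Thus the only two-sided ideals of $B$ are $0$ and $B$, and since $B\neq 0$ this says precisely that $B$ is simple.

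I do not expect a genuine obstacle here: once the reduction to $B$ is in place, everything is an immediate application of results already established, chiefly Proposition \ref{kernel P}. The only points needing a moment's care are the justification that a nonzero homomorphic image of a self-localized ring is self-localized (already noted), the standard identity $\text{J}(A/\text{J}(A))=0$, and the observation that the relevant notion of ideal when speaking of a simple ring is the two-sided one, which is exactly what Proposition \ref{kernel P} controls.
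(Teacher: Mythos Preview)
Your proof is correct, but it takes a slightly more circuitous route than the paper's. The paper argues directly in $A$: for any proper two-sided ideal $I\subset A$, the quotient map $A\to A/I$ is local (since $A$ is self-localized and $A/I\neq 0$), and then Lemma~\ref{localideal L} immediately gives $I\subset\text{J}(A)$; hence every proper ideal lies in $\text{J}(A)$ and $A/\text{J}(A)$ is simple. You instead first pass to $B=A/\text{J}(A)$, which requires the extra observations that $B$ is self-localized and that $\text{J}(B)=0$, and then you invoke Proposition~\ref{kernel P} (itself a consequence of Lemma~\ref{localideal L}) to conclude. Both arguments hinge on the same idea---self-localized implies all quotient maps to nonzero rings are local, and locality of a quotient map forces the kernel into the Jacobson radical---but the paper's version avoids the preliminary reduction and the appeal to the deeper Proposition~\ref{kernel P} where the more elementary Lemma~\ref{localideal L} already suffices.
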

\begin{proof}
If $A$ is self-localized then for any ideal $I$ of $A$, the quotient
homomorphism $A\to A/I$ is a local homomorphism. So by
Lemma \ref{localideal L}, $I\subset \text{J}(A)$. Therefore, $A/\text{J}(A)$ is a simple ring.
\end{proof}
Clearly any division ring is self-localized. But there are self-localized
rings which are not division rings.
Recall that an element $a\in A$ in the ring $A$ is called \textbf{regular} if it is neither a
right nor a left zero divisor.
The ring $A$ is called
a \textbf{ring of quotients} if every regular element of $A$ is invertible.
It is easy to see that
any simple ring of quotients  is self-localized (because the kernel of $A\to A_a$ is
nonzero if $a$ is not regular).
A subclass of simple quotient rings is the class of simple von Neumann regular rings.
By definition,
  $A$ is called von Neumann regular if for any $a\in A$, there is some $x\in A$ such that
$axa=a$. Moreover we have
\begin{proposition}  \label{vonNeumann P}
A von Neumann regular ring is self-localized iff it is a simple ring.
\end{proposition}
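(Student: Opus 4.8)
The plan is to prove the two implications separately, invoking standard properties of von Neumann regular rings together with results already available in this section.

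For the ``if'' direction, I would first record that every von Neumann regular ring is a ring of quotients in the sense defined above. Indeed, if $a\in A$ is regular and $x\in A$ satisfies $axa=a$, then $a(xa-1)=0$ forces $xa=1$ since $a$ is not a left zero divisor, and $(ax-1)a=0$ forces $ax=1$ since $a$ is not a right zero divisor; hence $a\in\text{U}(A)$. Consequently, if $A$ is also simple, then $A$ is a simple ring of quotients, and, as already noted just before the statement of the proposition, such a ring is self-localized (for any $a$ the kernel of $i_a:A\to A_a$ is a two-sided ideal, hence $0$ or $A$; if $a$ is not regular this kernel is nonzero, so $A_a=0$ and $a$ is quasi-nilpotent, while if $a$ is regular then $a\in\text{U}(A)$).

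For the ``only if'' direction, suppose $A$ is von Neumann regular and self-localized. By Proposition \ref{radical P}, the ring $A/\text{J}(A)$ is simple, so it is enough to show $\text{J}(A)=0$. This is the classical fact that the Jacobson radical of a von Neumann regular ring vanishes: given $a\in\text{J}(A)$, choose $x$ with $axa=a$; then $xa\in\text{J}(A)$, so $1-xa\in\text{U}(A)$ by the characterization $I\subset\text{J}(A)\iff 1+I\subset\text{U}(A)$ recalled in Lemma \ref{localideal L}, and from $a(1-xa)=0$ we obtain $a=0$ after multiplying on the right by $(1-xa)^{-1}$. Hence $A=A/\text{J}(A)$ is simple.

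I do not expect any serious obstacle; the proof reduces entirely to the two routine lemmas on von Neumann regular rings just indicated — regular elements are units, and the Jacobson radical is trivial — both of which follow immediately from the defining identity $axa=a$. The only care needed is to make sure the ``if'' direction is phrased so as to cite the already-established statement that a simple ring of quotients is self-localized, rather than reproving it.
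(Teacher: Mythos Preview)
Your proposal is correct and follows essentially the same route as the paper: both directions use the same two ingredients (Proposition~\ref{radical P} together with $\text{J}(A)=0$ for the forward implication, and the observation that a von Neumann regular ring is a ring of quotients so that the ``simple ring of quotients $\Rightarrow$ self-localized'' remark applies for the converse). The only difference is that you spell out the short arguments for $\text{J}(A)=0$ and ``regular $\Rightarrow$ unit'' explicitly, whereas the paper cites the first and gives a one-line contrapositive for the second.
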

\begin{proof}
Suppose that $A$ is a von Nuemann  regular ring. If $A$ is self-localized
then by Proposition \ref{radical P},
$A/\text{J(A)}$ is a simple ring. But it is well known that $\text{J}(A)=0$ for a von
Neumann regular ring, see \cite{La}.
Conversely, suppose that $A$ is a simple von Nuemann  regular ring. Let
$a\in A\setminus\text{U}(A)$.
Since $axa=a$ for some $x\in A$, the element $a$ is either a left or a right zero divisor.
So $A$ is a simple
 ring of quotients, hence it is self-localized.
\end{proof}
Note that the proof of this lemma shows that any localization ring of a
von Neumann ring is a homomorphic image of it.
\begin{example}
An example of a simple self-localized ring is the ring $\text{M}_n(k)$ of
$n$ by $n$ matrices over a commutative field $k$.

\end{example}

\end{subsection}


\begin{subsection}{Successive localizations}

By Proposition \ref{pull back P}, given a ring homomorphism $\phi:A\to B$,
the subset $S_1=A\cap \text{U}(B)$ is a fully invertible subset of
$A$ and there is  a natural homomorphism $\phi_1=\phi^{S_1}:A_1=A_{S_1}\to B$ which
extends $\phi$.
It might happen that $S_1=\text{U}(A)$. In this case,
  $\phi$ is a local homomorphism. Now suppose that $\phi$ is not local. Then consider
  $\phi_1:A_1\to B$.
  The homomorphism $\phi_1$
might not be local. Set $S_2=A_1\cap \text{U}(B)$ and consider the natural homomorphism
$\phi_2=(\phi_1)^{S_1}:A_2=(A_1)_{S_1}\to B$. Continuing this process, we obtain a sequence
$\mathcal{S}=(S_1,S_2,...)$ of fully invertible subsets of rings $A_{n+1}=(A_n)_{S_{n+1}}$,
 where $A_0=A$.
Note we have two possibilities here. Either $\mathcal{S}$ terminates,  meaning that there is
some $n$ for which the
homomorphism $\phi_n:A_n\to B$ is local, or $\mathcal{S}$ does not terminate meaning that
none
of the homomorphisms
$\phi_n:A_n\to B$ are local.  In any case,  the sequence $\mathcal{S}$ is called a
\textbf{fully invertible system}
on $A$ (coming from $\phi$). Given a fully invertible system $\mathcal{S}=(S_1,S_2,...)$,
define the rings
$A_{\mathcal{S},n}$ inductively via $A_{\mathcal{S},n}=(A_{\mathcal{S},n-1})_{S_n}$
and $A_{\mathcal{S},0}=A$.
Note that any fully invertible subset $S$ of $A$ can be canonically considered as a
fully invertible system on $A$.
More precisely, $S$ can be identified with the fully invertible system
$(S,\text{U}(A_S),\text{U}(A_S),...)$. This identification will be used from now on.

When working with fully invertible systems, the method of proof by induction is
sometimes used which is justified by the
following easy lemma.
\begin{lemma}
Let $\mathcal{S}=(S_1,S_2,...)$ be a fully invertible system on $A$. Then for each $n$, $\mathcal{S}_n=(S_{n+1},S_{n+2},...)$
is a fully invertible system
on $A_{\mathcal{S},n}$.
\end{lemma}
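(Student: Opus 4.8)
The plan is to recognize $\mathcal{S}_n$ as the fully invertible system produced by restarting the successive-localization procedure at a suitable homomorphism. Since $\mathcal{S}=(S_1,S_2,...)$ is a fully invertible system on $A$, by definition it comes from some ring homomorphism $\phi\colon A\to B$ with $B\neq 0$; that is, writing $\phi_0=\phi$, $A_{\mathcal{S},0}=A$ and then, inductively, $S_{m+1}=A_{\mathcal{S},m}\cap\text{U}(B)$ (the preimage taken through $\phi_m$), $A_{\mathcal{S},m+1}=(A_{\mathcal{S},m})_{S_{m+1}}$ and $\phi_{m+1}=(\phi_m)^{S_{m+1}}\colon A_{\mathcal{S},m+1}\to B$ (the homomorphism furnished by Proposition \ref{pull back P}), the sequence $(S_1,S_2,...)$ is exactly what this process returns. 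In particular every $A_{\mathcal{S},m}$ is nonzero and $\phi_m\colon A_{\mathcal{S},m}\to B$ is a homomorphism into the nonzero ring $B$.

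First I would take $\phi_n\colon A_{\mathcal{S},n}\to B$ as the new starting homomorphism and run the very same construction on it. Its first step produces the fully invertible subset $A_{\mathcal{S},n}\cap\text{U}(B)$ of $A_{\mathcal{S},n}$, the ring $(A_{\mathcal{S},n})_{A_{\mathcal{S},n}\cap\text{U}(B)}$, and the extended homomorphism $(\phi_n)^{A_{\mathcal{S},n}\cap\text{U}(B)}$; by the construction of $\mathcal{S}$ these are respectively $S_{n+1}$, $A_{\mathcal{S},n+1}$ and $\phi_{n+1}$. An easy induction on $k\geq 1$ then shows that the $k$-th subset produced by the new process is $S_{n+k}$, that its $k$-th intermediate ring is $A_{\mathcal{S},n+k}$, and that its $k$-th intermediate homomorphism is $\phi_{n+k}$: at each stage the defining clauses of the new process relative to the base ring $A_{\mathcal{S},n}$ read word for word as the defining clauses of $\mathcal{S}$ with all indices shifted by $n$. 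Hence the fully invertible system on $A_{\mathcal{S},n}$ coming from $\phi_n$ is precisely $(S_{n+1},S_{n+2},...)=\mathcal{S}_n$, which is what we wanted.

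There is no genuine obstacle in this argument; the one point needing care is the bookkeeping of indices, and that is exactly the remark that the recursion defining a fully invertible system is \emph{shift invariant}, so that passing from the data $(A,\phi,(S_1,S_2,...))$ to the data $(A_{\mathcal{S},n},\phi_n,(S_{n+1},S_{n+2},...))$ respects every clause of the definition. Alternatively, one may first treat the case $n=1$ directly from Proposition \ref{pull back P} and then obtain the general statement by induction on $n$, applying the case just proved to $\mathcal{S}_1$ over $A_{\mathcal{S},1}$.
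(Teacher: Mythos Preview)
Your proposal is correct. The paper does not actually give a proof of this lemma: it is introduced as an ``easy lemma'' justifying proofs by induction on fully invertible systems, and is stated without argument. Your approach --- recognizing $\mathcal{S}_n$ as the fully invertible system on $A_{\mathcal{S},n}$ coming from the intermediate homomorphism $\phi_n\colon A_{\mathcal{S},n}\to B$ that already appears in the construction of $\mathcal{S}$ --- is the natural way to supply the omitted details, and is presumably what the author had in mind.
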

In the following proposition
  a characterization of fully invertible systems is given.
\begin{proposition}
Let $A$ be a ring and $\mathcal{S}=(S_1,S_2,...)$ be a sequence of fully
invertible subsets $S_n$ of rings
$A_{\mathcal{S},n}=(A_{\mathcal{S},n-1})_{S_{n}}$ where $n\geq 1$ and $A_{\mathcal{S},0}=A$.
Then $\mathcal{S} $ is a fully invertible system on $A$
iff    $A_{\mathcal{S},n}\cap S_m=S_{n+1}$
 for each $m> n\geq 0$.
\end{proposition}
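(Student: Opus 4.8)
I would prove the two implications separately, writing $A_n:=A_{\mathcal S,n}$ and letting $j_{n,m}\colon A_n\to A_m$ ($n\le m$) be the canonical map assembled from the localization homomorphisms; recall that for $m>n$ the notation ``$A_n\cap S_m$'' means $j_{n,m-1}^{-1}(S_m)$, since $S_m$ lives inside $A_{m-1}$.

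The forward implication is a diagram chase. If $\mathcal S$ comes from a homomorphism $\phi\colon A\to B$ with $B\ne 0$, then the successive-localization construction supplies maps $\phi_k\colon A_k\to B$ with $\phi_k\, i_{S_k}=\phi_{k-1}$ (hence $\phi_m\, j_{n,m}=\phi_n$) and, by definition, $S_{k+1}=\phi_k^{-1}(\mathrm U(B))$. For $m>n\ge 0$ one then has $S_m=\phi_{m-1}^{-1}(\mathrm U(B))$, so pulling back along $j_{n,m-1}$ gives $A_n\cap S_m=(\phi_{m-1}j_{n,m-1})^{-1}(\mathrm U(B))=\phi_n^{-1}(\mathrm U(B))=S_{n+1}$. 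Nothing beyond unwinding the definitions is needed here.

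For the converse the plan is to build the required homomorphism out of $\mathcal S$ itself. The natural candidate is $B:=\varinjlim_n A_n$, the direct limit of the chain $A_0\to A_1\to A_2\to\cdots$, with $\phi:=(A_0\to B)$. First I would check $B\ne 0$: each $A_n=(A_{n-1})_{S_n}$ is nonzero because $S_n$ is fully invertible, hence properly invertible, in $A_{n-1}$ (Lemma~\ref{fully invertiblesubsets L}, together with the fact that a subset is properly invertible exactly when its localization ring is nonzero), and $1_B=0_B$ would force $1=0$ in some $A_n$. Then I would show by induction on $k\ge 1$ that the $k$-th term $S_k'$ of the fully invertible system attached to $\phi$ equals $S_k$; granting this up to $k-1$, the ambient rings automatically agree through stage $k-1$, so $S_k'$ is the set of $b\in A_{k-1}$ whose image in $B$ is a unit. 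Since a unit of a direct limit of unital rings is already a unit at some finite stage, $b\in S_k'$ iff $j_{k-1,n}(b)\in\mathrm U(A_n)$ for some $n\ge k-1$; the stage $n=k-1$ contributes $\mathrm U(A_{k-1})\subseteq S_k$ (Proposition~\ref{irrinv P} part 1), and each stage $n\ge k$ contributes $A_{k-1}\cap S_n$, which the hypothesis collapses to $S_k$. Hence $S_k'=S_k$, and the system of $\phi$ is $\mathcal S$.

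The real content is in this converse: recognizing $\varinjlim A_n$ as the correct ambient ring and making precise that invertibility in a direct limit of unital rings is detected at a finite stage (and, relatedly, that such a limit of nonzero rings is nonzero). The identities $A_{k-1}\cap S_n=S_k$ for $n\ge k$ are literally the hypothesis, and the small observation $\mathrm U(A_{k-1})\subseteq S_k$ needed to absorb the stage $n=k-1$ is already recorded in Proposition~\ref{irrinv P}; the rest is bookkeeping with the universal property of localization.
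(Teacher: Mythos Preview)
Your proof is correct and follows the same approach as the paper: for the converse you take $B=\varinjlim_n A_{\mathcal S,n}$ with its canonical map from $A$, and verify by induction that the fully invertible system it induces coincides with $\mathcal S$, using that invertibility in the direct limit is detected at a finite stage together with the hypothesis $A_{k-1}\cap S_n=S_k$. Your write-up is in fact a bit more careful than the paper's (you spell out the forward direction, check $B\ne 0$, and separate the contribution $\mathrm U(A_{k-1})\subseteq S_k$ at stage $n=k-1$), but the argument is the same.
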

\begin{proof}
One direction is easy. Now suppose that $\mathcal{S}$ has the stated property.  Let
$A_{\mathcal{S}}=\underset{n}{\underrightarrow{\lim}} A_{\mathcal{S},n}$
be the direct limit of  the system $i_{S_n}:A_{\mathcal{S},n-1}\to A_{\mathcal{S},n}$.
 Clearly we have a natural homomorphism
 $i_{\mathcal{S}}:A\to A_{\mathcal{S}}$. I claim that $\mathcal{S}$ is
 the fully invertible system coming from $i_\mathcal{S}$.
 Let $ \mathcal{T}=(T_1,T_2,...)$ be the fully invertible system coming from $i_\mathcal{S}$.
Since $T_1=A\cap \text{U}(A_{\mathcal{S}})$, we have $S_1\subset T_1$.
If $a\in T_1$ then $a$ is invertible in $A_{\mathcal{S}}$, hence invertible
in some $A_{\mathcal{S},n}$. This means that
$a\in A\cap S_{n+1}=S_1$ which implies that
$S_1=T_1$. A simple induction shows that $S_n=T_n$, hence $\mathcal{S}=
 {\mathcal{T}}$ is a  fully invertible system on $A$.
\end{proof}
The ring $A_{\mathcal{S}}$ is called the
\textbf{localization} of $A$ at $\mathcal{S}$. Using the presentations of
$A_{S_n}$'s, one can give a presentation of   $A_{\mathcal{S}}$. Roughly speaking,
the ring $A_{\mathcal{S}}$
has the following presentation
$$A_{\mathcal{S}}=\langle A,\{x_{s}\}_{s\in \cup_n S_n}|\, x_ss=sx_s=1,\,
\text{for any}\, s\in \cup_n S_n \rangle. $$
Note that we do not have to adjoin symbols $s\in S_{n}$ (for $n>1$) to this
presentation, because
there are other relations which are hidden in this presentation and realize
$s\in S_n$ ($n>1$). More precisely
each $s\in S_{n}$ ($n>1$) can be  presented as a noncommutative polynomial in terms
of $x_{t_1},...,x_{t_k}$ (and elements of
$A$)  for some
$t_1,...,t_k\in \cup_{i=1}^{n-1} S_i$.  In this case, I say that $s$ depends on $t_1,...,t_k$.
In this presentation, every element of $y\in A_\mathcal{S}$ can be presented as follows
\begin{equation}\label{polynomialpresentation}
y=f(a_1,...,a_m,x_{s_{11}},...,x_{s_{1n_1}},x_{s_{21}},...,x_{s_{2n_2}},...,x_{s_{k1}},...,
x_{s_{kn_k}})
\end{equation}
where
$$f\in \mathbb{Z}\langle Y_1,...,Y_m,Z_{11},...,
Z_{1n_1},Z_{21},...,Z_{2n_2},...,Z_{k1},...,Z_{kn_k} \rangle,$$
$$a_i\in A,\,    s_{ij}\in S_i,$$
and each $s_{ij}$ depends on $s_{pq}$'s with $p< i$.
The presentation \ref{polynomialpresentation} is called a \textbf{polynomial presentation}
of $a$.

One can compare fully invertible systems. More  precisely, suppose that
$\mathcal{S}=(S_1,S_2,...)$ and $\mathcal{T}=(T_1,T_2,...)$
are two fully invertible systems on $A$. If
$S_1\subset T_1$ then we have a natural homomorphism $A_{\mathcal{S},1}\to
 A_{\mathcal{T},1}$. If
$S_2\subset A_{\mathcal{S},1}\cap T_2$ then we have a natural homomorphism
$A_{\mathcal{S},2}\to A_{\mathcal{T},2}$ and
so on and so forth.
In general,
I write $\mathcal{S}\subset \mathcal{T}$ (and say $\mathcal{T}$ contains $\mathcal{S}$) if
 $S_n\subset  A_{\mathcal{S},n}\cap T_n$ for any $n$. In this case, we obtain
 a natural homomorphism $A_\mathcal{S}\to A_{\mathcal{T}}$ making
 the following diagram commutative
\begin{displaymath}
\xymatrix{ & A \ar[dr]_{i_\mathcal{T}}  \ar[r]^{i_\mathcal{S}} & A_{\mathcal{S}} \ar[d]  \\
           &   & A_{\mathcal{T}}}
 \end{displaymath}
Conversely, if there is a ring homomorphism  $A_\mathcal{S}\to A_{\mathcal{T}}$
making
 the above diagram commutative then we must have $\mathcal{S}\subset \mathcal{T}$.

Like the localizations at fully invertible subsets, $A_{\mathcal{S}}$ enjoys a universal
property.
More precisely,  $A_{\mathcal{S}}$ has the following universal property:
for any ring homomorphism $\phi:A\to B$ such that $\mathcal{S}$ is contained
in the fully invertible system coming from $\phi$, there is a unique homomorphism
$\phi_\mathcal{S}:A_\mathcal{S}\to B$ such that $\phi=\phi_\mathcal{S}i_\mathcal{S}$, i.e.
the following diagram is commutative
\begin{displaymath}
\xymatrix{ & A \ar[dr]_{\phi}  \ar[r]^{i_\mathcal{S}} & A_{\mathcal{S}}
\ar[d]^{\phi_\mathcal{S}} \\
           &   & B}
 \end{displaymath}
Note that  $A_{\mathcal{S}}$ is completely determined by this universal property.

We can also consider the intersection of (any family of) fully invertible systems.
More precisely, given fully invertible systems $\mathcal{S}$ and $\mathcal{T}$,  there is a
(unique) fully invertible system $\mathcal{S}\cap \mathcal{T}$,
called the intersection of  $\mathcal{S}$ and $\mathcal{T}$, such that for any
fully invertible system
$\mathcal{R}$ on $A$, we have $\mathcal{R}\subset \mathcal{S}$ and $\mathcal{R}
\subset \mathcal{T}$ iff
$\mathcal{R}\subset \mathcal{S}\cap \mathcal{T}$.

Now we have the following proposition similar to Proposition \ref{pull back P}.
\begin{proposition}\label{pullbacksystem P}
Suppose that $\phi:A\to B$ is a ring homomorphism and $\mathcal{S}=(S_1,S_2,...)$
is a fully invertible system on $B$.
Then  the sequence
$$\phi^{*}(\mathcal{S}):=(A\cap S_1,A_{A\cap S_1}\cap S_2,...)$$
is a fully invertible system on $A$, called
the \textbf{pull back} of $\mathcal{S}$ to $A$. Moreover
\begin{enumerate}
\item There  is  a unique ring homomorphism  $\phi^\mathcal{S}:A_{\phi^{*}(\mathcal{S})}
\to B_\mathcal{S}$
 making the following diagram commutative
\begin{displaymath}
\xymatrix{ & A \ar[d]_{i_{\phi^{*}(\mathcal{S})}}  \ar[r]^{\phi} & B \ar[d]^{i_\mathcal{S}} \\
           & A_{\phi^{*}(\mathcal{S})}\ar[r]^{\phi^\mathcal{S}}  & B_\mathcal{S}}
 \end{displaymath}
\item The homomorphism  $\phi^\mathcal{S}:A_{\phi^{*}(\mathcal{S})}\to B_\mathcal{S}$
is a local homomorphism.
\end{enumerate}
\end{proposition}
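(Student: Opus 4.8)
The plan is to iterate Proposition~\ref{pull back P} in order to build a compatible family of homomorphisms $\psi_n:A_{\phi^{*}(\mathcal{S}),n}\to B_{\mathcal{S},n}$ extending $\phi$, and then read off all three assertions from this family; throughout I abbreviate $\phi^{*}(\mathcal{S})=(T_1,T_2,\dots)$, so $T_{n+1}=A_{\phi^{*}(\mathcal{S}),n}\cap S_{n+1}$. Set $\psi_0=\phi$. Given $\psi_n:A_{\phi^{*}(\mathcal{S}),n}\to B_{\mathcal{S},n}$, apply Proposition~\ref{pull back P} to $\psi_n$ and to the fully invertible subset $S_{n+1}$ of $B_{\mathcal{S},n}$: it yields that $T_{n+1}=\psi_n^{-1}(S_{n+1})$ is a fully invertible subset of $A_{\phi^{*}(\mathcal{S}),n}$ and produces a homomorphism $\psi_{n+1}:=\psi_n^{\,S_{n+1}}:A_{\phi^{*}(\mathcal{S}),n+1}\to B_{\mathcal{S},n+1}$ with $\psi_{n+1}\,i_{T_{n+1}}=i_{S_{n+1}}\,\psi_n$. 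Inductively this shows every term of $\phi^{*}(\mathcal{S})$ is fully invertible in the appropriate ring.

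To finish Part~1 I check that $\phi^{*}(\mathcal{S})$ satisfies the criterion in the characterization of fully invertible systems proved earlier, namely $A_{\phi^{*}(\mathcal{S}),n}\cap T_m=T_{n+1}$ for $m>n\geq 0$. Composing the elementary commuting squares $\psi_{k+1}\,i_{T_{k+1}}=i_{S_{k+1}}\,\psi_k$ shows that $\psi_{m-1}$ composed with the transition map $A_{\phi^{*}(\mathcal{S}),n}\to A_{\phi^{*}(\mathcal{S}),m-1}$ equals the transition map $B_{\mathcal{S},n}\to B_{\mathcal{S},m-1}$ composed with $\psi_n$; since $T_m=\psi_{m-1}^{-1}(S_m)$, pulling back gives $A_{\phi^{*}(\mathcal{S}),n}\cap T_m=\psi_n^{-1}(B_{\mathcal{S},n}\cap S_m)$, and because $\mathcal{S}$ is a fully invertible system on $B$ we have $B_{\mathcal{S},n}\cap S_m=S_{n+1}$, so the right-hand side is $\psi_n^{-1}(S_{n+1})=T_{n+1}$. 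This proves Part~1.

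For Part~2, the relations $\psi_{n+1}\,i_{T_{n+1}}=i_{S_{n+1}}\,\psi_n$ say that the $\psi_n$ are compatible with the transition maps of the two systems, so passing to direct limits produces a ring homomorphism $\phi^{\mathcal{S}}:A_{\phi^{*}(\mathcal{S})}=\varinjlim_n A_{\phi^{*}(\mathcal{S}),n}\to\varinjlim_n B_{\mathcal{S},n}=B_{\mathcal{S}}$. The square in the statement is simply the instance $n=0$, where $\psi_0=\phi$. Uniqueness holds because, by the presentation of $A_{\phi^{*}(\mathcal{S})}$, this ring is generated by $i_{\phi^{*}(\mathcal{S})}(A)$ together with the adjoined inverses, so any homomorphism out of it is determined by its restriction to $A$, i.e.\ by commutativity of the square. (Equivalently, one checks that $\phi^{*}(\mathcal{S})$ is exactly the fully invertible system coming from $i_{\mathcal{S}}\phi:A\to B_{\mathcal{S}}$, so that $\phi^{\mathcal{S}}=(i_{\mathcal{S}}\phi)_{\phi^{*}(\mathcal{S})}$ exists and is unique by the universal property of $A_{\phi^{*}(\mathcal{S})}$.)

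For Part~3, let $x\in A_{\phi^{*}(\mathcal{S})}$ with $\phi^{\mathcal{S}}(x)\in\text{U}(B_{\mathcal{S}})$. Represent $x$ by some $x_n\in A_{\phi^{*}(\mathcal{S}),n}$; then $\phi^{\mathcal{S}}(x)$ is the image in $B_{\mathcal{S}}$ of $\psi_n(x_n)\in B_{\mathcal{S},n}$. Since $B_{\mathcal{S}}$ is the direct limit of the $B_{\mathcal{S},m}$ over the directed set $\mathbb{N}$, invertibility of this image is already witnessed at a finite stage, so $\psi_n(x_n)$ becomes invertible in $B_{\mathcal{S},m}$ for some $m\geq n$. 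Let $x_m\in A_{\phi^{*}(\mathcal{S}),m}$ be the image of $x_n$; compatibility of the $\psi$'s gives $\psi_m(x_m)\in\text{U}(B_{\mathcal{S},m})$, and by Proposition~\ref{irrinv P}(1) every unit of $B_{\mathcal{S},m}$ lies in its fully invertible subset $S_{m+1}$, hence $x_m\in\psi_m^{-1}(S_{m+1})=T_{m+1}$. Therefore $x_m$ becomes invertible in $A_{\phi^{*}(\mathcal{S}),m+1}=(A_{\phi^{*}(\mathcal{S}),m})_{T_{m+1}}$, so its image $x$ in $A_{\phi^{*}(\mathcal{S})}$ is a unit; thus $\phi^{\mathcal{S}}$ is local. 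The main obstacle is Part~3, and it rests on exactly these two facts: that invertibility in the direct limit $B_{\mathcal{S}}$ is detected at some finite stage $B_{\mathcal{S},m}$, and that the units of $B_{\mathcal{S},m}$ already belong to the next fully invertible subset $S_{m+1}$, so that one further localization step on the $A$-side turns $x$ into a unit. Everything in Parts~1 and~2 is routine diagram-chasing once Proposition~\ref{pull back P} and the characterization of fully invertible systems are in hand.
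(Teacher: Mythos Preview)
Your proof is correct and follows essentially the same strategy as the paper. The only notable difference is in how you establish that $\phi^{*}(\mathcal{S})$ is a fully invertible system: the paper simply observes that if $\psi:B\to C$ is a homomorphism inducing $\mathcal{S}$, then $\phi^{*}(\mathcal{S})$ is, by inspection, the fully invertible system coming from $\psi\phi$ (so a fully invertible system by definition), whereas you build the stage-by-stage maps $\psi_n$ explicitly via Proposition~\ref{pull back P} and then verify the intrinsic characterization $A_{\phi^{*}(\mathcal{S}),n}\cap T_m=T_{n+1}$. Your parenthetical in Part~2 is exactly the paper's argument, and your locality argument in Part~3 matches the paper's verbatim in spirit (invertibility is witnessed at a finite stage $m$, then one more localization step on the $A$-side produces the inverse).
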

\begin{proof}
Since $\mathcal{S}$ is a fully invertible system on $B$, there is a ring homomorphism
$\psi:B\to C$ ($C\neq 0$) which induces $\mathcal{S}$. Then
 it is easy to see that $\phi^{*}(\mathcal{S})$ is the fully invertible system coming
  from $\psi\phi$.
Hence $\phi^{*}(\mathcal{S})$ is a fully invertible system on $A$. In particular,
if we take $\psi=i_\mathcal{S}:B\to B_\mathcal{S}$ then  $\phi^{*}(\mathcal{S})$ is contained
in the fully invertible system
coming from $i_\mathcal{S}\phi$
  and
 hence we have a unique ring homomorphism $\phi^\mathcal{S}:A_{\phi^{*}(\mathcal{S})}
 \to B_\mathcal{S}$
  making the following diagram  commutative
\begin{displaymath}
\xymatrix{ & A \ar[d]_{i_{\phi^{*}(\mathcal{S})}}  \ar[r]^{\phi} & B \ar[d]^{i_\mathcal{S}} \\
           & A_{\phi^{*}(\mathcal{S})}\ar[r]^{\phi^\mathcal{S}}  & B_\mathcal{S}}
 \end{displaymath}
Finally I show that $\phi^{\mathcal{S}}$ is local. Suppose that
$x\in A_{\phi^{*}(\mathcal{S})}$
is invertible in $B_\mathcal{S}$. Hence $x$ belongs to some $A_{\phi^{*}(\mathcal{S}),n}$
and is invertible in $B_{\mathcal{S},n}$
for $n$ big enough. Then $x\in A_{\phi^{*}(\mathcal{S}),n}\cap S_{n+1}$ which implies that
  $x\in\text{U}(A_{\phi^{*}(\mathcal{S})})$.
\end{proof}
Note that the pull back operation is transitive, i.e.
 if $\phi:A\to B$ and $\psi:B\to C$ are  ring homomorphisms and $\mathcal{S}$ is a fully
  invertible system on $C$ then
  $(\psi\phi)^{*}(\mathcal{S})=\phi^{*}(\psi^{*}(\mathcal{S}))$.
\begin{proposition}\label{F.I.SinA_S P}
For any  fully invertible system $\mathcal{S}$ on $A$, there is a one-to-one correspondence
between fully invertible systems on $A_\mathcal{S}$ and fully invertible systems on $A$
containing $\mathcal{S}$
(given by the pull back
map
$\mathcal{T}\mapsto i_\mathcal{S}^{*}(\mathcal{T})$).
Moreover, for each fully invertible
system $\mathcal{T}$ on
 $A_\mathcal{S}$, the natural ring homomorphism $A_{i_\mathcal{S}^{*}(\mathcal{T})}\to
 (A_\mathcal{S})_{\mathcal{T}}$ is
 an isomorphism.
\end{proposition}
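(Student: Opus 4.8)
The plan is to construct an explicit inverse to the pull-back map $\mathcal{T}\mapsto i_{\mathcal{S}}^{*}(\mathcal{T})$ and then to read off the ring isomorphism of the ``moreover'' part from it via universal properties. The key preliminary fact I would isolate -- proved by the same induction on the level used in the proof of the characterization of fully invertible systems above, and using that $A_{\mathcal{R}}$ is the direct limit of the $A_{\mathcal{R},n}$ -- is that for any fully invertible system $\mathcal{R}$ on a ring $A$, the fully invertible system coming from $i_{\mathcal{R}}:A\to A_{\mathcal{R}}$ is $\mathcal{R}$ itself. Combined with the observation already made in the proof of Proposition \ref{pullbacksystem P}, that the pull-back along $\phi$ of ``the system coming from $\psi$'' is ``the system coming from $\psi\phi$'', this makes pull-back behave functorially. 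I would also record three elementary remarks: a ring homomorphism out of $A_{\mathcal{S}}$ is determined by its restriction along $i_{\mathcal{S}}$ (the generator $x_{s}$ must go to the inverse of the image of $s$); the fully invertible system coming from a local homomorphism is the trivial one (consisting of the units); and pull-back is monotone with respect to inclusion of systems. Granting these, $\Phi(\mathcal{T}):=i_{\mathcal{S}}^{*}(\mathcal{T})$ is a fully invertible system on $A$ by Proposition \ref{pullbacksystem P}, and it contains $\mathcal{S}$: indeed $\mathcal{S}$ is the system coming from $i_{\mathcal{S}}$, which by functoriality is $i_{\mathcal{S}}^{*}$ applied to the trivial system on $A_{\mathcal{S}}$, and the trivial system is contained in $\mathcal{T}$.

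For the inverse map, given a fully invertible system $\mathcal{R}$ on $A$ with $\mathcal{S}\subset\mathcal{R}$, let $g_{\mathcal{R}}:A_{\mathcal{S}}\to A_{\mathcal{R}}$ be the canonical homomorphism and define $\Psi(\mathcal{R})$ to be the fully invertible system on $A_{\mathcal{S}}$ coming from $g_{\mathcal{R}}$. That $\Phi\circ\Psi=\mathrm{id}$ is immediate from functoriality and the key fact: $i_{\mathcal{S}}^{*}(\Psi(\mathcal{R}))$ is the system coming from $g_{\mathcal{R}}\circ i_{\mathcal{S}}=i_{\mathcal{R}}$, which is $\mathcal{R}$. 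The substantial point is $\Psi\circ\Phi=\mathrm{id}$, and this is where part 2 of Proposition \ref{pullbacksystem P} does the real work. Put $\mathcal{R}=i_{\mathcal{S}}^{*}(\mathcal{T})$ and let $\theta:A_{\mathcal{R}}\to (A_{\mathcal{S}})_{\mathcal{T}}$ be the canonical homomorphism of Proposition \ref{pullbacksystem P} taken for $\phi=i_{\mathcal{S}}$, which is \emph{local}. Comparing restrictions along $i_{\mathcal{S}}$ (using the commuting square of that proposition and $g_{\mathcal{R}}\circ i_{\mathcal{S}}=i_{\mathcal{R}}$) gives $\theta\circ g_{\mathcal{R}}=i_{\mathcal{T}}:A_{\mathcal{S}}\to (A_{\mathcal{S}})_{\mathcal{T}}$. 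Hence the system coming from $i_{\mathcal{T}}$, which is $\mathcal{T}$ by the key fact, equals the pull-back along $g_{\mathcal{R}}$ of the system coming from $\theta$; but $\theta$ is local, so that system is trivial, and its pull-back along $g_{\mathcal{R}}$ is just the system coming from $g_{\mathcal{R}}$, namely $\Psi(\mathcal{R})$. Thus $\Psi(\Phi(\mathcal{T}))=\mathcal{T}$, and $\Phi$ is a bijection with inverse $\Psi$.

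For the ``moreover'' statement I would show the same $\theta:A_{i_{\mathcal{S}}^{*}(\mathcal{T})}\to (A_{\mathcal{S}})_{\mathcal{T}}$ is an isomorphism by exhibiting its inverse. Since the system coming from $g_{\mathcal{R}}$ is exactly $\mathcal{T}$ (established above), the universal property of $(A_{\mathcal{S}})_{\mathcal{T}}$ yields a unique homomorphism $\eta:(A_{\mathcal{S}})_{\mathcal{T}}\to A_{\mathcal{R}}$ with $\eta\circ i_{\mathcal{T}}=g_{\mathcal{R}}$; then $\eta\circ\theta$ agrees with $\mathrm{id}_{A_{\mathcal{R}}}$ after precomposition with $i_{\mathcal{R}}$, and $\theta\circ\eta$ agrees with $\mathrm{id}_{(A_{\mathcal{S}})_{\mathcal{T}}}$ after precomposition with $i_{\mathcal{T}}$, so both are identities by the relevant universal properties. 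The step I expect to require the most care is the bookkeeping that keeps this argument non-circular: a first attempt naturally tries to derive injectivity of the pull-back map and the ring isomorphism from one another, and the resolution is to prove $\Psi\circ\Phi=\mathrm{id}$ first using \emph{only} that $\theta$ is local, and then read the isomorphism off from that identity rather than the other way around.
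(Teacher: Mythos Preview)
Your proof is correct, and it takes a genuinely different route from the paper's. Both arguments share the same surjectivity step (your $\Phi\circ\Psi=\mathrm{id}$ is exactly the paper's observation that the pull-back of the system coming from $A_{\mathcal{S}}\to A_{\mathcal{S}'}$ is $\mathcal{S}'$), but they reverse the order in which injectivity and the ring isomorphism are established. The paper proves the isomorphism $A_{i_{\mathcal{S}}^{*}(\mathcal{T})}\cong (A_{\mathcal{S}})_{\mathcal{T}}$ \emph{first}, by asserting that $A_{\mathcal{S}}\to A_{i_{\mathcal{S}}^{*}(\mathcal{T})}$ ``is easy to see'' to have the universal property of $(A_{\mathcal{S}})_{\mathcal{T}}$, and then deduces injectivity of the pull-back map from this isomorphism. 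You instead prove $\Psi\circ\Phi=\mathrm{id}$ first, using \emph{only} that the comparison map $\theta$ is local (Proposition~\ref{pullbacksystem P}, part 2) together with your ``key fact'' that the system coming from $i_{\mathcal{R}}$ is $\mathcal{R}$; the isomorphism then drops out by building an explicit inverse $\eta$ to $\theta$. Your approach has the advantage of making every step a formal consequence of results already proved (in particular the locality of $\theta$), rather than re-verifying a universal property, and your closing remark about avoiding circularity is exactly the point of divergence: the paper chose the other branch of that fork. The paper's approach is marginally shorter on the page since the universal-property check is compressed into one sentence, but yours is more transparent about where the work is being done.
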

\begin{proof}
Consider the natural homomorphism $i=i_\mathcal{S}:A\to A_{\mathcal{S}}$.
Clearly the pull back of any fully invertible
system on $A_{\mathcal{S}}$ to $A$ contains $\mathcal{S}$. On the other hand
if a fully invertible system $\mathcal{S}'$ on $A$ contains $\mathcal{S}$
then we obtain a natural homomorphism $A_{\mathcal{S}}\to A_{\mathcal{S}'}$.
Then the pull back of the fully invertible system
on $A_{\mathcal{S}}$ coming from  $A_{\mathcal{S}}\to A_{\mathcal{S}'}$, to $A$
is just $\mathcal{S}'$.
So, to show that  $\mathcal{T}\mapsto i^{*}(\mathcal{T})$
gives a one-to-one correspondence,  it is enough to show that this pull back map
is one-to-one. To show this, I first prove that
the natural  homomorphism $A_{i^{*}(\mathcal{T})}\to
 (A_\mathcal{S})_{\mathcal{T}}$ is
 an isomorphism for any fully invertible system $\mathcal{T}$ on
 $A_\mathcal{S}$. Since $\mathcal{S}\subset i^{*}(\mathcal{T})$, there is a natural
 homomorphism
 $A_\mathcal{S}\to A_{i^{*}(\mathcal{T})}$. It is easy to see that this homomorphism
 has the universal property of
 $(A_\mathcal{S})_{\mathcal{T}}$ which,
  in particular, implies that the natural
 homomorphism $A_{i^{*}(\mathcal{T})}\to
 (A_\mathcal{S})_{\mathcal{T}}$ is an isomorphism.
Now, if $\mathcal{T}_1$ and $\mathcal{T}_2$ are two fully invertible systems on
$A_\mathcal{S}$ such that $i^{*}(\mathcal{T}_1)=i^{*}(\mathcal{T}_2)$
then we have an isomorphism $(A_{\mathcal{S}})_{\mathcal{T}_1}\to
(A_{\mathcal{S}})_{\mathcal{T}_2}$ making the following diagram commutative
\begin{displaymath}
\xymatrix{ & A_\mathcal{S} \ar[dr] \ar[r] &  (A_{\mathcal{S}})_{\mathcal{T}_1} \ar[d] &  \\
        &  &  (A_{\mathcal{S}})_{\mathcal{T}_2} &
        }
         \end{displaymath}
because $ A_{i^{*}(\mathcal{T}_i)}\to (A_{\mathcal{S}})_{\mathcal{T}_i}$
are natural isomorphisms. So
$\mathcal{T}_1=\mathcal{T}_2$.
\end{proof}
\begin{example}
There are fully invertible systems which are not fully invertible subsets.
Consider the free division ring $F\langle x, y\rangle$ generated by two elements, see
\cite{Getal} for the definition and the construction of $F\langle x, y\rangle$.
Then the fully invertible system $\mathcal{S}$
on $\mathbb{Q}\langle x, y\rangle$
coming from the natural homomorphism $\mathbb{Q}\langle x, y\rangle\to F\langle x, y\rangle$,
is not a fully invertible subset.
In fact,  $\mathcal{S}$ does not terminate (see the notion of inversion height,
\cite{Getal,Re}).  

\end{example}

\end{subsection}


\end{section}


\begin{section}{Definition of F-schemes}
In this part F-schemes are defined. It is assumed that the reader is familiar
with basics of
the theory of ringed spaces and  schemes, see \cite{Ha} for example.


\begin{subsection}{The full spectrum of a ring}
Suppose that $A$ is a (nonzero) ring. Let $\text{F}(A)$
be the set of all   fully invertible systems on $A$.   The set
$\text{F}(A)$ is called the \textbf{full spectrum} of $A$.
Clearly $ \text{F}(A)\neq \emptyset$. First, I want to introduce a topology on $\text{F}(A)$. \\
By a fraction of $A$, I mean an $n$-tuple  $\begin{bf} a \end{bf} =(a_1,...,a_n)$ such that
$$a_1\in A, a_2\in A_{a_1},..., a_n\in (...(A_{a_1})_{a_2}...)_{a_{n-1}}.$$
For a fraction $\begin{bf} a \end{bf}=(a_1,...,a_n)$ of $A$, set $A_{\begin{bf}
a \end{bf},0}=A$ and $A_{\begin{bf} a
\end{bf},k}=(A_{\begin{bf} a \end{bf},k-1})_{a_k}$ inductively for $k=1,...,n$.
Denote $A_{\begin{bf} a \end{bf},n}$ by $A_{\begin{bf} a \end{bf}}$.
Given a fraction $\begin{bf} a \end{bf}=(a_1,...,a_n)$ and a fully invertible system
$\mathcal{S}=(S_1,S_2,...)$, I write
$\begin{bf} a \end{bf}\in \mathcal{S}$
if for any $1\leq i\leq n$, we have $a_i\in S_i$.  In other words, $\begin{bf} a
\end{bf}\in \mathcal{S}$ iff $a_i\in \text{U}(A_\mathcal{S})$
for any $1\leq i\leq n$. It is easy to see that $\begin{bf} a \end{bf}\in
\mathcal{S}$ iff there is a homomorphism
$A_{\begin{bf} a \end{bf}}\to A_\mathcal{S}$ making the following diagram commutative
\begin{displaymath}
\xymatrix{ & A \ar[dr]  \ar[r] & A_{\begin{bf} a \end{bf}} \ar[d]  \\
           &   & A_{\mathcal{S}}}
 \end{displaymath}
For  a fraction $\begin{bf} a \end{bf}=(a_1,...,a_n)$  of $A$,   set
$$D(\begin{bf} a \end{bf})=D(a_1,...,a_n)=\{\mathcal{S}\in\text{F}(A)|\, \begin{bf} a
\end{bf}\in \mathcal{S} \}.$$
Clearly $D(a_1,...,a_n)=D(a_1,...,a_n,1,...,1)$.
\begin{proposition}\label{fundamentalopensubsets P}
\begin{enumerate}
\item $D(1)=\text{F}(A)$ and $D(0)=\emptyset$.
\item  Let   $\begin{bf} a \end{bf}=(a_1,...,a_n)$ and $\begin{bf} b \end{bf}
 =(b_1,...,b_n)$ be two fractions of $A$. Then
 $\langle \begin{bf} a \end{bf},\begin{bf} b \end{bf}\rangle :=
 (a_1b_1a_1,...,a_nb_na_n)$ is a (well-defined) fraction of $A$ and  $D(\begin{bf}
  a \end{bf})\cap D(\begin{bf} b \end{bf})=D(\langle \begin{bf} a \end{bf},
  \begin{bf} b \end{bf}\rangle)$.
\item  Let   $\begin{bf} a \end{bf}=(a_1,...,a_n)$ and $\begin{bf} b \end{bf}
=(b_1,...,b_n)$ be two fractions of $A$. We have
$D(\begin{bf} b \end{bf})\subset D(\begin{bf} a \end{bf})$ iff  there is a homomorphism
$A_{\begin{bf} a \end{bf}}\to A_{\begin{bf} b \end{bf}}$ making the following
diagram commutative
\begin{displaymath}
\xymatrix{ & A \ar[rd]  \ar[r] & A_{\begin{bf} a \end{bf}} \ar[d]  \\
           &   & A_{\begin{bf} b \end{bf}}}
 \end{displaymath}
\end{enumerate}
\end{proposition}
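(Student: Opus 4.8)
The plan is to rephrase all three assertions in terms of the homomorphism criterion stated just before the proposition — $\mathbf{a}\in\mathcal{S}$ holds iff there is a ring homomorphism $A_{\mathbf{a}}\to A_{\mathcal{S}}$ compatible with the structure maps from $A$ — and then to exploit the elementary fact behind Proposition~\ref{irrinv P}(3): $xyx$ is a unit of a ring iff both $x$ and $y$ are. Part~(1) is then immediate. Indeed $1\in\text{U}(A)\subset S_1$ for every fully invertible system by Proposition~\ref{irrinv P}(1), so $D(1)=\text{F}(A)$; and no $A_{\mathcal{S}}$ is the zero ring (each $A_{\mathcal{S},n}$ is nonzero, since $S_n$ is a fully invertible, hence properly invertible, subset of $A_{\mathcal{S},n-1}$, and $A_{\mathcal{S}}$ is a direct limit of these nonzero unital rings), so $0$ is never a unit of $A_{\mathcal{S}}$ and $D(0)=\emptyset$.

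For the well-definedness in~(2) I would induct on $n$, proving simultaneously that $\langle\mathbf{a},\mathbf{b}\rangle=(a_1b_1a_1,\dots,a_nb_na_n)$ is a fraction and that there are canonical homomorphisms $f\colon A_{\mathbf{a}}\to A_{\langle\mathbf{a},\mathbf{b}\rangle}$ and $g\colon A_{\mathbf{b}}\to A_{\langle\mathbf{a},\mathbf{b}\rangle}$ over $A$. The case $n=1$ is exactly the identity $A_{a_1b_1a_1}=A_{\{a_1,b_1\}}$ with its two structure maps. In the inductive step, writing $\mathbf{a}'$, $\mathbf{b}'$ for the truncations of length $n-1$, the symbol ``$a_nb_na_n$'' is read as $f(a_n)g(b_n)f(a_n)\in A_{\langle\mathbf{a},\mathbf{b}\rangle'}$ (legitimate because the $f(a_i)$ and $g(b_i)$ are already invertible there), and localizing at it makes $f(a_n)$ and $g(b_n)$ invertible by the $xyx$ fact, so $f$ and $g$ extend over the final localization. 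With these maps, $D(\langle\mathbf{a},\mathbf{b}\rangle)\subseteq D(\mathbf{a})\cap D(\mathbf{b})$ is just composition of homomorphisms.

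The reverse inclusion in~(2), and the nontrivial direction of~(3), use the universal property of a tower of successive localizations: a homomorphism $\phi\colon A\to B$ factors through $A_{\langle\mathbf{a},\mathbf{b}\rangle}$ iff $\phi$ sends $a_1b_1a_1$ to a unit, its induced extension to $A_{\{a_1,b_1\}}$ sends $a_2b_2a_2$ to a unit, and so on. So, given $\mathcal{S}\in D(\mathbf{a})\cap D(\mathbf{b})$ witnessed by $u\colon A_{\mathbf{a}}\to A_{\mathcal{S}}$ and $v\colon A_{\mathbf{b}}\to A_{\mathcal{S}}$, the image of $a_1b_1a_1$ in $A_{\mathcal{S}}$ is a unit (those of $a_1,b_1$ are), the induced $A_{\{a_1,b_1\}}\to A_{\mathcal{S}}$ restricts to $u$ on $A_{a_1}$ and to $v$ on $A_{b_1}$ by uniqueness, so $a_2,b_2$ (hence $a_2b_2a_2$) go to units, and continuing yields $A_{\langle\mathbf{a},\mathbf{b}\rangle}\to A_{\mathcal{S}}$, i.e. $\mathcal{S}\in D(\langle\mathbf{a},\mathbf{b}\rangle)$. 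For~(3), $(\Leftarrow)$ is again composition; for $(\Rightarrow)$, if $A_{\mathbf{b}}=0$ then $D(\mathbf{b})=\emptyset$ and the zero map $A_{\mathbf{a}}\to A_{\mathbf{b}}$ does the job, while if $A_{\mathbf{b}}\neq 0$ we take the fully invertible system $\mathcal{S}_{\mathbf{b}}$ on $A$ coming from $i\colon A\to A_{\mathbf{b}}$: each $b_j$ is inductively a unit of $A_{\mathcal{S}_{\mathbf{b}}}$ (this is exactly the defining property $S_{m+1}=A_{\mathcal{S}_{\mathbf{b}},m}\cap\text{U}(A_{\mathbf{b}})$), so $\mathcal{S}_{\mathbf{b}}\in D(\mathbf{b})\subseteq D(\mathbf{a})$, and composing $A_{\mathbf{a}}\to A_{\mathcal{S}_{\mathbf{b}}}$ with the canonical $A_{\mathcal{S}_{\mathbf{b}}}\to A_{\mathbf{b}}$ gives the required homomorphism.

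The part I expect to be the real work is the bookkeeping in~(2): tracking in which localization each of $a_i$, $b_i$, $a_ib_ia_i$ lives, checking that the towers $A_{\mathbf{a}}$ and $A_{\mathbf{b}}$ map compatibly into the single tower $A_{\langle\mathbf{a},\mathbf{b}\rangle}$, and verifying that the maps built one localization at a time in the reverse inclusion actually assemble — all of which lean on the ``$xyx$ invertible iff $x$ and $y$ invertible'' fact and on the uniqueness clauses of the universal properties. Parts~(1) and~(3) are short once the homomorphism criterion is set up.
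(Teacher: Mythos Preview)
Your proposal is correct and follows essentially the same approach as the paper: everything hinges on Proposition~\ref{irrinv P}(3) (the ``$xyx$ is a unit iff $x$ and $y$ are'' fact) together with the homomorphism criterion for $\mathbf{a}\in\mathcal{S}$. The paper's own proof is extremely terse --- it dismisses (1) as trivial, says (2) follows from Proposition~\ref{irrinv P}(3), and for (3) argues by contraposition (if no $A$-homomorphism $A_{\mathbf{a}}\to A_{\mathbf{b}}$ exists, some $a_i$ fails to be invertible in $A_{\mathbf{b}}$, hence in $A_{\mathcal{S}}$ for a suitable $\mathcal{S}\in D(\mathbf{b})$) --- whereas you spell out the inductive bookkeeping in (2), handle the edge case $A_{\mathbf{b}}=0$ in (3), and phrase (3) as a direct construction via the fully invertible system $\mathcal{S}_{\mathbf{b}}$ coming from $A\to A_{\mathbf{b}}$; but the underlying argument is the same.
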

\begin{proof}
The first  part  is trivial and the second part follows form
Proposition \ref{irrinv P} part 3. One direction of the last part is trivial.
To prove the other direction, suppose that $D(\begin{bf} b \end{bf})
\subset D(\begin{bf} a \end{bf})$.
If there is not any natural
homomorphism $A_{\begin{bf} a \end{bf}}\to A_{\begin{bf} b \end{bf}}$,
then  there is some $a_i$ which is not invertible
in $A_{\begin{bf} b \end{bf}}$. But then  there is
some $\mathcal{S}\in D(\begin{bf} b \end{bf})$ such that $a_i$ is not
invertible in $A_\mathcal{S}$. This means that
$\mathcal{S}\notin D(\begin{bf} a \end{bf})$, a contradiction.
\end{proof}
By part 2 of the above proposition, the subsets $D(\begin{bf} a \end{bf})$'s
form a base for a topology
  on $\text{F}(A)$.   From now on,   consider $\text{F}(A)$ as
  a topological space with this topology.   Open subsets of the from
  $D(\begin{bf} a \end{bf})$,
 are called \textbf{fundamental} open subsets.
\begin{proposition}\label{closure of a point P}
For any $\mathcal{T}\in \text{F}(A)$, we have $\overline{\{\mathcal{T}\}}=\{\mathcal{S}\in \text{F}(A)| \mathcal{S}\subset \mathcal{T}\}$.
\end{proposition}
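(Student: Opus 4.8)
The plan is to pass to the base of fundamental open sets. Since the sets $D(\mathbf{a})$ form a base for the topology on $\text{F}(A)$, a point $\mathcal{S}$ lies in $\overline{\{\mathcal{T}\}}$ precisely when every fundamental open set containing $\mathcal{S}$ also contains $\mathcal{T}$, i.e. precisely when every fraction $\mathbf{a}$ of $A$ with $\mathbf{a}\in\mathcal{S}$ satisfies $\mathbf{a}\in\mathcal{T}$. So the proposition is equivalent to the statement
\[
\mathcal{S}\subset\mathcal{T}\quad\Longleftrightarrow\quad\big(\text{for every fraction }\mathbf{a}\text{ of }A,\ \mathbf{a}\in\mathcal{S}\Rightarrow\mathbf{a}\in\mathcal{T}\big).
\]
The implication ``$\Rightarrow$'' is immediate: if $\mathcal{S}\subset\mathcal{T}$ there is a homomorphism $A_{\mathcal{S}}\to A_{\mathcal{T}}$ compatible with $i_{\mathcal{S}}$ and $i_{\mathcal{T}}$, and for $\mathbf{a}\in\mathcal{S}$ there is a homomorphism $A_{\mathbf{a}}\to A_{\mathcal{S}}$ over $A$; composing gives $A_{\mathbf{a}}\to A_{\mathcal{T}}$ over $A$, which by the criterion recorded just before Proposition \ref{fundamentalopensubsets P} means $\mathbf{a}\in\mathcal{T}$.

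For the reverse implication I would construct a homomorphism $A_{\mathcal{S}}\to A_{\mathcal{T}}$ over $A$ directly. The key preliminary step is to identify $A_{\mathcal{S}}$ with the direct limit of the rings $A_{\mathbf{a}}$ over all fractions $\mathbf{a}\in\mathcal{S}$; this family is directed, since for $\mathbf{a},\mathbf{b}\in\mathcal{S}$ the fraction $\langle\mathbf{a},\mathbf{b}\rangle$ of Proposition \ref{fundamentalopensubsets P}(2) again lies in $\mathcal{S}$ (because $D(\langle\mathbf{a},\mathbf{b}\rangle)=D(\mathbf{a})\cap D(\mathbf{b})\ni\mathcal{S}$) and dominates both, with transition maps those of Proposition \ref{fundamentalopensubsets P}(3). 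One should note here that each localization map $A\to A_{\mathbf{a}}$, and likewise $A\to A_{\mathcal{S}}$, is a ring epimorphism, so a homomorphism out of $A_{\mathbf{a}}$ that restricts to a given map on $A$ is unique if it exists; this makes the transition maps canonical and the whole diagram commute automatically. Granting this identification, the hypothesis gives for each $\mathbf{a}\in\mathcal{S}$ the unique homomorphism $A_{\mathbf{a}}\to A_{\mathcal{T}}$ over $A$ witnessing $\mathbf{a}\in\mathcal{T}$; by the same uniqueness these assemble into a cocone, which induces the desired homomorphism $A_{\mathcal{S}}\to A_{\mathcal{T}}$ over $A$, and hence $\mathcal{S}\subset\mathcal{T}$.

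The main obstacle is the identification of $A_{\mathcal{S}}$ with this direct limit, which comes down to showing that every element of $A_{\mathcal{S}}$ --- equivalently of some $A_{\mathcal{S},n}$ --- already lies in the image of $A_{\mathbf{a}}$ for a suitable fraction $\mathbf{a}=(a_1,\dots,a_n)\in\mathcal{S}$. For this one takes a polynomial presentation (\ref{polynomialpresentation}) of the element and, using the identity $A_{\{a,b\}}=A_{aba}$ of Proposition \ref{irrinv P}(3), compresses level by level the finitely many inversions of elements of each $S_i$ occurring in it into a single $a_i\in S_i$. The subtlety to watch is that the compression at level $i$ must also swallow every element of $S_i$ used in the presentations chosen at higher levels, so that each $a_{i+1}$ really lives in $A_{(a_1,\dots,a_i)}$ and $(a_1,\dots,a_n)$ is a legitimate fraction in $\mathcal{S}$; this bookkeeping is the only genuinely technical point. (One could instead avoid the colimit and prove $S_n\subset A_{\mathcal{S},n}\cap T_n$ by induction on $n$, applying the hypothesis to the fraction $(a_1,\dots,a_{n-1},s)$ for $s\in S_n$ and invoking the characterization of fully invertible systems to see that an element of $A_{\mathcal{T},n-1}$ which becomes invertible anywhere in the tower defining $A_{\mathcal{T}}$ is already invertible in $A_{\mathcal{T},n}$, hence lies in $T_n$ --- but this route relies on the same compression.)
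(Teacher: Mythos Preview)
Your argument is correct and follows the same line as the paper's proof: both reduce to the base of fundamental opens and must show that ``every fraction in $\mathcal{S}$ lies in $\mathcal{T}$'' implies $\mathcal{S}\subset\mathcal{T}$. The paper's proof is essentially a one-liner asserting that this gives $S_n\subset A_{\mathcal{S},n-1}\cap T_n$ for each $n$; your proposal supplies the content behind that assertion, namely the compression of a polynomial presentation into a single fraction $(a_1,\dots,a_n)\in\mathcal{S}$ via Proposition~\ref{irrinv P}(3).

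The only organizational difference is that you frame the main argument as a colimit identification $A_{\mathcal{S}}\cong\varinjlim_{\mathbf{a}\in\mathcal{S}}A_{\mathbf{a}}$ and then assemble a map to $A_{\mathcal{T}}$, whereas the paper (and your parenthetical alternative) works level by level. Either way the substantive step is the same compression; your colimit formulation has the minor bonus that it is exactly the computation of the stalk $\mathcal{O}_{A,\mathcal{S}}$ that the paper carries out later in Proposition~\ref{structuresheaf P}(1), so you are effectively anticipating that result.
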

\begin{proof}
We have $\mathcal{S}\in \overline{\{\mathcal{T}\}}$ iff  $\mathcal{S}\in D(\begin{bf} a \end{bf})$
implies that $\mathcal{T}\in D(\begin{bf} a \end{bf})$ for any fraction of
$A$. This implies that
$S_n\subset T_n\cap A_{\mathcal{S},n}$ for any $n$, hence we have
$\mathcal{S}\in \overline{\{\mathcal{T}\}}$
iff $\mathcal{S}\subset \mathcal{T}$.
\end{proof}
One consequence of this proposition is that
if an open subset of $\text{F}(A)$
contains $\mathcal{S}$ then it must contain any $\mathcal{S}\subset \mathcal{T}$.
This proposition also implies that every $\text{F}(A)$ is a $\text{T}_0$ topological space,
i.e. for any points $x,y\in \text{F}(A)$, there is an open subset containing one of the them,
but not the other one.

The topological space $\text{F}(A)$ has  $\text{U}(A)$ as one of its points.
This point is the minimal point of $\text{F}(A)$, i.e.
any point of $\text{F}(A)$ contains this point. In other words, the only
open subset of $\text{F}(A)$ which contains $ \text{U}(A)$ is
the whole space $\text{F}(A)$. Equivalently, every nonempty closed subset of $\text{F}(A)$
contains $ \text{U}(A) $, and moreover,  the point $ \text{U}(A)$ is the only
 closed point of $\text{F}(A)$, because
it is contained in any fully invertible system. In \cite{Gr}, such a point is
called the center of $\text{F}(A)$.
More precisely
\begin{definition}
Suppose that $X$ is a topological space. A point $x\in X$ is called the \textbf{center} of $X$
if $x$ is a closed point and contained in any nonempty closed subspace of $X$.
\end{definition}
Clearly, if the center exists then it is unique.
For any ring $A$, the point $\text{U}(A)$ is the center of  $\text{F}(A)$.
Similarly, every fundamental open subset $D(\begin{bf} a \end{bf})$ (with its
induced topological structure)
has a center. More precisely, $A\cap \text{U}(A_{\begin{bf} a \end{bf}})$ is
the center of $D(\begin{bf} a \end{bf})$.
Moreover
\begin{proposition}\label{subsetswithcenter P}
An open subset of   $\text{F}(A)$ has a center (in its induced topology) iff it
is a fundamental open subset of
$\text{F}(A)$.
\end{proposition}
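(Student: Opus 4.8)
The plan is to translate ``having a center'' into a statement about the specialization order $\subseteq$ on $\text{F}(A)$, and then to treat the two implications separately; the substantive work for one of them already sits in the paragraph preceding the statement, so most of the proof is the other, elementary, direction. Concretely, I would first extract from Proposition \ref{closure of a point P} the following remark: since $\overline{\{\mathcal{T}\}}=\{\mathcal{S}\in\text{F}(A):\mathcal{S}\subseteq\mathcal{T}\}$, the closed subsets of any subspace $Y\subseteq\text{F}(A)$ are exactly the traces on $Y$ of the $\subseteq$-downward closed subsets of $\text{F}(A)$. Hence a point $\mathcal{C}\in Y$ is the center of $Y$ if and only if $\mathcal{C}\subseteq\mathcal{T}$ for every $\mathcal{T}\in Y$, i.e. $\mathcal{C}$ is the $\subseteq$-least element of $Y$: a least element belongs to every nonempty down-set meeting $Y$, and by antisymmetry its closure in $Y$ is $\{\mathcal{C}\}$; conversely a center of $Y$ must lie in $\overline{\{\mathcal{T}\}}\cap Y$ for each $\mathcal{T}\in Y$, so $\mathcal{C}\subseteq\mathcal{T}$.

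For the implication ``$D(\mathbf{a})$ has a center'', with $\mathbf{a}=(a_1,\dots,a_n)$ a fraction, I would use the assertion made just before the proposition: the center of $D(\mathbf{a})$ is the fully invertible system $\mathcal{C}_{\mathbf{a}}$ induced by the canonical homomorphism $A\to A_{\mathbf{a}}$ (denoted $A\cap\text{U}(A_{\mathbf{a}})$ there). In the language of the first paragraph this says that $\mathcal{C}_{\mathbf{a}}$ is the least element of $D(\mathbf{a})$, which one checks as follows. Unwinding the construction of $\mathcal{C}_{\mathbf{a}}$ gives a homomorphism $A_{\mathcal{C}_{\mathbf{a}}}\to A_{\mathbf{a}}$ over $A$, and chasing the universal property of the iterated localization $A_{\mathbf{a}}$ one level at a time (at stage $k$ the element $a_k$ is a unit of $A_{\mathbf{a}}$, hence its image lies in the $k$-th term of $\mathcal{C}_{\mathbf{a}}$ and so becomes a unit of $A_{\mathcal{C}_{\mathbf{a}}}$) gives a homomorphism $A_{\mathbf{a}}\to A_{\mathcal{C}_{\mathbf{a}}}$ over $A$; thus $\mathbf{a}\in\mathcal{C}_{\mathbf{a}}$, i.e. $\mathcal{C}_{\mathbf{a}}\in D(\mathbf{a})$, and for any $\mathcal{T}\in D(\mathbf{a})$ the homomorphism $A_{\mathbf{a}}\to A_{\mathcal{T}}$ over $A$ composes with $A_{\mathcal{C}_{\mathbf{a}}}\to A_{\mathbf{a}}$ to give $A_{\mathcal{C}_{\mathbf{a}}}\to A_{\mathcal{T}}$ over $A$, whence $\mathcal{C}_{\mathbf{a}}\subseteq\mathcal{T}$ by the comparison criterion for fully invertible systems. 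So every fundamental open subset has a center.

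For the converse, let $U\subseteq\text{F}(A)$ be open with a center $\mathcal{C}$. By Proposition \ref{fundamentalopensubsets P} the fundamental open sets form a base of $\text{F}(A)$, so $U=\bigcup_j D(\mathbf{b}^{(j)})$ for suitable fractions $\mathbf{b}^{(j)}$; since $\mathcal{C}\in U$ there is an index $j_0$ with $\mathcal{C}\in D(\mathbf{b}^{(j_0)})$. Now $D(\mathbf{b}^{(j_0)})$ is open in $\text{F}(A)$ and contained in $U$, so $U\setminus D(\mathbf{b}^{(j_0)})$ is closed in the subspace $U$; it does not contain $\mathcal{C}$, and since a center lies in every nonempty closed subspace, it must be empty. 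Therefore $U=D(\mathbf{b}^{(j_0)})$ is a fundamental open subset.

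I expect the converse to be essentially formal, using only the base of fundamental opens and the definition of the center. The one genuinely delicate point in the whole argument is the level-by-level universal-property chase of the second paragraph, namely that the iterated localization at a fraction coincides with the localization at the fully invertible system it induces; since this was already settled in the discussion preceding the statement, the proof of the proposition itself reduces to the short topological argument just given.
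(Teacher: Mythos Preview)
Your argument is correct, and the decisive direction---that an open $U$ with center $\mathcal{C}$ must be fundamental---is exactly the paper's: choose a basic $D(\mathbf{a})\subseteq U$ containing $\mathcal{C}$ and observe that $U\setminus D(\mathbf{a})$ is closed in $U$, misses the center, hence is empty. The paper omits your first two paragraphs (the forward direction and the identification of the center of $D(\mathbf{a})$ are treated as already established in the text preceding the statement); one small slip is your claim that the closed subsets of $Y$ are \emph{exactly} the traces of $\subseteq$-downward closed sets---only one inclusion holds in general---but you only use that valid inclusion, so nothing in the proof is affected.
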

\begin{proof}
If $x$ is the center of an open subset $U$ of   $\text{F}(A)$ then  there is
some fundamental open subset
  $D(\begin{bf} a \end{bf})\subset U$ which contains $x$, because fundamental
  open subsets form a base for the topology
  of  $\text{F}(A)$. Since $x$ is the center of $U$, we must have
  $U\subset D(\begin{bf} a \end{bf})$ as well which implies that
  $U=D(\begin{bf} a \end{bf})$.
\end{proof}
It is obvious that any topological space with a center is quasi-compact.
So for any ring $A$, the topological space
$\text{F}(A)$ is quasi-compact. Moreover, every open covering of $\text{F}(A)$ has to have
$\text{F}(A)$ as one of its elements.
\begin{remark}
For any commutative local ring $A$, the affine
scheme $\text{Spec}(A)$ has a center. It is easy to see that if a scheme has a
center then it has to be
isomorphic to $\text{Spec}(A)$ for some local ring $A$. Topologically, the
full spectrum of rings
resembles the prime spectrum of   commutative local rings. The fact that
any affine F-scheme has a center, simplifies a lot of arguments regarding F-schemes, whereas
the corresponding arguments for schemes might be much harder to present.
\end{remark}

\end{subsection}


\begin{subsection}{F-schemes and their structure sheaf  }
Suppose that $A$ is a ring. I want to define a sheaf $\mathcal{O}=\mathcal{O}_A $
of (possibly noncommutative)   rings on $\text{F}(A)$.
For any open subset $U\subset\text{F}(A)$,   define $\mathcal{O}(U)$
to be the set of all functions $t:U\to\underset{\mathcal{S}\in U}{\cup}A_{\mathcal{S}}$
such that\\
\begin{enumerate}
\item for any $\mathcal{S}\in U$, we have $t(\mathcal{S})\in A_{\mathcal{S}}$, and
\item for any $\mathcal{S}_0\in U$, there are an open neighborhood $U_0\subset U$
of $\mathcal{S}_0$,
an element
$$f\in \mathbb{Z}\langle Y_1,...,Y_m,Z_{11},...,Z_{1n_1},Z_{21},...,Z_{2n_2},...,
Z_{k1},...,Z_{kn_k} \rangle$$
and elements $a_i\in A$ and $s_{ij}\in S_i$,
(for any $\mathcal{S}=(S_1,S_2,...)\in U_0$)
such that,
as an element of $A_\mathcal{S}$, for any $\mathcal{S}=(S_1,S_2,...)\in U_0$,
\begin{equation}\label{localpresentation}
t(\mathcal{S})=f(a_1,...,a_m,x_{s_{11}},...,x_{s_{1n_1}},x_{s_{21}},...,
x_{s_{2n_2}},...,x_{s_{k1}},...,x_{s_{kn_k}})
\end{equation}
and $s_{ij}$ depends on $s_{pq},p<i$.
\end{enumerate}
It is clear that the sum and product of two such functions are a function of this form.
Moreover for any open subsets $U\subset V$, we have the  restriction map
 $\mathcal{O}(V)\to\mathcal{O}(U)$. Therefore we have
a presheaf of  (possibly noncommutative)   rings on $\text{F}(A)$, which  is in fact a sheaf.
So $(\text{F}(A),\mathcal{O})$ is a  ringed space.
Recall that a ringed space is a pair $(X,\mathcal{O}_X)$ of a topological space $X$
and a sheaf of
(possibly noncommutative)   rings. The sheaf $\mathcal{O}_X$ is called
the structure sheaf of $X$. A \textbf{ringed subspace} of a ringed space
 $(X,\mathcal{O}_X)$ is a ringed space $(Y,\mathcal{O}_{X}|_{Y})$
where $Y$ is a subspace of $X$ and $\mathcal{O}_{X}|_Y$ is the restriction
of the structure sheaf of $X$ to $Y$.
Sometimes,   a ringed space $(X,\mathcal{O}_X)$ is simply shown  by $X$.

We note that from the definition of
the structure sheaf of $\text{F}(A)$, any section of $\mathcal{O}_A$,
is locally of the form $b\in A_{\begin{bf} a \end{bf}}$ for some fraction
$\begin{bf} a \end{bf}$ of $A$.
More precisely, if $t$ is a section of $\mathcal{O}_A$ over an open subset $U$,
then for any $\mathcal{S}_0$, we can find a fraction $\begin{bf} a \end{bf}$
of $A$ (such that $\mathcal{S}_0\in D(\begin{bf} a \end{bf})\subset U$) and an
element $b\in A_{\begin{bf} a \end{bf}}$ such that for any $\mathcal{S}\in
D(\begin{bf} a \end{bf})$ we have $t(\mathcal{S})=b$
as elements of $\mathcal{S}$. This is because in the polynomial presentation of
$t$ there are only finitely many $x_{ij}$'s.
In the following proposition,  some properties of the structure sheaf of the ringed
 space $\text{F}(A)$ are given.
\begin{proposition}\label{structuresheaf P}
\begin{enumerate}
\item At the level of stalks, for any $\mathcal{S}\in \text{F}(A)$, we have an
isomorphism $\mathcal{O}_{A,\mathcal{S}}\cong A_{\mathcal{S}}$.
\item For any fraction $\begin{bf} a \end{bf}$ of $A$ such that $D(\begin{bf}
a \end{bf})\neq \emptyset$, we have an isomorphism
$\mathcal{O}_A(D(\begin{bf} a \end{bf}))\cong A_{\begin{bf} a \end{bf}}$. In particular, we have
$\Gamma(\text{F}(A),\mathcal{O}_{A})\cong A$.
\end{enumerate}
\end{proposition}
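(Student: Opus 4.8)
The plan is to prove the two statements together, since part 2 essentially contains part 1 in disguise (stalks are colimits of sections over fundamental neighborhoods). I would start with part 1. Fix $\mathcal{S}\in\text{F}(A)$. The stalk $\mathcal{O}_{A,\mathcal{S}}$ is the colimit of $\mathcal{O}_A(U)$ over open $U\ni\mathcal{S}$; since the fundamental opens $D(\begin{bf} a \end{bf})$ form a base, it is the colimit over those $D(\begin{bf} a \end{bf})$ containing $\mathcal{S}$, i.e. over fractions $\begin{bf} a \end{bf}\in\mathcal{S}$. For each such fraction there is (as remarked in the excerpt, using Proposition \ref{irrinv P}) a natural homomorphism $A_{\begin{bf} a \end{bf}}\to A_{\mathcal{S}}$, and these are compatible with the transition maps $A_{\begin{bf} a \end{bf}}\to A_{\begin{bf} b \end{bf}}$ coming from Proposition \ref{fundamentalopensubsets P}(3) whenever $D(\begin{bf} b \end{bf})\subset D(\begin{bf} a \end{bf})$. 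This gives a canonical homomorphism $\varinjlim_{\begin{bf} a \end{bf}\in\mathcal{S}} A_{\begin{bf} a \end{bf}}\to A_{\mathcal{S}}$, and I claim this identifies with $\mathcal{O}_{A,\mathcal{S}}\to A_{\mathcal{S}}$ once we know the local-section description: the remark just before the proposition says every section of $\mathcal{O}_A$ is locally of the form $b\in A_{\begin{bf} a \end{bf}}$, so the natural map $\varinjlim A_{\begin{bf} a \end{bf}}\to\mathcal{O}_{A,\mathcal{S}}$ is surjective, and one checks injectivity from the same local presentations (two elements of $A_{\begin{bf} a \end{bf}}$ that agree as functions near $\mathcal{S}$ must already agree in some $A_{\begin{bf} b \end{bf}}$ with $\begin{bf} b \end{bf}\in\mathcal{S}$). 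So it suffices to show $\varinjlim_{\begin{bf} a \end{bf}\in\mathcal{S}} A_{\begin{bf} a \end{bf}}\cong A_{\mathcal{S}}$. This is where the polynomial-presentation machinery of Section 2.2 does the work: any element of $A_{\mathcal{S}}$ has a polynomial presentation involving only finitely many $x_{s_{ij}}$ with $s_{ij}\in S_i$, hence lies in the image of $A_{\begin{bf} a \end{bf}}$ for the fraction $\begin{bf} a \end{bf}=(s_{11}\cdots,\dots)$ built from those finitely many denominators (reorganized level by level so that each entry depends only on earlier entries — this is exactly the structure of a fraction), giving surjectivity; and two elements of $A_{\begin{bf} a \end{bf}}$ with the same image in $A_{\mathcal{S}}$ have equal images after adjoining finitely many further denominators from $\mathcal{S}$, since the defining relations of $A_{\mathcal{S}}$ involve only finitely many of the $x_s$ at a time, giving injectivity of the colimit map.

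For part 2, fix a fraction $\begin{bf} a \end{bf}$ with $D(\begin{bf} a \end{bf})\neq\emptyset$. There is a natural homomorphism $A_{\begin{bf} a \end{bf}}\to\mathcal{O}_A(D(\begin{bf} a \end{bf}))$ sending $b\in A_{\begin{bf} a \end{bf}}$ to the section $\mathcal{S}\mapsto(\text{image of }b\text{ in }A_{\mathcal{S}})$ — well-defined precisely because $\begin{bf} a \end{bf}\in\mathcal{S}$ for every $\mathcal{S}\in D(\begin{bf} a \end{bf})$, so there is a map $A_{\begin{bf} a \end{bf}}\to A_{\mathcal{S}}$, and such a section visibly satisfies the local-presentation condition (2) in the definition of $\mathcal{O}_A$ with $U_0=D(\begin{bf} a \end{bf})$. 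I would prove this map is an isomorphism. For surjectivity: given $t\in\mathcal{O}_A(D(\begin{bf} a \end{bf}))$, the remark preceding the proposition gives, for each $\mathcal{S}_0$, a fraction $\begin{bf} b \end{bf}$ with $\mathcal{S}_0\in D(\begin{bf} b \end{bf})\subset D(\begin{bf} a \end{bf})$ and $b\in A_{\begin{bf} b \end{bf}}$ representing $t$ there; by quasi-compactness of $D(\begin{bf} a \end{bf})$ — which has a center, namely $A\cap\text{U}(A_{\begin{bf} a \end{bf}})$, hence is quasi-compact, and in fact the only open cover is the one containing $D(\begin{bf} a \end{bf})$ itself — we may take $D(\begin{bf} b \end{bf})=D(\begin{bf} a \end{bf})$ already, so $t$ comes from a single $b\in A_{\begin{bf} b \end{bf}}$ with $D(\begin{bf} b \end{bf})=D(\begin{bf} a \end{bf})$; then Proposition \ref{fundamentalopensubsets P}(3) applied in both directions gives compatible maps $A_{\begin{bf} a \end{bf}}\rightleftarrows A_{\begin{bf} b \end{bf}}$, and chasing the center shows $b$ lifts to $A_{\begin{bf} a \end{bf}}$. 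For injectivity: if $b\in A_{\begin{bf} a \end{bf}}$ maps to the zero section, then its image in $A_{\mathcal{S}}$ is $0$ for every $\mathcal{S}\in D(\begin{bf} a \end{bf})$; evaluating at the center $\mathcal{S}=A\cap\text{U}(A_{\begin{bf} a \end{bf}})$, for which $A_{\mathcal{S}}\to$ is (by part 1 applied appropriately, or directly) faithful enough to detect $b$ — more precisely $A_{\begin{bf} a \end{bf}}$ itself maps isomorphically to $A_{\mathcal{S}}$ at that center since $\begin{bf} a\end{bf}$ already inverts everything needed — we get $b=0$. The last assertion $\Gamma(\text{F}(A),\mathcal{O}_A)\cong A$ is the case $\begin{bf} a \end{bf}=(1)$, where $D(1)=\text{F}(A)$ and $A_{(1)}=A$.

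The main obstacle is the surjectivity in part 2 (equivalently, the heart of part 1): one must show that a globally-consistent choice of local polynomial presentations can be amalgamated into a single element of $A_{\begin{bf} a \end{bf}}$. The quasi-compactness coming from the center is the crucial simplification here — it collapses what in ordinary scheme theory is a sheaf-gluing argument with cocycle conditions into the statement that one fundamental open already covers itself, so there is essentially nothing to glue, only a compatibility between $A_{\begin{bf} a \end{bf}}$ and $A_{\begin{bf} b \end{bf}}$ for $D(\begin{bf} a\end{bf})=D(\begin{bf} b\end{bf})$ to verify, and that is handled by the two-way homomorphisms of Proposition \ref{fundamentalopensubsets P}(3) together with their uniqueness. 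I would also need to be slightly careful that when I reorganize a finite set of denominators $s_{ij}$ into a genuine fraction, the dependency condition ``$s_{ij}$ depends on $s_{pq}$ with $p<i$'' is preserved; this is immediate from how polynomial presentations and the filtration of $A_{\mathcal{S}}$ by the $A_{\mathcal{S},n}$ were set up in Section 2.2, but it is the kind of bookkeeping point worth stating explicitly.
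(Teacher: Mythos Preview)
Your argument is correct and rests on the same two ingredients the paper uses: finite polynomial presentations for part 1, and the center of $D(\mathbf{a})$ for part 2. The organization differs chiefly in part 2, where the paper gives a one-line deduction from part 1: since the center $c$ of $D(\mathbf{a})$ lies in no proper open subset of $D(\mathbf{a})$, the stalk map $\mathcal{O}_A(D(\mathbf{a}))\to\mathcal{O}_{A,c}$ is already an isomorphism (the directed system computing the stalk has a single term), and by part 1 the target is $A_c\cong A_{\mathbf{a}}$. Your separate surjectivity and injectivity checks are exactly this observation unpacked---in surjectivity you force $D(\mathbf{b})=D(\mathbf{a})$ by taking $\mathcal{S}_0$ to be the center, and in injectivity you evaluate at the center---so nothing is wrong, but the paper's phrasing makes transparent that no gluing or cocycle verification is needed at all. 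One small point worth tightening: your injectivity step in part 2 relies on the map $A_{\mathbf{a}}\to A_{\mathcal{S}}$ being an isomorphism when $\mathcal{S}$ is the center; this is true, but it is precisely what the paper's route through part 1 supplies, so you may as well invoke part 1 there explicitly rather than sketch it ad hoc.
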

\begin{proof}
1. We have a ring homomorphism $\phi: \mathcal{O}_{A,\mathcal{S}}\to A_\mathcal{S}$
defined by sending
$t\in  \mathcal{O}_{A,\mathcal{S}}$ to $t(\mathcal{S})$.
Any element   $a\in A_\mathcal{S}$ can be presented as in \ref{polynomialpresentation}.
But it is easy to see that
this presentation gives a well-defined element of $A_{\mathcal{T}}$ in a
 neighborhood of $\mathcal{S}$ whose image
is just $a$. So  $\phi$ is onto.
If $t(\mathcal{S})=0$ then since $t$ is, locally at $\mathcal{S}$,
presented as in \ref{localpresentation},
we see that $f=0$ in $A_\mathcal{S}$. This means that
$f=0$ in some $A_{\mathcal{S},n}$. Therefore $f$ is in the ideal of
$A\langle \{x_{s_{ij}}\}_{ij}\rangle$
generated by $s_{ij}x_{s_{ij}}-1$ and $x_{s_{ij}}s_{ij}-1$.
Since we only need finitely many
$s_{ij}x_{s_{ij}}-1$ and $x_{s_{ij}}s_{ij}-1$ to generate $f$,
this is valid in a neighborhood
of $\mathcal{S}$, i.e. $t=0$ in $\mathcal{O}_{A,\mathcal{S}}$. So $\phi$ is  also
injective.\\

\noindent 2. Using part 1, we have
$$ \mathcal{O}_{A}(D(\begin{bf} a \end{bf}))\cong
\mathcal{O}_{A,A\cap \text{U}(A_{\begin{bf} a \end{bf}})}\cong A_{\begin{bf} a \end{bf}}$$
because the only open subset of $D(\begin{bf} a \end{bf})$ containing
$A\cap \text{U}(A_{\begin{bf} a \end{bf}})$ is
$D({\begin{bf} a \end{bf}})$ (recall that $A\cap \text{U}(A_{\begin{bf} a \end{bf}})$
is the center of $D(\begin{bf} a \end{bf})$).

\end{proof}
\begin{remark}\label{definitionofstructuresheaf R}
There is an  equivalent way to define the structure sheaf $\mathcal{O}_A$.
In general if a presheaf (of abelian groups)
is defined on a base of open subsets of a topological space then there is
a unique way to extend this presheaf to a sheaf on the topological space, provided that
it satisfies a ``sheaflike'' condition, see \cite{Gr}. I explain this construction for
$\mathcal{O}_A$.
First define
$\mathcal{O}_A(D(\begin{bf} a \end{bf}))=A_{\begin{bf} a \end{bf}}$. If
$D(\begin{bf} b \end{bf})\subset D(\begin{bf} a \end{bf})$ then consider
the natural homomorphism $ A_{\begin{bf} a \end{bf}}\to A_{\begin{bf} b \end{bf}} $
as in Proposition \ref{fundamentalopensubsets P}.
For any open subset $U$ of $\text{F}(A)$, set
$$\mathcal{O}_A(U)=\underset{D(\begin{bf} a \end{bf})\subset U}{\underleftarrow{\lim}}
 \mathcal{O}_A(D(\begin{bf} a \end{bf}))
 =\underset{D(\begin{bf} a \end{bf})\subset U}{\underleftarrow{\lim}}
 A_{\begin{bf} a \end{bf}}.
 $$
Finally, if $U\subset V$ are two open subsets of  $\text{F}(A)$ then we have a natural
homomorphism
$\mathcal{O}_A(V)\to\mathcal{O}_A(U)$. Since fundamental open subsets have a center,
the ``sheaflike''
condition is always satisfied. So this gives a sheaf of rings on $\text{F}(A)$.
It is easy to see that this definition is equivalent to the above definition  of
the structure sheaf $\mathcal{O}_A$.
\end{remark}
Recall that a morphism between two ringed spaces $(X,\mathcal{O}_X)$ and $(Y,\mathcal{O}_Y)$
is a pair $(f,f^{\sharp})$ of a continuous map $f:X\to Y$ and a homomorphism of sheaves
of rings $f^{\sharp}:\mathcal{O}_Y\to f_*\mathcal{O}_X$ on $Y$. Two ringed spaces
$X$ and $Y$ are isomorphic if there is a morphism $(f,f^{\sharp}):X\to Y$ which has an inverse.
For brevity,  the morphism $(f,f^{\sharp})$ is simply denoted by $f$.
\begin{definition}
A morphism
$$(f,f^{\sharp}):(X,\mathcal{O}_X)\to (Y,\mathcal{O}_Y)$$
between two ringed spaces
is called \textbf{local} if for any
$x\in X$, the natural homomorphism  $f^{\sharp}_x:\mathcal{O}_{Y,f(x)}\to\mathcal{O}_{X,x}$
is local.
\end{definition}
\begin{theorem}\label{AffineF-schemes T}
\begin{enumerate}
\item Any ring homomorphism $\phi: A\to B$ induces a natural  local morphism
$$(f,f^{\sharp}):(\text{F}(B),\mathcal{O}_{B})
\to(\text{F}(A),\mathcal{O}_{A})$$
of ringed spaces.
\item Any   local morphism $(\text{F}(B),\mathcal{O}_{B})
\to(\text{F}(A),\mathcal{O}_{A})$ is induced from a
unique ring homomorphism $A\to B$ as in 1.
\end{enumerate}
\end{theorem}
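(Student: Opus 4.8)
The plan is to mimic the classical correspondence between rings and affine schemes (Hartshorne II.2.3), with prime ideals replaced by fully invertible systems, localization at a prime by $A_{\mathcal{S}}$, local ring homomorphisms handled through Proposition \ref{localmaps P}, and Proposition \ref{pullbacksystem P} providing the dictionary between the algebraic and the geometric sides; the presence of a center in every $D(\mathbf{a})$ and in $\text{F}(A)$ keeps the sheaf-theoretic part light.

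For part 1, given $\phi\colon A\to B$ I would take the underlying map $f\colon\text{F}(B)\to\text{F}(A)$ to be the pull-back $f(\mathcal{S})=\phi^{*}(\mathcal{S})$ of Proposition \ref{pullbacksystem P}, which also furnishes a local homomorphism $\phi^{\mathcal{S}}\colon A_{\phi^{*}(\mathcal{S})}\to B_{\mathcal{S}}$ compatible with $\phi$. Then $\phi$ pushes a fraction $\mathbf{a}=(a_1,\dots,a_n)$ of $A$ to a fraction $\phi_{*}(\mathbf{a})$ of $B$, obtained by iterating the universal property of one-element localizations, together with a compatible homomorphism $A_{\mathbf{a}}\to B_{\phi_{*}(\mathbf{a})}$; using this and the locality of $\phi^{\mathcal{S}}$ one checks that $\mathbf{a}\in\phi^{*}(\mathcal{S})$ if and only if $\phi_{*}(\mathbf{a})\in\mathcal{S}$, so $f^{-1}(D(\mathbf{a}))=D(\phi_{*}(\mathbf{a}))$ and $f$ is continuous. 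For the structure-sheaf map I would define $f^{\sharp}$ on the base of fundamental opens, taking on $D(\mathbf{a})$ the homomorphism $A_{\mathbf{a}}=\mathcal{O}_A(D(\mathbf{a}))\to B_{\phi_{*}(\mathbf{a})}=\mathcal{O}_B(f^{-1}D(\mathbf{a}))=(f_{*}\mathcal{O}_B)(D(\mathbf{a}))$; these maps are compatible with the restriction maps of Proposition \ref{fundamentalopensubsets P}(3), so by the base-extension procedure of Remark \ref{definitionofstructuresheaf R} they glue to a morphism of sheaves $f^{\sharp}\colon\mathcal{O}_A\to f_{*}\mathcal{O}_B$. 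Finally, under the identifications $\mathcal{O}_{A,f(\mathcal{S})}\cong A_{\phi^{*}(\mathcal{S})}$ and $\mathcal{O}_{B,\mathcal{S}}\cong B_{\mathcal{S}}$ of Proposition \ref{structuresheaf P}(1), the stalk map $f^{\sharp}_{\mathcal{S}}$ is exactly $\phi^{\mathcal{S}}$, hence local, so $(f,f^{\sharp})$ is a local morphism; and one sees $\Gamma(f^{\sharp})=\phi$ by evaluating on $D(1)=\text{F}(A)$.

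For part 2, given a local morphism $(f,f^{\sharp})$ I would set $\phi:=\Gamma(f^{\sharp})\colon A=\Gamma(\text{F}(A),\mathcal{O}_A)\to\Gamma(\text{F}(B),\mathcal{O}_B)=B$ (Proposition \ref{structuresheaf P}(2)) and show that the local morphism $(g,g^{\sharp})$ that $\phi$ induces through part 1 coincides with $(f,f^{\sharp})$; uniqueness of $\phi$ then follows since $\Gamma(g^{\sharp})=\phi$. Fix $\mathcal{S}\in\text{F}(B)$ and put $\mathcal{T}=f(\mathcal{S})$. Taking stalks at $\mathcal{T}$ and using Proposition \ref{structuresheaf P} produces a commutative square whose rows are $\phi\colon A\to B$ and $f^{\sharp}_{\mathcal{S}}\colon A_{\mathcal{T}}\to B_{\mathcal{S}}$ and whose verticals are $i_{\mathcal{T}}$ and $i_{\mathcal{S}}$, with $f^{\sharp}_{\mathcal{S}}$ local by hypothesis. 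Commutativity gives $f^{\sharp}_{\mathcal{S}}i_{\mathcal{T}}=i_{\mathcal{S}}\phi$, so $\mathcal{T}$ is contained in the fully invertible system coming from $i_{\mathcal{S}}\phi$, which by Proposition \ref{pullbacksystem P} is $\phi^{*}(\mathcal{S})$; thus $\mathcal{T}\subset\phi^{*}(\mathcal{S})$ and, by the universal property of $A_{\mathcal{T}}$, $f^{\sharp}_{\mathcal{S}}$ factors as $A_{\mathcal{T}}\xrightarrow{\,h\,}A_{\phi^{*}(\mathcal{S})}\xrightarrow{\,\phi^{\mathcal{S}}\,}B_{\mathcal{S}}$. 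Since $\phi^{\mathcal{S}}$ and $f^{\sharp}_{\mathcal{S}}$ are local, $h$ is local by Proposition \ref{localmaps P}(2). An induction on $n$ --- at each stage $A_{\mathcal{T},n-1}=A_{\phi^{*}(\mathcal{S}),n-1}$, the map $h$ is a homomorphism of algebras over this common ring, and the characterization of fully invertible systems identifies the $n$-th terms of $\mathcal{T}$ and $\phi^{*}(\mathcal{S})$ with the traces of $\text{U}(A_{\mathcal{T}})$ and $\text{U}(A_{\phi^{*}(\mathcal{S})})$ on $A_{\mathcal{T},n-1}$, which agree by locality of $h$ --- then gives $\mathcal{T}=\phi^{*}(\mathcal{S})$, i.e. $f=g$ on underlying spaces. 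To identify the sheaf maps, note that $f^{\sharp}_{\mathcal{S}}\colon A_{\phi^{*}(\mathcal{S})}\to B_{\mathcal{S}}$ now fits in the commutative square with $\phi$, $i_{\phi^{*}(\mathcal{S})}$, $i_{\mathcal{S}}$, and since $\phi^{*}(\mathcal{S})$ is precisely the system coming from $i_{\mathcal{S}}\phi$, the universal property of $A_{\phi^{*}(\mathcal{S})}$ forces this map to be unique, so $f^{\sharp}_{\mathcal{S}}=\phi^{\mathcal{S}}=g^{\sharp}_{\mathcal{S}}$; a morphism of sheaves into a sheaf is determined by its stalk maps, whence $f^{\sharp}=g^{\sharp}$.

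I expect the main obstacle to be the induction in part 2: showing that a local homomorphism $h\colon A_{\mathcal{T}}\to A_{\phi^{*}(\mathcal{S})}$ between localizations of $A$ with $\mathcal{T}\subset\phi^{*}(\mathcal{S})$ actually forces the two fully invertible systems to agree term by term. This is where the characterization of fully invertible systems and Proposition \ref{localmaps P} must be combined carefully (in particular, checking that once the systems agree up to level $n-1$ the map $h$ is automatically a morphism over $A_{\mathcal{T},n-1}$). By comparison, the continuity of $f$, the construction of $\phi_{*}(\mathbf{a})$, and the stalk identifications in part 1 are routine, if a little tedious.
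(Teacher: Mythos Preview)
Your proposal is correct and follows essentially the same approach as the paper: define $f$ via $\phi^{*}$, check continuity on fundamental opens, build $f^{\sharp}$ from the maps $A_{\mathbf{a}}\to B_{\phi(\mathbf{a})}$ (equivalently from the $\phi^{\mathcal{S}}$), and for part 2 recover $\phi$ on global sections and use the commutative square together with locality of $f^{\sharp}_{\mathcal{S}}$ to force $f(\mathcal{S})=\phi^{*}(\mathcal{S})$. The paper compresses this last step into a single sentence, whereas you spell out the factorization $f^{\sharp}_{\mathcal{S}}=\phi^{\mathcal{S}}\circ h$, the locality of $h$ via Proposition~\ref{localmaps P}(2), and the level-by-level induction; this is exactly the argument the paper is implicitly invoking, so the ``main obstacle'' you flag is not a gap but just the honest work behind the paper's one-line assertion.
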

\begin{proof}
1. By Proposition   \ref{pullbacksystem P}, the assignment
$\mathcal{S}\mapsto \phi^{*}(\mathcal{S})$ does
define a map
$f:\text{F}(B)\to\text{F}(A)$. This map is continuous,
because $f^{-1}(D(\begin{bf} a \end{bf} ))=D(\phi(\begin{bf} a \end{bf}))$
for any fraction $\begin{bf} a \end{bf}=(a_1,...,a_n)$ of $A$
(here $\phi(\begin{bf} a \end{bf})$ is the fraction
$(\phi(a_1),\phi_1(a_2),...,\phi_{n-1}(a_n))$, where $\phi_1:A_{a_1}\to B_{\phi(a_1)}$
is the natural homomorphism
and so on and so forth). Using the way the structure sheaves
$\mathcal{O}_A$  and $\mathcal{O}_B$ are defined and by composing with
the natural homomorphisms $\phi^{\mathcal{S}}$, we can define a morphism
$f^{\sharp}:\mathcal{O}_A\to f_{*}\mathcal{O}_B$ for which
$f^{\sharp}_{\mathcal{S}}=\phi^{\mathcal{S}}$.
So we obtain a morphism
$$(f,f^{\sharp}):(\text{F}(B),\mathcal{O}_{B})
\to(\text{F}(A),\mathcal{O}_{A})$$
of ringed spaces. Since $f^{\sharp}_{\mathcal{S}}=\phi^{\mathcal{S}}$, by Proposition
\ref{pullbacksystem P} part 2, $f$ is a local morphism.  \\
2. By  Proposition \ref{structuresheaf P}, for any morphism
$$(f,f^{\sharp}):(\text{F}(B),\mathcal{O}_{B})\to(\text{F}(A),\mathcal{O}_{A}),$$
we obtain a ring homomorphism
$$\phi:A\cong \Gamma(\text{F}(A),\mathcal{O}_{A})\to
B\cong\Gamma(\text{F}(B),\mathcal{O}_{B}).$$
Then for any $\mathcal{S} \in \text{F}(A)$, we have the following commutative diagram
\begin{displaymath}
\xymatrix{ & A \ar[d]  \ar[r]^{\phi} & B \ar[d] \\
           & A_{f(\mathcal{S})}\ar[r]^{f^{\sharp}_\mathcal{S}}  & B_\mathcal{S}}
 \end{displaymath}
Now the commutativity of the above diagram and the fact that
${f^{\sharp}_\mathcal{S}}$ is a local homomorphism, imply
$f(\mathcal{S})=\phi^{*}(\mathcal{S})$. Finally the above diagram shows
that $(f,f^{\sharp})$ is induced from $\phi$ as in part 1.
\end{proof}
\begin{proposition}      \label{fundamentalopensubfschemes P}
For any fraction $\begin{bf} a \end{bf}$ of $A$ such that  $D(\begin{bf}
a \end{bf})\neq\emptyset$, we have an isomorphism
$(D(\begin{bf} a \end{bf}),\mathcal{O}_{A}|_{D(\begin{bf} a \end{bf})})\cong
 \text{F}(A_{\begin{bf} a \end{bf}})$.
\end{proposition}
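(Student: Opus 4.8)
The plan is to obtain the isomorphism as the morphism induced, via Theorem \ref{AffineF-schemes T}, by the natural homomorphism $i:A\to A_{\begin{bf} a \end{bf}}$; denote it $(f,f^{\sharp}):\text{F}(A_{\begin{bf} a \end{bf}})\to\text{F}(A)$. First I would identify $A_{\begin{bf} a \end{bf}}$, together with $i$, with the localization $i_{\mathcal{S}_0}:A\to A_{\mathcal{S}_0}$, where $\mathcal{S}_0$ is the fully invertible system on $A$ coming from $i$. This is a matter of comparing universal properties: one constructs mutually inverse $A$-homomorphisms between $A_{\begin{bf} a \end{bf}}$ and $A_{\mathcal{S}_0}$ stage by stage, using that each $a_k$ (viewed in the appropriate iterated localization) lies in the corresponding component of $\mathcal{S}_0$. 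Granting $A_{\begin{bf} a \end{bf}}\cong A_{\mathcal{S}_0}$ over $A$, the characterization of ``$\begin{bf} a \end{bf}\in\mathcal{S}$'' by the existence of a homomorphism $A_{\begin{bf} a \end{bf}}\to A_{\mathcal{S}}$ over $A$, together with the analogous characterization of ``$\mathcal{S}_0\subset\mathcal{S}$'', gives $D(\begin{bf} a \end{bf})=\{\mathcal{S}\in\text{F}(A)|\,\mathcal{S}_0\subset\mathcal{S}\}$ as sets.

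Next, apply Proposition \ref{F.I.SinA_S P} with $\mathcal{S}=\mathcal{S}_0$: the pull-back $\mathcal{T}\mapsto i_{\mathcal{S}_0}^{*}(\mathcal{T})$ is a bijection from $\text{F}(A_{\begin{bf} a \end{bf}})$ onto the fully invertible systems on $A$ containing $\mathcal{S}_0$, that is, onto $D(\begin{bf} a \end{bf})$. Since $f(\mathcal{T})=i^{*}(\mathcal{T})$ is precisely this pull-back, and $f$ is continuous by Theorem \ref{AffineF-schemes T}, $f$ is a continuous bijection of $\text{F}(A_{\begin{bf} a \end{bf}})$ onto $D(\begin{bf} a \end{bf})$. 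To promote this to a homeomorphism onto $D(\begin{bf} a \end{bf})$ I would check openness: a basic open subset of $\text{F}(A_{\begin{bf} a \end{bf}})$ is $D(\begin{bf} b \end{bf})$ for a fraction $\begin{bf} b \end{bf}=(b_1,...,b_m)$ of $A_{\begin{bf} a \end{bf}}$, and because $b_1\in A_{\begin{bf} a \end{bf}}$, $b_2\in(A_{\begin{bf} a \end{bf}})_{b_1}$, and so on, the concatenation $\begin{bf} c \end{bf}=(a_1,...,a_n,b_1,...,b_m)$ is a fraction of $A$ with $A_{\begin{bf} c \end{bf}}=(A_{\begin{bf} a \end{bf}})_{\begin{bf} b \end{bf}}$. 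By the formula for the preimage of a fundamental open subset in Theorem \ref{AffineF-schemes T}, $f^{-1}(D(\begin{bf} c \end{bf}))=D(\begin{bf} b \end{bf})$ (the first $n$ coordinates of the pushed-forward fraction are already units and drop out); since $D(\begin{bf} c \end{bf})\subset D(\begin{bf} a \end{bf})$ (as $\begin{bf} c \end{bf}$ extends $\begin{bf} a \end{bf}$) and $f$ is a bijection onto $D(\begin{bf} a \end{bf})$, this yields $f(D(\begin{bf} b \end{bf}))=D(\begin{bf} c \end{bf})$, which is open. Hence $f$ is a homeomorphism onto $D(\begin{bf} a \end{bf})$.

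Finally, it remains to see $f^{\sharp}$ is an isomorphism of sheaves; since $f$ is a homeomorphism onto $D(\begin{bf} a \end{bf})$, it is enough to check this on stalks. By Theorem \ref{AffineF-schemes T} and Proposition \ref{pullbacksystem P}, $f^{\sharp}_{\mathcal{T}}$ is the natural homomorphism $i^{\mathcal{T}}:A_{i^{*}(\mathcal{T})}\to(A_{\begin{bf} a \end{bf}})_{\mathcal{T}}$, which, by the uniqueness clause, is exactly the natural isomorphism furnished by Proposition \ref{F.I.SinA_S P}. Thus $f^{\sharp}$ is an isomorphism on every stalk, hence an isomorphism of sheaves, and $f$ restricts to an isomorphism of ringed spaces $(D(\begin{bf} a \end{bf}),\mathcal{O}_A|_{D(\begin{bf} a \end{bf})})\cong\text{F}(A_{\begin{bf} a \end{bf}})$.

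I expect the main obstacle to be the first identification $A_{\begin{bf} a \end{bf}}\cong A_{\mathcal{S}_0}$ over $A$ (matching an iterated single-element localization with a localization at the induced fully invertible system) and, to a lesser extent, the concatenation-of-fractions bookkeeping; both are routine uses of the universal properties of successive localizations, but they require care in matching ``over $A$'' and ``over $A_{\begin{bf} a \end{bf}}$'' factorizations. Alternatively, one could skip the first step by establishing directly a fraction-version of Proposition \ref{F.I.SinA_S P}, which is essentially the same argument.
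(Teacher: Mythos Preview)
Your proposal is correct and follows essentially the same route as the paper: induce the morphism from $A\to A_{\begin{bf} a \end{bf}}$ via Theorem~\ref{AffineF-schemes T}, use Proposition~\ref{F.I.SinA_S P} to get a bijection onto $D(\begin{bf} a \end{bf})$, check openness, and conclude by showing the stalk maps are the natural isomorphisms of Proposition~\ref{F.I.SinA_S P}. The only difference is cosmetic: the paper invokes Proposition~\ref{F.I.SinA_S P} directly for $A_{\begin{bf} a \end{bf}}$ and simply asserts that $f$ takes open sets to open sets, whereas you make the intermediate identification $A_{\begin{bf} a \end{bf}}\cong A_{\mathcal{S}_0}$ explicit and spell out the concatenation-of-fractions argument for openness---exactly the details one would need to fill in the paper's sketch.
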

\begin{proof}
The natural ring homomorphism $A\to A_{\begin{bf} a \end{bf}}$ gives rise to a local morphism
$f:\text{F}(A_{\begin{bf} a
\end{bf}})\to\text{F}(A)$ of ringed spaces
 by Theorem
\ref{AffineF-schemes T}.  First, we have $f$ is a bijection from $\text{F}(A_{\begin{bf} a
\end{bf}})$ onto  $D(\begin{bf} a \end{bf})$, by Proposition \ref{F.I.SinA_S P}.
It is easy to see that $f$ maps open subsets to open subsets. Hence $f$ is a homeomorphism
from   $\text{F}(A_{\begin{bf} a
\end{bf}})$ onto $D(\begin{bf} a \end{bf})$. Finally, using Proposition \ref{F.I.SinA_S P},
we see that the natural homomorphism
$$f^{\sharp}_{\mathcal{S}}:\mathcal{O}_{A,f(\mathcal{S})}\cong A_{f(\mathcal{S})}
\to \mathcal{O}_{A_{\begin{bf} a \end{bf}},\mathcal{S}}\cong (A_{\begin{bf} a
\end{bf}})_\mathcal{S}$$
is an isomorphism for any $\mathcal{S}\in \text{F}(A_{\begin{bf} a
\end{bf}})$.  Therefore $(f,f^{\sharp})$ is an isomorphism.
\end{proof}
Finally, we can introduce the notion of F-schemes.
\begin{definition}
An \textbf{affine F-scheme} is a ringed space which is isomorphic to
$(\text{F}(A),\mathcal{O}_{A})$
for some ring $A$. An \textbf{F-scheme} is a ringed space which is
locally isomorphic to affine F-schemes.
A \textbf{morphism} between   F-schemes is a  local morphism between them
as ringed spaces.
\end{definition}
Clearly the composition of two local morphisms is a local morphism. So F-schemes
and morphisms between them
form a category which is denoted by $\textbf{FSch}$.
An \textbf{open sub-F-scheme} of an F-scheme is an open ringed subspace of it.
By Proposition \ref{fundamentalopensubfschemes P}, an open sub-F-scheme of an F-scheme
is an F-scheme. The following lemma is straightforward to prove.
 \begin{lemma}\label{beingaffine L}
An F-scheme is affine iff it has a center. Moreover, any open affine sub-F-scheme
of an affine F-scheme, is a fundamental open subset (with its induced structures).
\end{lemma}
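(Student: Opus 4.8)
The plan is to prove both assertions of Lemma~\ref{beingaffine L} separately, relying heavily on the machinery already established, in particular Proposition~\ref{subsetswithcenter P}, Proposition~\ref{fundamentalopensubfschemes P}, and the fact that every $\text{F}(A)$ has a center, namely $\text{U}(A)$.

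First I would handle the ``affine $\Rightarrow$ has a center'' direction, which is immediate: if $(X,\mathcal{O}_X)\cong(\text{F}(A),\mathcal{O}_A)$ then $X$ has a center because $\text{F}(A)$ does (its center is $\text{U}(A)$), and having a center is a purely topological property preserved under homeomorphism. For the converse, suppose an F-scheme $(X,\mathcal{O}_X)$ has a center $x_0$. By definition of F-scheme, there is an open sub-F-scheme $U\ni x_0$ together with an isomorphism $U\cong \text{F}(A)$ for some ring $A$. Since $x_0$ is the center of $X$, every nonempty closed subset of $X$ contains $x_0$; I would argue that this forces $U=X$: if $U\neq X$ then $X\setminus U$ is a nonempty closed subset not containing $x_0$ (as $x_0\in U$), contradicting that $x_0$ is the center. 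Hence $X=U\cong\text{F}(A)$ is affine. The only subtlety is to make sure the structure-sheaf part of the isomorphism carries over, but since $U$ is an \emph{open} sub-F-scheme, $\mathcal{O}_X|_U=\mathcal{O}_U$ on the nose, so the isomorphism $U\cong\text{F}(A)$ of ringed spaces is exactly an isomorphism $(X,\mathcal{O}_X)\cong(\text{F}(A),\mathcal{O}_A)$.

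For the second assertion, let $(X,\mathcal{O}_X)$ be an affine F-scheme, identified via an isomorphism with $(\text{F}(A),\mathcal{O}_A)$, and let $V\subset X$ be an open sub-F-scheme that happens to be affine. By the first part (applied to $V$), $V$ has a center $x_1$. Now $V$, being an open subset of $\text{F}(A)$ with a center in its induced topology, is a fundamental open subset $D(\mathbf{a})$ by Proposition~\ref{subsetswithcenter P}. It remains to check that the \emph{ringed-space} structure matches, i.e. that $(V,\mathcal{O}_X|_V)$ is the fundamental open sub-F-scheme $(D(\mathbf{a}),\mathcal{O}_A|_{D(\mathbf{a})})$; but again this is automatic because $V$ is open, so $\mathcal{O}_X|_V$ is literally the restriction of the structure sheaf, and Proposition~\ref{fundamentalopensubfschemes P} then identifies this with $\text{F}(A_{\mathbf{a}})$.

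The only place requiring genuine care — the main (mild) obstacle — is the topological claim that a center of $X$, when $X$ has one, lies in \emph{every} affine open piece and thus forces that piece to be the whole space; one must phrase this correctly using the definition of center (closed point contained in every nonempty closed subspace) rather than some weaker ``generic point'' notion, and one should double-check that the center of $\text{F}(A)$ really is $\text{U}(A)$ and is unique, both of which are noted in the text preceding Proposition~\ref{subsetswithcenter P}. Everything else is a direct invocation of the cited propositions, so the proof will be short.
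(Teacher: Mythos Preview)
Your proposal is correct and follows precisely the route the paper has in mind: the lemma is stated as ``straightforward to prove'' with no proof given, and the intended argument is exactly the one you outline---use that $\text{F}(A)$ has center $\text{U}(A)$ for one direction, use the definition of center to force an affine neighborhood of the center to be all of $X$ for the other, and then invoke Proposition~\ref{subsetswithcenter P} (together with Proposition~\ref{fundamentalopensubfschemes P} for the sheaf part) for the second assertion. There is nothing to add.
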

To see how far an F-scheme is from being an affine F-scheme, I introduce the following
definition.
\begin{definition}
An F-scheme $X$ is called $n$-affine ($n\geq 1$) if \\
1) there is an open affine  covering $U_i$ of $X$
such that each open subspace $U_i\cap U_j$ is $(n-1)$-affine (a $0$-affine
F-scheme is just an affine F-scheme), \\
2) $X$ is not $m$-affine for any $m<n$.
\end{definition}
The following lemma states that there are no $n$-affine schemes if $n>2$.
\begin{lemma}\label{naffine L}
Each F-scheme is either affine or 1-affine or 2-affine.
\end{lemma}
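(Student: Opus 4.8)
The plan is to show that every F-scheme $X$ admits an open affine covering whose pairwise intersections are each at most $1$-affine; since being at most $2$-affine means precisely being affine, $1$-affine, or $2$-affine, this yields the claim. First I would invoke the definition of an F-scheme to fix an open affine covering $\{U_i\}$ of $X$, say with $U_i\cong \text{F}(A_i)$. The whole problem then reduces to a statement about a single affine F-scheme: for any open subset $U$ of $\text{F}(A)$ (here $U=U_i\cap U_j$ regarded as an open subspace of $U_i$), $U$ is at most $1$-affine.

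To prove this, I would cover $U$ by fundamental open subsets $D(\mathbf{a})$ of $\text{F}(A)$, which is possible because (as noted right after Proposition~\ref{fundamentalopensubsets P}) the sets $D(\mathbf{a})$ form a base for the topology of $\text{F}(A)$; and each nonempty $D(\mathbf{a})$ is an affine F-scheme, being isomorphic to $\text{F}(A_{\mathbf{a}})$ by Proposition~\ref{fundamentalopensubfschemes P}. This gives an open affine covering $\{D(\mathbf{a}_k)\}$ of $U$, each member of which is open in $X$ as well. The key point is that this covering has affine pairwise intersections: by Proposition~\ref{fundamentalopensubsets P} part~2,
\[
D(\mathbf{a}_k)\cap D(\mathbf{a}_l)=D(\langle \mathbf{a}_k,\mathbf{a}_l\rangle),
\]
which is again a fundamental open subset, hence affine (or empty) by Proposition~\ref{fundamentalopensubfschemes P}. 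Thus $U$ is affine or $1$-affine. Applying this to every $U_i\cap U_j$ shows that $X$ has an open affine covering with at most $1$-affine pairwise intersections, so $X$ is affine, $1$-affine, or $2$-affine.

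The proof is essentially this short, so the only thing to be careful about is the bookkeeping in the definition of ``$n$-affine'': condition~(1) should be read as allowing $U_i\cap U_j$ to be $m$-affine for \emph{some} $m\le n-1$ (i.e. at most $(n-1)$-affine), since the construction above will in general produce intersections that are genuinely affine rather than genuinely $1$-affine, and $X$ itself may turn out to be affine or $1$-affine. Under that reading the argument is complete; all the real content is packed into the closure of the family of fundamental open subsets under intersection (Proposition~\ref{fundamentalopensubsets P} part~2) together with their affineness (Proposition~\ref{fundamentalopensubfschemes P}). If one wants, in a concrete example, to decide exactly which of the three cases occurs, one can phrase the final step via Lemma~\ref{beingaffine L} (affine $=$ has a center) and check whether the relevant covers can be collapsed.
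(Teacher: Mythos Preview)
Your proof is correct and follows essentially the same route as the paper: first show that any open sub-F-scheme of an affine F-scheme is affine or $1$-affine by covering it with fundamental open subsets and using that these are closed under intersection (Proposition~\ref{fundamentalopensubsets P} part~2) and affine (Proposition~\ref{fundamentalopensubfschemes P}), then apply this to the pairwise intersections of an affine cover of $X$. Your explicit remark about reading ``$(n-1)$-affine'' as ``at most $(n-1)$-affine'' is apt, since the paper's own argument tacitly uses the same interpretation.
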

\begin{proof}
First I show that any open sub-F-scheme $U$ of an affine F-scheme $\text{F}(A)$
is either affine or 1-affine. In fact
$U$ can be covered with fundamental open subsets which are affine. Since the
intersection of any two fundamental open subsets in an affine F-scheme
is a fundamental open subset, we see that $U$ is either affine or 1-affine. Now
suppose that $X$ is an F-scheme and $U_i$ is an open affine covering of $X$.
Then each $U_i\cap U_j$, being  an open subset of an
affine F-scheme, is either
affine or 1-affine. So $X$ is either affine or 1-affine or 2-affine.
\end{proof}
The following proposition is the counterpart of a well known result in algebraic geometry.
\begin{proposition}\label{mapstoaffineF-schemes P}
For any F-scheme $X$ and any  affine F-scheme  $ \text{F}(A)$, the natural map
$$ \text{Hom}_{\textbf{FSch}}(X,\text{F}(A))\to \text{Hom}_{\textbf{Rings}}
(A,\Gamma(X,\mathcal{O}_X))$$
is a bijection.
\end{proposition}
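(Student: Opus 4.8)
The plan is to follow the classical gluing argument, reducing everything to the affine case, which is already contained in Theorem \ref{AffineF-schemes T}. First I would make the map explicit: a morphism $(f,f^{\sharp}):X\to\text{F}(A)$ yields, via its action on global sections and the isomorphism $\Gamma(\text{F}(A),\mathcal{O}_A)\cong A$ of Proposition \ref{structuresheaf P}, a ring homomorphism $\alpha(f):A\to\Gamma(X,\mathcal{O}_X)$; this is the natural map in the statement. It is visibly natural under restriction: for any open $V\subset X$ one has $\alpha(f|_V)=\rho^X_V\circ\alpha(f)$, where $\rho^X_V$ denotes the restriction $\Gamma(X,\mathcal{O}_X)\to\Gamma(V,\mathcal{O}_X)$. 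When $X=\text{F}(B)$ is affine, $\alpha$ is precisely the correspondence furnished by Theorem \ref{AffineF-schemes T}: part 1 gives that every ring homomorphism arises, and part 2 (uniqueness) gives injectivity, so $\alpha$ is a bijection in that case.

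Next, fix an open affine covering $X=\bigcup_i U_i$ with $U_i\cong\text{F}(B_i)$, so that $\Gamma(U_i,\mathcal{O}_X)\cong B_i$. For injectivity, suppose $f,g:X\to\text{F}(A)$ satisfy $\alpha(f)=\alpha(g)$. Composing with $\rho^X_{U_i}$ and using naturality, $\alpha(f|_{U_i})=\alpha(g|_{U_i})$ in $\text{Hom}_{\textbf{Rings}}(A,B_i)$, so by the affine case $f|_{U_i}=g|_{U_i}$ for every $i$. Since the underlying continuous map and the sheaf homomorphism of a morphism of ringed spaces are determined by their restrictions to the members of an open cover (the latter by the sheaf axiom for $\mathcal{O}_X$), we conclude $f=g$.

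For surjectivity, start with $\psi:A\to\Gamma(X,\mathcal{O}_X)$ and set $\psi_i=\rho^X_{U_i}\circ\psi:A\to B_i$. The affine case produces local morphisms $f_i:U_i\to\text{F}(A)$ with $\alpha(f_i)=\psi_i$. To glue the $f_i$ I must check they agree on $U_i\cap U_j$, and this is the one point that needs care: by Lemma \ref{naffine L} the overlap $U_i\cap U_j$ is only guaranteed to be $1$-affine, so injectivity cannot be invoked on it directly. Instead I would cover $U_i\cap U_j$ by fundamental affine open subsets $W$; on each such $W$ both $f_i|_W$ and $f_j|_W$ induce the same ring homomorphism $\rho^X_W\circ\psi$ (by naturality together with $\alpha(f_i)=\psi_i$, $\alpha(f_j)=\psi_j$), hence by the affine case $f_i|_W=f_j|_W$. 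As the $W$'s cover $U_i\cap U_j$ and morphisms of ringed spaces are determined locally, $f_i|_{U_i\cap U_j}=f_j|_{U_i\cap U_j}$. The continuous maps then glue to a continuous $f:X\to\text{F}(A)$, and the sheaf homomorphisms $f_i^{\sharp}$ glue to $f^{\sharp}:\mathcal{O}_A\to f_*\mathcal{O}_X$; the result is local because locality is a stalkwise condition and every stalk of $X$ lies over some $U_i$. Finally $\alpha(f)=\psi$, since both have image $\psi_i$ under each $\rho^X_{U_i}$ and $\Gamma(X,\mathcal{O}_X)\hookrightarrow\prod_i\Gamma(U_i,\mathcal{O}_X)$ by the sheaf axiom.

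The main obstacle is exactly this gluing step: because the overlaps $U_i\cap U_j$ fail to be affine in general, the comparison of $f_i$ and $f_j$ has to be pushed one level further, to an affine refinement, before the bijection of Theorem \ref{AffineF-schemes T} applies. Everything else — naturality of $\alpha$ under restriction, and the facts that morphisms of ringed spaces glue from, and are separated by, an open cover — is routine sheaf-theoretic bookkeeping.
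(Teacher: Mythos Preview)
Your argument is correct and follows essentially the same strategy as the paper: reduce to the affine case via Theorem \ref{AffineF-schemes T}, then glue over an affine cover, handling the non-affine overlaps by passing to a further affine refinement. The paper compresses all of this into the single remark that the bijection is natural and that every F-scheme is at most $2$-affine (Lemma \ref{naffine L}), leaving the reader to unroll exactly the two-step descent you have written out; your version is simply the explicit form of that sketch.
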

\begin{proof}
If $X$ is affine, it follows from Theorem \ref{AffineF-schemes T}.
 It is easy to see that if the above statement is true for
an open  covering of $X$ and the intersection of open subsets of this covering,
then it holds for $X$ (because the
isomorphism is natural). So the statement follows from
Lemma \ref{naffine L}.
\end{proof}
Concerning morphisms from affine F-schemes to F-schemes, we have the following proposition.
 \begin{proposition}\label{morphismsfromaffineF-scheme P}
Let $X$   be an F-scheme and $\text{F}(A)$ be an affine scheme. Giving a morphism
$\text{F}(A)\to X$ is the same
as giving a point $x\in X$ and a local homomorphism $\mathcal{O}_{X,x}\to A$.
\end{proposition}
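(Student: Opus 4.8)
The plan is to exhibit mutually inverse maps between $\text{Hom}_{\textbf{FSch}}(\text{F}(A),X)$ and the set of pairs $(x,\varphi)$ with $x\in X$ and $\varphi\colon\mathcal{O}_{X,x}\to A$ a local homomorphism. The whole argument hinges on the fact that $\text{F}(A)$ has a center, namely $\text{U}(A)$: it is the unique closed point of $\text{F}(A)$ and lies in every nonempty closed subset, so the only open subset of $\text{F}(A)$ containing $\text{U}(A)$ is $\text{F}(A)$ itself. Write $c=\text{U}(A)$; by Proposition \ref{structuresheaf P} there is a canonical isomorphism $\mathcal{O}_{\text{F}(A),c}\cong A_c=A$ (localizing at units changes nothing).

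Given a morphism $(g,g^{\sharp})\colon\text{F}(A)\to X$, I would set $x=g(c)$ and let $\varphi=g^{\sharp}_c\colon\mathcal{O}_{X,x}\to\mathcal{O}_{\text{F}(A),c}\cong A$; this is local because $g$, being a morphism of F-schemes, is a local morphism. The key reduction is that for any such $g$ and any open $U\subseteq X$ with $x\in U$ one has $g(\text{F}(A))\subseteq U$, since $g^{-1}(U)$ is an open subset of $\text{F}(A)$ containing $c$ and therefore equals $\text{F}(A)$. Hence every morphism out of $\text{F}(A)$ factors through any affine open neighbourhood of $g(c)$, which reduces everything to the affine situation of Theorem \ref{AffineF-schemes T}.

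Conversely, given $(x,\varphi)$, I would choose an affine open $U=\text{F}(B)\subseteq X$ containing $x$; then $x$ corresponds to a fully invertible system $\mathcal{S}\in\text{F}(B)$ with $\mathcal{O}_{X,x}\cong B_{\mathcal{S}}$ by Propositions \ref{structuresheaf P} and \ref{fundamentalopensubfschemes P}, and I form the composite $\psi\colon B\xrightarrow{i_{\mathcal{S}}}B_{\mathcal{S}}\cong\mathcal{O}_{X,x}\xrightarrow{\varphi}A$. By Theorem \ref{AffineF-schemes T}(1) this $\psi$ induces a local morphism $h\colon\text{F}(A)\to\text{F}(B)$, and composing with the open immersion $\text{F}(B)=U\hookrightarrow X$ gives the desired morphism $g\colon\text{F}(A)\to X$. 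Independence of the choice of $U$ is checked by passing, for a second affine open $U'\ni x$, to an affine open $W\subseteq U\cap U'$ with $x\in W$: by Lemma \ref{beingaffine L}, $W$ is a fundamental open subset of each of $\text{F}(B)$ and $\text{F}(B')$, so $W\cong\text{F}(C)$ with restriction maps $B\to C$, $B'\to C$ compatible with the identifications of $\mathcal{O}_{X,x}$; then the two candidate ring maps $B\to A$ and $B'\to A$ both factor through $C\to A$, and functoriality of $\text{F}(-)$ together with Proposition \ref{fundamentalopensubfschemes P} (identifying $\text{F}(C)\hookrightarrow\text{F}(B)$ with the morphism induced by $B\to C$) forces the two morphisms $\text{F}(A)\to X$ to coincide.

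It then remains to verify that the two constructions are inverse to each other. Going backward then forward, one computes $h(c)=\psi^{*}(\text{U}(A))$, which is the fully invertible system coming from $\psi$; since $\psi=\varphi\circ i_{\mathcal{S}}$ with $\varphi$ local, transitivity of the pull back (Proposition \ref{pullbacksystem P}) and $\varphi^{-1}(\text{U}(A))=\text{U}(B_{\mathcal{S}})$ identify this with the system coming from $i_{\mathcal{S}}$, which is $\mathcal{S}$ (Propositions \ref{pullbacksystem P} and \ref{F.I.SinA_S P}); thus $g(c)=x$, while $g^{\sharp}_c=h^{\sharp}_c$ is the unique homomorphism $B_{\mathcal{S}}\to A$ extending $\psi$ along $i_{\mathcal{S}}$, i.e. $\varphi$ (using that open immersions are isomorphisms on stalks). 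Going forward then backward, the key reduction lets us regard $g$ as a morphism of affine F-schemes $\text{F}(A)\to\text{F}(B)$, which by Theorem \ref{AffineF-schemes T}(2) is induced by the map on global sections $B=\Gamma(U,\mathcal{O}_X)\to\Gamma(\text{F}(A),\mathcal{O}_A)=A$; compatibility of $g^{\sharp}$ with passage to stalks identifies this with $g^{\sharp}_c\circ i_{\mathcal{S}}=\varphi\circ i_{\mathcal{S}}=\psi$, so the reconstructed morphism equals $g$. The main obstacle is not conceptual but bookkeeping: ensuring the backward map is genuinely independent of the chosen affine neighbourhood, and keeping the chain of canonical identifications ($\mathcal{O}_{X,x}\cong B_{\mathcal{S}}$, $\mathcal{O}_{\text{F}(A),c}\cong A$, and $g^{\sharp}$ versus its stalk maps) coherent so that the induced ring homomorphisms agree on the nose.
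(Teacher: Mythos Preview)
Your argument is correct; the forward direction is exactly the paper's, and your backward construction via a chosen affine chart $U=\text{F}(B)$, together with the independence check and the bijectivity verification, is sound.

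The paper's own proof is considerably shorter because it packages your backward construction differently: instead of picking an affine open and then checking independence, it simply invokes the \emph{natural} morphism $\text{F}(\mathcal{O}_{X,x})\to X$ and precomposes with the morphism $\text{F}(A)\to\text{F}(\mathcal{O}_{X,x})$ induced by the local homomorphism $\varphi\colon\mathcal{O}_{X,x}\to A$. This sidesteps the bookkeeping entirely, since $\mathcal{O}_{X,x}$ is canonical. Of course, to actually construct that natural morphism one would do essentially what you did (choose an affine chart and use $B\to B_{\mathcal{S}}=\mathcal{O}_{X,x}$), so your proof can be viewed as spelling out what the paper leaves implicit. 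The trade-off is that the paper's formulation makes the well-definedness automatic and the inverse check nearly tautological, whereas your version is more self-contained but has to carry the coherence of identifications by hand.
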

\begin{proof}
For any morphism $f: \text{F}(A)\to X$, we obtain a point $x=f(\text{U}(A))$ and a
local homomorphism
$f^\sharp_{\text{U}(A)}: \mathcal{O}_{X,x}\to A=\mathcal{O}_{A,\text{U}(A)}$.
Conversely, given such  data, we have
  a morphism  $\text{F}(A)\to \text{F}(\mathcal{O}_{X,x})$ which composed with
  the natural homomorphism
$\text{F}(\mathcal{O}_{X,x})\to X$, gives a morphism  $\text{F}(A)\to X$
sending $\text{U}(A)$ to $x$.
\end{proof}
\begin{remark}
In \cite{Va}, the author constructs a ringed space  $\text{NCSpec}(A)$ for any ring $A$.
Even though there are some relations between ringed spaces $\text{NCSpec}(A)$ and
$\text{F}(A)$, they are
not the same in general.   In fact,   $\text{NCSpec}(A)$ is, roughly speaking, the same as
the set of fully invertible \underline{subsets} of $A$ which is in general
different from $\text{F}(A)$.
\end{remark}
Finally, I give some examples of F-schemes.
\begin{example}
Clearly an F-scheme contains only one point iff it
is the full spectrum of a self-localized ring. So, for example,
$\text{F}(\text{M}_n(k))$ (where $k$ is a commutative field) is an F-scheme with only one point
$\text{U}(\text{M}_n(k))$.
\end{example}
To give the next example, we need a construction regarding ringed spaces.
Given two ringed spaces $(X,\mathcal{O}_X)$ and $(Y,\mathcal{O}_Y)$,
the topological product
of them is defined to be  the ringed space $(X\times_{\textbf{Top}} Y,
\mathcal{O}_X\times \mathcal{O}_Y)$
where $X\times_{\textbf{Top}} Y$ is the product of $X$ and $Y$
in the category \textbf{Top} of topological spaces  and the structure sheaf
$\mathcal{O}_X\times \mathcal{O}_Y$ is defined by
$\mathcal{O}_X\times \mathcal{O}_Y(U\times V):=\mathcal{O}_X(U)\times \mathcal{O}_Y(V)$
(the direct product of rings)
for any open subsets $U\subset X$ and $V\subset Y$.

\begin{example}
Suppose that $A\times B$ is the direct product of rings $A$ and $B$.
Then   $\text{F}(A\times B)$ is a disjoint union of two open subsets
$D(1_A\times 0)$, $D(0\times 1_B)$ and the closed subset
$Y=\text{F}(A\times B)\setminus (D(1_A\times 0)\cup D(0\times 1_B))$.
It is easy to see that  the projection homomorphisms $A\times B\to A$
and $A\times B\to B$ give rise to isomorphisms
$D(1_A\times 0)\cong \text{F}(A)$ and
 $D(0\times 1_B)\cong\text{F}(B)$. Moreover, the ringed subspace $Y$ of
$\text{F}(A\times B)$ is isomorphic to the topological product of  $\text{F}(A)$ and
$\text{F}(B)$.

\end{example}

\begin{example}
When $A$ is a commutative ring,   it turns out that $\text{F}(A)$ can be constructed
from
$\text{Spec}(A)$, see the next section. Nevertheless, to motivate the next example,
let's consider $\text{F}(k[[x]])$ where $k$ is a commutative field and
$k[[x]]$ is the ring of formal power series. It is easy to see that
 $\text{F}(k[[x]])=\text{Spec}(k[[x]])$ (as ringed spaces), hence $\text{F}(k[[x]])$
consists of only two points
 namely the closed point  $\text{U}(k[[x]])$ and the generic point $k[[x]]\setminus \{0\}$.
 Moreover the stalk of the structure sheaf at the closed point is $k[[x]]$ and
 its stalk at the generic point is $k((x))$, the filed of  Laurent series.
\end{example}
\begin{example}
Suppose that $k$ is a commutative field with a derivation, i.e. a linear map
$':k\to k$ such that $(ab)'=a'b+ab'$ for any $a,b\in k$, see
\cite{Ka1} for generalities on differential algebras.
One can associate a few  "differential rings" to $k$, see \cite{Ar}. One of these rings is
the ring of Volterra operators $k[[\partial^{-1}]]$ which is defined to be the set of
formal sums
$a_0+a_1\partial^{-1}+a_2\partial^{-2}+\cdots$  where $a_i \in k$ and the product
is defined by   $\partial^{-1} a=\sum_{i=0}^{\infty}{(-1)^{i} a^{(i)}\partial^{-1-i}}$
(with the convention $a^{(0)}=a$ and $a^{(n+1)}=(a^{(n)})'$).  One can see that
the affine F-scheme $\text{F}(k[[\partial^{-1}]])$ consists of only two points, namely the
closed point  $\text{U}(k[[\partial^{-1}]])$ and the generic point
$k[[\partial^{-1}]]\setminus \{0\}$.
The stalk of the structure sheaf at the closed point is just $k[[\partial^{-1}]]$
and its stalk at the generic point is $k((\partial^{-1}))$, the field
of formal pseudo differential
operators, see \cite{Ar}.
\end{example}
\begin{example}\label{nonseparatdexample E}
Given F-schemes $X$ and $Y$, open subsets $U\subset X$ and $V\subset Y$,
and an isomorphism $U\cong V$, one can glue $X$ and $Y$ by identifying $U$ and $V$, see
\cite{Ha} for gluing schemes. As an example, suppose that $X=\text{F}(A)$
is an affine F-scheme. Then $U=X\setminus\{\text{U}(A)\}$ is an open subset of $X$
and using the identity map $U\to U$, one can glue
$X$ and itself along $U$ to obtain an F-scheme which is certainly not an affine F-scheme,
because it does not have a center.
\end{example}
\end{subsection}

\end{section}


\begin{section}{Schemes versus F-schemes}
In this part a connection between schemes and F-schemes is discussed.
In particular, the notion
of ``commutative'' F-schemes is defined and it is shown that commutative F-schemes
and schemes are closely related.
If $A$ is commutative then $\text{F}(A)$ and $\text{NCSpec}(A)\setminus \{\gamma\}$
are the same where
$\gamma$ is the generic point of $\text{NCSpec}(A)$, see \cite{Va}.
For another point of view on the relation between
$\text{F}(A)=\text{NCSpec}(A)\setminus \{\gamma\}$ and $\text{Spec}(A)$, see \cite{Va}.


\begin{subsection}{Fully invertible systems in the commutative case}
In this section, some standard facts from the theory of
commutative rings are reviewed, see \cite{AM,Ka} for example.

Suppose that $A$ is a unitary
commutative ring.
A subset $S$ of $A$ is  called \textbf{multiplicatively closed} if it
is closed under multiplication.   Given a multiplicatively closed subset $S$
of $A$, we have $S^{-1}A$, the ring of fractions of $A$ with
respect to $S$ and a natural map $A\to S^{-1}A$, sending $a\in A$
to $a/1$.
If $P$ is a prime ideal of $A$ then $A\setminus P$ is a
multiplicatively closed subset whose ring of
fractions is denoted by $A_P$. We need the following lemma to characterize fully
invertible subsets in commutative rings.
\begin{lemma}\label{invertible L}
Suppose that $S$ is a multiplicatively closed subset  of $A$.
Then an element $a\in A$ is invertible in $S^{-1}A$ if and only
if $Aa\cap S\neq \emptyset$.
\end{lemma}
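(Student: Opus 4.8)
The statement is the standard criterion for invertibility in a localization of a commutative ring, so the plan is to unwind the definition of $S^{-1}A$ and compute directly. First I would prove the easy direction: suppose $a/1$ is invertible in $S^{-1}A$, so there is some $b/s \in S^{-1}A$ (with $b \in A$, $s \in S$) with $(a/1)(b/s) = 1/1$. By the definition of equality of fractions, this means there exists $t \in S$ with $t(ab - s) = 0$ in $A$, i.e. $tab = ts$. Since $S$ is multiplicatively closed, $ts \in S$, and $tab = a(tb) \in Aa$, so $Aa \cap S \neq \emptyset$. (One should be slightly careful if $0 \in S$, but then $S^{-1}A = 0$, every element is ``invertible'', and $Aa \cap S \ni 0$ as well, so the equivalence still holds trivially.)

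For the converse, assume $Aa \cap S \neq \emptyset$, say $ca = s \in S$ for some $c \in A$. Then in $S^{-1}A$ we have $(a/1)(c/s) = (ca)/s = s/s = 1/1$ and likewise $(c/s)(a/1) = 1/1$ by commutativity, so $a/1$ is a unit with inverse $c/s$. This direction requires nothing beyond the ring-of-fractions axioms.

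I do not expect a genuine obstacle here; the only point demanding care is bookkeeping with the defining equivalence relation on fractions (the auxiliary element $t \in S$ that appears because $A$ need not be a domain), and the degenerate case $0 \in S$ noted above. Everything else is a one-line computation in $S^{-1}A$ using that $S$ is closed under products. This lemma will then be the engine for the subsequent characterization of fully invertible subsets of a commutative ring $A$ in terms of prime ideals (via $A \setminus P$), which is presumably the next result in the section.
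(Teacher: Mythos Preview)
Your proof is correct and follows essentially the same route as the paper's: both directions are handled by the same direct computation with fractions, using the auxiliary element $t\in S$ (the paper's $s'$) from the equivalence relation and the multiplicative closure of $S$. The only cosmetic difference is the order in which the two implications are presented.
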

\begin{proof}
If $Aa\cap S\neq \emptyset$then $ab=s\in S$ for some $b\in A$.
This implies that $a.(b/s)=1$ in $S^{-1}A$, i.e. $a\in A$ is
invertible in $S^{-1}A$. Conversely if $a\in A$ is invertible in
$S^{-1}A$ then  we have $a.(b/s)=1$ in $S^{-1}A$ for some $b\in A$
and $s\in S$. This implies that $s'(ab-s)=0$ in $A$, for some
$s'\in S$. So $(s'b)a=s's\in Aa\cap S$, i.e. $Aa\cap S\neq
\emptyset$.
\end{proof}
\begin{proposition}
A subset $S$ of $A$ is fully invertible iff the following hold
\begin{enumerate}
\item $S$ does not contain the zero element.
\item $S$ is multiplicatively closed.
\item $ab\in S$ implies that $a\in S$ and $b\in S$.
\end{enumerate}
\end{proposition}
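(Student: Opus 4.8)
The plan is to prove both implications. The reverse direction will be carried out by exhibiting the classical ring of fractions $S^{-1}A$ as an explicit witness to full invertibility; the forward direction by transporting the three closure conditions through an arbitrary witnessing homomorphism, using commutativity to get past the limitation of Proposition \ref{irrinv P}(3) (which only speaks of elements of the form $aba$).

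For the forward direction, assume $S$ is fully invertible, so there is a homomorphism $\phi:A\to B$ with $B\neq 0$ and $S=\phi^{-1}(\text{U}(B))$. Condition (1) is immediate: $\phi(0)=0$ is not invertible in the nonzero ring $B$, so $0\notin S$ (equivalently, $S$ is properly invertible, hence $A_S\neq 0$, hence $0\notin S$). Condition (2) is Proposition \ref{irrinv P}(2). For condition (3), suppose $ab\in S$, so $\phi(ab)=\phi(a)\phi(b)\in\text{U}(B)$. Since $A$ is commutative, $\phi(A)$ is a commutative subring of $B$, so the element $\phi(ab)$ of $\phi(A)$ commutes with all of $\phi(A)$, and therefore so does its inverse $\phi(ab)^{-1}$; in particular $\phi(ab)^{-1}$ commutes with $\phi(a)$ and with $\phi(b)$. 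Then $\phi(b)\,\phi(ab)^{-1}$ is a two-sided inverse of $\phi(a)$ in $B$ (using $\phi(b)\phi(a)=\phi(ab)$), so $\phi(a)\in\text{U}(B)$ and $a\in S$; by symmetry $b\in S$.

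For the reverse direction, assume $S$ satisfies (1)--(3). We may assume $S\neq\emptyset$ (the empty set is never fully invertible, since $\phi(1)=1$ always lies in $\text{U}(B)$); then for any $s\in S$, applying (3) to $s=s\cdot 1\in S$ gives $1\in S$, so $S$ is a genuine multiplicative set and the ring of fractions $S^{-1}A$ with natural map $\phi:A\to S^{-1}A$ is defined. By (1), $0\notin S$, hence $S^{-1}A\neq 0$. It remains to check $S=\phi^{-1}(\text{U}(S^{-1}A))$. If $a\in S$ then $a=1\cdot a\in Aa\cap S$, so $a$ is invertible in $S^{-1}A$ by Lemma \ref{invertible L}. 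Conversely, if $a$ is invertible in $S^{-1}A$, Lemma \ref{invertible L} produces $b\in A$ with $ba\in S$, whence $a\in S$ by (3). Thus $\phi$ witnesses that $S$ is fully invertible.

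I do not expect a serious obstacle; the one point deserving a moment's care is that the general definition of ``fully invertible'' is phrased via the noncommutative localization $A_S$, whereas in the commutative setting the natural object is $S^{-1}A$. This is harmless: the definition merely asks for \emph{some} nonzero ring $B$ with $S=\phi^{-1}(\text{U}(B))$, and $S^{-1}A$ serves; moreover one checks easily that for commutative $A$ the two localizations agree, since any homomorphism out of $A$ inverting $S$ has commutative image and the adjoined inverses centralize it, so $A_S\cong S^{-1}A$. The only genuinely noncommutative-aware step is the commutativity argument used to deduce (3) in the forward direction, which is exactly what upgrades Proposition \ref{irrinv P}(3) from ``$aba\in S$'' to ``$ab\in S$''.
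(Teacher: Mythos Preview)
Your proof is correct and follows essentially the same route as the paper: both establish the reverse direction by exhibiting $S^{-1}A$ and invoking Lemma \ref{invertible L} to show $S=i^{-1}(\text{U}(S^{-1}A))$. The paper leaves the forward direction as ``easy to see'', and your argument simply fills in those details---your centralizer argument for (3) in a possibly noncommutative witness $B$ is slightly more elaborate than needed (one could instead pass to the commutative ring $A_S\cong S^{-1}A$), but it is correct, and your explicit handling of the edge case $S=\emptyset$ is a nice touch the paper omits.
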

\begin{proof}
It is easy to see that if $S$ is fully invertible then it satisfies
1,2 and 3. Conversely, since $S$ is multiplicatively closed,
we have an isomorphism $A_S\cong S^{-1}A$. Then using Lemma
\ref{invertible L}, we see that $i^{-1}(\text{U}(S^{-1}A))=S$. Note that
$S^{-1}A=0$ iff $0\in S$.
\end{proof}
\begin{remark}
In \cite{Ka}, the expression ``saturated multiplicatively closed'' is used
instead of ``fully invertible''.
\end{remark}
It is easy to see that  for any prime ideal $P$ of $A$, the subset
$A\setminus P$ is a fully invertible subset of $A$.
Therefore, as sets, we have an inclusion
$$\text{Spec}(A)\subset \text{F}(A).$$
The following
proposition characterizes such fully invertible subsets.
\begin{proposition}\label{primefully invertible P}
A fully invertible subset $S$ of $A$ is of the form $A\setminus P$ for
some prime ideal $P$ of $A$,  iff $S^{-1}A$ is a local ring.
\end{proposition}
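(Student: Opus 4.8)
The plan is to lean on the identifications already in hand: since a fully invertible subset $S$ of a commutative ring is multiplicatively closed (and does not contain $0$), the proof of the preceding proposition gives a canonical isomorphism $A_S\cong S^{-1}A$, under which the natural map $i=i_S:A\to A_S$ becomes $i:A\to S^{-1}A$, and moreover $S=i^{-1}(\text{U}(S^{-1}A))$ (this is the content of Lemma \ref{invertible L} combined with Lemma \ref{fully invertiblesubsets L}). With this dictionary the statement becomes almost formal.

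For the ``only if'' direction, suppose $S=A\setminus P$ for a prime ideal $P$. Then $S^{-1}A=A_P$ is the usual localization of $A$ at the prime $P$, which is the standard example of a local ring, with maximal ideal $PA_P$; nothing more is needed here.

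For the ``if'' direction, assume $S^{-1}A$ is local with maximal ideal $\mathfrak{m}$, and put $P:=i^{-1}(\mathfrak{m})$. First note $S^{-1}A\neq 0$ because $0\notin S$, so $\mathfrak{m}$ is a genuine maximal (in particular prime) ideal, and its contraction $P$ along the ring homomorphism $i$ is a prime ideal of $A$. In a local ring the group of units is exactly the complement of the maximal ideal, so $\text{U}(S^{-1}A)=S^{-1}A\setminus\mathfrak{m}$. Therefore
$$S=i^{-1}(\text{U}(S^{-1}A))=i^{-1}(S^{-1}A\setminus\mathfrak{m})=A\setminus i^{-1}(\mathfrak{m})=A\setminus P,$$
which is what we wanted.

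The argument is essentially routine; the only point that requires a moment's care is making sure the equality $S=A\setminus P$ holds on the nose rather than merely the inclusion $S\subseteq A\setminus P$. This is precisely where full invertibility (as opposed to mere proper invertibility / multiplicative closedness) is used: it is what upgrades the obvious inclusion $S\subseteq i^{-1}(\text{U}(S^{-1}A))$ to an equality, and hence lets us recover $S$ exactly as the complement of the contracted maximal ideal.
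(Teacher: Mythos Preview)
Your proof is correct and follows essentially the same approach as the paper: the paper also handles the forward direction by citing the well-known fact that $A_P$ is local, and the converse by contracting the maximal ideal $m$ of $S^{-1}A$ to $P=m\cap A$ and then using $S=\text{U}(S^{-1}A)\cap A=A\setminus P$. Your write-up is a bit more explicit about why the contraction is prime and where full invertibility enters, but the argument is the same.
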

\begin{proof}
If $P$ is a prime ideal then it is well-known that $(A\setminus P)^{-1}A=A_P$ is a
local ring.
Conversely, if $S^{-1}A$ is a local ring with the maximal ideal $m$ then  $P=m\cap A$
gives a prime ideal
of $A$. Moreover $S=\text{U}(S^{-1}A)\cap A=A\setminus P$.
\end{proof}
Note that for any $a\in A$,
the intersection of a fundamental open subset $D(a)$ of $\text{F}(A)$ with $\text{Spec}(A)$
is just the   open subset
$$D(a)\cap\text{Spec}(A)= \{P\in\text{Spec}(A)|\, a\notin P\}$$
of $\text{Spec}(A)$ associated to $a$, see \cite{Ha}.

It is well known that for any multiplicatively closed subset $S$
of $A$, we have a one-to-one correspondence between prime ideals
of $S^{-1}A$ and those prime ideals $P$ of $A$  for which $P\cap
S=\emptyset$. In other words
\begin{equation}\label{closed}
Spec(S^{-1}A)=\{P\in Spec(A)|\, P\cap S=\emptyset\, \}
\end{equation}
In the case, $S$ is fully invertible, we can get more information. Namely
\begin{proposition}\label{equal}
Two fully invertible subsets $S$ and $T$ are the same iff
$\text{Spec}(S^{-1}A)=\text{Spec}(T^{-1}A)$ under the above identification.
Moreover we have
$$S=\cap_{P\in Spec(S^{-1}A)}(A\setminus P)$$
for
any fully invertible subset $S$.
\end{proposition}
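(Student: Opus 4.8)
The plan is to prove the displayed identity
$$S=\bigcap_{P\in\text{Spec}(S^{-1}A)}(A\setminus P)$$
first, and then obtain the ``iff'' statement as an immediate formal consequence. The one structural fact I will lean on is that a fully invertible subset $S$ is multiplicatively closed (so $A_S\cong S^{-1}A$, as in the commutative discussion above) and satisfies $S=A\cap\text{U}(A_S)$ by Lemma~\ref{fully invertiblesubsets L}; hence for $a\in A$ we have $a\in S$ precisely when $a/1$ is invertible in $S^{-1}A$.

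To prove the identity, recall that under the identification (\ref{closed}) the points of $\text{Spec}(S^{-1}A)$ are exactly the primes $P$ of $A$ with $P\cap S=\emptyset$, so the right-hand side above is the set of $a\in A$ lying in no prime of $A$ disjoint from $S$. The inclusion ``$\subseteq$'' is trivial: if $a\in S$ and $P\cap S=\emptyset$ then $a\notin P$. For ``$\supseteq$'', take $a\notin S$; then $a/1$ is not a unit of $S^{-1}A$, and since $0\notin S$ the ring $S^{-1}A$ is nonzero, so $a/1$ lies in some maximal ideal $\mathfrak{m}$ of $S^{-1}A$. Its contraction $P$ along $A\to S^{-1}A$ is then a prime of $A$ with $P\cap S=\emptyset$ (elements of $S$ become units, which avoid $\mathfrak{m}$) and $a\in P$; hence $a$ is not in the right-hand intersection. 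Alternatively one can phrase this second inclusion through Lemma~\ref{invertible L}: $a\notin S$ means $Aa\cap S=\emptyset$, so the ideal $Aa$ is contained in a prime maximal among those missing $S$, and that prime works.

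Granting the identity, the equivalence is immediate. If $S=T$ then of course $\text{Spec}(S^{-1}A)=\text{Spec}(T^{-1}A)$ as subsets of $\text{Spec}(A)$ under (\ref{closed}). Conversely, if $\text{Spec}(S^{-1}A)=\text{Spec}(T^{-1}A)$, i.e. $\{P:P\cap S=\emptyset\}=\{P:P\cap T=\emptyset\}$, then applying the identity to $S$ and to $T$ gives
$$S=\bigcap_{P\cap S=\emptyset}(A\setminus P)=\bigcap_{P\cap T=\emptyset}(A\setminus P)=T.$$

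I do not anticipate a real obstacle here: once the translation ``$a\in S\iff a/1$ is a unit of $S^{-1}A$'' is available, everything reduces to the standard behaviour of prime ideals under localization already recorded in (\ref{closed}) and Lemma~\ref{invertible L}. The only minor point to keep in mind is that the index set of primes in the identity is nonempty, which is guaranteed by $0\notin S$.
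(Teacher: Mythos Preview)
Your proof is correct and rests on the same key step as the paper's: if $a\notin S$ then there is a prime $P$ of $A$ with $a\in P$ and $P\cap S=\emptyset$. The only difference is organizational---you prove the displayed identity first and deduce the ``iff'' as a formal consequence, whereas the paper argues the ``iff'' directly (by the same prime-avoidance step) and then proves the identity separately; your ordering is arguably cleaner, and you also supply the justification (via a maximal ideal of $S^{-1}A$, or via Lemma~\ref{invertible L}) that the paper leaves implicit.
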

\begin{proof}
Suppose that $Spec(S^{-1}A)=Spec(T^{-1}A)$, but there is some $a\in S\setminus T$.
Then there is a prime ideal
$P$ such that $a\in P$ but $P\cap T=\emptyset$, i.e. $P\in Spec(T^{-1}A)$ but
$P\notin Spec(S^{-1}A)$, a contradiction.\\
Clearly $S\subset\cap_{P\in Spec(S^{-1}A)}(A\setminus P)$. Now given any $a\in A\setminus S$,
there is a prime ideal
$P$ such that $a\in P$ but $P\cap S=\emptyset$. This means that $a\notin A\setminus P$ and so
 $\cap_{P\in Spec(S^{-1}A)}(A\setminus P)\subset S$ as well.
\end{proof}
Note that this proposition implies that any fully invertible subset of
$A$ is of the form $S=\cap_{P\in I}(A\setminus P)$, where $I$ is a
family of prime ideals of $A$ (also see  \cite{Ka}).

It is easy to see that, in the commutative case, the notions of fully invertible subsets and
fully invertible systems coincide. Therefore if $A$ is a commutative ring then,
$\text{F}(A)$  is just the set of fully invertible subsets of $A$.

\end{subsection}


\begin{subsection}{Commutative F-schemes}
The way schemes and F-schemes are defined shows that there is a relation
between schemes and F-schemes.
In this section, a functor is given from the category of ``commutative''
F-schemes to the category of schemes.
\begin{definition}
An $F$-scheme $X$ is called commutative if there is an open
  affine covering $  \{\text{F}(A_i)\}_i$ of $X$ such that $A_i's$ are
commutative rings.
\end{definition}
The following lemma gives a characterization of F-schemes.
\begin{lemma}
An F-scheme $X$ is commutative iff for any open affine sub-F-scheme
$\text{F}(A)$ of $X$, $A$ is a commutative ring.
\end{lemma}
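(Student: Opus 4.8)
The plan is to dispatch the easy implication in a line and concentrate on the converse. If every open affine sub-F-scheme $\text{F}(A)$ of $X$ has $A$ commutative, then since $X$ is by definition locally isomorphic to affine F-schemes it has an open affine covering $\{\text{F}(A_i)\}$, and each $A_i$ is commutative by hypothesis, so $X$ is commutative.

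For the substantial direction, assume $X$ is commutative, fix an open affine covering $\{\text{F}(A_i)\}$ with each $A_i$ commutative, and let $\text{F}(A)$ be an arbitrary open affine sub-F-scheme of $X$; the goal is to show $A$ is commutative. First I would cut $\text{F}(A)$ into small commutative pieces: for each $i$, the intersection $\text{F}(A)\cap\text{F}(A_i)$ is open in the affine F-scheme $\text{F}(A_i)$, hence is a union of fundamental open subsets $D(\mathbf{b})$ of $\text{F}(A_i)$, and by Proposition \ref{fundamentalopensubfschemes P} each nonempty such $D(\mathbf{b})$ is isomorphic as a ringed space to $\text{F}((A_i)_{\mathbf{b}})$. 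Since $A_i$ is commutative, the ring $(A_i)_{\mathbf{b}}$ is built from $A_i$ by iterating ordinary localizations of commutative rings and so is again commutative (an easy induction on the length of the fraction $\mathbf{b}$). Letting $i$ and $\mathbf{b}$ range, this exhibits an open covering of $\text{F}(A)$ by ringed subspaces, each isomorphic to $\text{F}(B)$ for a commutative ring $B$.

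Next I would pull this structure back onto $\text{F}(A)$ itself. Each such piece is an open affine sub-F-scheme of $\text{F}(A)$, hence by Lemma \ref{beingaffine L} it is a fundamental open subset $D(\mathbf{c})$ of $\text{F}(A)$, and by Proposition \ref{structuresheaf P} we have $\mathcal{O}_A(D(\mathbf{c}))\cong A_{\mathbf{c}}$; being the coordinate ring of something isomorphic to $\text{F}(B)$, this ring is isomorphic to $B=\Gamma(\text{F}(B),\mathcal{O}_B)$ and hence commutative. So $\text{F}(A)$ has an open covering $\{D(\mathbf{c}_\alpha)\}$ with every section ring $\mathcal{O}_A(D(\mathbf{c}_\alpha))$ commutative. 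Finally, since $\mathcal{O}_A$ is a sheaf and $A\cong\Gamma(\text{F}(A),\mathcal{O}_A)$ by Proposition \ref{structuresheaf P}, the separatedness part of the sheaf axiom for this covering gives an injective ring homomorphism $A\hookrightarrow\prod_\alpha\mathcal{O}_A(D(\mathbf{c}_\alpha))$ into a product of commutative rings; a subring of a commutative ring is commutative, so $A$ is commutative.

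I do not expect a genuine obstacle here — it is a standard ``verify on an affine base, then glue'' argument, made painless by the fact (Lemma \ref{beingaffine L}, Propositions \ref{structuresheaf P} and \ref{fundamentalopensubfschemes P}) that open affine sub-F-schemes of an affine F-scheme are exactly the fundamental opens and have the expected localizations as section rings. The only things to watch are: confirming the small pieces $D(\mathbf{b})$ are genuinely affine so that Lemma \ref{beingaffine L} applies; the induction that localization of a commutative ring at a fraction stays commutative; and the bookkeeping for empty fundamental opens appearing in a covering, where the relevant ring is the zero ring and hence trivially commutative. Alternatively, the last step admits a stalk-wise phrasing: every stalk $\mathcal{O}_{A,\mathcal{S}}\cong A_{\mathcal{S}}$ is a localization of some commutative $A_{\mathbf{c}_\alpha}$ and hence commutative, and a sheaf of rings whose stalks are all commutative is a sheaf of commutative rings because the commutator of two local sections has zero germ everywhere and therefore vanishes.
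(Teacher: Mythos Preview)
Your argument is correct, but the paper's proof is a one-liner that exploits a feature of F-schemes you are working around rather than using directly. The paper simply observes that if $X$ is commutative then every stalk $\mathcal{O}_{X,x}$ is commutative (being a localization of one of the commutative $A_i$'s), and then notes that for an open affine $\text{F}(A)\subset X$ one has $A\cong\mathcal{O}_{X,\text{U}(A)}$, since the center $\text{U}(A)$ of $\text{F}(A)$ is contained in no proper open subset. That is the whole proof: $A$ \emph{is} a stalk, hence commutative.

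Your main route---cover $\text{F}(A)$ by fundamental opens with commutative section rings, then embed $A$ into the product via the sheaf separation axiom---is perfectly valid and is the kind of argument one would need in ordinary scheme theory, where global sections are not stalks. Your alternative stalk-wise phrasing at the end comes close to the paper's idea (all stalks commutative $\Rightarrow$ all section rings commutative), but still treats $A$ as a section ring rather than recognizing it as the stalk at $\text{U}(A)$. The moral is that the existence of a center in every affine F-scheme collapses many ``cover and glue'' arguments to single stalk computations, and this lemma is a good illustration of that phenomenon.
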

\begin{proof}
One direction is trivial. Suppose that $X$ is commutative and $\text{F}(A)$
is an open affine sub-F-scheme of $X$. Since $X$ is commutative, for any $x\in X$,
the ring $\mathcal{O}_{X,x}$ is
commutative. So $A\cong\mathcal{O}_{X,\text{U}(A)}$ is commutative.
\end{proof}
\begin{definition}
Let $X$ be a ringed space. A point $x\in X$ is called \textbf{local} if
$\mathcal{O}_{X,x}$ is a local ring.
Recall that a local ring is a ring whose set of  noninvertible elements is an ideal.
\end{definition}
Denote  the set of local points of $X$ by $\text{L}(X)$ and consider it as a ringed
subspace of
$X$ with its induced structures.
First we have the following simple algebraic lemma.
\begin{lemma}\label{simplealgebaraic L}
Suppose that $f:A\to B$ is a local ring homomorphism and $B$ is a local ring.
Then $A$ is a local ring.
\end{lemma}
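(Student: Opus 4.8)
The plan is to pin down the set of non-invertible elements of $A$ as the preimage under $f$ of the maximal ideal of $B$, and then to observe that this preimage is automatically a two-sided ideal because $f$ is a ring homomorphism. No serious difficulty is expected; the argument is essentially a complement-and-preimage bookkeeping exercise.

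First I would set $\mathfrak{m}=B\setminus \text{U}(B)$. Since $B$ is a local ring, $\mathfrak{m}$ is by definition a two-sided ideal of $B$; in particular $1\notin\mathfrak{m}$ because $1\in\text{U}(B)$.

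Next, using that $f$ is a local homomorphism, i.e. $\text{U}(A)=f^{-1}(\text{U}(B))$, I would take complements in $A$ to obtain
$$A\setminus\text{U}(A)=A\setminus f^{-1}(\text{U}(B))=f^{-1}\big(B\setminus\text{U}(B)\big)=f^{-1}(\mathfrak{m}).$$
Since $f$ is a ring homomorphism and $\mathfrak{m}$ is a two-sided ideal of $B$, the preimage $f^{-1}(\mathfrak{m})$ is a two-sided ideal of $A$: it is closed under addition, and for $x\in A$ with $f(x)\in\mathfrak{m}$ and any $a\in A$ we have $f(ax)=f(a)f(x)\in\mathfrak{m}$ and $f(xa)=f(x)f(a)\in\mathfrak{m}$. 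Moreover $1\notin f^{-1}(\mathfrak{m})$ since $f(1)=1\notin\mathfrak{m}$, so this ideal is proper. Hence the set of non-invertible elements of $A$ equals the ideal $f^{-1}(\mathfrak{m})$, which is exactly the statement that $A$ is a local ring.

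The only point worth keeping in mind is the standing conventions of the paper: ``local ring'' means the set of non-units forms a (two-sided) ideal, and all homomorphisms are unital, so that $f(1)=1$ is what guarantees that $f^{-1}(\mathfrak{m})$ does not exhaust $A$. Everything else is routine, so I do not anticipate any genuine obstacle in this proof.
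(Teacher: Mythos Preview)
Your proof is correct and matches the paper's argument: both identify $A\setminus\text{U}(A)$ with $f^{-1}(\mathfrak{m}_B)$ using the locality of $f$, and conclude that this preimage is an ideal. The paper's version is simply a terser one-line rendition of the same complement-and-preimage computation you have written out in full.
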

\begin{proof}
Let $m_B$ be the unique maximal ideal of $B$. Since $f$ is local, any element
not in $f^{-1}(m_B)$ is invertible, i.e.
$A$ is a local ring with the maximal ideal $f^{-1}(m_B)$.
\end{proof}
Using this lemma we have the following proposition.
\begin{proposition}\label{local P}
Let $X$ and $Y$ be two ringed spaces and $f:X\to Y$ is a local morphism between
them. Then the restriction of
$f$  to $\text{L}(X)$, defines a local  morphism   $\text{L}(f):\text{L}(X)\to \text{L}(Y)$.
\end{proposition}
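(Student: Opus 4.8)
The plan is to verify three things in turn: that $f$ carries $\text{L}(X)$ into $\text{L}(Y)$, that the restriction of $(f,f^{\sharp})$ is a bona fide morphism of ringed spaces $\text{L}(X)\to\text{L}(Y)$ for the induced structures, and that this morphism is local. The first point is exactly where Lemma \ref{simplealgebaraic L} is used: if $x\in\text{L}(X)$ then $\mathcal{O}_{X,x}$ is a local ring, and since $f$ is a local morphism the stalk map $f^{\sharp}_x:\mathcal{O}_{Y,f(x)}\to\mathcal{O}_{X,x}$ is a local ring homomorphism; Lemma \ref{simplealgebaraic L} then forces $\mathcal{O}_{Y,f(x)}$ to be local, i.e. $f(x)\in\text{L}(Y)$. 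Hence $f$ restricts to a map $g=\text{L}(f):\text{L}(X)\to\text{L}(Y)$, which is automatically continuous for the subspace topologies, and writing $j_X:\text{L}(X)\hookrightarrow X$ and $j_Y:\text{L}(Y)\hookrightarrow Y$ for the inclusions one has $f\circ j_X=j_Y\circ g$ on underlying spaces.

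For the sheaf part I would first recall that the structure sheaf induced on a subspace is the topological inverse image, so that $\mathcal{O}_X|_{\text{L}(X)}=j_X^{-1}\mathcal{O}_X$ and $\mathcal{O}_Y|_{\text{L}(Y)}=j_Y^{-1}\mathcal{O}_Y$, and that for a point $z$ of the subspace one has the canonical identification $(j_X^{-1}\mathcal{O}_X)_z=\mathcal{O}_{X,z}$ (and likewise over $Y$). To produce the comorphism $g^{\sharp}:\mathcal{O}_Y|_{\text{L}(Y)}\to g_{*}(\mathcal{O}_X|_{\text{L}(X)})$ I would use the adjunction between $j_Y^{-1}$ and $(j_Y)_{*}$: a map of this shape corresponds to a map $\mathcal{O}_Y\to (j_Y)_{*}g_{*}j_X^{-1}\mathcal{O}_X=f_{*}((j_X)_{*}j_X^{-1}\mathcal{O}_X)$, and for the latter I would take the composite of $f^{\sharp}:\mathcal{O}_Y\to f_{*}\mathcal{O}_X$ with $f_{*}$ applied to the adjunction unit $\mathcal{O}_X\to (j_X)_{*}j_X^{-1}\mathcal{O}_X$. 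Unwinding this adjunction on stalks shows that, under the identifications above, $g^{\sharp}_z$ is exactly $f^{\sharp}_z:\mathcal{O}_{Y,f(z)}\to\mathcal{O}_{X,z}$ for every $z\in\text{L}(X)$. This defines the morphism $\text{L}(f)=(g,g^{\sharp}):\text{L}(X)\to\text{L}(Y)$, the verification that it is a morphism of ringed spaces being just the naturality already encoded in the adjunction.

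Finally, locality of $\text{L}(f)$ is immediate from the stalk computation: for each $z\in\text{L}(X)$ the stalk map of $\text{L}(f)$ is $f^{\sharp}_z$, which is a local homomorphism because $f$ is a local morphism; so $\text{L}(f)$ is local by definition. I expect the only genuinely technical step to be the middle one — namely pinning down that restricting the structure sheaf to a subspace is the inverse image functor, that it leaves stalks unchanged, and that the adjunction really recovers $f^{\sharp}_z$ on stalks. Both the point-set statement $f(\text{L}(X))\subseteq\text{L}(Y)$ and the locality of $\text{L}(f)$ are then purely formal consequences, the former resting entirely on Lemma \ref{simplealgebaraic L}.
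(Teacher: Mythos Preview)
Your proof is correct and follows essentially the same approach as the paper: both arguments use Lemma \ref{simplealgebaraic L} to get $f(\text{L}(X))\subset\text{L}(Y)$, and then observe that since $\text{L}(X)$ and $\text{L}(Y)$ carry the induced ringed-space structures, the restriction is automatically a local morphism. The paper dispatches the sheaf part in a single sentence, whereas you spell out the inverse-image/adjunction formalism and the stalk identification explicitly; this extra care is fine but not required.
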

\begin{proof}
From Lemma \ref{simplealgebaraic L}, we deduce that $f(\text{L}(X))\subset \text{L}(Y)$.
Since the structures of
$\text{L}(X)$ and $\text{L}(Y)$ are the ones induced from $X$ and $Y$, we see that $f$
restricted to $\text{L}(X)$, gives a local morphism
$\text{L}(f):\text{L}(X)\to \text{L}(Y)$.
\end{proof}
\begin{lemma}\label{Affine L}
For any (commutative) ring $A$, there is a natural local morphism of
ringed spaces $f=f_A:\text{Spec}(A)\to \text{F}(A)$.
Moreover, $f$ is a homeomorphism of $\text{Spec}(A)$ onto $\text{L}(\text{F}(A))$
and the structure sheaf of
$\text{Spec}(A)$ is the induced
structure sheaf from $\text{F}(A)$, i.e. the ringed spaces $\text{Spec}(A)$
and $\text{L}(\text{F}(A))$
are isomorphic.
\end{lemma}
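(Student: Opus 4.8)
The plan is to take $f=f_A\colon\text{Spec}(A)\to\text{F}(A)$ to be, on underlying sets, the inclusion already established in the previous subsection, where a prime ideal $P$ is identified with the fully invertible subset $A\setminus P$ (which, since in the commutative case fully invertible subsets and fully invertible systems coincide, is a point of $\text{F}(A)$); naturality in $A$ is then immediate from the corresponding naturality of ordinary localization. The work splits into three steps: (i) $f$ is a bijection of $\text{Spec}(A)$ onto $\text{L}(\text{F}(A))$; (ii) $f$ is a homeomorphism onto $\text{L}(\text{F}(A))$; (iii) $f$ lifts to an isomorphism of ringed spaces. For (i): by Proposition \ref{structuresheaf P} the stalk $\mathcal{O}_{A,\mathcal{S}}$ is $A_{\mathcal{S}}$, so $\mathcal{S}$ is a local point of $\text{F}(A)$ precisely when $A_{\mathcal{S}}=S^{-1}A$ is a local ring, and by Proposition \ref{primefully invertible P} this happens precisely when $\mathcal{S}=A\setminus P$ for a (necessarily unique) prime $P$. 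Hence $f$ maps $\text{Spec}(A)$ bijectively onto $\text{L}(\text{F}(A))$.

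For (ii) the key algebraic observation I would record is that for any fraction $\mathbf{a}=(a_1,\dots,a_n)$ of a commutative ring $A$, the iterated localization $A_{\mathbf{a}}$ is isomorphic to $A_g$ for a single $g\in A$: inductively, if $A_{\mathbf{a},k-1}\cong A_{g_{k-1}}$ and $a_k=b_k/g_{k-1}^{m}$ with $b_k\in A$, then over a ring in which $g_{k-1}$ is already invertible, inverting $a_k$ is the same as inverting $b_k$, so $A_{\mathbf{a},k}\cong A_{g_{k-1}b_k}$; taking $g=a_1b_2\cdots b_n$ does the job. Consequently $\mathbf{a}\in A\setminus P$ iff $g\notin P$, so that
\begin{displaymath}
D(\mathbf{a})\cap\text{Spec}(A)=\{P\in\text{Spec}(A)\mid g\notin P\}
\end{displaymath}
is a distinguished open of $\text{Spec}(A)$; conversely every distinguished open of $\text{Spec}(A)$ has this form (take $\mathbf{a}=(g)$). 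Since the sets $D(\mathbf{a})$ form a base for $\text{F}(A)$ and the distinguished opens a base for $\text{Spec}(A)$, it follows that $f$ is continuous and open onto its image, hence a homeomorphism of $\text{Spec}(A)$ onto $\text{L}(\text{F}(A))$.

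For (iii) I would define $f^{\sharp}\colon\mathcal{O}_A\to f_*\mathcal{O}_{\text{Spec}(A)}$ by restricting a section $t\in\mathcal{O}_A(U)$ to $U\cap\text{Spec}(A)$: by the local model \ref{localpresentation}, such a $t$ is locally a noncommutative polynomial in finitely many $a_i$'s and $x_{s_{ij}}$'s, which in the commutative case collapses to an honest fraction $b/g^{k}$ over a suitable $A_g$, i.e. precisely the local form of a section of $\mathcal{O}_{\text{Spec}(A)}$; conversely every section of $\mathcal{O}_{\text{Spec}(A)}$ is locally of this shape. On stalks the induced map $f^{\sharp}_P\colon\mathcal{O}_{A,P}\cong A_{A\setminus P}=A_P\cong\mathcal{O}_{\text{Spec}(A),P}$ is the identity, hence an isomorphism, and in particular local, so $(f,f^{\sharp})$ is a local morphism. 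Since a morphism of sheaves that is an isomorphism on every stalk is an isomorphism, and $\text{L}(\text{Spec}(A))=\text{Spec}(A)$, restricting $(f,f^{\sharp})$ to $\text{L}(\text{F}(A))$ (using Proposition \ref{local P}) yields the asserted isomorphism of ringed spaces $\text{Spec}(A)\cong\text{L}(\text{F}(A))$, which also shows that the structure sheaf of $\text{Spec}(A)$ is the one induced from $\text{F}(A)$.

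The step I expect to be the main obstacle is (iii), more precisely the bookkeeping that the restriction map $\mathcal{O}_A(U)\to\mathcal{O}_{\text{Spec}(A)}(U\cap\text{Spec}(A))$ is well defined and hits every local section: one must check that the ``polynomial presentation'' of a section of $\mathcal{O}_A$ genuinely becomes a fraction in the sense of $\text{Spec}(A)$, that this is compatible with the restriction maps $\mathcal{O}_A(D(\mathbf{a}))\to\mathcal{O}_A(D(\mathbf{b}))$ for $D(\mathbf{b})\subset D(\mathbf{a})$ under the identifications $A_{\mathbf{a}}\cong A_g$, and that the two sheafification procedures agree. Once this compatibility of the two presheaves on the matching bases $\{D(\mathbf{a})\}$ and $\{D(g)\}$ is in place, the remainder is routine, using only the stalk computation of Proposition \ref{structuresheaf P} and the identification $A_{A\setminus P}=A_P$.
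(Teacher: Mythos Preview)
Your proposal is correct and follows essentially the same approach as the paper: define $f(P)=A\setminus P$, invoke Proposition \ref{primefully invertible P} to identify the image with $\text{L}(\text{F}(A))$, and use the stalk computation $\mathcal{O}_{A,A\setminus P}\cong A_P$ to see that the obvious sheaf map is a local isomorphism. The paper's own proof is much terser (it simply asserts the homeomorphism and sheaf identification as ``easy to see''), so your explicit reduction of $A_{\mathbf{a}}$ to a single-element localization $A_g$ and your stalk-level check of $f^{\sharp}$ are welcome elaborations rather than a different strategy; in particular, the bookkeeping worry you flag in (iii) is legitimately bypassed by the stalkwise argument you already give.
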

\begin{proof}
Define $f$ by $f(P)=A\setminus P$ for any prime ideal $P$
of $A$. It is easy to see that $f$ defines a topological homeomorphism
from $ \text{Spec}(A)$
onto $\text{L}(\text{F}(A))$, see Proposition    \ref{primefully invertible P}.
The way the structure sheaves of
$\text{Spec}(A)$ and $ \text{F}(A) $ are defined shows that there is an
obvious morphism $f^{\sharp}$, yielding
a morphism from  $\text{Spec}(A)$ onto     $\text{L}(\text{F}(A))$ with the
induced structures. This map is easily seen to have
the desired properties.
\end{proof}
By the above lemma, we can naturally consider $\text{Spec}(A)$ as a
ringed subspace of $\text{F}(A)$. Finally we have the following theorem.
\begin{theorem}\label{equivalenceofcategories T}
For any commutative scheme $X$, consider $\text{L}(X)$ with the induced ringed
structure from
$X$. Then the assignment $X\mapsto \text{L}(X)$ defines a functor from the
category of commutative F-schemes
to the category of  schemes.
\end{theorem}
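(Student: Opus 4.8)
The plan is to reduce everything to the affine case, which is handled by Lemma~\ref{Affine L}, and then glue. The key observation, used repeatedly, is that $X\mapsto\text{L}(X)$ commutes with passing to open subspaces: if $U\subset X$ is open and $x\in U$, then $\mathcal{O}_{U,x}=\mathcal{O}_{X,x}$, so $x$ is a local point of $U$ iff it is a local point of $X$ lying in $U$; hence $\text{L}(U)=U\cap\text{L}(X)$ as subsets, and because restriction of a sheaf to a subspace is transitive, the ringed structure on $\text{L}(U)$ induced from $U$ coincides with the one induced from $X$.

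First I would treat the affine case. If a commutative F-scheme $X$ is affine, then $X\cong\text{F}(A)$ for some ring $A$, and $A\cong\mathcal{O}_{X,\text{U}(A)}$ is commutative because $X$ is a commutative F-scheme (its stalks are commutative, as recorded above); then Lemma~\ref{Affine L} identifies $\text{L}(\text{F}(A))$, equipped with the structure sheaf induced from $\text{F}(A)$, with the affine scheme $\text{Spec}(A)$. So $\text{L}(X)$ is an affine scheme.

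For the general case, let $X$ be a commutative F-scheme and $\{U_i\}$ an open affine cover with $U_i\cong\text{F}(A_i)$ and $A_i$ commutative. By the observation above, $\{\text{L}(U_i)\}=\{U_i\cap\text{L}(X)\}$ is an open cover of $\text{L}(X)$ and each $\text{L}(U_i)$, with its induced ringed structure, is isomorphic to $\text{Spec}(A_i)$ by Lemma~\ref{Affine L}; hence $\text{L}(X)$ is locally isomorphic to affine schemes, i.e. a scheme. For a morphism $f\colon X\to Y$ of commutative F-schemes --- by definition a local morphism of ringed spaces --- Proposition~\ref{local P} gives a well-defined local morphism $\text{L}(f)\colon\text{L}(X)\to\text{L}(Y)$ of ringed spaces; since $\text{L}(X)$ and $\text{L}(Y)$ are schemes, a local morphism of ringed spaces between them is exactly a morphism of schemes, so $\text{L}(f)$ is a morphism of schemes. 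Functoriality ($\text{L}(\text{id}_X)=\text{id}_{\text{L}(X)}$ and $\text{L}(g\circ f)=\text{L}(g)\circ\text{L}(f)$) is immediate, since $\text{L}(-)$ on morphisms is simply restriction of the underlying map and the sheaf morphism to the subspace of local points.

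The only step with genuine content is checking that the structure sheaf of $\text{L}(X)$ restricts, over each $\text{L}(U_i)$, to the structure sheaf of $\text{Spec}(A_i)$ under the homeomorphism of Lemma~\ref{Affine L}; this is where transitivity of sheaf restriction is used, together with Lemma~\ref{simplealgebaraic L} (through Proposition~\ref{local P}) to ensure morphisms carry local points to local points. Because the identifications $\text{L}(\text{F}(A_i))\cong\text{Spec}(A_i)$ of Lemma~\ref{Affine L} are natural, they are automatically compatible on overlaps $\text{L}(U_i)\cap\text{L}(U_j)$, so no extra gluing argument is needed; I expect no obstacle beyond this bookkeeping.
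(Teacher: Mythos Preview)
Your proposal is correct and follows essentially the same approach as the paper: use Lemma~\ref{Affine L} to identify $\text{L}$ on affines with $\text{Spec}$, cover a general commutative F-scheme by affines to conclude $\text{L}(X)$ is a scheme, and invoke Proposition~\ref{local P} for morphisms. The paper's proof is terser, leaving implicit the ``$\text{L}$ commutes with open subspaces'' observation and the gluing that you spell out, but the ingredients and logical structure are the same.
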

\begin{proof}
From Lemma \ref{Affine L}, we can see that $\text{L}(X)$ is in fact a scheme.
Therefore the assignment does assign
a scheme to any commutative F-scheme. Suppose that $f:X\to Y$ is a
morphism between two commutative F-schemes. Since $f$ is local,
$f$ maps $\text{L}(X)$ to $\text{L}(Y)$ and we obtain a local morphism
$\text{L}(f):\text{L}(X)\to \text{L}(Y)$,
by Proposition \ref{local P}.  So  $\text{L}(f):\text{L}(X)\to \text{L}(Y)$ is a
morphism of schemes.
Now it is easy to see that
this gives a functor
$$\text{L}:\textbf{CFSch}\to \textbf{Sch}$$
from the category of commutative F-schemes to the category of schemes.
\end{proof}

\end{subsection}

\end{section}


\begin{section}{First properties of F-schemes}
In this section some familiar concepts in the
theory of schemes such as closed sub-schemes and separated schemes (see \cite{Ha}),
are extended to F-schemes.

\begin{subsection}{Quasi-coherent sheaves of modules}
Let $A$ be a ring. Suppose that $M$ is a left $A$-module and $N$ is an $A$-bimodule. Using
a similar construction as in Remark \ref{definitionofstructuresheaf R}, we obtain  a sheaf of
$\mathcal{O}_A$-modules $\widetilde{M}$ on
$\text{F}(A)$ such that
$\widetilde{M}(D(\begin{bf} a \end{bf}))=A_{\begin{bf} a \end{bf}}\otimes_A M $,
for any fundamental open subset
$D(\begin{bf} a \end{bf})$ of $\text{F}(A)$. Similarly, we obtain a sheaf of
$\mathcal{O}_A$-bimodules $\widehat{N}$ on
$\text{F}(A)$ such that
$\widehat{N}(D(\begin{bf} a \end{bf}))=A_{\begin{bf} a
\end{bf}}\otimes_A N\otimes_A A_{\begin{bf} a \end{bf}} $, for any fundamental
open subset
$D(\begin{bf} a \end{bf})$ of $\text{F}(A)$. Note that for any open subset $U$
of $\text{F}(A)$, we have
$$\widetilde{M}(U)=\underset{D(\begin{bf} a \end{bf})\subset U}
{\underleftarrow{\lim}} (A_{\begin{bf} a \end{bf}}\otimes_A M)$$
and
$$\widehat{N}(U)=\underset{D(\begin{bf} a \end{bf})\subset U}
{\underleftarrow{\lim}} (A_{\begin{bf} a \end{bf}}\otimes_A N \otimes
A_{\begin{bf} a \end{bf}}).$$
Clearly $\Gamma(\text{F}(A), \widetilde{M})=M$ and $\Gamma(\text{F}(A), \widehat{N})=N$.
\begin{proposition}\label{stalkofsheafofmodules P}
\begin{enumerate}
\item For each $\mathcal{S}\in \text{F}(A)$, there is a natural $A_{\mathcal{S}}$-module
 isomorphism
 $\widetilde{M}_{\mathcal{S}}\cong A_{\mathcal{S}}\otimes_A M$.
\item For any fraction $\begin{bf} a \end{bf}$ of $A$ such that
$D(\begin{bf} a \end{bf})\neq \emptyset$, there is an isomorphism
of sheaves of $\mathcal{O}_X$-modules
$\widetilde{M}|_{D(\begin{bf} a \end{bf})}\cong \widetilde{M_{\begin{bf} a \end{bf}}}$
where $M_{\begin{bf} a \end{bf}}=A_{\begin{bf} a \end{bf}}\otimes_A M$.
\end{enumerate}
Similar statements hold for  $\widehat{N}$.
\end{proposition}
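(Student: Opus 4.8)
The plan is to imitate the proof of Proposition \ref{structuresheaf P}, which establishes the analogous statements for the structure sheaf $\mathcal{O}_A$, observing at each step that the extra tensor factor $M$ (resp. the bimodule $N$) is carried along harmlessly because tensoring commutes with the filtered colimits that occur. Both sheaves $\widetilde{M}$ and $\widehat{N}$ are, by construction (Remark \ref{definitionofstructuresheaf R}), determined by their values on the base of fundamental open subsets together with the transition maps, so it suffices to work with this base throughout.

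For part 1, I would start from the defining formula $\widetilde{M}(U)=\varprojlim_{D(\mathbf{a})\subset U}(A_{\mathbf{a}}\otimes_A M)$ and the fact that the fundamental open subsets form a base. Hence the stalk at $\mathcal{S}$ is $\widetilde{M}_{\mathcal{S}}=\varinjlim_{D(\mathbf{a})\ni\mathcal{S}}(A_{\mathbf{a}}\otimes_A M)$, the colimit running over fundamental neighborhoods of $\mathcal{S}$, with transition maps obtained by applying $-\otimes_A M$ to the canonical ring maps $A_{\mathbf{a}}\to A_{\mathbf{b}}$ of Proposition \ref{fundamentalopensubsets P} part 3. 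This colimit is filtered: by Proposition \ref{fundamentalopensubsets P} part 2, $D(\mathbf{a})\cap D(\mathbf{b})=D(\langle\mathbf{a},\mathbf{b}\rangle)$ is again a fundamental neighborhood of $\mathcal{S}$, so the system of fundamental neighborhoods is downward directed. Since $-\otimes_A M$ commutes with filtered colimits, $\widetilde{M}_{\mathcal{S}}\cong\big(\varinjlim_{D(\mathbf{a})\ni\mathcal{S}}A_{\mathbf{a}}\big)\otimes_A M$, and the inner colimit is precisely $\mathcal{O}_{A,\mathcal{S}}$, which by Proposition \ref{structuresheaf P} part 1 is $A_{\mathcal{S}}$. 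This yields the natural $A_{\mathcal{S}}$-module isomorphism $\widetilde{M}_{\mathcal{S}}\cong A_{\mathcal{S}}\otimes_A M$. For $\widehat{N}$ the same argument applies: $-\otimes_A N\otimes_A -$ is a bifunctor commuting with filtered colimits in each slot, whence $\widehat{N}_{\mathcal{S}}\cong A_{\mathcal{S}}\otimes_A N\otimes_A A_{\mathcal{S}}$.

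For part 2, I would use the identification $D(\mathbf{a})\cong\text{F}(A_{\mathbf{a}})$ of Proposition \ref{fundamentalopensubfschemes P} and compare $\widetilde{M}|_{D(\mathbf{a})}$ with $\widetilde{M_{\mathbf{a}}}$ on the base of fundamental open subsets of $\text{F}(A_{\mathbf{a}})$. A fundamental open $D(\mathbf{b})\subset D(\mathbf{a})$ corresponds, under Proposition \ref{F.I.SinA_S P}, to a fraction $\mathbf{b}'$ of $A_{\mathbf{a}}$ with $(A_{\mathbf{a}})_{\mathbf{b}'}\cong A_{\mathbf{b}}$ as $A$-algebras, and then
$$\widetilde{M_{\mathbf{a}}}(D(\mathbf{b}))=(A_{\mathbf{a}})_{\mathbf{b}'}\otimes_{A_{\mathbf{a}}}M_{\mathbf{a}}=(A_{\mathbf{a}})_{\mathbf{b}'}\otimes_{A_{\mathbf{a}}}(A_{\mathbf{a}}\otimes_A M)\cong(A_{\mathbf{a}})_{\mathbf{b}'}\otimes_A M\cong A_{\mathbf{b}}\otimes_A M=\widetilde{M}(D(\mathbf{b})).$$
These isomorphisms are compatible with restriction maps because the underlying ring isomorphisms $(A_{\mathbf{a}})_{\mathbf{b}'}\cong A_{\mathbf{b}}$ are, so the compatible family glues to an $\mathcal{O}$-linear isomorphism of sheaves $\widetilde{M}|_{D(\mathbf{a})}\cong\widetilde{M_{\mathbf{a}}}$. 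The same computation with $A_{\mathbf{a}}\otimes_A N\otimes_A A_{\mathbf{a}}$ in place of $A_{\mathbf{a}}\otimes_A M$ handles $\widehat{N}$.

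The routine-but-slightly-delicate point, and the one I would take the most care with, is the bookkeeping in part 2: making precise the correspondence between fractions $\mathbf{b}$ of $A$ with $D(\mathbf{b})\subset D(\mathbf{a})$ and fractions $\mathbf{b}'$ of $A_{\mathbf{a}}$, and checking that the resulting ring isomorphism $(A_{\mathbf{a}})_{\mathbf{b}'}\cong A_{\mathbf{b}}$ is the one compatible with all structure maps. This is essentially a direct application of Proposition \ref{F.I.SinA_S P} (the one-to-one correspondence of fully invertible systems together with the isomorphism $A_{i_{\mathcal{S}}^{*}(\mathcal{T})}\cong(A_{\mathcal{S}})_{\mathcal{T}}$), so no genuinely new idea is needed; everything else is a transcription of the proof of Proposition \ref{structuresheaf P} with a tensor factor attached.
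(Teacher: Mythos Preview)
Your argument is correct and matches the paper's own proof essentially line for line: part 1 is exactly the chain of isomorphisms the paper writes (stalk as filtered colimit over fundamental neighborhoods, commute $-\otimes_A M$ with the colimit, identify $\varinjlim A_{\mathbf a}\cong A_{\mathcal S}$), and for part 2 the paper simply declares the statement ``trivial'', so your elaboration via Proposition~\ref{fundamentalopensubfschemes P} and Proposition~\ref{F.I.SinA_S P} is a faithful unpacking of what the paper has in mind.
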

\begin{proof}
We have
$$\widetilde{M}_{\mathcal{S}}\cong \underset{\mathcal{S}\in U}
{\underrightarrow{\lim}} \widetilde{M}(U)
\cong \underset{\mathcal{S}\in D(\begin{bf} a \end{bf})}{\underrightarrow{\lim}}
 \widetilde{M}(D(\begin{bf} a \end{bf}))$$
$$
\cong \underset{\mathcal{S}\in D(\begin{bf} a \end{bf})}{\underrightarrow{\lim}}
(A_{\begin{bf} a \end{bf}}\otimes_A M)
\cong (\underset{\mathcal{S}\in D(\begin{bf} a \end{bf})}{\underrightarrow{\lim}}
A_{\begin{bf} a \end{bf}})\otimes_A M
\cong A_{\mathcal{S}}\otimes_A M
$$
The second assertion is trivial.
\end{proof}
The following proposition is straightforward to prove.
\begin{proposition}
The assignment $M\mapsto \widetilde{M}$ defines a right exact functor from the category
of $A$-modules  to the category of left $\mathcal{O}_A$-modules.
Moreover the canonical map $\text{Hom}_A(M,N)\to
\text{Hom}_{\mathcal{O}_A}(\widetilde{M},\widetilde{N})$ is an isomorphism for any
two $A$-modules $M$ and $N$. One has similar statements  for bimodules.

\end{proposition}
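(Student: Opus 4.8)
The plan is to reduce every assertion to the base of fundamental open subsets and to stalks, where it becomes a statement about the right-exact tensor functors $A_{\mathbf a}\otimes_A(-)$ and $A_{\mathcal S}\otimes_A(-)$. For functoriality: an $A$-linear map $f\colon M\to N$ induces, for each fraction $\mathbf a$ with $D(\mathbf a)\neq\emptyset$, the $A_{\mathbf a}$-linear map $\mathrm{id}\otimes f\colon A_{\mathbf a}\otimes_A M\to A_{\mathbf a}\otimes_A N$, and these are compatible with the transition homomorphisms attached to inclusions $D(\mathbf a)\subset D(\mathbf b)$; by the base-to-sheaf construction of Remark \ref{definitionofstructuresheaf R} they assemble into a morphism $\widetilde f\colon\widetilde M\to\widetilde N$ of sheaves of $\mathcal O_A$-modules, visibly compatible with composition and identities and additive in $f$. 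For right exactness I would test on stalks: exactness of a sequence of $\mathcal O_A$-modules is a stalk-local condition, and Proposition \ref{stalkofsheafofmodules P} (together with the naturality of its isomorphism) identifies $\widetilde{(-)}_{\mathcal S}$ with $A_{\mathcal S}\otimes_A(-)$ and $\widetilde f_{\mathcal S}$ with $\mathrm{id}\otimes f$. Hence if $M'\to M\to M''\to 0$ is exact, applying the right-exact functor $A_{\mathcal S}\otimes_A(-)$ preserves exactness at every $\mathcal S$, so $\widetilde{M'}\to\widetilde M\to\widetilde{M''}\to 0$ is exact.

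\textbf{The Hom-bijection.} The canonical map is $f\mapsto\widetilde f$, and its candidate inverse is $\phi\mapsto\Gamma(\text{F}(A),\phi)$, using $\Gamma(\text{F}(A),\widetilde M)=M$ and $\Gamma(\text{F}(A),\widetilde N)=N$. Since $D(1)=\text{F}(A)$ and the corresponding localization is $A$ itself, the restriction $\Gamma(\text{F}(A),\widetilde M)\to\widetilde M(D(\mathbf a))$ is $m\mapsto 1\otimes m$, and the global section of $\widetilde f$ is $f$; this yields injectivity immediately. For surjectivity, take an $\mathcal O_A$-linear $\phi\colon\widetilde M\to\widetilde N$ and set $f=\Gamma(\text{F}(A),\phi)$. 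On each stalk, $\phi_{\mathcal S}$ is $A_{\mathcal S}$-linear and, by commutativity of the restriction square, sends $1\otimes m$ (the image of $m\in M$) to $1\otimes f(m)$; $A_{\mathcal S}$-linearity then forces $\phi_{\mathcal S}(a\otimes m)=a\otimes f(m)=\widetilde f_{\mathcal S}(a\otimes m)$, and as these elements additively span $A_{\mathcal S}\otimes_A M$ we get $\phi_{\mathcal S}=\widetilde f_{\mathcal S}$ for all $\mathcal S$, hence $\phi=\widetilde f$ since a morphism of sheaves is determined by its stalks. One may equally run this argument directly on fundamental opens, using that a morphism of sheaves is determined by its restrictions to a base.

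\textbf{The bimodule case and the main obstacle.} The bimodule statements go through verbatim with $\widehat N(D(\mathbf a))=A_{\mathbf a}\otimes_A N\otimes_A A_{\mathbf a}$ replacing $\widetilde M(D(\mathbf a))$: the functor $A_{\mathcal S}\otimes_A(-)\otimes_A A_{\mathcal S}$ is right exact as a composite of two right-exact functors, the stalk identification is the evident one, and in the surjectivity step an $A_{\mathcal S}$-\emph{bilinear} map is pinned down by its value on $1\otimes n\otimes 1$, giving $\psi_{\mathcal S}(a\otimes n\otimes b)=a\otimes g(n)\otimes b$. The only point requiring genuine care — and the only place noncommutativity actually shows up — is surjectivity of the Hom-bijection: one must use both the $\mathcal O_A$-linearity of $\phi$ to reduce to its effect on the generators $1\otimes m$ and the fact that these locally defined maps automatically respect restrictions, so that no separate patching argument is needed. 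Everything else is a direct transcription of the familiar commutative-scheme argument.
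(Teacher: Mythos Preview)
Your proposal is correct. The paper itself omits the proof entirely, declaring the proposition ``straightforward to prove'' and moving on, so there is nothing to compare against beyond the intended standard argument; your write-up is exactly that argument, carried out carefully on the base of fundamental opens and on stalks using Proposition~\ref{stalkofsheafofmodules P} and the global-sections identification $\Gamma(\text{F}(A),\widetilde{M})=M$.
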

Now we can introduce the notion of quasi-coherent  and coherent sheaves of modules.
\begin{definition}
Let $X$ be an F-scheme. Suppose that $\mathcal{F}$ is a sheaf of $\mathcal{O}_X$-modules and
$\mathcal{G}$ is a sheaf of $\mathcal{O}_X$-bimodules.  The sheaf
$\mathcal{F}$ (resp. $\mathcal{G}$) is called \textbf{quasi-coherent}
if there is an open affine covering $U_i=\text{F}(A_i)$ of $X$ and
$A_i$-modules $M_i$ (resp. $A_i$-bimodules $N_i$) such that
$\mathcal{F}|_{U_i}\cong \widetilde{M}_i$ (resp. $\mathcal{G}|_{U_i}\cong \widehat{N_i}$).
They  called \textbf{coherent} if  , furthermore, $M_i$'s (resp. $N_i$'s) can be taken to be
finitely generated
$A_i$-modules ($A_i$-bimodules).
\end{definition}
The following proposition gives a familiar property of quasi-coherent (and coherent)
sheaves of modules in the case of F-schemes.
\begin{proposition}
Suppose that $X$ is an F-scheme and   $\mathcal{F}$ is a sheaf of $\mathcal{O}_X$-modules. Then
$\mathcal{F}$ is quasi-coherent (coherent) iff for any open affine
 sub-F-scheme $U=\text{F}(A)$ of $X$,
there is an (finitely generated) $A$-module $M$ such that $\mathcal{F}|_{U}\cong \widetilde{M}$.
 A similar statement
holds for sheaves of bimodules.
\end{proposition}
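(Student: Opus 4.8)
The plan is to reduce both assertions to two auxiliary facts: (i) on an \emph{affine} F-scheme $\text{F}(A)$ every quasi-coherent (coherent) sheaf $\mathcal{F}$ is globally isomorphic to $\widetilde{M}$ with $M=\Gamma(\text{F}(A),\mathcal{F})$ (and $M$ finitely generated in the coherent case), the evident analogue $\widehat{N}$ holding for bimodules; and (ii) the restriction of a quasi-coherent (coherent) sheaf to an open sub-F-scheme is again quasi-coherent (coherent). Granting these, the ``if'' direction is trivial, since the affine opens on which $\mathcal{F}$ restricts to some $\widetilde{M}$ already include an affine covering of $X$. For the ``only if'' direction one restricts $\mathcal{F}$ to an arbitrary open affine $U=\text{F}(A)\subset X$, which is quasi-coherent (coherent) by (ii), and then applies (i) to $U$ to get $\mathcal{F}|_U\cong\widetilde{M}$ with $M=\Gamma(U,\mathcal{F})$.

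For (i) I would exploit that affine F-schemes are ``small'': $\text{F}(A)$ has the center $\text{U}(A)$, so the only open subset of $\text{F}(A)$ containing $\text{U}(A)$ is $\text{F}(A)$ itself, and consequently every open covering of $\text{F}(A)$ has $\text{F}(A)$ among its members. Hence if $\mathcal{F}$ is quasi-coherent, witnessed by an affine covering $\{U_i=\text{F}(A_i)\}$ with $\mathcal{F}|_{U_i}\cong\widetilde{M_i}$, then some $U_{i_0}$ already equals $\text{F}(A)$, so $\mathcal{F}\cong\widetilde{M_{i_0}}$; here $M_{i_0}$ is a module over $\Gamma(U_{i_0},\mathcal{O})\cong\Gamma(\text{F}(A),\mathcal{O}_A)\cong A$, and $M_{i_0}\cong\Gamma(\text{F}(A),\widetilde{M_{i_0}})\cong\Gamma(\text{F}(A),\mathcal{F})$, which proves (i); finite generation is visibly preserved. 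The bimodule case is identical with $\widehat{\,\cdot\,}$ replacing $\widetilde{\,\cdot\,}$, using the bimodule half of Proposition \ref{stalkofsheafofmodules P}.

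For (ii), fix an affine covering $\{V_j=\text{F}(B_j)\}$ of $X$ with $\mathcal{F}|_{V_j}\cong\widetilde{M_j}$, and let $U\subset X$ be open. The sets $V_j\cap U$ cover $U$; each is open in $\text{F}(B_j)$, hence covered by fundamental open subsets $D(\begin{bf} b \end{bf})$ of $\text{F}(B_j)$ contained in it. By Proposition \ref{fundamentalopensubfschemes P} each such $D(\begin{bf} b \end{bf})$ is an affine F-scheme isomorphic to $\text{F}((B_j)_{\begin{bf} b \end{bf}})$, and by Proposition \ref{stalkofsheafofmodules P} one has $\mathcal{F}|_{D(\begin{bf} b \end{bf})}\cong\widetilde{M_j}|_{D(\begin{bf} b \end{bf})}\cong\widetilde{(M_j)_{\begin{bf} b \end{bf}}}$, where $(M_j)_{\begin{bf} b \end{bf}}=(B_j)_{\begin{bf} b \end{bf}}\otimes_{B_j}M_j$ is a module (finitely generated when $M_j$ is) over the structure ring of that fundamental open. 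These $D(\begin{bf} b \end{bf})$ form an affine open covering of $U$ exhibiting $\mathcal{F}|_U$ as quasi-coherent (coherent), and then (i) applied to $U=\text{F}(A)$ gives $\mathcal{F}|_U\cong\widetilde{\Gamma(U,\mathcal{F})}$. The one place that needs a little care is exactly this restriction argument; the base-change identity $\widetilde{M}|_{D(\begin{bf} a \end{bf})}\cong\widetilde{M_{\begin{bf} a \end{bf}}}$ of Proposition \ref{stalkofsheafofmodules P} does all the real work there, while (i) is almost immediate once one invokes the center of an affine F-scheme. The bimodule statement follows verbatim with $\widehat{\,\cdot\,}$ throughout.
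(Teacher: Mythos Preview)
Your proof is correct and rests on the same two ingredients as the paper's: the center of an affine F-scheme (so that any open set containing it is the whole affine), and the base-change identity $\widetilde{M}|_{D(\mathbf{a})}\cong\widetilde{M_{\mathbf{a}}}$ from Proposition~\ref{stalkofsheafofmodules P}.

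The paper's argument is organized more directly, without the intermediate decomposition into (i) and (ii). Given an arbitrary open affine $U=\text{F}(A)\subset X$, the center of $U$ must lie in some member $V=\text{F}(B)$ of the affine covering witnessing quasi-coherence; since $V\cap U$ is then an open subset of $U$ containing its center, one gets $U\subset V$ immediately. By Lemma~\ref{beingaffine L}, $U$ is a fundamental open $D(\mathbf{a})$ of $V$, and the base-change identity gives $\mathcal{F}|_U\cong\widetilde{M_{\mathbf{a}}}$ in one stroke. Your route arrives at the same place: when you apply (i) to the covering of $U$ produced in (ii), the covering member that equals $U$ is precisely such a $D(\mathbf{b})\subset V_j$, and your $M$ is the corresponding $(M_j)_{\mathbf{b}}$. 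So the extra layer of abstraction (proving restriction to opens preserves quasi-coherence in general) is not needed for this proposition, though it is a reasonable lemma to have on hand.
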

\begin{proof}
One direction is trivial. So suppose that
$\mathcal{F}$ is quasi-coherent and $U=\text{F}(A)$ is an open affine
sub-F-scheme  of $X$. Since $\text{F}(A)$ has a center,
there is an open affine
sub-F-scheme $U\subset V=\text{F}(B)$ and a $B$-module $M$ such that
$\mathcal{F}|_{V}\cong \widetilde{M}$. So, by Lemma \ref{beingaffine L},
 $U$ is a fundamental open subset of
$V$ and hence $A\cong B_{\begin{bf} a \end{bf}}$ for some fraction of $B$. This implies that
$\mathcal{F}|_U\cong \widetilde{M}|_U\cong \widetilde{M_{\begin{bf} a \end{bf}}}$,
by Proposition
\ref{stalkofsheafofmodules P}. If $\mathcal{F}$ is  coherent then  $M$ can be
chosen to be a finitely generated $B$-module. So
$M_{\begin{bf} a \end{bf}}=B_{\begin{bf} a \end{bf}}\otimes_B M\cong A\otimes_B M$ is a
finitely generated $A$-modules.
\end{proof}

Note that we have similar notions and constructions for
right modules as well.

\begin{proposition}
Suppose that $X$ is an affine F-scheme and
$$0\to \mathcal{F}_1\to\mathcal{F}_2\to\mathcal{F}_3\to 0$$
is an exact sequence of abelian sheaves on $X$. Then the sequence
$$0\to \Gamma(X,\mathcal{F}_1)\to\Gamma(X,\mathcal{F}_2)\to\Gamma(X,\mathcal{F}_3)\to 0$$
is exact.
\end{proposition}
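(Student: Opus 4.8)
The plan is to exploit the fact, recorded in Lemma \ref{beingaffine L}, that an affine F-scheme $X$ has a center $x_0$; concretely, if $X=\text{F}(A)$ then $x_0=\text{U}(A)$. The only feature of a center that I will use is the one already noted after Proposition \ref{subsetswithcenter P}: the only open subset of $X$ containing $x_0$ is $X$ itself.

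First I would observe that, because of this, for every sheaf $\mathcal{F}$ of abelian groups on $X$ the stalk $\mathcal{F}_{x_0}=\underset{x_0\in U}{\underrightarrow{\lim}}\,\mathcal{F}(U)$ is a directed colimit over a system with a single object, namely $X$ itself. Hence the natural map $\Gamma(X,\mathcal{F})=\mathcal{F}(X)\to\mathcal{F}_{x_0}$ is an isomorphism; in other words, on an affine F-scheme the global section functor coincides with the stalk functor at the center.

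Next I would invoke the standard fact that the stalk functor $\mathcal{G}\mapsto\mathcal{G}_{x_0}$ on sheaves of abelian groups is exact --- equivalently, that a short exact sequence of abelian sheaves is exact on each stalk. Applying this to $0\to\mathcal{F}_1\to\mathcal{F}_2\to\mathcal{F}_3\to 0$ gives a short exact sequence $0\to\mathcal{F}_{1,x_0}\to\mathcal{F}_{2,x_0}\to\mathcal{F}_{3,x_0}\to 0$, and the identification of the previous paragraph rewrites this as $0\to\Gamma(X,\mathcal{F}_1)\to\Gamma(X,\mathcal{F}_2)\to\Gamma(X,\mathcal{F}_3)\to 0$, which is exactly the claim.

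There is essentially no serious obstacle: the entire content is the presence of a center, which collapses global sections to a single stalk and thereby makes $\Gamma(X,-)$ exact on \emph{all} abelian sheaves --- not merely the quasi-coherent ones, in contrast to the situation for ordinary affine schemes, where such a statement would require cohomology vanishing and holds only for quasi-coherent sheaves. The only point deserving a word of care is the verification that the colimit defining $\mathcal{F}_{x_0}$ really is indexed by the one-object system $\{X\}$, and that is immediate from the characterization of the center recalled above.
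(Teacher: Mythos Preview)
Your argument is correct and is essentially identical to the paper's own proof: the paper simply remarks that the statement holds for any topological space with a center because global sections of an abelian sheaf coincide with the stalk at the center, and then exactness of the stalk functor gives the result. You have spelled out precisely this reasoning in more detail.
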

\begin{proof}
In fact this holds for any topological space with a center, because
the group of the global sections of any abelain sheaf on a topological space with center $x$
is the same as the stalk
of that sheaf at $x$.
\end{proof}

If $M$ is a left $A$-module as well as a right $A$-module then to distinguish between
the corresponding sheave of left $\mathcal{O}_A$-modules and
sheaf of right $\mathcal{O}_A$-modules on $\text{F}(A)$, the notations $\widetilde{M}^l$ and
$\widetilde{M}^r$ are used respectively. If there is no confusion,
I do not use the indices $l$ and $r$.

\end{subsection}


\begin{subsection}{Closed sub-F-schemes}
In order to introduce the notion of closed sub-F-schemes and closed immersions,
we need some information in the affine case.
\begin{lemma}\label{closedsubsetsofideals}
Let   $A$ be a ring and $I\subset A$ be an ideal. Let $\pi:A\to A/I$ be
the quotient homomorphism inducing the morphism $f:\text{F}(A/I)\to \text{F}(A)$.
Then $\mathcal{S}\in \text{F}(A)$ is in the image of $f$ iff
 $I\subset \text{J}(A_\mathcal{S})$ (meaning that the image of $I$ under the natural homomorphism
 $A\to A_\mathcal{S}$ is a subset of $\text{J}(A_\mathcal{S})$).
\end{lemma}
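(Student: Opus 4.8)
The strategy is to unwind the definition of "being in the image of $f$" in terms of fully invertible systems, and then identify the condition for a pull-back system to "be all of $\text{F}(A/I)$" with a Jacobson radical condition via Lemma~\ref{localideal L}. Recall from Theorem~\ref{AffineF-schemes T} that the morphism $f$ sends a fully invertible system $\mathcal{T}$ on $A/I$ to its pull-back $\pi^{*}(\mathcal{T})$. So I must show: $\mathcal{S} = \pi^{*}(\mathcal{T})$ for some $\mathcal{T} \in \text{F}(A/I)$ if and only if $I \subset \text{J}(A_\mathcal{S})$.

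First I would treat the direction where $\mathcal{S}$ is in the image. If $\mathcal{S} = \pi^{*}(\mathcal{T})$, then by Proposition~\ref{pullbacksystem P} there is a local homomorphism $\pi^{\mathcal{T}} : A_{\pi^{*}(\mathcal{T})} \to (A/I)_{\mathcal{T}}$, i.e. a local homomorphism out of $A_\mathcal{S}$ whose composite with $i_\mathcal{S}$ agrees (via $\pi$) with $i_\mathcal{T}\pi$. In particular the image of $I$ in $A_\mathcal{S}$ maps to zero in $(A/I)_\mathcal{T}$, so it lies in the kernel of a local homomorphism. By Proposition~\ref{kernel P}, the kernel of a local homomorphism is contained in the Jacobson radical of the source; hence the image of $I$ in $A_\mathcal{S}$ lies in $\text{J}(A_\mathcal{S})$, which is exactly the asserted condition.

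For the converse, suppose $I \subset \text{J}(A_\mathcal{S})$ (meaning its image under $A \to A_\mathcal{S}$ is). Let $\bar I$ denote this image; since $\bar I \subset \text{J}(A_\mathcal{S})$, Lemma~\ref{localideal L} tells us the quotient map $A_\mathcal{S} \to A_\mathcal{S}/\bar I$ is local. Now $A \to A_\mathcal{S} \to A_\mathcal{S}/\bar I$ kills $I$, so it factors through a homomorphism $\psi : A/I \to A_\mathcal{S}/\bar I$. Let $\mathcal{T} := \psi^{*}(\text{U}(A_\mathcal{S}/\bar I))$ be the fully invertible system on $A/I$ induced by $\psi$ (via successive localizations, as in Section~\ref{localization S}); I claim $\pi^{*}(\mathcal{T}) = \mathcal{S}$. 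By transitivity of pull-back, $\pi^{*}(\mathcal{T})$ is the fully invertible system on $A$ coming from the composite $A \to A/I \to A_\mathcal{S}/\bar I$, which equals the composite $A \xrightarrow{i_\mathcal{S}} A_\mathcal{S} \to A_\mathcal{S}/\bar I$. Since the second map here is local (by the choice above), Proposition~\ref{localmaps P} and an argument as in the proof of Proposition~\ref{pullbacksystem P} show that an element becomes invertible downstairs exactly when it is already invertible in $A_\mathcal{S}$ at the appropriate stage; hence the fully invertible system coming from this composite is exactly $\mathcal{S}$. Therefore $\mathcal{S} = \pi^{*}(\mathcal{T})$ is in the image of $f$.

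The main obstacle I anticipate is the bookkeeping in the converse direction: making precise the claim that the fully invertible system induced by a local map $A_\mathcal{S} \to A_\mathcal{S}/\bar I$ post-composed with $i_\mathcal{S}$ recovers $\mathcal{S}$ itself rather than something strictly smaller. This needs the observation that if $g : B \to C$ is local then the fully invertible system on $A$ coming from $g h$ (for $h : A \to B$) equals the one coming from $h$ — essentially because $g$ does not create new invertibilities. This is the same phenomenon used in Proposition~\ref{pullbacksystem P}(2), and I would phrase it as a small lemma or simply cite that proposition's argument. Everything else is a routine application of the results already established in Sections~\ref{localization S} and the definition of $f$ in Theorem~\ref{AffineF-schemes T}.
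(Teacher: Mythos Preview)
Your proposal is correct and follows essentially the same route as the paper: for the forward direction you invoke the local homomorphism $A_\mathcal{S}\to (A/I)_\mathcal{T}$ and Proposition~\ref{kernel P}, and for the converse you quotient $A_\mathcal{S}$ by (the ideal generated by) the image of $I$, use Lemma~\ref{localideal L} to see this quotient map is local, and then read off that the induced system on $A$ is $\mathcal{S}$ itself. One small point of hygiene: your $\bar I$ should be the two-sided ideal of $A_\mathcal{S}$ generated by the image of $I$, not merely the set-theoretic image (which need not be an ideal); since $\text{J}(A_\mathcal{S})$ is an ideal this changes nothing, and indeed the paper writes $I'$ for exactly this ideal.
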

\begin{proof}
Suppose that $\mathcal{S}\in \text{F}(A)$. If $\mathcal{S}=f(\mathcal{T})$ then
 we have the following   commutative diagram
 \begin{displaymath}
\xymatrix{ & A \ar[d]  \ar[r]  & A_\mathcal{S} \ar[d]   \\
           & A/I   \ar[r] &  (A/I)_\mathcal{T} }
 \end{displaymath}
The homomorphism $A_\mathcal{S}\to (A/I)_\mathcal{T}$ is local and its kernel
contains $I$. Therefore, by Proposition  \ref{kernel P}, we have
$I\subset \text{J}(A_\mathcal{S})$.
Conversely, suppose that $I\subset \text{J}(A_\mathcal{S})$. Consider the following
commutative diagram
 \begin{displaymath}
\xymatrix{ & A \ar[d]  \ar[r]  & A_\mathcal{S} \ar[d]   \\
           & A/I   \ar[r] &  A_\mathcal{S}/I' }
 \end{displaymath}
where $I'$ is the ideal of $A_\mathcal{S}$ generated by $I$. Since
$I'\subset \text{J}(A_\mathcal{S})$,
the fully invertible system on $A$ coming from $A\to A_\mathcal{S}/I'$ is just $\mathcal{S}$.
But the commutativity of the above diagram implies that $f(\mathcal{T})=\mathcal{S}$ where
$\mathcal{T}$ is the fully invertible system on $A/I$ coming from $ A/I\to A_\mathcal{S}/I'$.

\end{proof}
Given an ideal $I$ of a ring $A$, set
$$Z(I):=\{\mathcal{S}\in \text{F}(A)|\, I\subset \text{J}(A_\mathcal{S})\}.$$
By the above lemma, $Z(I)$ is the image of $\text{F}(A/I)$ under the morphism
$f:\text{F}(A/I)\to \text{F}(A)$. The following lemma characterizes its closure.
\begin{lemma}\label{closureofZ(I) L}
For any ideal $I$ of $A$, we have
$$\overline{Z(I)}= \{\mathcal{S}\in \text{F}(A)|\, \text{the ideal generated by I in}\,\,
 A_\mathcal{S}\neq A_\mathcal{S}\}.$$
\end{lemma}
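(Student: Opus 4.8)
The plan is to prove set equality by double inclusion, using Lemma \ref{closedsubsetsofideals} together with the structure of the fundamental open subsets. Write $W:=\{\mathcal{S}\in\text{F}(A)\mid \text{the ideal generated by }I\text{ in }A_\mathcal{S}\neq A_\mathcal{S}\}$. The first observation I would record is that $W$ is closed: if $\mathcal{S}\notin W$, then the image of $I$ generates the unit ideal in $A_\mathcal{S}$, so $1=\sum b_j c_j d_j$ for finitely many $c_j\in I$ and $b_j,d_j\in A_\mathcal{S}$; since each $b_j,d_j$ involves only finitely many of the inverted elements, this relation already holds in some $A_{\begin{bf}a\end{bf}}$ with $\mathcal{S}\in D(\begin{bf}a\end{bf})$, hence on the whole fundamental neighbourhood $D(\begin{bf}a\end{bf})$. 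Thus the complement of $W$ is open, so $W$ is closed.

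Next, I would show $Z(I)\subset W$, which is immediate: if $I\subset\text{J}(A_\mathcal{S})$ then the ideal generated by $I$ in $A_\mathcal{S}$ lies in $\text{J}(A_\mathcal{S})\neq A_\mathcal{S}$ (a proper ideal, since $A_\mathcal{S}\neq 0$ because $\mathcal{S}$ corresponds to a homomorphism to a nonzero ring). Since $W$ is closed and contains $Z(I)$, we get $\overline{Z(I)}\subset W$.

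The substantive direction is $W\subset\overline{Z(I)}$, i.e.\ every $\mathcal{S}$ with $I'_\mathcal{S}:=\langle I\rangle_{A_\mathcal{S}}\neq A_\mathcal{S}$ lies in the closure of $Z(I)$. By Proposition \ref{closure of a point P} and the base of fundamental opens, it suffices to show: every fundamental open $D(\begin{bf}a\end{bf})$ containing $\mathcal{S}$ meets $Z(I)$. So fix such a neighbourhood; then $\begin{bf}a\end{bf}\in\mathcal{S}$, so there is a natural homomorphism $A_{\begin{bf}a\end{bf}}\to A_\mathcal{S}$, and the ideal generated by $I$ in $A_{\begin{bf}a\end{bf}}$ is still proper (it maps into the proper ideal $I'_\mathcal{S}$). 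Let $B=A_{\begin{bf}a\end{bf}}$ and let $J$ be the ideal of $B$ generated by the image of $I$; then $J\neq B$, so $B/J$ is a nonzero ring. The natural map $A\to B\to B/J$ kills $I$, hence factors through $A/I$, and it gives a fully invertible system $\mathcal{T}$ on $A/I$; let $\mathcal{R}=f(\mathcal{T})\in Z(I)$ be its image. Because the composite $A\to B/J$ factors through $B=A_{\begin{bf}a\end{bf}}$, all the entries of $\begin{bf}a\end{bf}$ become invertible in $B/J$, so $\begin{bf}a\end{bf}\in\mathcal{R}$, i.e.\ $\mathcal{R}\in D(\begin{bf}a\end{bf})\cap Z(I)$. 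This shows $\mathcal{S}\in\overline{Z(I)}$ and completes the proof.

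The main obstacle I anticipate is keeping the finiteness bookkeeping honest in the first step — making sure the equation expressing $1$ as an element of the ideal generated by $I$ in the direct limit $A_\mathcal{S}=\varinjlim A_{\mathcal{S},n}$ already holds at a finite stage, so that it propagates to a full fundamental neighbourhood rather than just to the point $\mathcal{S}$. Once openness of the complement of $W$ is established, the rest is a routine chase using Lemma \ref{closedsubsetsofideals} (to identify $Z(I)$ as the image of $\text{F}(A/I)$) and Proposition \ref{fundamentalopensubsets P} (to relate $\begin{bf}a\end{bf}\in\mathcal{S}$ with the existence of the homomorphism $A_{\begin{bf}a\end{bf}}\to A_\mathcal{S}$).
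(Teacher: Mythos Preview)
Your argument is correct and follows the same overall strategy as the paper: show the set $W$ is closed by a finiteness argument in the direct limit, note $Z(I)\subset W$, and then prove $W\subset\overline{Z(I)}$ by producing, from the nonzero quotient by the ideal generated by $I$, a point of $Z(I)$ near $\mathcal{S}$.

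The only noteworthy difference is in the last step. You work neighbourhood by neighbourhood: for each $D(\mathbf{a})\ni\mathcal{S}$ you pass to $A_{\mathbf{a}}/J$ and build a point $\mathcal{R}\in D(\mathbf{a})\cap Z(I)$. The paper does this once, at the limit: it takes $I'=\langle I\rangle_{A_\mathcal{S}}$ and uses the map $A\to A_\mathcal{S}/I'$ to produce a single $\mathcal{T}\in Z(I)$ with $\mathcal{S}\subset\mathcal{T}$, then invokes Proposition~\ref{closure of a point P} to conclude $\mathcal{S}\in\overline{\{\mathcal{T}\}}\subset\overline{Z(I)}$. Your citation of Proposition~\ref{closure of a point P} is thus a bit misplaced (you actually use the neighbourhood definition of closure, not that proposition), whereas the paper genuinely uses it. The paper's route is slightly more economical; yours is equally valid and perhaps more hands-on.
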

\begin{proof}
Let $M$ be the set on the right-hand side of this equality. Clearly $Z(I)\subset M$.
Let $\mathcal{S}\in M$ and $I'$ be the ideal generated by $I$ in $A_\mathcal{S}$.
The commutativity
of the following diagram
 \begin{displaymath}
\xymatrix{ & A \ar[d]  \ar[r]  & A_\mathcal{S} \ar[d]   \\
           & A/I   \ar[r] &  A_\mathcal{S}/I' }
 \end{displaymath}
implies that there is some $\mathcal{T}\in Z(I)$ which contains $\mathcal{S}$. So,
by  Proposition \ref{closure of a point P}, $\mathcal{S}\in \overline{Z(I)}$.
This proves that
$M\subset \overline{Z(I)}$. To finish the proof, we just need to show that $M$ is closed.
Suppose that $\mathcal{S}\notin M$. Therefore the ideal generated by $I$ in $A_\mathcal{S}$
contains
1. Using the presentation of $A_{\mathcal{S}}$, we can find a fraction
$\begin{bf} a \end{bf}\in \mathcal{S}$ of $A$ such that the ideal generated by $I$ in
$A_{\begin{bf} a \end{bf}}$ contains 1. This implies that for any $\mathcal{T}\in\text{F}(A)$
containing
$\begin{bf} a \end{bf}$, the ideal generated by $I$ in $A_{\mathcal{T}}$ contains 1. So
$D(\begin{bf} a \end{bf})\cap M=\emptyset$. This implies that $M$ is closed.
\end{proof}
After discussing the affine case, we go to the general case.
\begin{proposition}\label{supportofquasicoherent P}
Suppose that $\mathcal{I}$ is a quasi-coherent sheaf of (two-sided) ideals on an
F-scheme $X$. Then
the support of the sheaf $\mathcal{O}_X/\mathcal{I}$    is a closed subset of
$X$.
\end{proposition}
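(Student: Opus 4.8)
The statement asserts that the support of $\mathcal{O}_X/\mathcal{I}$ is closed when $\mathcal{I}$ is a quasi-coherent sheaf of two-sided ideals. Since being closed is a local property, and $X$ has an open affine covering, it suffices to work on each affine piece $U = \text{F}(A)$.

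On such a piece, quasi-coherence of $\mathcal{I}$ means $\mathcal{I}|_U \cong \widehat{N}$ for some two-sided ideal $N$ of $A$ (here $N$ is an $A$-bimodule that is a subobject of $\mathcal{O}_X|_U$, hence an ideal $I \subset A$). Then $(\mathcal{O}_X/\mathcal{I})|_U$ has stalk at $\mathcal{S}$ equal to $A_\mathcal{S}/I'$, where $I'$ is the ideal generated by $I$ in $A_\mathcal{S}$ — this uses right-exactness of the tilde/hat construction and Proposition \ref{stalkofsheafofmodules P}. Thus the support of $\mathcal{O}_X/\mathcal{I}$ restricted to $U$ is exactly $\{\mathcal{S} \in \text{F}(A) \mid A_\mathcal{S}/I' \neq 0\} = \{\mathcal{S} \mid I' \neq A_\mathcal{S}\}$, which by Lemma \ref{closureofZ(I) L} is precisely $\overline{Z(I)}$, a closed subset of $\text{F}(A)$. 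Since this holds on every member of the affine cover, the support is closed in $X$.

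=== PROOF PROPOSAL ===

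The plan is to reduce to the affine case and then invoke Lemma \ref{closureofZ(I) L}. Since ``being closed'' is a local property on $X$, I would fix an open affine covering $\{U_i = \text{F}(A_i)\}$ of $X$ and show that the support of $\mathcal{O}_X/\mathcal{I}$ meets each $U_i$ in a closed subset of $U_i$; closedness in $X$ then follows. So from now on I work on a single affine piece $U = \text{F}(A)$.

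First I would identify what quasi-coherence gives us here. By the characterization of quasi-coherent sheaves (and the remark that for an affine F-scheme one may test on any affine open), $\mathcal{I}|_U \cong \widehat{N}$ for some $A$-bimodule $N$; since $\mathcal{I}$ is a sheaf of two-sided ideals of $\mathcal{O}_X$, taking global sections over $U$ identifies $N$ with a two-sided ideal $I = \Gamma(U, \mathcal{I}) \subset \Gamma(U, \mathcal{O}_X) = A$. Next I would compute the stalks of $\mathcal{O}_X/\mathcal{I}$ on $U$. Using Proposition \ref{stalkofsheafofmodules P} (together with the right-exactness of the $\widehat{(-)}$ construction and the fact that stalks commute with the quotient of sheaves), for $\mathcal{S} \in \text{F}(A)$ one gets $(\mathcal{O}_X/\mathcal{I})_\mathcal{S} \cong A_\mathcal{S}/I'$, where $I'$ denotes the two-sided ideal of $A_\mathcal{S}$ generated by the image of $I$ under $A \to A_\mathcal{S}$. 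Hence
$$\text{Supp}(\mathcal{O}_X/\mathcal{I}) \cap U = \{\mathcal{S} \in \text{F}(A) \mid A_\mathcal{S}/I' \neq 0\} = \{\mathcal{S} \in \text{F}(A) \mid I' \neq A_\mathcal{S}\}.$$
By Lemma \ref{closureofZ(I) L} this set is exactly $\overline{Z(I)}$, which is closed in $\text{F}(A)$. Doing this for every $U_i$ finishes the proof.

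I expect the main obstacle to be the careful bookkeeping in the reduction step rather than any deep algebra: one must check that the bimodule $N$ appearing in $\mathcal{I}|_U \cong \widehat{N}$ really is (canonically isomorphic to) an honest two-sided ideal of $A$ in a way compatible with the inclusion $\mathcal{I} \hookrightarrow \mathcal{O}_X$, and that the cokernel sheaf $\mathcal{O}_X/\mathcal{I}$ restricted to $U$ is $\widehat{A/I}$ — this is where right-exactness of $N \mapsto \widehat{N}$ (and the compatibility of $\Gamma$ with center-having spaces) is used. A secondary subtlety is that the local computation of stalks must be stated for the quotient of bimodule sheaves; but since everything in sight is determined on the base of fundamental opens $D(\begin{bf} a \end{bf})$, where the sections are explicitly $A_{\begin{bf} a \end{bf}} \otimes_A (A/I) \otimes_A A_{\begin{bf} a \end{bf}} \cong A_{\begin{bf} a \end{bf}}/I_{\begin{bf} a \end{bf}}$, this is routine once set up correctly. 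Once the identification $(\mathcal{O}_X/\mathcal{I})_\mathcal{S} \cong A_\mathcal{S}/I'$ is in hand, the conclusion is immediate from Lemma \ref{closureofZ(I) L}.
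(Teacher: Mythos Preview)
Your proof is correct and follows essentially the same approach as the paper: reduce to the affine case $X=\text{F}(A)$, identify $\mathcal{I}$ with $\widehat{I}$ for a two-sided ideal $I\subset A$, observe that the support consists of those $\mathcal{S}$ for which the ideal generated by $I$ in $A_\mathcal{S}$ is proper, and invoke Lemma~\ref{closureofZ(I) L} to conclude this is $\overline{Z(I)}$. Your version is in fact more explicit about the stalk computation and the identification of $N$ with an ideal than the paper's terse ``It is easy to see,'' but the structure is the same.
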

\begin{proof}
We can assume that $X=\text{F}(A)$ is affine. Therefore $\mathcal{I}=\widehat{I}$ for some
(two-sided)
ideal $I$ of $A$. It is easy to see that the support of $\mathcal{O}_X/\mathcal{I}$ consists of
all $\mathcal{S}$ such that the ideal generated by $I$ in $A_{\mathcal{S}}$ does not contain 1. So
the support of $\mathcal{O}_X/\mathcal{I}$ is equal to $\overline{Z(I)}$, see Lemma
\ref{closureofZ(I) L}.
\end{proof}
Given  a sheaf of (two-sided) ideals $\mathcal{I}$
on an F-scheme $X$, set
$$Z(\mathcal{I}):=\{x\in X|\, \mathcal{I}_x\subset \text{J}(\mathcal{O}_{X,x})\}.$$
Note that the closure of $Z(\mathcal{I})$ is the support of $\mathcal{O}_X/\mathcal{I}$, if
$\mathcal{I}$ is quasi-coherent.
\begin{definition}
A subset of an F-scheme $X$ is called \textbf{semi-closed in $X$} if it is of the form  $Z(\mathcal{I})$
for some  quasi-coherent sheaf of (two-sided) ideals on $X$.
\end{definition}
Note that if $U\subset X$ is open and $Y\subset X$ is semi-closed in $X$ then  $Y\cap U$
is semi-closed in $U$.
\begin{proposition}\label{closedsubfschemes P}
Suppose that $\mathcal{I}$ is a quasi-coherent sheaf of (two-sided) ideals on an F-scheme $X$.
Then $(Z(\mathcal{I}),\mathcal{O}_X/\mathcal{I})$ is an F-scheme.
\end{proposition}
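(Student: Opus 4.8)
The statement is local on $X$, so the plan is to reduce immediately to the affine case $X=\text{F}(A)$, where $\mathcal{I}=\widehat{I}$ for a two-sided ideal $I\subset A$. The key geometric input is Lemma~\ref{closedsubsetsofideals}, which identifies $Z(\widehat{I})$ with the image of $f:\text{F}(A/I)\to\text{F}(A)$ induced by the quotient map $\pi:A\to A/I$. So the natural candidate is to show that $f$ restricts to an isomorphism of ringed spaces from $\text{F}(A/I)$ onto the ringed subspace $(Z(\mathcal{I}),\mathcal{O}_X/\mathcal{I})$; then $(Z(\mathcal{I}),\mathcal{O}_X/\mathcal{I})$ is affine, hence an F-scheme.

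First I would check that $f$ is injective on $\text{F}(A/I)$ and a homeomorphism onto its image $Z(\widehat{I})$ with the subspace topology. Injectivity and continuity come from Theorem~\ref{AffineF-schemes T}; that $f$ is open onto its image can be checked on fundamental opens, using that $f^{-1}(D(\begin{bf}a\end{bf}))=D(\pi(\begin{bf}a\end{bf}))$ and that every fraction of $A/I$ lifts to a fraction of $A$ (since $A\to A/I$ is surjective, one lifts the entries inductively). The surjectivity of $f$ onto $Z(\widehat{I})$ is exactly Lemma~\ref{closedsubsetsofideals}.

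Next, the sheaf-theoretic part: I must identify $f_*\mathcal{O}_{A/I}$ with $\mathcal{O}_A/\mathcal{I}$ restricted to $Z(\widehat{I})$. At the level of stalks, for $\mathcal{S}=f(\mathcal{T})$ one needs $(\mathcal{O}_A/\mathcal{I})_{\mathcal{S}}\cong (A_{\mathcal{S}})/(I\cdot A_{\mathcal{S}})\cong (A/I)_{\mathcal{T}}$. The first isomorphism is because stalks of a quotient sheaf are quotients of stalks (Proposition~\ref{structuresheaf P} part 1 identifies $\mathcal{O}_{A,\mathcal{S}}\cong A_{\mathcal{S}}$, and $\widehat{I}_{\mathcal{S}}$ is the ideal $I\cdot A_{\mathcal{S}}$ by Proposition~\ref{stalkofsheafofmodules P}). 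The second isomorphism is the crucial algebraic point: when $\mathcal{S}=f(\mathcal{T})$, i.e.\ $I\subset \text{J}(A_{\mathcal{S}})$ and $\mathcal{T}$ is the fully invertible system on $A/I$ induced by $A\to A_{\mathcal{S}}/I'$, the ring $A_{\mathcal{S}}/(I\cdot A_{\mathcal{S}})$ together with the map from $A/I$ has the universal property of $(A/I)_{\mathcal{T}}$ — this requires verifying that a homomorphism $A/I\to C$ sending $\mathcal{T}$ to units factors through $A_{\mathcal{S}}/(I\cdot A_{\mathcal{S}})$, which follows from the universal property of $A_{\mathcal{S}}$ plus the observation that $I$ already maps into $\text{J}(A_{\mathcal{S}})\subset\ker$ of the composite once we kill $I$. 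Granting the stalk isomorphisms are compatible with restriction (which they are, being induced functorially by $\pi$), one gets an isomorphism of sheaves $\mathcal{O}_A/\mathcal{I}\,|_{Z(\widehat{I})}\cong f_*\mathcal{O}_{A/I}$, and hence the ringed space $(Z(\mathcal{I}),\mathcal{O}_X/\mathcal{I})$ is isomorphic to the affine F-scheme $\text{F}(A/I)$.

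**Main obstacle.** The routine parts are the topology and the reduction to the affine case. The genuine difficulty is the stalk identification $(A_{\mathcal{S}})/(I\cdot A_{\mathcal{S}})\cong (A/I)_{\mathcal{T}}$ — in particular making precise which fully invertible system $\mathcal{T}\in\text{F}(A/I)$ corresponds to a given $\mathcal{S}\in Z(\widehat{I})$, and checking this correspondence is a genuine inverse to $f$ (not just a surjection). One should also be slightly careful that the subspace structure sheaf $\mathcal{O}_X/\mathcal{I}$ is the sheafification of the presheaf quotient and verify the support/closure subtleties from Lemma~\ref{closureofZ(I) L} do not interfere: $Z(\mathcal{I})$ need not be closed, only its closure (the support of $\mathcal{O}_X/\mathcal{I}$) is, but this does not obstruct the argument since we only claim $(Z(\mathcal{I}),\mathcal{O}_X/\mathcal{I})$ is an F-scheme, not a \emph{closed} sub-F-scheme of $X$. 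Once the affine case is settled, gluing over an affine cover $\{U_i\}$ of a general $X$ is automatic because $Z(\mathcal{I})\cap U_i=Z(\mathcal{I}|_{U_i})$ and the structure sheaves agree on overlaps.
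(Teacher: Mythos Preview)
Your proposal is correct and follows essentially the same route as the paper: reduce to the affine case $X=\text{F}(A)$, $\mathcal{I}=\widehat{I}$, and exhibit an isomorphism $\text{F}(A/I)\cong(Z(\mathcal{I}),\mathcal{O}_X/\mathcal{I})$ using Lemma~\ref{closedsubsetsofideals} for the topological bijection and the lifting of fractions along the surjection $\pi$ for the homeomorphism. The one difference worth noting is where you verify the sheaf isomorphism: you work at stalks, aiming for $A_{\mathcal{S}}/(I\cdot A_{\mathcal{S}})\cong (A/I)_{\mathcal{T}}$, whereas the paper works on the basis of fundamental opens, proving $A_{\mathbf{a}}/I'\cong (A/I)_{\pi(\mathbf{a})}$ for each fraction $\mathbf{a}$ by checking that $A/I\to A_{\mathbf{a}}/I'$ has the universal property of localizing at $\pi(\mathbf{a})$ (first for length-one fractions, then by induction). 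The paper's basis-level argument is slightly cleaner because the universal property of $A_{\mathbf{a}}$ is transparent (iterated one-element localization), it bypasses the need to pin down the inverse assignment $\mathcal{S}\mapsto\mathcal{T}$ explicitly, and it simultaneously handles injectivity of $f$; your stalk version requires invoking the universal property of the direct-limit ring $A_{\mathcal{S}}$ and keeping track of the pullback $\mathcal{S}=\pi^*(\mathcal{T})$, which is correct but a bit more delicate. Either way the core algebraic content---that quotienting by $I$ commutes with these localizations---is the same.
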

\begin{proof}
It is enough to prove this for the case $X=\text{F}(A)$ is affine
and  $\mathcal{I}=\widehat{I}$ for some (two-sided)
ideal $I$ of $A$. I prove that  $(Z(\mathcal{I}),\mathcal{O}_X/\mathcal{I})$
is isomorphic to   the affine F-scheme $\text{F}(A/I)$. Consider the morphism
$f:\text{F}(A/I)\to \text{F}(A)$ induced by
the quotient homomorphism $\pi:A\to A/I$. By Lemma \ref{closedsubsetsofideals}, $f$ induces
a continuous map   $g:\text{F}(A/I)\to Z(\mathcal{I})$. It is clearly a bijection.
To prove that $g$ is a homeomorphism, it is enough to show that
the images of open subsets are open
in the induced topology of  $ Z(\mathcal{I})$.  So suppose that
$\begin{bf} a \end{bf}$ is a fraction of $A/I$. It is easy to see that
there is a fraction  $\begin{bf} b \end{bf}$ of $A$ such that
$\pi(\begin{bf} b \end{bf})=\begin{bf} a \end{bf}$. Then one can
 check that $g(D(\begin{bf} a \end{bf}))=D(\begin{bf} b \end{bf})\cap Z(\mathcal{I}) $.
So $g$ is a homeomorphism. On the other hand,
it is easy to see that
$\mathcal{I}$ is in the kernel of $f^\sharp: \mathcal{O}_A\to f_*\mathcal{O}_{A/I}$. Therefore
$f$ induces a morphism $g:\text{F}(A/I)\to (Z(\mathcal{I}),\mathcal{O}_A/\mathcal{I})$ of ringed
spaces.  In order to show that $g$ is an isomorphism,
it is enough to show that for any fraction $\begin{bf} a \end{bf}$ of $A$, the homomorphism
$$f^{\sharp}:\mathcal{O}(A)(D(\begin{bf} a \end{bf}))/\mathcal{I}(D(\begin{bf} a \end{bf}))\to
f_*\mathcal{O}_{A/I}(D(\begin{bf} a \end{bf}))=\mathcal{O}(A/I)(D(\pi(\begin{bf} a \end{bf})))$$
is an isomorphism (since fundamental open subsets form a base for the topology of $\text{F}(A)$).
The above homomorphism is the natural homomorphism
$$A_{\begin{bf} a \end{bf}}/I'\to (A/I)_{\pi(\begin{bf} a \end{bf})}$$
where $I'$ is the ideal of $A_{\begin{bf} a \end{bf}}$ which is generated by $I$.
I claim that
the homomorphism $A/I \to A_{\begin{bf} a \end{bf}}/I'$ has the universal property associated to
$\pi(\begin{bf} a \end{bf})$. Suppose that $\begin{bf} a \end{bf}=(a)$ where $a\in A$. Since
$a$ is invertible in $A_a$, the element $\pi(a)$ is invertible in $A_a/I'$. If $A/I\to B$
is a ring homomorphism  such that $\pi(a)$ is invertible in $B$ then there is a
natural ring homomorphism $(A/I)_{\pi(a)}\to B$ making
the following diagram commutative
 \begin{displaymath}
\xymatrix{ & A/I  \ar[dr] \ar[r]  & (A/I)_{\pi(  a  )}\ar[d]   \\
           &      & B  }
 \end{displaymath}
This homomorphism composed with the homomorphism $A_a/I'\to (A/I)_{\pi(a)}$, gives
a homomorphism $A_a/I'\to B$ making the following diagram commutative
 \begin{displaymath}
\xymatrix{ & A/I  \ar[dr] \ar[r]  & A_{  a  }/I' \ar[d]   \\
           &      &  B }
 \end{displaymath}
So the homomorphism $A/I \to A_{  a  }/I'$ has the universal property associated to
$\pi(  a  )$. A simple induction implies the same for any fraction.
Therefore in the following  commutative diagram
 \begin{displaymath}
\xymatrix{ & A/I  \ar[dr] \ar[r]  & A_{\begin{bf} a \end{bf}}/I' \ar[d]   \\
           &      &  (A/I)_{\pi(\begin{bf} a \end{bf})} }
 \end{displaymath}
the natural homomorphism
$$A_{\begin{bf} a \end{bf}}/I'\to (A/I)_{\pi(\begin{bf} a \end{bf})}$$
is an isomorphism.
\end{proof}
Now we can introduce the notion of closed sub-F-schemes and closed immersions. First, it is
straightforward to define the notion of open
immersions; a morphism $f:X\to Y$ is called an \textbf{open immersion}
if $f$ induces an isomorphism
of $X$ with an open sub-F-scheme of $Y$.
\begin{definition}
A \textbf{closed sub-F-scheme} of an F-scheme $X$ is a ringed space of the form
$(Z(\mathcal{I}),\mathcal{O}_X/\mathcal{I})$ for some quasi-coherent sheaf  $\mathcal{I}$ of ideals.
A morphism $f:X\to Y$ is called a \textbf{closed immersion} if it is   equivalent to a morphism
of type $(Z(\mathcal{I}),\mathcal{O}_Y/\mathcal{I})\to Y$ (the inclusion morphism) for some
quasi-coherent
sheaf of ideals $\mathcal{I}$ on $Y$. Recall that  $f:X\to Y$ and $f': X' \to Y$ are called
equivalent if there is an isomorphism $i: X' \to X$ such that $f'=f i$.
\end{definition}
Note that any closed sub-F-scheme of an affine F-scheme $\text{F}(A)$ is
of the form $\text{F}(A/I)$
for some ideal $I$ of $A$. Moreover, the morphism $ \text{F}(A/I)\to \text{F}(A)$
(induce by the quotient homomorphism) is a closed immersion.

A locally closed immersion (and sub-F-scheme) is what one expects;
a morphism $f:X\to Y$ is called a \textbf{locally closed immersion} if
there are a closed immersion $g:X\to Z$ and an open immersion $
h: Z\to Y$ such that $f=h g$.
A \textbf{locally closed sub-F-scheme} of an F-scheme $X$ is
a closed sub-F-scheme of an open sub-F-scheme of $X$.

To give another characterization of closed immersions we need the following lemma.
\begin{lemma}\label{kernelquasicoherent}
Suppose that  $(Y,\mathcal{O}_Y)=(Z(\mathcal{I}),\mathcal{O}_X/\mathcal{I})$ is a
closed sub-F-scheme of an
F-scheme $X$ and $i:Y\to X$ is the inclusion morphism. Then the homomorphism
$i_\sharp: \mathcal{O}_X\to i_*\mathcal{O}_Y$ is onto and its kernel is equal to
$\mathcal{I}$.
\end{lemma}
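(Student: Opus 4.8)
The plan is to reduce the statement to the affine case and then check it on stalks. Surjectivity of $i_\sharp$ and the equality $\ker(i_\sharp)=\mathcal{I}$ of subsheaves of $\mathcal{O}_X$ are both local questions on $X$, and for any open $U\subseteq X$ one has $(i_*\mathcal{O}_Y)|_U\cong(i|_{Y\cap U})_*(\mathcal{O}_Y|_{Y\cap U})$; hence I may assume $X=\text{F}(A)$ and $\mathcal{I}=\widehat{I}$ for a two-sided ideal $I\subseteq A$. In this situation Proposition \ref{closedsubfschemes P} identifies the closed sub-F-scheme $(Z(\mathcal{I}),\mathcal{O}_X/\mathcal{I})$ with the affine F-scheme $\text{F}(A/I)$, through the morphism induced by the quotient map $\pi\colon A\to A/I$. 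I will use two consequences of (the proof of) that proposition: for every fraction $\mathbf{b}$ of $A$ one has $D(\mathbf{b})\cap Z(\mathcal{I})\cong D(\pi(\mathbf{b}))$, and there is a natural isomorphism $A_{\mathbf{b}}/I_{\mathbf{b}}\xrightarrow{\ \sim\ }(A/I)_{\pi(\mathbf{b})}$, where $I_{\mathbf{b}}$ denotes the ideal generated by $I$ in $A_{\mathbf{b}}$.

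The first observation is that, since $\mathcal{O}_Y$ is by definition the restriction of $\mathcal{O}_X/\mathcal{I}$ to the subspace $Z(\mathcal{I})$, the morphism $i_\sharp$ factors as
$$\mathcal{O}_X\ \longrightarrow\ \mathcal{O}_X/\mathcal{I}\ \xrightarrow{\ \alpha\ }\ i_*\mathcal{O}_Y,$$
where the first map is the canonical projection --- which is already surjective with kernel $\mathcal{I}$ --- and $\alpha$ is the natural map $\mathcal{F}\to i_*(\mathcal{F}|_{Z(\mathcal{I})})$ applied to $\mathcal{F}=\mathcal{O}_X/\mathcal{I}$. Therefore the lemma reduces entirely to showing that $\alpha$ is an isomorphism of sheaves of rings on $X$, and I would verify this on stalks. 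Fixing $\mathcal{S}\in\text{F}(A)$ and using that the fundamental open subsets $D(\mathbf{b})$ containing $\mathcal{S}$ are cofinal among its neighbourhoods, together with $\mathcal{O}_Y(D(\mathbf{b})\cap Z(\mathcal{I}))\cong(A/I)_{\pi(\mathbf{b})}\cong A_{\mathbf{b}}/I_{\mathbf{b}}$, one obtains
$$(i_*\mathcal{O}_Y)_{\mathcal{S}}\ \cong\ \varinjlim_{D(\mathbf{b})\ni\mathcal{S}}A_{\mathbf{b}}/I_{\mathbf{b}}\ \cong\ A_{\mathcal{S}}/\mathcal{I}_{\mathcal{S}},$$
the last step by exactness of filtered colimits, where $\mathcal{I}_{\mathcal{S}}$ is the ideal generated by $I$ in $A_{\mathcal{S}}=(\mathcal{O}_X)_{\mathcal{S}}$, i.e. precisely the stalk of $\mathcal{I}$. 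Under these identifications $\alpha_{\mathcal{S}}$ becomes the identity of $A_{\mathcal{S}}/\mathcal{I}_{\mathcal{S}}$, hence is an isomorphism; consequently $(i_\sharp)_{\mathcal{S}}$ is the quotient map $A_{\mathcal{S}}\to A_{\mathcal{S}}/\mathcal{I}_{\mathcal{S}}$ at every point, which gives simultaneously that $i_\sharp$ is onto and that its kernel is $\mathcal{I}$.

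The step I expect to be the real obstacle is the stalk computation at a point $\mathcal{S}$ lying in $\overline{Z(\mathcal{I})}\setminus Z(\mathcal{I})$. One must keep in mind that $Z(\mathcal{I})$ need not be closed --- only its closure equals the support of $\mathcal{O}_X/\mathcal{I}$, by Proposition \ref{supportofquasicoherent P} --- so at such an $\mathcal{S}$ the point is not in $Y$ at all, yet $i_*\mathcal{O}_Y$ still has a nonzero stalk there; no ``topologically closed immersion'' shortcut is available, and one genuinely has to run the direct-limit argument and identify the limit with $A_{\mathcal{S}}/\mathcal{I}_{\mathcal{S}}$ compatibly with $i_\sharp$, which is exactly where Proposition \ref{closedsubfschemes P} and exactness of filtered colimits are needed. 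At the remaining points the computation is immediate: for $\mathcal{S}\in Z(\mathcal{I})$ the direct limit collapses to the stalk $(\mathcal{O}_Y)_{\mathcal{S}}$, and for $\mathcal{S}\notin\overline{Z(\mathcal{I})}$ both stalks are zero.
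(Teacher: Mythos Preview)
Your proof is correct and follows essentially the same approach as the paper: reduce to the affine case and invoke the key isomorphism $A_{\mathbf{b}}/I_{\mathbf{b}}\cong (A/I)_{\pi(\mathbf{b})}$ established in the proof of Proposition~\ref{closedsubfschemes P}. The paper's own proof is a single sentence (``This is a local question which immediately follows from the proof of Proposition~\ref{closedsubfschemes P}''), whereas you spell out the argument via the factorization $\mathcal{O}_X\to\mathcal{O}_X/\mathcal{I}\to i_*\mathcal{O}_Y$ and a stalk computation, and you explicitly address the subtle case $\mathcal{S}\in\overline{Z(\mathcal{I})}\setminus Z(\mathcal{I})$; this is more detail than the paper supplies, but the underlying idea is the same.
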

\begin{proof}
This is a local question which
immediately follows from the proof of Proposition \ref{closedsubfschemes P}.
\end{proof}
Using this lemma, we have the following characterization of closed immersions.
\begin{proposition}\label{charaterizationofclosedimmersions P}
A morphism $f:X\to Y$ of F-schemes is a closed immersion iff $f$ induces a homeomorphism from
$X$ onto a semi-closed subset of $Y$ and the homomorphism $f_\sharp:\mathcal{O}_Y\to f_*\mathcal{O}_X$
is onto.
\end{proposition}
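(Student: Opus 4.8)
The plan is to prove the two implications separately, the forward one being essentially formal. Suppose first that $f\colon X\to Y$ is a closed immersion, so that by definition there are a quasi-coherent sheaf of ideals $\mathcal{I}$ on $Y$ and an isomorphism $j\colon X\to(Z(\mathcal{I}),\mathcal{O}_Y/\mathcal{I})$ with $f=\iota j$, where $\iota$ is the inclusion morphism. Since $(Z(\mathcal{I}),\mathcal{O}_Y/\mathcal{I})$ carries the subspace topology of $Y$, the morphism $\iota$ is a homeomorphism onto $Z(\mathcal{I})$, and hence $f$ is a homeomorphism of $X$ onto the semi-closed subset $Z(\mathcal{I})$; moreover, by Lemma~\ref{kernelquasicoherent} the sheaf homomorphism $\iota_\sharp$ is onto, whence $f_\sharp$ is onto as well. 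This settles one direction.

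For the converse, assume $f$ is a homeomorphism of $X$ onto a semi-closed subset of $Y$ and that $f_\sharp\colon\mathcal{O}_Y\to f_*\mathcal{O}_X$ is onto. Put $\mathcal{I}:=\ker f_\sharp$, a sheaf of two-sided ideals on $Y$; surjectivity of $f_\sharp$ gives $\mathcal{O}_Y/\mathcal{I}\cong f_*\mathcal{O}_X$. What I would aim to show is that $\mathcal{I}$ is quasi-coherent, that $Z(\mathcal{I})=f(X)$, and that the inclusion $(Z(\mathcal{I}),\mathcal{O}_Y/\mathcal{I})\to Y$ is equivalent to $f$. All three assertions are local on $Y$, so I would fix an open affine covering $\{Y_i=\text{F}(A_i)\}$ of $Y$ and argue over each $Y_i$: replacing $Y$ by $Y_i$ and $X$ by $f^{-1}(Y_i)$ — using that the intersection of a semi-closed subset with an open subset is semi-closed, that surjectivity of a sheaf map can be checked on stalks and is therefore local, and that the restriction of a homeomorphism onto its image is again one — reduces the problem to the case $Y=\text{F}(A)$ affine.

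The decisive point is then that $X$ is affine. Indeed, the semi-closed subset $f(X)$ of $\text{F}(A)$ has the form $Z(I_0)$ for a two-sided ideal $I_0$ of $A$, and by Lemma~\ref{closedsubsetsofideals} together with the proof of Proposition~\ref{closedsubfschemes P} the closed sub-F-scheme supported on $Z(I_0)$ is isomorphic to $\text{F}(A/I_0)$; in particular, as a topological space $f(X)=Z(I_0)$ is the space of an affine F-scheme and so has a center. Transporting that center through the homeomorphism $f\colon X\to f(X)$ shows $X$ has a center, whence $X$ is affine by Lemma~\ref{beingaffine L}; write $X=\text{F}(B)$. Being a morphism of F-schemes, $f$ is local, so by Theorem~\ref{AffineF-schemes T} it is induced by a unique ring homomorphism $\phi\colon A\to B$; taking global sections of the surjection $f_\sharp$ and using that on a space with a center the global sections coincide with the stalk at the center, one sees that $\phi$ is onto, so $B\cong A/I$ with $I=\ker\phi$ and $f$ is the morphism $\text{F}(A/I)\to\text{F}(A)$ induced by the quotient map. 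By the proof of Proposition~\ref{closedsubfschemes P} this morphism identifies $X=\text{F}(A/I)$ with the closed sub-F-scheme $(Z(I),\mathcal{O}_A/\widehat{I})$ of $\text{F}(A)$, and by Lemma~\ref{kernelquasicoherent} the kernel sheaf is $\mathcal{I}|_{Y_i}=\widehat{I}$, which is quasi-coherent, with $Z(\mathcal{I}|_{Y_i})=Z(I)=f(X)\cap Y_i$.

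To conclude I would glue: since $\mathcal{I}|_{Y_i}$ is quasi-coherent for every $i$, $\mathcal{I}$ is a quasi-coherent sheaf of ideals on $Y$; the local computations then give $Z(\mathcal{I})=f(X)$ and $\mathcal{O}_Y/\mathcal{I}\cong f_*\mathcal{O}_X$ globally, so $(Z(\mathcal{I}),\mathcal{O}_Y/\mathcal{I})$ is canonically isomorphic to $X$ (push $\mathcal{O}_X$ forward along the homeomorphism $X\to f(X)$ induced by $f$) compatibly with the structure maps to $Y$. Hence $f$ is equivalent to the inclusion $(Z(\mathcal{I}),\mathcal{O}_Y/\mathcal{I})\to Y$, i.e.\ a closed immersion. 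The one genuinely non-formal step is the middle one, that $f(X)$ being semi-closed forces $X$ to be affine; it rests on two features peculiar to this theory — that semi-closed subsets of an affine F-scheme inherit an affine structure with a center (Proposition~\ref{closedsubfschemes P}) and that an F-scheme is affine exactly when it has a center (Lemma~\ref{beingaffine L}) — and everything else amounts to assembling Theorem~\ref{AffineF-schemes T}, Lemma~\ref{kernelquasicoherent}, and a routine gluing argument.
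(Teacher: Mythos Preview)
Your argument is correct and follows essentially the same route as the paper's proof: reduce to the affine case $Y=\text{F}(A)$, use that a semi-closed subset of an affine F-scheme has a center to conclude $X$ is affine via Lemma~\ref{beingaffine L}, deduce that the induced ring map $\phi:A\to B$ is onto by looking at the stalk (equivalently, global sections) at the center, and finish with Lemma~\ref{kernelquasicoherent}. You supply more detail (the forward direction, the explicit gluing, and the justification via Proposition~\ref{closedsubfschemes P} that $Z(I_0)$ carries a center), but the substance is the same.
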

\begin{proof}
One direction is straightforward. So suppose that $f$ satisfies the above conditions.
Let $\mathcal{I}$ be the kernel of $\mathcal{O}_Y\to f_*\mathcal{O}_X$ which is then
a sheaf of ideals on $Y$. I claim that  $\mathcal{I}$ is quasi-coherent and $f$ is canonically
equivalent to
the inclusion homomorphism $(Z(\mathcal{I}),\mathcal{O}_Y/\mathcal{I})\to Y$ which proves the
proposition.
To prove this, it is enough to assume that $Y=\text{F}(A)$ is affine.  Since each semi-closed
subset of $Y$ has a center, $X$ has to have a center which implies that $X$ is affine, see Lemma
\ref{beingaffine L}. Therefore $X=\text{F}(B)$ and $f:X\to Y$ is induced by a homomorphism
$\phi:A\to B$. But $\phi:A\to B$ is the homomorphism
 $f^\sharp_{\text{U}(A)}:\mathcal{O}_{A,\text{U}(A)}
 \to (f_*\mathcal{O}_B)_{\text{U}(A)}$ which is onto. By Lemma \ref{kernelquasicoherent},
 $\mathcal{I}$ is quasi-coherent and $f$ is canonically equivalent to  the inclusion homomorphism $(Z(\mathcal{I}),\mathcal{O}_Y/\mathcal{I})\to X$.
\end{proof}
The following corollary is immediate from the above proposition.
\begin{corollary}\label{closedimmersionopenconver C}
Suppose that $f:X\to Y$ is a morphism of F-schemes and $\{U_i\}$ is an open covering of $Y$. Then the
morphism $f$
is a closed immersion iff $f^{-1}(U_i)\to U_i$ is a closed immersion for each $i$.
\end{corollary}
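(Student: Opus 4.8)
The plan is to reduce the statement to the affine case via Proposition \ref{charaterizationofclosedimmersions P}, which characterizes closed immersions by two local conditions: that $f$ be a homeomorphism onto a semi-closed subset of the target, and that $f_\sharp:\mathcal{O}_Y\to f_*\mathcal{O}_X$ be surjective. Both of these are conditions that can be checked on an open covering of $Y$, so the whole corollary should fall out by patching. One direction is immediate: if $f$ is a closed immersion, then restricting to any open $U_i\subset Y$ yields a morphism $f^{-1}(U_i)\to U_i$ which is again of the form $(Z(\mathcal{I}|_{U_i}),\mathcal{O}_{U_i}/\mathcal{I}|_{U_i})\to U_i$, since restriction of a quasi-coherent sheaf of ideals to an open sub-F-scheme is again quasi-coherent and $Z$ commutes with restriction (as noted right before Proposition \ref{closedsubfschemes P}).

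For the converse, I would suppose $f^{-1}(U_i)\to U_i$ is a closed immersion for each $i$ and verify the two conditions of Proposition \ref{charaterizationofclosedimmersions P} for $f$ itself. Surjectivity of $f_\sharp:\mathcal{O}_Y\to f_*\mathcal{O}_X$ is a local statement on $Y$: it suffices that $(f_\sharp)|_{U_i}$ be surjective for each $i$, and $(f_\sharp)|_{U_i}$ is precisely the sheaf map attached to $f^{-1}(U_i)\to U_i$ (here one uses that $f$ restricted to $f^{-1}(U_i)$ has target $U_i$, and $f_*$ commutes with the relevant restrictions). Similarly, ``$f$ is a homeomorphism onto a semi-closed subset'' should be checked locally: since the $U_i$ cover $Y$ and the $f^{-1}(U_i)$ cover $X$, and each $f|_{f^{-1}(U_i)}$ is a homeomorphism onto a semi-closed subset $Z(\mathcal{I}_i)\subset U_i$, gluing gives that $f$ is a homeomorphism onto a subset of $Y$ whose intersection with each $U_i$ is semi-closed in $U_i$. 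One then needs that a subset of $Y$ which is semi-closed on each member of an open cover is semi-closed in $Y$; this follows because the local sheaves of ideals $\ker(\mathcal{O}_{U_i}\to f_*\mathcal{O}_{f^{-1}(U_i)})$ agree on overlaps (both equal $\ker(\mathcal{O}_Y\to f_*\mathcal{O}_X)$ restricted there, by Lemma \ref{kernelquasicoherent}) and hence glue to a global quasi-coherent sheaf of ideals $\mathcal{I}$ on $Y$ with $Z(\mathcal{I})\cap U_i = Z(\mathcal{I}_i)$.

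The main obstacle I anticipate is bookkeeping around the quasi-coherence of the glued ideal sheaf $\mathcal{I}=\ker f_\sharp$: one must check that a sheaf of ideals which is quasi-coherent on each $U_i$ of an open cover is globally quasi-coherent, and that $Z$ of it is computed covering-wise. The first point is essentially formal from the definition of quasi-coherence (it is phrased in terms of the existence of a suitable affine open cover, so being quasi-coherent is local), and the second follows from the remark that $Z(\mathcal{I})\cap U$ is semi-closed in $U$ together with the fact that $\mathcal{I}_x\subset \text{J}(\mathcal{O}_{X,x})$ is a stalk condition insensitive to which $U_i$ we test it in. Once these are in place, Proposition \ref{charaterizationofclosedimmersions P} applied to $f$ with this $\mathcal{I}$ finishes the argument, and the corollary statement follows.
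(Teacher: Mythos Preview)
Your proposal is correct and follows essentially the same approach as the paper: the paper simply declares the corollary ``immediate from the above proposition'' (Proposition~\ref{charaterizationofclosedimmersions P}), and what you have written is a careful unpacking of exactly why that characterization makes both conditions local on~$Y$. The only difference is level of detail; your gluing of the kernel ideal sheaves and verification that $Z(\mathcal{I})$ is computed stalkwise are the implicit steps the paper leaves to the reader.
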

The composition of two open immersions is clearly an open immersion. The same is
valid for closed immersions.
\begin{proposition}\label{compositionofclosedimmersions P}
The composition of
closed immersions  is
  a closed immersion.
\end{proposition}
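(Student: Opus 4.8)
The plan is to reduce everything to the affine situation and then to invoke the elementary fact, already recorded right after the definition of closed sub-F-schemes, that for any ideal $I$ of a ring $A$ the morphism $\text{F}(A/I)\to \text{F}(A)$ induced by the quotient homomorphism is a closed immersion. So let $f:X\to Y$ and $g:Y\to Z$ be closed immersions; the goal is to show that $g\circ f:X\to Z$ is a closed immersion. (One could also try to argue directly via Proposition \ref{charaterizationofclosedimmersions P}: the composite $(g\circ f)_\sharp$ is onto since it factors as $\mathcal{O}_Z\to g_*\mathcal{O}_Y\to g_*f_*\mathcal{O}_X$ with $g_*$ exact on a closed immersion, but then one still has to know that the image of $g\circ f$ is semi-closed in $Z$, i.e. that a semi-closed subset of a closed sub-F-scheme is semi-closed upstairs, which is not one of the facts proved above. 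The affine reduction avoids this.)

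First I would use Corollary \ref{closedimmersionopenconver C}. Choose an open affine covering $\{U_i=\text{F}(A_i)\}$ of $Z$; then it suffices to prove that $(g\circ f)^{-1}(U_i)\to U_i$ is a closed immersion for each $i$. Applying the ``only if'' direction of Corollary \ref{closedimmersionopenconver C} to $g$ with the cover $\{U_i\}$, each $g^{-1}(U_i)\to U_i$ is a closed immersion; applying it again to $f$ with the open cover $\{g^{-1}(U_i)\}$ of $Y$, each $f^{-1}(g^{-1}(U_i))\to g^{-1}(U_i)$ is a closed immersion. Since $(g\circ f)^{-1}(U_i)=f^{-1}(g^{-1}(U_i))$, we are reduced to the case $Z=\text{F}(A)$ affine.

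Now $g:Y\to \text{F}(A)$ is a closed immersion, so (by the remark after the definition of closed sub-F-schemes, together with Lemma \ref{beingaffine L}) $Y$ is isomorphic to $\text{F}(A/I)$ for some ideal $I\subset A$ and, up to equivalence, $g$ is the morphism induced by $\pi:A\to A/I$. Next, $f:X\to \text{F}(A/I)$ is a closed immersion into an affine F-scheme, so $X\cong \text{F}\big((A/I)/J\big)$ for some ideal $J$ of $A/I$, with $f$ induced by the quotient $A/I\to (A/I)/J$. Put $K=\pi^{-1}(J)$, an ideal of $A$ containing $I$; then $(A/I)/J\cong A/K$, and under these identifications the composite $g\circ f:X\to \text{F}(A)$ is exactly the morphism induced by the quotient homomorphism $A\to A/K$. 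By the remark recorded after the definition of closed sub-F-schemes this is a closed immersion, which completes the proof.

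The main point requiring care is the reduction step: one must be explicit that ``closed immersion'' is local on the target in the precise form needed, so that restricting a closed immersion over a preimage of an open set is again a closed immersion, and that this can be iterated along $g$ and then $f$. This is exactly what the two successive applications of Corollary \ref{closedimmersionopenconver C} above provide, so it is worth writing them out rather than leaving them implicit. Everything after the reduction is the purely algebraic observation that a quotient of a quotient $A\twoheadrightarrow A/I\twoheadrightarrow (A/I)/J$ is again a single quotient $A\twoheadrightarrow A/K$.
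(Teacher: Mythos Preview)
Your proof is correct and follows essentially the same route as the paper's own argument: reduce to the affine case via Corollary \ref{closedimmersionopenconver C}, identify $Y\cong\text{F}(A/I)$ and $X\cong\text{F}(A/I')$ for ideals $I\subset I'$ of $A$, and conclude that the composite is induced by the quotient $A\to A/I'$. The paper is terser about the reduction step and the passage from $(A/I)/J$ to $A/K$, but your more explicit bookkeeping (two applications of the corollary, writing $K=\pi^{-1}(J)$) is the same argument spelled out in full.
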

\begin{proof}
Suppose that $f:X\to Y$ and $g:Y\to Z$ are closed immersions. To prove that $gf$ is a closed
immersion,
by Corollary \ref{closedimmersionopenconver C}, we may assume that $Z=\text{F}(A)$ is affine.
This implies that $X$ and $Y$, being closed sub-F-schemes of affine F-schemes,
are also affine.  So $Y=\text{F}(A/I)$ and $X=\text{F}(A/I')$ for some ideals $I\subset I'$.
Therefore   $g f: \text{F}(A/I')\to A$ is a closed immersion.
 \end{proof}

\end{subsection}


\begin{subsection}{Separated F-schemes}
Let \textbf{FSch} be the category of F-schemes.
Given any F-scheme $Y$, the category of F-schemes over $Y$ is
denoted by \textbf{FSch}$_Y$. It is easy to see that $\text{F}(\mathbb{Z})$
is the final object in \textbf{FSch} and therefore
any F-scheme is naturally an F-scheme over $\mathbb{Z}$.
The following theorem states that  fiber products exist  in \textbf{FSch}$_Y$.
\begin{theorem}\label{fiberproduct}
For any two F-schemes $X$ and $Z$   over the F-scheme $Y$,
the fiber product $X\times_Y Z$ exists and is unique up to isomorphism.
\end{theorem}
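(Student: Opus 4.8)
The plan is to imitate the classical construction of fiber products of schemes (in the style of \cite{Ha}), the only genuinely new ingredient being the description of the affine case. First, \emph{uniqueness} is automatic: a fiber product, if it exists, represents the functor on the category $\textbf{FSch}_Y$ of F-schemes over $Y$ which sends an F-scheme $T$ to the set of pairs of morphisms $T\to X$ and $T\to Z$ over $Y$, so any two fiber products are canonically isomorphic. Hence it suffices to prove existence, and by uniqueness the construction may be carried out locally and then glued.

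\textbf{The affine case.} Suppose $X=\text{F}(B)$, $Z=\text{F}(C)$ and $Y=\text{F}(A)$, with the structure morphisms induced, via Theorem \ref{AffineF-schemes T}, by ring homomorphisms $A\to B$ and $A\to C$. Let $B\ast_A C$ be the coproduct of $B$ and $C$ in the category of $A$-rings (the amalgamated free product of rings); this colimit always exists, possibly equal to the zero ring, in which case $\text{F}(B\ast_A C)=\emptyset$, which is vacuously an F-scheme. I claim that $\text{F}(B\ast_A C)$, equipped with the morphisms to $\text{F}(B)$ and $\text{F}(C)$ induced by the canonical maps $B\to B\ast_A C\leftarrow C$, is the fiber product. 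Indeed, by Proposition \ref{mapstoaffineF-schemes P} there is, for every F-scheme $T$ and every ring $R$, a natural bijection $\text{Hom}_{\textbf{FSch}}(T,\text{F}(R))\cong\text{Hom}_{\textbf{Rings}}(R,\Gamma(T,\mathcal{O}_T))$; by naturality in $R$, a morphism $T\to\text{F}(B)$ lies over $Y$ exactly when the corresponding ring homomorphism $B\to\Gamma(T,\mathcal{O}_T)$ is $A$-linear, and similarly for $C$. Hence
\[
\text{Hom}_{\textbf{FSch}_Y}(T,\text{F}(B\ast_A C))\cong\text{Hom}_{A\text{-}\textbf{Rings}}(B\ast_A C,\Gamma(T,\mathcal{O}_T)),
\]
and the right-hand side is, by the universal property of the coproduct, the set of pairs of $A$-linear homomorphisms $B\to\Gamma(T,\mathcal{O}_T)$ and $C\to\Gamma(T,\mathcal{O}_T)$, that is, exactly the set of pairs of morphisms $T\to X$, $T\to Z$ over $Y$. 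This is the required universal property.

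\textbf{Reduction by gluing.} For the general case I would carry out the usual three-step descent. The key lemma to establish first is: if the fiber product $W=X\times_Y Z$ exists, with first projection $p\colon W\to X$, and $U\subset X$ is an open sub-F-scheme, then $p^{-1}(U)$ with its induced ringed structure is an F-scheme (by the definition of F-scheme together with Proposition \ref{fundamentalopensubfschemes P}) and represents $U\times_Y Z$. Granting this, if $\{X_i\}$ is an open affine cover of $X$ and each $X_i\times_Y Z$ exists, one glues these along the open sub-F-schemes lying over the overlaps $X_i\cap X_j$, which are fiber products by the lemma and hence canonically identified; the gluing is legitimate because ringed spaces glue along open ringed subspaces, the result is again locally affine (so an F-scheme) with local projection morphisms, and the uniqueness clause supplies the cocycle condition on triple overlaps. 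The identical argument then reduces $Z$ to affine, and finally $Y$ to affine (cover $Y$ by affine opens $V_k$, replace $X$ and $Z$ by their preimages over $V_k$, and glue the fiber products $X_{V_k}\times_{V_k}Z_{V_k}$). Combined with the affine case, this proves existence.

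\textbf{The main obstacle.} The algebra here is painless: the correct noncommutative replacement of $B\otimes_A C$ is simply the categorical coproduct $B\ast_A C$, which always exists, so the real work is the gluing bookkeeping — proving the compatibility lemma, checking that the glued ringed space is genuinely an F-scheme and that every structure and projection map is a \emph{local} morphism (not merely a morphism of ringed spaces), and verifying independence of the chosen covers. Here the fact that every F-scheme is affine, $1$-affine, or $2$-affine (Lemma \ref{naffine L}) bounds the depth of the covering arguments and should keep the descent short, so the principal difficulty is just the careful propagation of the universal property through the successive gluings.
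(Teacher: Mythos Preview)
Your proposal is correct and follows essentially the same route as the paper: the paper's proof simply says that the argument is ``quite similar to the proof of the existence of fiber products in the category of schemes'' (citing \cite{Ha}) and records that in the affine case $\text{F}(B)\times_{\text{F}(A)}\text{F}(C)\cong\text{F}(B\ast_A C)$. Your write-up is considerably more detailed---in particular you spell out how Proposition \ref{mapstoaffineF-schemes P} yields the universal property in the affine case and you flag the gluing bookkeeping---but the underlying strategy is identical.
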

\begin{proof}
The proof is quite similar to the proof of the existence of fiber products
in the category of schemes, see \cite{Ha}.
I only mention that
in the case $X=\text{F}(A)$, $Z=\text{F}(B)$ and $Y=\text{F}(R)$  are  all affine,
$ X\times_Y Z\cong \text{F}(A*_R B)$    where $A*_R B $ is the coproduct of $A$ and $B$
over $R$.
\end{proof}
In the particular case $X=Z$, from the universal property of fiber product,
 we obtain a morphism
$\Delta_X:X\to X\times_Y X$, called the \textbf{diagonal morphism},
such that $p_1\Delta_X=p_2\Delta_X=1_X$ where
$p_i:X\times_Y X\to X$ are the projection morphisms.

Having proved the existence of fiber products, one can extend
the important notion of base extension in scheme theory (see \cite{Ha})
to the case of F-schemes.   The following proposition says that closed immersions are
stable under base extension.
\begin{proposition}\label{closedimemrsionbaseextension P}
Suppose that $f:Y'\to Y$ is a closed immersion. For any morphism $g:X\to  Y$, the natural
morphism
$X\times_{Y} Y'\to X$ is a closed immersion.
\end{proposition}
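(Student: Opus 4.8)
The plan is to reduce to the affine case and invoke the fact that closed immersions can be checked locally on the target (Corollary \ref{closedimmersionopenconver C}). Write $p\colon X\times_Y Y'\to X$ for the projection; we must show $p$ is a closed immersion, and here the target is $X$.

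First I would choose a compatible open affine cover. Since $Y$ admits an open affine cover $\{W_j=\text{F}(R_j)\}$ and each $g^{-1}(W_j)$ admits an open affine cover, we obtain an open affine cover $\{U_i=\text{F}(A_i)\}$ of $X$ such that $g(U_i)\subset W_{j(i)}$ for suitable indices $j(i)$. Because $f$ is a closed immersion, Corollary \ref{closedimmersionopenconver C} applied to the cover $\{W_j\}$ shows that each $f^{-1}(W_j)\to W_j$ is a closed immersion; since any closed sub-F-scheme of the affine F-scheme $\text{F}(R_j)$ has the form $\text{F}(R_j/I_j)$ for an ideal $I_j\subset R_j$ (as noted after the definition of closed sub-F-schemes), we may write $f^{-1}(W_{j(i)})\cong\text{F}(R_{j(i)}/I_{j(i)})$.

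Next, as in the category of schemes, the formation of fiber products is compatible with open immersions into the factors, so $p^{-1}(U_i)=U_i\times_Y Y'$ and, because $U_i\to Y$ factors through $W_{j(i)}$, one has
$$p^{-1}(U_i)\;\cong\;U_i\times_{W_{j(i)}}f^{-1}(W_{j(i)})\;\cong\;\text{F}(A_i)\times_{\text{F}(R_{j(i)})}\text{F}(R_{j(i)}/I_{j(i)}).$$
By the affine part of Theorem \ref{fiberproduct}, the right-hand side is $\text{F}\bigl(A_i *_{R_{j(i)}}(R_{j(i)}/I_{j(i)})\bigr)$. A short diagram chase with the universal property of the ring coproduct identifies $A_i *_{R_{j(i)}}(R_{j(i)}/I_{j(i)})$ with $A_i/J_i$, where $J_i$ is the two-sided ideal of $A_i$ generated by the image of $I_{j(i)}$ under $R_{j(i)}\to A_i$, and under this identification $p^{-1}(U_i)\to U_i$ is the morphism induced by the quotient map $A_i\to A_i/J_i$. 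That morphism is a closed immersion, so $p^{-1}(U_i)\to U_i$ is a closed immersion for every $i$, and $p$ is a closed immersion by Corollary \ref{closedimmersionopenconver C}.

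The two steps I expect to require the most care are: (i) checking that fiber products in \textbf{FSch} localize correctly over open sub-F-schemes of the base factors, so that $p^{-1}(U_i)$ is genuinely $U_i\times_Y Y'$ and these pieces cover $X\times_Y Y'$ — this is part of the construction behind Theorem \ref{fiberproduct} and parallels the scheme case; and (ii) the ring-theoretic identification of $A *_R (R/I)$ with $A$ modulo the two-sided ideal generated by the image of $I$, which is a routine application of the universal property of coproducts of (noncommutative) rings but must be carried out with two-sided ideals. The main obstacle, such as it is, is merely the bookkeeping of the compatible affine cover; the rest is formal.
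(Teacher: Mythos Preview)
Your proof is correct and follows essentially the same strategy as the paper: reduce to the affine case via Corollary~\ref{closedimmersionopenconver C}, then compute the affine fiber product as $A*_R(R/I)\cong A/J$ and observe that the projection corresponds to the surjection $A\to A/J$. The only difference is that the paper, after reducing to $X=\text{F}(A)$ affine, exploits the center of $X$ to conclude $g(X)\subset U$ for a single affine open $U\subset Y$, thereby avoiding the explicit construction of a compatible cover; your bookkeeping with the $W_j$ and $U_i$ is unnecessary in this setting but does no harm.
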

\begin{proof}
Using Corollary \ref{closedimmersionopenconver C}, it is enough to prove this for the case
$X=\text{F}(A)$ is affine. Then there is an open affine subset $U=\text{F}(B)$ of $Y$
such that $g(X)\subset U$ (since $X$ has a center). It can be seen that
the natural morphism $X\times_U f^{-1}(U)\to X\times_Y Y'$ is an isomorphism. So we can,
furthermore,
assume that $Y=\text{F}(B)$ is affine. So $Y'=\text{F}(B/I)$ for some ideal $I$ of $B$.
Then  $X\times_{Y} Y'\to X$ is nothing
but the morphism induced by the natural  homomorphism $A*_B (B/I)\to A$ which is onto.
So   $X\times_{Y} Y'\to X$ is a closed immersion.
\end{proof}
Similarly one can prove the following proposition.
\begin{proposition}\label{closedimemrsionproduct P}
Suppose that $X'\to X$ and $Y'\to Y$   are  closed immersions in the
category \textbf{FSch}$_Z$.
The natural morphism $X'\times_Z Y'\to X\times_Z Y$
is a closed immersion.
\end{proposition}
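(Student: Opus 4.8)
The plan is to reduce the statement to the two facts already established about closed immersions: that they are stable under base extension (Proposition~\ref{closedimemrsionbaseextension P}) and that they are closed under composition (Proposition~\ref{compositionofclosedimmersions P}). Concretely, I would factor the natural morphism through the intermediate F-scheme $X\times_Z Y'$ (which exists by Theorem~\ref{fiberproduct}) as
\[
X'\times_Z Y'\longrightarrow X\times_Z Y'\longrightarrow X\times_Z Y,
\]
and exhibit each of the two arrows as a base extension of one of the given closed immersions.

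First I would verify the canonical isomorphism $X\times_Z Y'\cong (X\times_Z Y)\times_Y Y'$, where $X\times_Z Y\to Y$ is the second projection and $Y'\to Y$ is the given closed immersion; this is a routine check with the universal property of fiber products, using that $Y'\to Y$ is a morphism over $Z$. Under this identification the projection $X\times_Z Y'\to X\times_Z Y$ becomes the natural morphism $(X\times_Z Y)\times_Y Y'\to X\times_Z Y$, so Proposition~\ref{closedimemrsionbaseextension P} shows that $X\times_Z Y'\to X\times_Z Y$ is a closed immersion.

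Next I would verify the analogous canonical isomorphism $X'\times_Z Y'\cong X'\times_X(X\times_Z Y')$, where $X\times_Z Y'\to X$ is the first projection and $X'\to X$ is the given closed immersion. Again this is a diagram chase with universal properties, and under it the morphism $X'\times_Z Y'\to X\times_Z Y'$ becomes the natural morphism $X'\times_X(X\times_Z Y')\to X\times_Z Y'$, which is a closed immersion by Proposition~\ref{closedimemrsionbaseextension P}. Composing the two, Proposition~\ref{compositionofclosedimmersions P} yields that $X'\times_Z Y'\to X\times_Z Y$ is a closed immersion; a final easy check that this composite coincides with the natural morphism $X'\times_Z Y'\to X\times_Z Y$, which follows from the uniqueness clauses in the universal properties involved, completes the argument.

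The only step requiring any attention is the bookkeeping for the two canonical isomorphisms and the identification of the composite of the two projections with the natural morphism; I expect this to be the main (but purely formal) obstacle, as it is entirely a manipulation of universal properties, everything substantive about closed immersions in the affine case and about the existence of fiber products having already been proved.
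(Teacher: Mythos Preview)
Your argument is correct. Factoring the product morphism as
\[
X'\times_Z Y'\;\longrightarrow\; X\times_Z Y'\;\longrightarrow\; X\times_Z Y,
\]
identifying each arrow as a base change of one of the given closed immersions via the canonical isomorphisms $X\times_Z Y'\cong (X\times_Z Y)\times_Y Y'$ and $X'\times_Z Y'\cong X'\times_X(X\times_Z Y')$, and then invoking Propositions~\ref{closedimemrsionbaseextension P} and~\ref{compositionofclosedimmersions P} is the standard categorical reduction, and all the bookkeeping you mention goes through in any category with fiber products.

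The paper does not spell out a proof here; it only says ``Similarly one can prove the following proposition,'' pointing back to the proof of Proposition~\ref{closedimemrsionbaseextension P}. That proof proceeds by reducing to the affine case via Corollary~\ref{closedimmersionopenconver C} and then checking directly that the relevant homomorphism of coproduct rings is surjective. So the intended argument is a second affine reduction rather than your formal factorization. Your route is a genuine alternative: it treats Propositions~\ref{closedimemrsionbaseextension P} and~\ref{compositionofclosedimmersions P} as black boxes and avoids touching rings again, at the cost of the (routine) universal-property bookkeeping. The paper's implied route is more hands-on but requires redoing essentially the same affine computation; yours is cleaner and makes explicit the general principle that any class of morphisms stable under base change and composition is automatically stable under products.
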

Now we are in a position to introduce the notion of separated F-schemes.
\begin{definition}
Suppose that $f:X\to Y$ is a morphism of  F-schemes. Then $f$ is called \textbf{separated}
if the diagonal morphism $\Delta_X:X\to X\times_Y X$ is a closed immersion. In this case,
$X$ is called \textbf{separated
over $Y$}.
The F-scheme $X$ is called \textbf{separated} if it is separated over $\text{F}(\mathbb{Z})$.
\end{definition}
The following lemma is easy to prove.
\begin{lemma}\label{affineseparated L}
A morphism of affine F-schemes is separated.

\end{lemma}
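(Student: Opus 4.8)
The plan is to reduce the statement to the affine situation and then invoke the characterization of closed immersions from Proposition \ref{charaterizationofclosedimmersions P}. Suppose $f:X\to Y$ is a morphism of affine F-schemes, say $X=\text{F}(B)$ and $Y=\text{F}(R)$; then $f$ is induced by a ring homomorphism $R\to B$ making $B$ an $R$-algebra. By Theorem \ref{fiberproduct}, the fiber product $X\times_Y X$ is the affine F-scheme $\text{F}(B*_R B)$, where $B*_R B$ is the coproduct of $B$ with itself over $R$. The diagonal morphism $\Delta_X:X\to X\times_Y X$ is then the morphism induced by the canonical ring homomorphism $\mu:B*_R B\to B$ determined by the identity on each factor (i.e. $\mu$ restricted to either copy of $B$ is the identity).

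The key observation is that $\mu:B*_R B\to B$ is surjective: its image contains the image of the first copy of $B$, which is all of $B$. First I would make this precise using the universal property of the coproduct, noting that $\mu$ exists and is unique because $\text{id}_B$ and $\text{id}_B$ agree on $R$. Now I want to conclude that the morphism $\text{F}(B)\to\text{F}(B*_R B)$ induced by a surjective ring homomorphism is a closed immersion. By the remark following the definition of closed immersion, the morphism $\text{F}(A/I)\to\text{F}(A)$ induced by a quotient map is always a closed immersion; since $\mu$ surjective means $B\cong (B*_R B)/\ker\mu$, the morphism $\Delta_X$ is equivalent to the inclusion of the closed sub-F-scheme $\text{F}((B*_R B)/\ker\mu)$, hence a closed immersion.

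The only genuine subtlety is to check that a surjective ring homomorphism $\phi:A\to A/I$ really does induce a closed immersion in the sense of the definition — that is, that the kernel sheaf $\widehat{I}$ is quasi-coherent and $\text{F}(A/I)$ is homeomorphic onto the semi-closed subset $Z(\widehat{I})$ with structure sheaf $\mathcal{O}_A/\widehat{I}$. But this is exactly the content of Proposition \ref{closedsubfschemes P} together with Lemma \ref{closedsubsetsofideals}, which already establish that $(Z(\widehat{I}),\mathcal{O}_A/\widehat{I})\cong\text{F}(A/I)$ and that this is an F-scheme; alternatively one can apply Proposition \ref{charaterizationofclosedimmersions P} directly, since $\phi$ surjective makes $f_\sharp:\mathcal{O}_A\to f_*\mathcal{O}_{A/I}$ onto and $\text{F}(A/I)$ maps homeomorphically onto the semi-closed set $Z(\widehat{I})$. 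I expect the main obstacle to be purely bookkeeping: being careful that the identification $X\times_Y X\cong\text{F}(B*_R B)$ from Theorem \ref{fiberproduct} carries $\Delta_X$ to the morphism induced by $\mu$, and that no hidden hypothesis (such as $X$ being nonempty, which is automatic since every $\text{F}(A)$ has the center $\text{U}(A)$) is needed. No commutativity or localness issue arises beyond what the cited results already handle, so the proof is short.
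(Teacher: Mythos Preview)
Your proposal is correct and follows essentially the same approach as the paper: identify $\Delta_X$ with the morphism of affine F-schemes induced by the fold map $B*_R B\to B$, observe this map is surjective, and conclude it is a closed immersion. The paper's proof is simply a terser version of yours, omitting the explicit citations to Proposition~\ref{closedsubfschemes P} and Proposition~\ref{charaterizationofclosedimmersions P} and relying on the remark (just after the definition of closed immersion) that $\text{F}(A/I)\to\text{F}(A)$ is always a closed immersion.
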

\begin{proof}
Let $f:\text{F}(B)\to\text{F}(A)$ be the morphism induced by
the homomorphism $\phi:A\to B$. Then  the diagonal morphism
$$\Delta:\text{F}(B)\to \text{F}(B)\times_{\text{F}(A)} \text{F}(B)=\text{F}(A*_B A)$$
 is
the morphism induced by
 the natural homomorphism $A*_B A\to A$ which is clearly
 onto.
\end{proof}
The following proposition gives a simple characterization of separated morphisms.
\begin{proposition}\label{sepatated P}
A morphism $f:X\to Y$ is separated iff the image of the diagonal morphism $\Delta_X$ is
semi-closed in $X\times_Y X$.
\end{proposition}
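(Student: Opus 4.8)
The plan is to prove both directions by reducing to the characterization of closed immersions already available in Proposition~\ref{charaterizationofclosedimmersions P}, which says a morphism is a closed immersion iff it is a homeomorphism onto a semi-closed subset \emph{and} the induced sheaf map is surjective. One direction is immediate: if $f$ is separated then $\Delta_X$ is a closed immersion, hence in particular its image is homeomorphic to $X$ and is semi-closed in $X\times_Y X$. So the real content is the converse: assuming $\Delta_X(X)$ is semi-closed in $X\times_Y X$, one must show $\Delta_X$ is actually a closed immersion, i.e. that the two ``missing'' conditions (homeomorphism onto the image, and surjectivity of $(\Delta_X)_\sharp$) come for free.

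First I would check that $\Delta_X$ is always an immersion onto a locally closed sub-F-scheme, exactly as in ordinary scheme theory: using the projection $p_1:X\times_Y X\to X$ we have $p_1\circ\Delta_X=1_X$, so $\Delta_X$ is a section of $p_1$ and therefore is injective, and moreover a homeomorphism onto its image (the image carries the subspace topology and $p_1$ restricted to it is a continuous inverse). So the topological part of ``closed immersion'' is automatic once we know the image is \emph{closed}, and by hypothesis the image is semi-closed, and the closure of a semi-closed set is the support of $\mathcal{O}/\mathcal{I}$ for the relevant quasi-coherent ideal sheaf. The key local step is then to verify that $(\Delta_X)_\sharp:\mathcal{O}_{X\times_Y X}\to (\Delta_X)_*\mathcal{O}_X$ is surjective. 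This is a local question, so I would cover $X$ by affine open sub-F-schemes $\mathrm{F}(A)$ lying over affine opens $\mathrm{F}(R)$ of $Y$; then by Theorem~\ref{fiberproduct} the corresponding open of $X\times_Y X$ is $\mathrm{F}(A*_R A)$, and $\Delta_X$ is locally induced by the multiplication/codiagonal homomorphism $A*_R A\to A$, which is manifestly surjective. Hence the sheaf map is onto on a cover, hence onto, exactly as in Lemma~\ref{affineseparated L}.

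Assembling: $\Delta_X$ is a homeomorphism onto its image, that image is semi-closed by hypothesis, and $(\Delta_X)_\sharp$ is surjective; by Proposition~\ref{charaterizationofclosedimmersions P}, $\Delta_X$ is a closed immersion, i.e. $f$ is separated. I expect the main obstacle to be a bookkeeping point rather than a conceptual one: Proposition~\ref{charaterizationofclosedimmersions P} is stated in terms of ``homeomorphism onto a semi-closed subset'', so I must make sure that for $\Delta_X$ the map onto its image really is a homeomorphism (not merely a continuous bijection) before invoking it. This is where the section property $p_1\circ\Delta_X=1_X$ is essential, and where a little care is needed because, unlike schemes, F-schemes can fail to be ``locally closed = closed in an open'' in the naive way; but since we only need the combination of (a homeomorphism onto) (a semi-closed subset) plus (surjective sheaf map), and all three are now in hand, the conclusion follows without the stronger topological input one would use classically.
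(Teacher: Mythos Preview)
Your proposal is correct and follows essentially the same route as the paper's proof. The paper's argument is just the one-line ``follows from Proposition~\ref{charaterizationofclosedimmersions P} and the affine case,'' and you have unpacked precisely those two ingredients: the section identity $p_1\circ\Delta_X=1_X$ gives the homeomorphism onto the image, while the local computation $A*_R A\twoheadrightarrow A$ from Lemma~\ref{affineseparated L} gives surjectivity of $(\Delta_X)_\sharp$, so that Proposition~\ref{charaterizationofclosedimmersions P} applies.
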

\begin{proof}
One direction is trivial. The other direction follows from Proposition
\ref{charaterizationofclosedimmersions P} and the affine case.
\end{proof}
Similar to the case of schemes, one has the following result.
\begin{corollary}\label{intersectionofaffinesinsepeareted C}
In a separated F-scheme, the intersection
of any two open affine sub-F-schemes is affine. In
particular,  every separated F-scheme is either affine or 1-affine.
\end{corollary}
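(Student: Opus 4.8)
The plan is to mimic the classical scheme-theoretic argument (cf. \cite{Ha}) and reduce the statement about the intersection $U\cap V$ of two open affine sub-F-schemes $U=\text{F}(A)$ and $V=\text{F}(B)$ of a separated F-scheme $X$ to the fact that closed immersions preserve affineness. First I would form the open sub-F-scheme $U\times_Y V$ inside $X\times_Y X$ (here $Y$ is the base over which $X$ is separated, which in the absolute case is $\text{F}(\mathbb{Z})$); since $U$ and $V$ are affine and $Y$ may be covered by affines refining them, $U\times_Y V$ is itself affine (by Theorem \ref{fiberproduct}, locally it is $\text{F}(A*_R B)$, and since $U$ has a center the whole thing has a center, hence is affine by Lemma \ref{beingaffine L}). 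Next I would observe that the diagonal morphism $\Delta_X:X\to X\times_Y X$, restricted over the open set $U\times_Y V$, is a morphism $\Delta_X^{-1}(U\times_Y V)\to U\times_Y V$ whose source is exactly $U\cap V$. Since $\Delta_X$ is a closed immersion by hypothesis, Corollary \ref{closedimmersionopenconver C} tells us that its restriction $U\cap V\to U\times_Y V$ is again a closed immersion.

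Having set this up, the conclusion is immediate: a closed sub-F-scheme of an affine F-scheme is affine (this is noted right after Definition of closed sub-F-schemes: any closed sub-F-scheme of $\text{F}(C)$ is of the form $\text{F}(C/I)$). Since $U\times_Y V$ is affine and $U\cap V\to U\times_Y V$ is a closed immersion, $U\cap V$ is affine. This proves the first assertion.

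For the ``in particular'' statement, recall from Lemma \ref{naffine L} that every F-scheme is affine, 1-affine, or 2-affine. If $X$ is separated, take any open affine covering $\{U_i\}$ of $X$; by what we just proved each $U_i\cap U_j$ is affine, hence the covering witnesses that $X$ is at worst 1-affine (the requirement in the definition of 1-affine that the pairwise intersections be $0$-affine, i.e. affine, is satisfied). So $X$ is either affine or 1-affine, and in particular cannot be 2-affine.

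The only point requiring a little care — and the main obstacle — is the claim that $U\times_Y V$ is affine: one must make sure that even when $Y$ is not affine, the fiber product of two affine opens over $Y$ is affine. Here I would argue that $U\times_Y V$ still possesses a center (namely the image of the pair of centers of $U$ and $V$ under the structure maps, matching up in $Y$), because centers are preserved by the affine fiber product construction $\text{F}(A*_R B)$, and then invoke Lemma \ref{beingaffine L} to conclude it is affine. Once this is granted, everything else is a formal consequence of Corollary \ref{closedimmersionopenconver C} and the structure of closed sub-F-schemes of affines.
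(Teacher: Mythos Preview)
Your argument is correct and follows exactly the classical route the paper has in mind (the paper gives no proof, only the remark ``similar to the case of schemes''). One simplification: since the corollary concerns a \emph{separated} F-scheme, by definition $Y=\text{F}(\mathbb{Z})$ is affine, so $U\times_Y V=\text{F}(A*_{\mathbb{Z}}B)$ is affine directly from Theorem \ref{fiberproduct}; the discussion of centers and non-affine $Y$ in your last paragraph is unnecessary here (and, as you sensed, would require more care to make rigorous in general).
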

Some of the properties of separated morphisms are given in the following proposition.
\begin{proposition}\label{separatedmporphismsbaseextension P}
\begin{enumerate}
\item Open and closed immersions are separated.
\item  Separated morphisms are stable under base extension.
\item  If $X'\to X$ and $Y'\to Y$   are  separated morphisms in the
category \textbf{FSch}$_Z$. Then
the natural morphism $X'\times_Z Y'\to X\times_Z Y$
is also separated.
\item The composition of two separated morphisms is separated.
\item If $f:X\to Y$ and $g:Y\to Z$ are two morphisms such that
$gf$ is separated then  $f$ is also separated.
\item A morphism $f:X\to Y$ is a separated morphism iff there is an open covering $U_i$ of $Y$ such that
each induced morphism $f^{-1}(U_i)\to U_i$ is separated.
\end{enumerate}
\end{proposition}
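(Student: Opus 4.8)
The plan is to transplant the classical scheme-theoretic argument (cf.\ \cite{Ha}) essentially verbatim, using as black boxes only: the existence and functoriality of fiber products in $\textbf{FSch}_Y$ (Theorem \ref{fiberproduct}, which affinely reads $X\times_Y Z\cong\text{F}(A*_R B)$), the stability of closed immersions under base extension (Proposition \ref{closedimemrsionbaseextension P}), the fact that a composition of closed immersions is a closed immersion (Proposition \ref{compositionofclosedimmersions P}), and the fact that being a closed immersion is local on the target (Corollary \ref{closedimmersionopenconver C}). It is cleanest to prove the six assertions in the order (1), (2), (4), (3), (5), (6), since (3) and (4) rely on (2) and (3) additionally on (4).

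First I would do (1): for an open immersion $f\colon X\to Y$ one has the canonical identification $X\times_Y X\cong X$ with both projections equal to the identity, so $\Delta_{X/Y}$ is an isomorphism; for a closed immersion, covering $Y$ by affine open sub-F-schemes $\text{F}(A_i)$ and glueing reduces the statement to the ring identity $(A_i/I_i)*_{A_i}(A_i/I_i)\cong A_i/I_i$, which holds because $A_i\to A_i/I_i$ is surjective. Since an isomorphism is a closed immersion (take $\mathcal{I}=0$ in the definition), (1) follows. For (2), with $f\colon X\to Y$ separated, $g\colon Y'\to Y$ arbitrary, and $X'=X\times_Y Y'$, I would check the canonical isomorphism $X'\times_{Y'}X'\cong(X\times_Y X)\times_Y Y'$ under which $\Delta_{X'/Y'}$ becomes the base extension of the closed immersion $\Delta_{X/Y}$, hence a closed immersion by Proposition \ref{closedimemrsionbaseextension P}. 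For (4), given $f\colon X\to Y$ and $g\colon Y\to Z$, I would factor $\Delta_{X/Z}=j\circ\Delta_{X/Y}$ with $j\colon X\times_Y X\to X\times_Z X$, identify $X\times_Y X$ with $(X\times_Z X)\times_{Y\times_Z Y}Y$ (the maps being $f\times f$ and $\Delta_{Y/Z}$) so that $j$ is a base extension of the closed immersion $\Delta_{Y/Z}$, hence a closed immersion by (2), and then invoke Proposition \ref{compositionofclosedimmersions P}. For (3) I would factor $X'\times_Z Y'\to X\times_Z Y$ as $X'\times_Z Y'\to X\times_Z Y'\to X\times_Z Y$, the two arrows being base extensions of $X'\to X$ and of $Y'\to Y$ and hence separated by (2), and conclude by (4). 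For (5), assuming $gf$ separated, I would observe that the square with arrows $\Delta_{X/Y}$, $\Delta_{X/Z}$, $\text{id}_X$, and $j\colon X\times_Y X\to X\times_Z X$ is Cartesian (on functors of points $(X\times_Y X)\times_{X\times_Z X}X$ parametrizes data $(a,b,x)$ with $(a,b)=(x,x)$, hence just $x$), so $\Delta_{X/Y}$ is a base extension of the closed immersion $\Delta_{X/Z}$ and (2) applies. Finally (6): the ``only if'' direction is (2) applied to the open immersions $U_i\hookrightarrow Y$; for the converse, the open sets $V_i:=f^{-1}(U_i)\times_{U_i}f^{-1}(U_i)$ cover $X\times_Y X$, one has $\Delta_{X/Y}^{-1}(V_i)=f^{-1}(U_i)$, and $\Delta_{X/Y}$ restricts over $V_i$ to $\Delta_{f^{-1}(U_i)/U_i}$, a closed immersion, so $\Delta_{X/Y}$ is a closed immersion by Corollary \ref{closedimmersionopenconver C}.

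The part I expect to demand actual care — the main obstacle — is not any one of the six items but the recurring bookkeeping underneath all of them: verifying the canonical isomorphisms between the various iterated fiber products and checking that, under each of them, the relevant diagonal morphism really does become a base extension (or a composition) of a diagonal already known to be a closed immersion. This is precisely where Theorem \ref{fiberproduct} does the work, and once these identifications are written out the rest is purely formal; in particular, nothing specific to F-schemes (the failure of stalks to be local, the presence of a center) enters, so the classical argument goes through unchanged.
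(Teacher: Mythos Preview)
Your proposal is correct and matches the paper's approach exactly: the paper's proof is the single sentence ``Having proved the corresponding facts about closed immersions, one can see that the proof of this proposition is essentially the same as the one for schemes, see \cite{Gr},'' and you have simply written out those classical details, using precisely the black boxes (Theorem \ref{fiberproduct}, Proposition \ref{closedimemrsionbaseextension P}, Proposition \ref{compositionofclosedimmersions P}, Corollary \ref{closedimmersionopenconver C}) the paper has set up. One cosmetic slip: in (5) you write ``(2) applies,'' but what you actually need there is Proposition \ref{closedimemrsionbaseextension P} (stability of \emph{closed immersions} under base extension), not item (2) of the present proposition.
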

\begin{proof}
Having proved the corresponding facts about closed immersions,
one can see that the proof of this proposition is essentially the same as the one
for schemes, see \cite{Gr}.
\end{proof}
\begin{remark}
It is known that if $\mathcal{P}$ is a property of
of morphisms of schemes which satisfies the following
\begin{enumerate}
\item a closed immersion has $\mathcal{P}$,
\item a composition of two morphisms having $\mathcal{P}$ has $\mathcal{P}$,
\item $\mathcal{P}$ is stable under base extension,
\end{enumerate}
then $\mathcal{P}$ satisfies the following as well
\begin{enumerate}
\item a product of morphisms having $\mathcal{P}$ has $\mathcal{P}$,
\item if $f:X\to Y$ and $g:Y\to Z$ are two morphisms, and if $gf$ has $\mathcal{P}$ and $g$ is
separated then  $f$ has $\mathcal{P}$.
\end{enumerate}
It can be seen that the same holds for F-schemes.
\end{remark}
In the following example, I give an example of a   nonseparated F-scheme.
\begin{example}
Suppose that $A$ is a ring for which $\text{F}(A)\setminus\{\text{U}(A)\}$ is
not an affine F-scheme, e.g. $A=k\langle x, y\rangle$ for any commutative field $k$.
Let $X$ be the F-scheme obtained by gluing $ \text{F}(A) $ with itself
along $\text{F}(A)\setminus\{\text{U}(A)\}$, see Example \ref{nonseparatdexample E}.
The F-scheme $X$ is not separated because there are open affine
subsets of $X$ (i.e. copies of $ \text{F}(A) $)
 whose intersection is not affine, see Corollary
 \ref{intersectionofaffinesinsepeareted C}.

\end{example}

\end{subsection}


\begin{subsection}{More properties of F-schemes}
There are other properties and concepts in the theory of schemes
that one can try to extend to F-schemes. In this part some of
these properties are considered. Before discussing these concepts, I want
to explain an important property of  many of these concepts.
It is well known that for a class of properties of schemes
(or morphisms of schemes), if an open affine covering of a scheme has those properties
then any open affine covering of that scheme has the same properties.
Some of these properties are  ``locally noetherian schemes, finite morphisms,
morphisms of finite type,...'', see \cite{Ha}.
Most of the concepts considered in this part have the same property. However, it is
a lot easier to prove this property for these concepts in the case of F-schemes
(because the topology of an F-scheme is more restricted than the topology of a scheme).

\begin{definition}
Suppose that $f:X\to Y$
is a morphism of F-schemes.
\begin{enumerate}
\item The morphism $f$ is called \textbf{affine} if there exists an open affine covering $U_i$
of $Y$ such that $f^{-1}(U_i)$ is affine for each $i$.
 \item The morphism $f$ is called \textbf{quasi-compact} if there exists an open affine covering $U_i$
of $Y$ such that $f^{-1}(U_i)$ is quasi-compact for each $i$.
\item The morphism $f$ is called \textbf{locally of finite type}   if there
exists an open affine covering $U_i=\text{F}(A_i)$
of $Y$ such that for
each $i$, the open subset $f^{-1}(U_i)$ can be covered by open affine subsets
$V_{ij} = \text{F}(B_{ij})$   where
each $B_{ij}$ is  finitely generated over $A_i$ as a ring.  The morphism $f$ is of \textbf{finite
type} if in addition each $f^{-1}(U_i)$ can be covered by a finite number of the
 $V_{ij}$.

\end{enumerate}
\end{definition}
Before discussing these concepts, we need a few lemmas.
\begin{lemma}\label{quasi-compact L}
An F-scheme is quasi-compact iff it can be covered by finitely many
open affine subsets.
\end{lemma}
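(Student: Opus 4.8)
The plan is to prove both implications. The ``if'' direction is immediate: if an F-scheme $X$ can be covered by finitely many open affine subsets $U_1,\dots,U_n$, then each $U_i$, being affine, has a center and hence is quasi-compact; a finite union of quasi-compact spaces is quasi-compact, so $X$ is quasi-compact. For the ``only if'' direction, suppose $X$ is quasi-compact. Since $X$ is an F-scheme, it admits some open affine covering $\{V_\alpha\}_{\alpha\in I}$ (each $V_\alpha\cong\text{F}(A_\alpha)$ for some ring $A_\alpha$); this follows directly from the definition of an F-scheme as a ringed space locally isomorphic to affine F-schemes. By quasi-compactness of $X$, finitely many of the $V_\alpha$ already cover $X$, say $V_{\alpha_1},\dots,V_{\alpha_n}$. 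These are finitely many open affine subsets covering $X$, which is exactly what we want.

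First I would state the two directions as above and observe that the only nontrivial input is the fact, recorded in the discussion after Proposition~\ref{subsetswithcenter P}, that any topological space with a center is quasi-compact, together with the elementary topological fact that a finite union of quasi-compact subspaces is quasi-compact. Then I would write out the ``only if'' direction using the definition of F-scheme to produce an open affine covering and then invoking quasi-compactness to extract a finite subcover.

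Honestly, I do not expect any real obstacle here: the statement is essentially a formal consequence of the definitions plus the observation that affine F-schemes (having centers) are quasi-compact. The only thing to be slightly careful about is making sure the covering extracted is by \emph{affine} open subsets --- but since we start from an open \emph{affine} covering and merely discard all but finitely many members, affineness is preserved automatically. So the proof is short, and I would keep it to a few lines.

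\begin{proof}
If $X$ can be covered by finitely many open affine subsets $U_1,\dots,U_n$, then each $U_i$ is an affine F-scheme, hence has a center and is therefore quasi-compact. Since a finite union of quasi-compact subspaces is quasi-compact, $X$ is quasi-compact. Conversely, suppose $X$ is quasi-compact. By definition, $X$ has an open covering $\{V_\alpha\}$ by affine F-schemes. Since $X$ is quasi-compact, finitely many of the $V_\alpha$ cover $X$, which gives a covering of $X$ by finitely many open affine subsets.
\end{proof}
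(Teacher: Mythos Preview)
Your proof is correct and takes essentially the same approach as the paper's. The only cosmetic difference is in the ``if'' direction: the paper argues directly from centers (for each affine piece $X_i$ with center $x_i$, any open set containing $x_i$ must contain all of $X_i$, so a finite subcover is immediate), while you invoke the equivalent facts that spaces with a center are quasi-compact and that finite unions of quasi-compact subspaces are quasi-compact.
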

\begin{proof}
One direction is trivial. So suppose that $X$ is an F-scheme such that $X=\cup_{i=1}^{n}X_i$
for some open affine subsets $X_i$.  Let $x_i$ be the center of $X_i$. If $U_j$
is an open covering of $X$ then there are $U_{j_1},...,U_{j_n}$ such that $x_i\in U_{j_i}$
for each $i$. So $X_i\subset U_{j_i}$
for each $i$ and hence $X=\cup_{i=1}^{n}U_{j_i}$.
\end{proof}
\begin{lemma}\label{morphismsofaffineaffine L}
A morphism of affine F-schemes is affine.
\end{lemma}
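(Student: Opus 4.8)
## Proof Plan for Lemma \ref{morphismsofaffineaffine L}

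The plan is to show that if $f:\text{F}(B)\to\text{F}(A)$ is a morphism of affine F-schemes, then there exists an open affine covering $\{U_i\}$ of $\text{F}(A)$ with each $f^{-1}(U_i)$ affine. The cleanest approach exploits the fact that affine F-schemes have a center: the whole space $\text{F}(A)$ is itself a fundamental open subset (namely $D(1)$), so I can take the trivial covering $\{U_0\} = \{\text{F}(A)\}$ consisting of a single open affine subset. Then $f^{-1}(\text{F}(A)) = \text{F}(B)$, which is affine by hypothesis. This immediately verifies the definition of an affine morphism.

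First I would observe that by Theorem \ref{AffineF-schemes T}, the morphism $f$ of ringed spaces is induced by a unique ring homomorphism $\phi:A\to B$, and that $f$ is automatically a local morphism of F-schemes. This is not strictly needed for the argument, but it frames the situation: a morphism between affine F-schemes is simply the geometric avatar of a ring homomorphism. Next, recalling from Proposition \ref{fundamentalopensubsets P} that $D(1) = \text{F}(A)$, I note that $\{D(1)\}$ is a (one-element) open affine covering of $\text{F}(A)$, since $(D(1),\mathcal{O}_A|_{D(1)}) = (\text{F}(A),\mathcal{O}_A)$ is an affine F-scheme. Then $f^{-1}(D(1)) = f^{-1}(\text{F}(A)) = \text{F}(B)$ is affine by assumption, so every element of this covering pulls back to an affine F-scheme. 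By the definition of an affine morphism given just above in the paper, $f$ is affine.

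I expect there to be essentially no obstacle here — the lemma is a routine bookkeeping consequence of the fact (Lemma \ref{beingaffine L} and the remarks preceding it) that an affine F-scheme, having a center, is its own top-level fundamental open set. The only thing to be slightly careful about is that the definition of ``affine morphism'' requires an open affine covering of the \emph{target}, and we are free to choose the trivial covering; no cleverness about the fiber structure or the ring homomorphism $\phi$ is required. One could, if desired, remark that this is the F-scheme analogue of the triviality ``a morphism of affine schemes is affine,'' but in the F-scheme setting it is even more immediate because the target need not be broken into pieces at all.
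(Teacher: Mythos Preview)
Your argument is correct: the one-element covering $\{\text{F}(A)\}$ immediately verifies the definition, since $f^{-1}(\text{F}(A))=\text{F}(B)$ is affine.

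The paper takes a slightly different route. Rather than exhibiting the trivial covering, it argues that \emph{every} open affine subset $U$ of $\text{F}(A)$ has affine preimage: by Lemma~\ref{beingaffine L}, such a $U$ is necessarily a fundamental open subset $D(\mathbf{a})$, and under the morphism induced by $\phi:A\to B$ one has $f^{-1}(D(\mathbf{a}))=D(\phi(\mathbf{a}))$, which is again a fundamental open subset of $\text{F}(B)$ and hence affine. Your approach is the minimal one and perfectly adequate for the lemma as stated. The paper's argument, though marginally longer, establishes the stronger fact that the preimage of \emph{any} open affine is affine---precisely the content of Proposition~\ref{anyopencoveringaffine P} in the affine-target case---and this is what gets reused in the proof of that proposition immediately afterward. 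Either proof is fine; yours is cleaner for this lemma in isolation, while the paper's dovetails with what comes next.
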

\begin{proof}
This follows from the facts that any open affine subset of an F-scheme
is a fundamental open subset (see Lemma \ref{beingaffine L})
and the inverse image of a fundamental open
subset is a fundamental open subset under a morphism of affine F-schemes.
\end{proof}
Note that any affine morphism is quasi-compact.
Now, regarding the above  properties, one has the following propositions.
\begin{proposition}\label{anyopencoveringaffine P}
A morphism  $f:X\to Y$ of F-schemes is affine iff for \underline{any} open affine subset
$U$ of $Y$, the open subset $f^{-1}(U)$ is affine.
\end{proposition}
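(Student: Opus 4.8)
The plan is to follow the standard pattern for ``affine-local'' properties, exploiting the special feature of F-schemes that every affine F-scheme has a center (Lemma~\ref{beingaffine L}), which makes covering arguments almost trivial compared to ordinary scheme theory. The nontrivial direction is: assuming there is \emph{one} open affine covering $\{U_i\}$ of $Y$ with $f^{-1}(U_i)$ affine, show that $f^{-1}(U)$ is affine for an \emph{arbitrary} open affine $U \subset Y$.

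First I would reduce to a local computation. Given an arbitrary open affine $U = \text{F}(B) \subset Y$, let $y_0$ be its center. Since $\{U_i\}$ covers $Y$, some $U_{i_0}$ contains $y_0$; but then, since $y_0$ is the center of $U$, the only open subset of $U$ containing $y_0$ is $U$ itself, so in fact $U \subset U_{i_0}$. Thus $U$ is an open affine sub-F-scheme of the affine F-scheme $U_{i_0} = \text{F}(A_{i_0})$, and by Lemma~\ref{beingaffine L} it is a fundamental open subset, i.e. $B \cong (A_{i_0})_{\begin{bf} a \end{bf}}$ for some fraction $\begin{bf} a \end{bf}$ of $A_{i_0}$. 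This already shows the arbitrary $U$ sits inside one member of the given covering, which is the key simplification the center provides.

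Next I would show $f^{-1}(U)$ is affine. We know $f^{-1}(U_{i_0})$ is affine, say $f^{-1}(U_{i_0}) = \text{F}(C)$, and $f$ restricted to it is induced by a ring homomorphism $\phi: A_{i_0} \to C$ (by Theorem~\ref{AffineF-schemes T}). Since $U = D(\begin{bf} a \end{bf}) \subset U_{i_0}$, continuity of $f$ gives $f^{-1}(U) = f^{-1}(D(\begin{bf} a \end{bf})) = D(\phi(\begin{bf} a \end{bf}))$ inside $\text{F}(C)$, using the description of preimages of fundamental opens from the proof of Theorem~\ref{AffineF-schemes T}. By Proposition~\ref{fundamentalopensubfschemes P}, $D(\phi(\begin{bf} a \end{bf}))$ with its induced structure is isomorphic to $\text{F}(C_{\phi(\begin{bf} a \end{bf})})$, hence affine. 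This proves the ``only if'' direction; the ``if'' direction is immediate by taking $U$ to range over any open affine covering of $Y$, which exists by definition of an F-scheme.

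I expect the main obstacle to be purely bookkeeping: carefully matching the fraction $\begin{bf} a \end{bf}$ on $A_{i_0}$ that cuts out $U$ with its image $\phi(\begin{bf} a \end{bf})$ as a fraction on $C$, and confirming that the identity $f^{-1}(D(\begin{bf} a \end{bf})) = D(\phi(\begin{bf} a \end{bf}))$ holds on the nose (this is exactly the continuity computation in the proof of Theorem~\ref{AffineF-schemes T}, applied to the restricted morphism $f^{-1}(U_{i_0}) \to U_{i_0}$). Everything else is a formal consequence of the center property plus Propositions~\ref{fundamentalopensubfschemes P} and the results on pullbacks of fully invertible systems; no genuinely hard step is involved.
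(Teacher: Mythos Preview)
Your proof is correct and follows essentially the same approach as the paper: use the center of the arbitrary affine $U$ to find a member $U_{i_0}$ of the given covering with $U\subset U_{i_0}$, recognize $U$ as a fundamental open $D(\begin{bf} a\end{bf})$ inside $U_{i_0}$, and then compute $f^{-1}(U)=D(\phi(\begin{bf} a\end{bf}))$ in the affine $f^{-1}(U_{i_0})$. The paper's proof is just a terser version of exactly this argument.
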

\begin{proof}
One direction is trivial. So suppose that $f$ is affine  and  $U=\text{F}(A)$
is an open affine subset of $Y$. Since $U$ has a center and $f$ is affine, there must be an open
affine subset $U\subset V=\text{F}(B)$, such that $f^{-1}(V)$ is affine. This implies that
$U$ is a fundamental open subset of $V$, and hence $f^{-1}(U)$ is a fundamental open subset of
$f^{-1}(V)$. Therefore  $f^{-1}(U)$ is affine.
\end{proof}
\begin{proposition}\label{anyopencoveringquasi-compact P}
A morphism  $f:X\to Y$ of F-schemes is quasi-compact iff for \underline{any} open affine subset
$U$ of $Y$, the open subset $f^{-1}(U)$ is quasi-compact.
\end{proposition}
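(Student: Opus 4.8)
The plan is to follow the proof of Proposition \ref{anyopencoveringaffine P} almost verbatim, substituting ``quasi-compact'' for ``affine'' and invoking Lemma \ref{quasi-compact L} at the end. One implication is immediate: if $f^{-1}(U)$ is quasi-compact for every open affine $U\subset Y$, then this holds in particular for the members of any open affine covering of $Y$, so $f$ is quasi-compact by definition.

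For the converse, suppose $f$ is quasi-compact, witnessed by an open affine covering $\{U_i\}$ of $Y$ with every $f^{-1}(U_i)$ quasi-compact, and let $U=\text{F}(A)$ be an arbitrary open affine subset of $Y$, with center $x=\text{U}(A)$. Choosing $i_0$ with $x\in U_{i_0}$, the open subset $U\cap U_{i_0}$ of $U$ contains the center of $U$, hence equals $U$; that is, $U\subset U_{i_0}$. Writing $U_{i_0}=\text{F}(B)$, Lemma \ref{beingaffine L} identifies $U$ with a fundamental open subset $D(\mathbf{a})$ of $\text{F}(B)$. It therefore suffices to prove the following claim: if $g:Z\to \text{F}(B)$ is a morphism with $Z$ quasi-compact, then $g^{-1}(D(\mathbf{a}))$ is quasi-compact for every fraction $\mathbf{a}$ of $B$; one then applies this to $g=f|_{f^{-1}(U_{i_0})}$ and $Z=f^{-1}(U_{i_0})$.

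To establish the claim, use Lemma \ref{quasi-compact L} to write $Z=Z_1\cup\cdots\cup Z_n$ with each $Z_k$ open and affine. Each restriction $g|_{Z_k}:Z_k\to \text{F}(B)$ is a morphism of affine F-schemes, so by Lemma \ref{morphismsofaffineaffine L} (more precisely, by the fact recorded in its proof that the inverse image of a fundamental open subset under a morphism of affine F-schemes is again a fundamental open subset) the set $g^{-1}(D(\mathbf{a}))\cap Z_k$ is a fundamental open subset of $Z_k$, hence affine. Thus $g^{-1}(D(\mathbf{a}))$ is an open sub-F-scheme covered by the finitely many open affine subsets $g^{-1}(D(\mathbf{a}))\cap Z_k$, so it is quasi-compact by Lemma \ref{quasi-compact L} once more. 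I do not anticipate a genuine obstacle here; the only point requiring a little care is the deduction $U\subset U_{i_0}$ from $x$ being the center of $U$, which is precisely the device already used in Proposition \ref{anyopencoveringaffine P}, the remainder being routine finite-covering bookkeeping powered by Lemma \ref{quasi-compact L}.
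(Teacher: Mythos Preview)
Your proof is correct and follows essentially the same approach as the paper: both use the center of $U$ to locate an affine $V$ from the witnessing cover with $U\subset V$, identify $U$ as a fundamental open subset of $V$, cover $f^{-1}(V)$ by finitely many open affines, and then observe that intersecting each with $f^{-1}(U)$ yields finitely many open affines (since preimages of fundamental opens under morphisms of affine F-schemes are fundamental opens), concluding via Lemma \ref{quasi-compact L}. Your version is simply more explicit in isolating the claim and invoking Lemma \ref{morphismsofaffineaffine L}.
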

\begin{proof}
One direction is trivial. So suppose that $f$ is affine  and  $U=\text{F}(A)$
is an open affine subset of $Y$. Since $U$ has a center and $f$ is affine, there must be an open
affine subset $U\subset V=\text{F}(B)$ such that $f^{-1}(V)$ is quasi-compact. Hence
$f^{-1}(V)$ can be covered by finitely many open affine subsets. Since $U$ is
a fundamental open subset of $V$, the subspace $f^{-1}(U)$ is a union of finitely many
open affine subsets and hence is quasi-compact by Lemma \ref{quasi-compact L}.
\end{proof}
\begin{proposition}\label{anyopencoveringlocallyfinitetype P}
A morphism  $f:X\to Y$ of F-schemes is locally of finite type
iff for  any open affine subsets
$U=\text{F}(A)$ of $Y$ and $V=\text{F}(B)$ of $X$
such that $f(V)\subset U$,  the ring $B$ is  finitely generated over $A$ as a ring.
\end{proposition}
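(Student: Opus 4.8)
The plan is to imitate Propositions \ref{anyopencoveringaffine P} and \ref{anyopencoveringquasi-compact P}: one direction is immediate, and for the other I will use centers to reduce an arbitrary pair of open affine sub-F-schemes to the pair furnished by the witnessing covering, and then exploit the fact that passing from a ring to one of its fraction-localizations $A_{\begin{bf} a \end{bf}}$ adjoins only finitely many ring generators. For the ``if'' direction, assume the displayed condition and pick any open affine covering $\{U_i=\text{F}(A_i)\}$ of $Y$. Since $X$ is an F-scheme, each $f^{-1}(U_i)$, being open in $X$, is covered by open affine sub-F-schemes $V_{ij}=\text{F}(B_{ij})$. As $f(V_{ij})\subset U_i$, the hypothesis applied to the pair $(U_i,V_{ij})$ gives that $B_{ij}$ is finitely generated over $A_i$, so $f$ is locally of finite type.

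For the ``only if'' direction, fix open affine sub-F-schemes $U=\text{F}(A)\subset Y$ and $V=\text{F}(B)\subset X$ with $f(V)\subset U$, and let $\{U_i=\text{F}(A_i)\}$ together with the $\{V_{ij}=\text{F}(B_{ij})\}$ be a covering witnessing that $f$ is locally of finite type. The first step is to enlarge $U$ and $V$ inside this covering. Since $U$ has a center $\text{U}(A)$ and the only open subset of $\text{F}(A)$ containing its center is $\text{F}(A)$ itself, any $U_i$ containing $\text{U}(A)$ satisfies $U_i\cap U=U$, i.e.\ $U\subset U_i$; such an $i$ exists because the $U_i$ cover $Y$. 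Then $V\subset f^{-1}(U_i)$, and the same argument applied to the center of $V$ produces $j$ with $V\subset V_{ij}$. By Lemma \ref{beingaffine L} and Proposition \ref{fundamentalopensubfschemes P}, $U$ is a fundamental open subset of $U_i$, so $A\cong (A_i)_{\begin{bf} a \end{bf}}$ for some fraction $\begin{bf} a \end{bf}$ of $A_i$, and likewise $B\cong (B_{ij})_{\begin{bf} b \end{bf}}$ for some fraction $\begin{bf} b \end{bf}$ of $B_{ij}$.

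The remaining step is algebraic. From the presentation of localizations, $(A_i)_{\begin{bf} a \end{bf}}$ is generated over $A_i$ by the finitely many formal inverses occurring in $\begin{bf} a \end{bf}$, so $A$ is finitely generated over $A_i$ as a ring; similarly $B$ is finitely generated over $B_{ij}$. Since $B_{ij}$ is finitely generated over $A_i$ by hypothesis and finite generation of rings is transitive, $B$ is finitely generated over $A_i$. Finally, the two natural ring homomorphisms $A_i\to B$ --- one obtained as $A_i\to A\to B$ and one as $A_i\to B_{ij}\to B$ --- coincide, because by Proposition \ref{mapstoaffineF-schemes P} both represent the morphism $V\to U_i$ obtained by restricting $f$ (using $f(V)\subset U\subset U_i$). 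Hence the image of $A_i$ in $B$ lies inside the image of $A$, so any finite generating set of $B$ over $A_i$ is also a finite generating set of $B$ over $A$, and $B$ is finitely generated over $A$.

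I expect the last compatibility point --- matching the map $A_i\to B$ coming from the covering with the map $A\to B$ induced by $f$ on the chosen affines, so that ``finitely generated over $A_i$'' genuinely upgrades to ``finitely generated over $A$'' --- to be the only delicate part; the center reductions and the observation that a fraction-localization adds only finitely many generators are routine.
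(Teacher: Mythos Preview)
Your proof is correct and follows the same overall strategy as the paper: use centers to enlarge $U$ and $V$ into members of the witnessing covering, then invoke that passing to a fraction-localization $A_{\begin{bf} a \end{bf}}$ adjoins only finitely many ring generators. The one place your argument differs is the final reduction. The paper, having found $U\subset U'=\text{F}(A')$ and $V\subset V'=\text{F}(B')$ with $f(V')\subset U'$, writes $A=A'_{\begin{bf} a \end{bf}}$ and then asserts $B=B'_{\phi(\begin{bf} a \end{bf})}$, i.e.\ identifies $V$ with $f^{-1}(U)\cap V'$; from there $B$ is visibly finitely generated over $A$ since $B'$ is over $A'$. Strictly speaking one only has $V\subset f^{-1}(U)\cap V'$, so a further fraction-localization is implicitly needed. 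You instead localize $B_{ij}$ independently at a fraction $\begin{bf} b \end{bf}$, obtain $B$ finitely generated over $A_i$ by transitivity, and then close the loop with the compatibility check that $A_i\to B$ factors through $A$. Your route is slightly longer but airtight; the paper's is more direct once the small gap is patched by one more localization.
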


\begin{proof}
One direction is trivial. So suppose that $U=\text{F}(A)\subset Y$ and $V=\text{F}(B)\subset X$
are open affine subsets such that $f(U)\subset V$. Then there are open
affine subsets $U\subset U'=\text{F}(A')\subset Y$ and $V\subset V' =\text{F}(B')\subset X$
such that  $B'$ is finitely generated over $A'$ as a ring (because $f$ is locally
of finite type and $U$ and $V$ have centers). Now $A=A'_{\begin{bf} a \end{bf}}$
for some fraction $\begin{bf} a \end{bf}$ of $A'$. Therefore
$B=B'_{\phi(\begin{bf} a \end{bf})}$ where $\phi:A'\to B'$ is the homomorphism
which induces the morphism $f:V'\to U'$. But it is easy to see that
$B=B'_{\phi(\begin{bf} a \end{bf})}$ is  finitely generated over
$A=A'_{\begin{bf} a \end{bf}}$ as a ring because
 $B'$ is finitely generated over $A'$ as a ring.
\end{proof}

\begin{proposition}\label{anyopencoveringfinitetype P}
A morphism  $f:X\to Y$ of F-schemes is  of finite type
iff  it is quasi-compact and locally of finite type.
\end{proposition}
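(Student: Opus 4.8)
The plan is to prove both implications directly, leaning on the two preceding ``any open affine covering'' propositions so that the choice of covering of $Y$ becomes irrelevant. For the forward implication, suppose $f$ is of finite type, witnessed by an open affine covering $U_i=\text{F}(A_i)$ of $Y$ and, for each $i$, finitely many open affine subsets $V_{ij}=\text{F}(B_{ij})$ covering $f^{-1}(U_i)$ with $B_{ij}$ finitely generated over $A_i$ as a ring. This same data immediately exhibits $f$ as locally of finite type. Moreover each $f^{-1}(U_i)$ is a finite union of open affine subsets, hence quasi-compact by Lemma \ref{quasi-compact L}, so $f$ is quasi-compact by definition.

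For the converse, assume $f$ is quasi-compact and locally of finite type, and fix an arbitrary open affine covering $U_i=\text{F}(A_i)$ of $Y$. By Proposition \ref{anyopencoveringquasi-compact P}, each $f^{-1}(U_i)$ is quasi-compact, so by Lemma \ref{quasi-compact L} we may write $f^{-1}(U_i)=\bigcup_{j=1}^{n_i} V_{ij}$ as a finite union of open affine subsets $V_{ij}=\text{F}(B_{ij})$. Since $V_{ij}\subset f^{-1}(U_i)$, we have $f(V_{ij})\subset U_i$, so Proposition \ref{anyopencoveringlocallyfinitetype P}, applied using that $f$ is locally of finite type, gives that each $B_{ij}$ is finitely generated over $A_i$ as a ring. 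The covering $\{U_i\}$ together with the finite families $\{V_{ij}\}$ then witnesses that $f$ is of finite type.

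I do not expect a genuine obstacle here: once Lemma \ref{quasi-compact L}, Proposition \ref{anyopencoveringquasi-compact P} and Proposition \ref{anyopencoveringlocallyfinitetype P} are in hand, the argument is pure bookkeeping. The only point requiring care is that ``of finite type'', ``quasi-compact'' and ``locally of finite type'' are each defined a priori via \emph{some} open affine covering of $Y$; the passage to an \emph{arbitrary} such covering is exactly what the two ``any open covering'' propositions supply, which is why I would route the proof through them rather than unwind the definitions by hand. As usual in this theory, this flexibility rests ultimately on the fact that every F-scheme, and in particular every fundamental open subset, has a center.
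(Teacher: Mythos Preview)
Your proof is correct and follows essentially the same route as the paper, which simply writes that the result ``follows from Propositions \ref{anyopencoveringlocallyfinitetype P} and \ref{anyopencoveringquasi-compact P}.'' You have merely spelled out the bookkeeping that the paper leaves implicit, invoking Lemma \ref{quasi-compact L} in the expected places.
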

\begin{proof}
It follows from Propositions \ref{anyopencoveringlocallyfinitetype P}
and \ref{anyopencoveringquasi-compact P}.

\end{proof}

%
%

The following proposition is straightforward to prove.
\begin{proposition}\label{closedimmersionsproperties P}
A closed immersion is affine, quasi-compact and of finite type.
\end{proposition}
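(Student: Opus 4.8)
The plan is to reduce the whole statement to the affine situation, where a closed sub-F-scheme of $\text{F}(A)$ is just $\text{F}(A/I)$ for a two-sided ideal $I$, and then read off all three properties from the fact that $A/I$ is a quotient of $A$.

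Concretely, I would fix a closed immersion $f:X\to Y$. By definition it is equivalent to the inclusion $(Z(\mathcal{I}),\mathcal{O}_Y/\mathcal{I})\to Y$ for some quasi-coherent sheaf $\mathcal{I}$ of (two-sided) ideals on $Y$, and $f$ is in particular a homeomorphism of $X$ onto $Z(\mathcal{I})$. Choose an open affine covering $\{U_i=\text{F}(A_i)\}$ of $Y$. Since $\mathcal{I}$ is quasi-coherent, $\mathcal{I}|_{U_i}=\widehat{I_i}$ for some two-sided ideal $I_i$ of $A_i$. Because $Z(-)$ is defined stalkwise, $Z(\mathcal{I})\cap U_i=Z(\mathcal{I}|_{U_i})$, and the structure sheaf restricts compatibly, so $f^{-1}(U_i)$ is precisely the closed sub-F-scheme $(Z(\widehat{I_i}),\mathcal{O}_{A_i}/\widehat{I_i})$ of $U_i$; by Proposition \ref{closedsubfschemes P} this is isomorphic to the affine F-scheme $\text{F}(A_i/I_i)$. (This local description, which uses Lemma \ref{closedsubsetsofideals} and Lemma \ref{closureofZ(I) L} to identify $Z(\mathcal{I})$, together with Corollary \ref{closedimmersionopenconver C}, is the only point requiring any care; everything afterwards is formal.)

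From this the first two assertions are immediate. The covering $\{U_i\}$ of $Y$ is an open affine covering with each $f^{-1}(U_i)=\text{F}(A_i/I_i)$ affine, so $f$ is affine. Since every affine F-scheme has a center (Proposition \ref{subsetswithcenter P}) and hence is quasi-compact, the same covering shows $f$ is quasi-compact; equivalently, this is the remark following Lemma \ref{morphismsofaffineaffine L} that any affine morphism is quasi-compact.

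For finite type, by Proposition \ref{anyopencoveringfinitetype P} it suffices to check $f$ is quasi-compact (done) and locally of finite type. Using the same covering $\{U_i=\text{F}(A_i)\}$, the set $f^{-1}(U_i)=\text{F}(A_i/I_i)$ is itself affine, and $A_i/I_i$, being a quotient of $A_i$, is generated over $A_i$ as a ring by the empty set, hence is finitely generated over $A_i$. Thus $f$ is locally of finite type — in fact of finite type outright, taking the one-element affine covering $V_{i1}=f^{-1}(U_i)$ of each $f^{-1}(U_i)$ — which completes the proof.
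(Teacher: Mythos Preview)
Your proof is correct and is precisely the ``straightforward'' argument the paper has in mind (the paper gives no proof beyond that word). You reduce to the affine case via Proposition~\ref{closedsubfschemes P}, identify $f^{-1}(U_i)\cong\text{F}(A_i/I_i)$, and then the three properties follow directly from the definitions since $A_i/I_i$ is a quotient of $A_i$; the only minor nit is that the fact that an affine F-scheme has a center is stated in the text before Proposition~\ref{subsetswithcenter P} rather than in that proposition itself.
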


\end{subsection}


\end{section}


\begin{section}{Projective F-schemes}
Projective schemes are fundamental objects in algebraic
geometry, see \cite{Ha}. In this part their counterparts in the theory of F-schemes
, namely projective F-schemes, are defined. For a detailed discussion of graded rings see
\cite{NV} (and references therein).


\begin{subsection}{Graded rings and localization}
Suppose that $R=\bigoplus_{n\in\mathbb{Z}} R_n$ is a $\mathbb{Z}$-graded
(possibly noncommutative) ring.
Recall that an element $r\in R$ is called homogeneous of degree $n$ if $r\in R_n$.
Every element $r\in R$ can be uniquely written as $r=r_{i_1}+\cdots+r_{i_n}$ where
$i_1<i_2<...<i_n$ and each $r_{i_j}$ is homogeneous of degree $i_j$. The homogeneous elements
$r_{i_j}$'s are called the homogeneous components of $r$. A subset of $R$ is called
homogeneous if it only contains homogeneous elements of $R$. Given a subset $S$ of $R$, set $h(S)$
to be the set of homogeneous elements in $S$.
A graded homomorphism of graded rings is a ring homomorphism $\phi:R\to T$
of graded rings $R=\bigoplus_{n\in\mathbb{Z}} R_n$ and
$T=\bigoplus_{n\in\mathbb{Z}} T_n$ with the property $\phi(R_n)\subset T_n$ for
all $n\in \mathbb{Z}$.

There is a ``homogeneously'' localizing process for graded rings similar
to the one for rings, see Section \ref{localization S}. Here a short account of
this processes (with similar proofs omitted) is given.
Suppose that $R$ is a graded ring and $S\subset R$ is
 a homogeneous subset of $R$. Then the localization
ring $R_{S}$ is naturally a $\mathbb{Z}$-graded ring. To see this, recall that
$R_{S}$ has the following presentation
$$R_{S}=\langle R,\{x_s\}_{s\in S}|\, x_ss=sx_s=1,\,\text{for any}\, s\in S \rangle. $$
Setting $\deg(x_s)=-\deg(s)$ for $s\in S$, we obtain a $\mathbb{Z}$-graded structure on
$R_{S}$ compatible with the $\mathbb{Z}$-graded structure of $R$, i.e.
the natural homomorphism $i_S:R\to R_{S}$ is a graded homomorphism. Moreover the natural
homomorphism $i_S:R\to R_{S}$ has the following universal property in the category of
graded rings (morphisms being graded homomorphisms): for any graded ring $T$ and a graded
homomorphism $\phi:R\to T$ such that $\phi(s)$ is invertible in $T$ for all $s\in S$, there
is a unique \underline{graded} homomorphism $\phi_S:R_S\to T$ such that $\phi=\phi_Si_S$.
 \begin{definition}
Suppose that $R$ is a  graded ring. A homogeneous subset $S$ of $R$ is called
a \textbf{homogeneously properly invertible subset} if
there is a  graded homomorphism $\phi:R\to T$, from $R$  to a \underline{nonzero}  graded ring $T$,
such that for every $s\in S$, the element $\phi(s)$ is invertible in $T$.
\end{definition}
\begin{definition}
A  homogeneous subset $S$ of  $R$ is called a
\textbf{homogeneously fully invertible subset} if there is a graded
homomorphism $\phi:R\to T$, from $R$ to a \underline{nonzero} graded ring $T$, such that
$S=h(\phi^{-1}(\text{U}(T)))$.
\end{definition}
\begin{lemma}\label{gradedfully invertiblesubsets L}
A  homogeneous subset $S$ of $R$ is homogeneously fully invertible iff
$S$ is homogeneously properly invertible and $S=h(R\cap \text{U}(R_S))$.
\end{lemma}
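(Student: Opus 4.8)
The plan is to mimic the proof of Lemma \ref{fully invertiblesubsets L}, working throughout in the category of graded rings and graded homomorphisms and keeping careful track of the homogeneity condition encoded by the operator $h$. All the needed input is already in place: just above the lemma it is recorded that for a homogeneous $S$ the localization $R_S$ carries a compatible $\mathbb{Z}$-grading, that $i_S:R\to R_S$ is a graded homomorphism, and that $i_S$ enjoys the universal property in the graded category.

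First I would prove the forward implication. Suppose $S$ is homogeneously fully invertible, so there is a graded homomorphism $\phi:R\to T$ to a nonzero graded ring $T$ with $S=h(\phi^{-1}(\text{U}(T)))$. Then every $s\in S$ satisfies $\phi(s)\in\text{U}(T)$, so $S$ is in particular homogeneously properly invertible. Since $\phi(s)$ is invertible for all $s\in S$, the graded universal property of $i_S$ yields a graded homomorphism $\phi_S:R_S\to T$ with $\phi=\phi_S i_S$. Hence $h(i_S^{-1}(\text{U}(R_S)))\subset h(i_S^{-1}(\phi_S^{-1}(\text{U}(T))))=h(\phi^{-1}(\text{U}(T)))=S$, while conversely each $s\in S$ is homogeneous and invertible in $R_S$, so $S\subset h(i_S^{-1}(\text{U}(R_S)))=h(R\cap\text{U}(R_S))$; therefore $S=h(R\cap\text{U}(R_S))$.

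For the converse, assume $S$ is homogeneously properly invertible and $S=h(R\cap\text{U}(R_S))$. Homogeneous proper invertibility provides a graded homomorphism $\psi:R\to T'$ to a nonzero graded ring with $\psi(S)\subset\text{U}(T')$; the graded universal property factors $\psi$ through $i_S$, giving a homomorphism $R_S\to T'$, and since $T'\neq 0$ this forces $R_S\neq 0$. Now $i_S:R\to R_S$ is itself a graded homomorphism to a nonzero graded ring, and by hypothesis $h(i_S^{-1}(\text{U}(R_S)))=h(R\cap\text{U}(R_S))=S$, which is exactly what is needed to witness that $S$ is homogeneously fully invertible.

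I do not expect any genuine obstacle: the argument is a routine transcription of the ungraded case. The only two points deserving a moment's attention are (i) that one must invoke the \emph{graded} universal property so that the factoring map is again a graded homomorphism, and (ii) that a unital homomorphism out of $R_S$ into a nonzero ring forces $R_S\neq 0$, which is immediate since $1$ cannot be sent to $1\neq 0$ if $1=0$ in $R_S$.
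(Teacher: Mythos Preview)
Your proposal is correct and is exactly the intended argument: the paper explicitly omits the proof of this lemma (``similar proofs omitted''), relying on the reader to transcribe the proof of Lemma~\ref{fully invertiblesubsets L} into the graded setting, which is precisely what you have done. The two points you flag---using the graded universal property and inferring $R_S\neq 0$ from the existence of a map to a nonzero ring---are the only places where any care is needed, and you handle them correctly.
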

\begin{proposition}\label{gradedirrinv P}
Suppose that $S$ and $T$ are two homogeneously fully invertible  subsets of $R$. Then
\begin{enumerate}
\item    $h(\text{U}(R))\subset S$.
\item $S$ is multiplicatively closed, i.e. if $s,t\in S$ then
 $st\in S$.
\item If $aba\in S$ for some homogeneous elements $a,b\in R$ then  $a,b\in S$. In particular,  $R_{aba}=R_{\{a,b\}}$.
\item $S\subset T$ iff there is a graded homomorphism $R_S\to R_T$ making the following
diagram commutative
\begin{displaymath}
\xymatrix{ & R \ar[dr]_{i_T}  \ar[r]^{i_S} & R_S \ar[d] \\
           &   & R_T}
 \end{displaymath}
\end{enumerate}
\end{proposition}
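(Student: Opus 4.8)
The plan is to imitate the proof of Proposition~\ref{irrinv P} essentially verbatim, the only changes being that every appeal to the universal property of $A_S$ is replaced by the \emph{graded} universal property of $R_S$ recorded just above, and every appeal to Lemma~\ref{fully invertiblesubsets L} is replaced by Lemma~\ref{gradedfully invertiblesubsets L} (which gives $S=h(R\cap\text{U}(R_S))$ for a homogeneously fully invertible $S$). The single extra ingredient needed at each step is the trivial observation that a product of homogeneous elements is homogeneous, so that all homogeneity hypotheses propagate through the manipulations.

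Concretely: Part~1 is immediate, since a homogeneous unit of $R$ has invertible, homogeneous image in $R_S$, hence lies in $h(R\cap\text{U}(R_S))=S$. Part~2 follows because if $s,t\in S$ then $st$ is homogeneous and $i_S(st)=i_S(s)i_S(t)$ is invertible in $R_S$, so $st\in h(R\cap\text{U}(R_S))=S$. Part~3 uses, exactly as in Proposition~\ref{irrinv P}, the elementary fact that $xyx$ is invertible in a ring iff both $x$ and $y$ are: from $aba\in S$ with $a,b$ homogeneous one gets $i_S(a),i_S(b)\in\text{U}(R_S)$ and hence $a,b\in S$; for the identification $R_{aba}=R_{\{a,b\}}$ one notes that $\{aba\}$ and $\{a,b\}$ are both homogeneous subsets of $R$, so both localizations are graded rings with graded maps from $R$, and since $a,b$ are invertible in $R_{aba}$ while $aba$ is invertible in $R_{\{a,b\}}$, comparing the two graded universal properties yields mutually inverse graded homomorphisms compatible with the maps from $R$. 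Part~4 is the graded analogue of the corresponding statement: if $S\subset T$ then every $s\in S$ lies in $T=h(R\cap\text{U}(R_T))$, hence has invertible image in $R_T$, so the graded universal property of $R_S$ supplies the graded homomorphism $R_S\to R_T$; conversely, such a homomorphism forces each homogeneous $s\in S$ to be invertible in $R_T$ along $R\to R_S\to R_T$, which is the composite $R\to R_T$, so $s\in h(R\cap\text{U}(R_T))=T$.

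I do not expect a genuine obstacle here — the proposition is indeed ``straightforward'', as the commutative model Proposition~\ref{irrinv P} already indicates. The only point deserving a sentence of care is that every localization appearing in the argument is taken at a \emph{homogeneous} subset, which is exactly what makes the graded universal properties available; this is guaranteed by Lemma~\ref{gradedfully invertiblesubsets L} together with the discussion of graded localization preceding the statement. Accordingly, the write-up can be as short as ``the proofs are those of Proposition~\ref{irrinv P}, with Lemma~\ref{gradedfully invertiblesubsets L} and the graded universal property of localization used in place of their ungraded analogues,'' spelling out only the mild extra remark about homogeneity in the identification $R_{aba}=R_{\{a,b\}}$ of Part~3.
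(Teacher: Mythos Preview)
Your proposal is correct and matches the paper's approach exactly: the paper omits the proof of this proposition, stating at the outset of the section that these graded analogues are given ``with similar proofs omitted,'' referring back to Section~\ref{localization S}. Your plan to replay Proposition~\ref{irrinv P} with Lemma~\ref{gradedfully invertiblesubsets L} and the graded universal property substituted in is precisely what the paper has in mind.
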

\begin{proposition}\label{gradedpullback P}
Suppose that $\phi:R\to T$ is a graded homomorphism and $S$ is a
homogeneously fully invertible  subset of $T$.
Then $h(R\cap S)$ is a homogeneously fully invertible subset of $R$.
Moreover there is a unique homomorphism  $\phi^S:R_{h(R\cap S)}\to T_S$ making the following
diagram commutative
\begin{displaymath}
\xymatrix{ & R \ar[d]_{i_{h(R\cap S)}}  \ar[r]^{\phi} & T \ar[d]^{i_S} \\
           & R_{h(R\cap S)}\ar[r]^{\phi^S}  & T_S}
           \end{displaymath}
\end{proposition}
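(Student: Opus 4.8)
The plan is to follow the proof of Proposition \ref{pull back P} essentially verbatim, replacing ``ring homomorphism'' by ``graded homomorphism'', ``fully invertible'' by ``homogeneously fully invertible'', and inserting the operator $h(-)$ wherever a homogeneous part is taken. The graded localization ring and its graded universal property recalled earlier in this subsection are exactly what make each step go through.

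First I would unwind the definition: since $S$ is homogeneously fully invertible in $T$, there is a graded homomorphism $\psi:T\to C$ with $C\neq 0$ a graded ring and $S=h(\psi^{-1}(\text{U}(C)))$. The composite $\psi\phi:R\to C$ is again a graded homomorphism, so $h((\psi\phi)^{-1}(\text{U}(C)))$ is a homogeneously fully invertible subset of $R$. The identity to verify is that this set coincides with $h(R\cap S)=h(\phi^{-1}(S))$: because $\phi$ is graded it carries a homogeneous element of $R$ to a homogeneous element of $T$, so for homogeneous $r\in R$ one has $r\in\phi^{-1}(S)$ iff $\phi(r)\in\psi^{-1}(\text{U}(C))$ iff $(\psi\phi)(r)\in\text{U}(C)$. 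This gives the first assertion, that $h(R\cap S)$ is a homogeneously fully invertible subset of $R$ (witnessed by $\psi\phi$).

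For the second assertion I would specialize $\psi$ to the graded localization map $i_S:T\to T_S$, which is a graded homomorphism with $T_S\neq 0$. Then $i_S\phi:R\to T_S$ is a graded homomorphism sending every element of $h(R\cap S)$ into $\text{U}(T_S)$, so the graded universal property of $R_{h(R\cap S)}$ yields a unique graded homomorphism $\phi^S=(i_S\phi)_{h(R\cap S)}:R_{h(R\cap S)}\to T_S$ with $i_S\phi=\phi^S i_{h(R\cap S)}$; uniqueness among all ring homomorphisms is automatic since $R_{h(R\cap S)}$ is generated as a ring by the image of $R$ together with the symbols $x_s$, whose images are forced. I do not expect a genuine obstacle: the only thing requiring care is the bookkeeping with $h(-)$ — using that a graded homomorphism preserves homogeneity, so that ``pass to the homogeneous part'' commutes with ``pull back along $\phi$'' — together with the observation that the graded localization $T_S$ really is a nonzero graded ring, which is precisely what the graded analogue of the localization machinery of Section \ref{localization S} supplies.
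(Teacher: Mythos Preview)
Your proposal is correct and follows exactly the approach the paper intends: the paper omits the proof of this proposition with the blanket remark ``similar proofs omitted'', indicating that one should simply transcribe the proof of Proposition~\ref{pull back P} to the graded setting, which is precisely what you have done. The care you take with the bookkeeping for $h(-)$ --- using that a graded homomorphism preserves homogeneity so that $h(\phi^{-1}(S))=h((\psi\phi)^{-1}(\text{U}(C)))$ --- is the only nontrivial point, and you handle it correctly.
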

Since for a homogeneously fully invertible subset $S$, the localization $R_S$ is graded, it is
straightforward to define the notion of \textbf{homogeneously fully invertible systems}.
Given a graded ring homomorphism $\phi:R\to T$, one obtains a sequence $\mathcal{S}=(S_1,S_2,...)$
such that  each $S_i$ is
a homogeneously fully invertible subset of the graded   ring
$R_{\mathcal{S},n}$ where $R_{\mathcal{S},n}=(R_{\mathcal{S},n-1})_{S_n}$
and $R_{\mathcal{S},0}=R$. Given a homogeneously fully invertible system $\mathcal{S}$,
the ring
$R_{\mathcal{S}}=\underset{n}{\underrightarrow{\lim}} R_{\mathcal{S},n}$ has a graded structure
coming from those of $R_{\mathcal{S},n}$'s.
\begin{lemma}
Let $\mathcal{S}=(S_1,S_2,...)$ be a homogeneously fully invertible system on $R$.
Then for each $n$, $\mathcal{S}_n=(S_{n+1},S_{n+2},...)$
is a homogeneously fully invertible system
on $R_{\mathcal{S},n}$.
\end{lemma}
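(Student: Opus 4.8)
The plan is to follow the proof of the analogous statement for ordinary fully invertible systems, replacing the universal property of localization rings by its graded refinement recalled above. By definition a homogeneously fully invertible system $\mathcal{S}=(S_1,S_2,\dots)$ on $R$ arises from a graded ring homomorphism $\phi\colon R\to T$ (with $T$ a nonzero graded ring): one has $S_1=h(\phi^{-1}(\text{U}(T)))$, and since $\phi(s)$ is invertible in $T$ for every (homogeneous) $s\in S_1$, the graded universal property of $R_{S_1}=R_{\mathcal{S},1}$ produces a \emph{graded} homomorphism $\phi_1\colon R_{\mathcal{S},1}\to T$ with $\phi=\phi_1 i_{S_1}$; then $S_2=h(\phi_1^{-1}(\text{U}(T)))$, and so on. First I would make this tower explicit: by induction on $k$, the graded universal property of $R_{\mathcal{S},k}=(R_{\mathcal{S},k-1})_{S_k}$ yields a graded homomorphism $\phi_k\colon R_{\mathcal{S},k}\to T$ compatible with the structure maps $i_{S_k}$, and by construction $S_k=h(\phi_{k-1}^{-1}(\text{U}(T)))$ for every $k\ge 1$.

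Fix $n$. I would then observe that $\phi_n\colon R_{\mathcal{S},n}\to T$ is a graded homomorphism to a nonzero graded ring, and compute the homogeneously fully invertible system it induces on $R_{\mathcal{S},n}$. Its first term is $h(\phi_n^{-1}(\text{U}(T)))$, which is exactly $S_{n+1}$; the corresponding localization is $(R_{\mathcal{S},n})_{S_{n+1}}=R_{\mathcal{S},n+1}$, the graded extension of $\phi_n$ over it is $\phi_{n+1}$, so the second term is $h(\phi_{n+1}^{-1}(\text{U}(T)))=S_{n+2}$, and an immediate induction shows that the whole induced system is $(S_{n+1},S_{n+2},\dots)=\mathcal{S}_n$. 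Hence $\mathcal{S}_n$ is, by definition, a homogeneously fully invertible system on $R_{\mathcal{S},n}$.

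The argument is purely formal and essentially the same as in the ungraded case; the only point requiring the graded theory is that at each stage the homomorphism furnished by the universal property is again graded, so that the homogeneity hypotheses are preserved down the tower — but this is precisely what the graded universal property of $R_S$ guarantees, so there is no real obstacle.
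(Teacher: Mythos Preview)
Your proof is correct and is precisely the argument the paper has in mind: the paper omits the proof of this lemma (and of its ungraded analogue) as ``easy'' and ``similar'' to the ungraded case, and your write-up simply spells out those details --- the tower of graded extensions $\phi_k\colon R_{\mathcal{S},k}\to T$ via the graded universal property, followed by the observation that $\phi_n$ induces exactly $(S_{n+1},S_{n+2},\dots)$ on $R_{\mathcal{S},n}$. There is nothing to add.
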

\begin{proposition}
Let $R$ be a graded ring and $\mathcal{S}=(S_1,S_2,...)$ be a sequence of homogeneously fully
invertible subsets $S_n$ of rings
$R_{\mathcal{S},n}=(R_{\mathcal{S},n-1})_{S_{n}}$ where $n\geq 1$ and $R_{\mathcal{S},0}=R$.
Then $\mathcal{S} $ is a homogeneously fully invertible system on $R$
iff  for any $m> n\geq 0$, one has    $h(R_{\mathcal{S},n}\cap S_m)=S_{n+1}$.
\end{proposition}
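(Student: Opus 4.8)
The plan is to mirror, in the graded setting, the proof of the ungraded characterization of fully invertible systems given earlier in this section, replacing every ring, homomorphism and localization by its graded counterpart. The easy implication is unwound from the definitions: if $\mathcal{S}$ is a homogeneously fully invertible system on $R$, it comes from a graded homomorphism $\phi:R\to T$ into a nonzero graded ring, and $S_{k}$ is by construction the set of homogeneous elements of the preimage of $\text{U}(T)$ under the induced graded map $R_{\mathcal{S},k-1}\to T$. For $m>n\ge 0$ the composite $R_{\mathcal{S},n}\to R_{\mathcal{S},m-1}\to T$ equals the structural map $R_{\mathcal{S},n}\to T$, so a homogeneous $a\in R_{\mathcal{S},n}$ lies in $R_{\mathcal{S},n}\cap S_m$ precisely when its image in $T$ is invertible, i.e. precisely when $a\in S_{n+1}$. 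This gives $h(R_{\mathcal{S},n}\cap S_m)=S_{n+1}$.

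For the converse, assume the displayed condition and form the graded ring $R_{\mathcal{S}}=\underset{n}{\underrightarrow{\lim}}\,R_{\mathcal{S},n}$, the direct limit along the graded maps $i_{S_n}:R_{\mathcal{S},n-1}\to R_{\mathcal{S},n}$; since the gradings are compatible with the transition maps, $R_{\mathcal{S}}$ carries a $\mathbb{Z}$-grading and $i_{\mathcal{S}}:R\to R_{\mathcal{S}}$ is a graded homomorphism. Each $S_n$, being homogeneously fully invertible, is in particular homogeneously properly invertible, so each $R_{\mathcal{S},n}$ is nonzero; as the index set is directed, $1\ne 0$ in $R_{\mathcal{S}}$ and hence $R_{\mathcal{S}}\ne 0$. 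Let $\mathcal{T}=(T_1,T_2,\dots)$ be the homogeneously fully invertible system on $R$ coming from $i_{\mathcal{S}}$. I would prove $S_n=T_n$ for all $n$ by induction on $n$; this yields $\mathcal{S}=\mathcal{T}$, so $\mathcal{S}$ is a homogeneously fully invertible system on $R$.

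In the inductive step one knows $S_j=T_j$, hence $R_{\mathcal{S},j}=R_{\mathcal{T},j}$, for $j<n$, so $T_n$ is the set of homogeneous elements of $R$ whose image in $R_{\mathcal{S}}$ (through $R_{\mathcal{S},n-1}$) is invertible. The inclusion $S_n\subseteq T_n$ is clear, since any $s\in S_n$ is already invertible in $R_{\mathcal{S},n}$, hence in $R_{\mathcal{S}}$. For the reverse inclusion take a homogeneous $a\in T_n$; then $i_{\mathcal{S}}(a)$ is invertible in $R_{\mathcal{S}}$, and because the inverse of a homogeneous unit of a graded ring is homogeneous, this inverse also comes from a finite stage. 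Choosing $N\ge n$ large enough that the relations $xy=1=yx$ already hold in $R_{\mathcal{S},N}$ (where $x$ is the image of $a$ and $y$ a chosen representative of the inverse), the image of $a$ is a homogeneous unit of $R_{\mathcal{S},N}$, hence lies in $S_{N+1}$ by Proposition \ref{gradedirrinv P} part 1. Therefore $a\in h(R_{\mathcal{S},n-1}\cap S_{N+1})=S_n$ by hypothesis, so $T_n\subseteq S_n$.

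The main obstacle I expect is exactly this passage from invertibility in the colimit to invertibility at a finite stage: the transition maps $i_{S_n}$ need not be injective, so one cannot naively lift a unit, and one must argue directly with filtered colimits of (noncommutative) rings that an equation $xy=1=yx$ valid in $R_{\mathcal{S}}$ already holds in some $R_{\mathcal{S},N}$, all the while keeping track of the $\mathbb{Z}$-grading. The remaining ingredients — that a directed colimit of graded rings along graded homomorphisms is again graded, and the bookkeeping identifying $R_{\mathcal{S},j}$ with $R_{\mathcal{T},j}$ for $j<n$ — are routine given the graded localization machinery already established.
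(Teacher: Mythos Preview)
Your proposal is correct and follows exactly the approach the paper intends: the paper omits the proof of this proposition, explicitly stating that the graded analogues have ``similar proofs omitted,'' and your argument is precisely the graded transcription of the paper's proof of the ungraded characterization (forming the colimit $R_{\mathcal{S}}$, comparing $\mathcal{S}$ with the system $\mathcal{T}$ induced by $i_{\mathcal{S}}$, and using that invertibility in a filtered colimit descends to a finite stage). The concern you flag about lifting units from the colimit is handled by the standard fact that in a filtered colimit of rings any relation $xy=1=yx$ valid in the colimit already holds at some finite stage, and the grading causes no extra difficulty since the transition maps are graded and inverses of homogeneous units are homogeneous.
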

\begin{proposition}\label{gradedpullbacksystem P}
Suppose that $\phi:R\to T$ is a graded ring homomorphism and $\mathcal{S}=(S_1,S_2,...)$
is a homogeneously fully invertible system on $T$.
Then  the sequence
$$h(\phi^{*}(\mathcal{S})):=(h(R\cap S_1),h(R_{h(R\cap S_1)}\cap S_2),...)$$
is a homogeneously fully invertible system on $R$, called
the pull back of $\mathcal{S}$ to $R$. Moreover
\begin{enumerate}
\item There  is  a unique graded ring homomorphism
$\phi^\mathcal{S}:R_{h(\phi^*(\mathcal{S}))}
\to T_\mathcal{S}$
 making the following diagram commutative
\begin{displaymath}
\xymatrix{ & R \ar[d]_{i_{h(\phi^*(\mathcal{S}))}}  \ar[r]^{\phi} & T \ar[d]^{i_\mathcal{S}} \\
           & R_{h(\phi^*(\mathcal{S}))}\ar[r]^{\phi^\mathcal{S}}  & T_\mathcal{S}}
 \end{displaymath}
\item The homomorphism  $\phi^\mathcal{S}:R_{h(\phi^*(\mathcal{S}))}\to T_\mathcal{S}$
is a \textbf{homogeneously local} homomorphism, i.e. the image of a homogeneous element
of $R_{h(\phi^*(\mathcal{S}))}$ is invertible in $T_\mathcal{S}$
iff it is invertible in $R_{h(\phi^*(\mathcal{S}))}$.
\end{enumerate}
\end{proposition}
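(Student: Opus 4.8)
The plan is to transpose the proof of Proposition~\ref{pullbacksystem P} into the graded setting, keeping track of the $\mathbb{Z}$-grading at every stage. First I would record the graded realization of $\mathcal{S}$: since $\mathcal{S}=(S_1,S_2,\dots)$ is a homogeneously fully invertible system on $T$, the ring $T_{\mathcal{S}}=\underset{n}{\underrightarrow{\lim}}\,T_{\mathcal{S},n}$ inherits a $\mathbb{Z}$-grading from the graded rings $T_{\mathcal{S},n}$ (each $T_{\mathcal{S},n}$ being graded with $\deg(x_s)=-\deg(s)$, as in the discussion of graded localization), so $i_{\mathcal{S}}:T\to T_{\mathcal{S}}$ is a graded homomorphism and $\mathcal{S}$ is the homogeneously fully invertible system it induces. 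More generally, if $\psi:T\to C$ is any graded homomorphism into a nonzero graded ring inducing $\mathcal{S}$, I would show by induction on $n$ that the $n$-th term of $h(\phi^{*}(\mathcal{S}))$ coincides with the $n$-th term of the homogeneously fully invertible system coming from the graded composite $\psi\phi:R\to C$. The case $n=1$ is Proposition~\ref{gradedpullback P} applied to $\psi\phi$: $h(R\cap S_1)=h\big((\psi\phi)^{-1}(\text{U}(C))\big)$ is homogeneously fully invertible in $R$. For the inductive step, $\psi$ induces a graded map $T_{\mathcal{S},n}\to C$, which composed with the induced graded map $R_{h(\phi^{*}(\mathcal{S})),n}\to T_{\mathcal{S},n}$ and fed into Proposition~\ref{gradedpullback P} (together with transitivity of the pull-back) identifies the $(n+1)$-st term as $h(R_{h(\phi^{*}(\mathcal{S})),n}\cap S_{n+1})$ and shows it is homogeneously fully invertible over $R_{h(\phi^{*}(\mathcal{S})),n}$. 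Hence $h(\phi^{*}(\mathcal{S}))$ is a homogeneously fully invertible system on $R$.

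For part~1, I would specialize to $\psi=i_{\mathcal{S}}:T\to T_{\mathcal{S}}$, so that $h(\phi^{*}(\mathcal{S}))$ is precisely the homogeneously fully invertible system coming from the graded homomorphism $i_{\mathcal{S}}\phi:R\to T_{\mathcal{S}}$. The universal property of $R_{h(\phi^{*}(\mathcal{S}))}$ in the category of graded rings then yields a unique \emph{graded} homomorphism $\phi^{\mathcal{S}}:R_{h(\phi^{*}(\mathcal{S}))}\to T_{\mathcal{S}}$ with $\phi^{\mathcal{S}}\,i_{h(\phi^{*}(\mathcal{S}))}=i_{\mathcal{S}}\phi$, i.e.\ making the displayed square commute; explicitly $\phi^{\mathcal{S}}=(i_{\mathcal{S}}\phi)_{h(\phi^{*}(\mathcal{S}))}$.

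For part~2, let $x$ be a homogeneous element of $R_{h(\phi^{*}(\mathcal{S}))}$ whose image is invertible in $T_{\mathcal{S}}$. Writing both rings as direct limits $\underset{n}{\underrightarrow{\lim}}\,R_{h(\phi^{*}(\mathcal{S})),n}$ and $\underset{n}{\underrightarrow{\lim}}\,T_{\mathcal{S},n}$, and using that a polynomial presentation of $(\phi^{\mathcal{S}}(x))^{-1}$ involves only finitely many of the adjoined generators, for $n$ large enough $x$ lies in $R_{h(\phi^{*}(\mathcal{S})),n}$ and is invertible in $T_{\mathcal{S},n}$; since $x$ is homogeneous this places $x$ in $h(R_{h(\phi^{*}(\mathcal{S})),n}\cap S_{n+1})$, the $(n+1)$-st term of $h(\phi^{*}(\mathcal{S}))$ computed over $R_{h(\phi^{*}(\mathcal{S})),n}$, so $x$ is already invertible in $R_{h(\phi^{*}(\mathcal{S})),n+1}$ and hence in $R_{h(\phi^{*}(\mathcal{S}))}$. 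Thus $\phi^{\mathcal{S}}$ is homogeneously local.

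The step I expect to be the main obstacle is the inductive identification in the second sentence block: one must verify that every intermediate ring $R_{h(\phi^{*}(\mathcal{S})),n}$ is graded and that this identification is compatible with the gradings at each level. This reduces to the already-established fact that localizing a graded ring at a homogeneous subset is a graded operation, together with transitivity of the pull-back, but setting up the induction cleanly — so that Proposition~\ref{gradedpullback P} can be applied at each stage to the correctly graded induced map — requires some care; once that bookkeeping is in place, the remaining assertions are essentially formal, exactly as in the ungraded case.
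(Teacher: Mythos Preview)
Your proposal is correct and follows exactly the approach the paper intends: the paper explicitly omits the proof of this proposition as being ``similar'' to that of Proposition~\ref{pullbacksystem P}, and your argument is precisely the transposition of that proof to the graded setting, with the bookkeeping of gradings and the use of Proposition~\ref{gradedpullback P} at each inductive stage made explicit. The direct-limit argument you give for part~2 is the graded analogue of the paper's proof of locality in Proposition~\ref{pullbacksystem P}.
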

\begin{proposition}\label{F.I.SinA_Sgraded P}
For any  homogeneously fully invertible system $\mathcal{S}$ on a graded ring $R$, there is a one-to-one correspondence
between homogeneously fully invertible systems on $R_\mathcal{S}$ and
homogeneously fully invertible systems on $R$
containing $\mathcal{S}$, given by the pull back
map
$\mathcal{T}\mapsto h(i_\mathcal{S}^{*}(\mathcal{T}))$. Moreover, for each homogeneously  fully invertible
system $\mathcal{T}$ on
 $R_\mathcal{S}$, the natural graded ring homomorphism $R_{h(i_\mathcal{S}^{*}(\mathcal{T}))}\to
 (R_\mathcal{S})_{\mathcal{T}}$ is
 an isomorphism.
\end{proposition}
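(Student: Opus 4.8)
The plan is to imitate the proof of Proposition~\ref{F.I.SinA_S P} verbatim, inserting the word ``graded'' and the operator $h(\cdot)$ throughout, and invoking the graded preliminaries in place of their ungraded counterparts. Write $i=i_\mathcal{S}:R\to R_\mathcal{S}$, which is a graded homomorphism. By Proposition~\ref{gradedpullbacksystem P}, for every homogeneously fully invertible system $\mathcal{T}$ on $R_\mathcal{S}$ the sequence $h(i^{*}(\mathcal{T}))$ is a homogeneously fully invertible system on $R$; since $i$ sends every element occurring in $\mathcal{S}$ to a unit of $R_\mathcal{S}$, the system $h(i^{*}(\mathcal{T}))$ contains $\mathcal{S}$. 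So the map $\mathcal{T}\mapsto h(i_\mathcal{S}^{*}(\mathcal{T}))$ does land in the set of homogeneously fully invertible systems on $R$ containing $\mathcal{S}$. For surjectivity, given such a system $\mathcal{S}'\supset \mathcal{S}$, the graded analogue of the comparison of systems (the graded version of the ungraded discussion producing $A_\mathcal{S}\to A_{\mathcal{S}'}$, using Proposition~\ref{gradedirrinv P} part~4 at each stage) yields a graded homomorphism $R_\mathcal{S}\to R_{\mathcal{S}'}$, and one checks that the pull back to $R$ of the homogeneously fully invertible system on $R_\mathcal{S}$ induced by this homomorphism is exactly $\mathcal{S}'$. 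Thus the pull back map is onto.

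To obtain injectivity it suffices, exactly as in the ungraded case, to prove the ``moreover'' assertion: the natural graded homomorphism $R_{h(i^{*}(\mathcal{T}))}\to (R_\mathcal{S})_{\mathcal{T}}$ is an isomorphism for every homogeneously fully invertible system $\mathcal{T}$ on $R_\mathcal{S}$. Since $\mathcal{S}\subset h(i^{*}(\mathcal{T}))$, there is a natural graded homomorphism $R_\mathcal{S}\to R_{h(i^{*}(\mathcal{T}))}$. The key step is to verify that this homomorphism enjoys the universal property, in the category of graded rings and graded homomorphisms, characterising $(R_\mathcal{S})_{\mathcal{T}}$: any graded homomorphism out of $R_\mathcal{S}$ for which $\mathcal{T}$ is contained in the induced homogeneously fully invertible system factors uniquely through it. This is a chase combining the (graded) universal properties of the successive graded localizations defining $R_{h(i^{*}(\mathcal{T}))}$ with that of $R_\mathcal{S}$. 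It follows that $R_{h(i^{*}(\mathcal{T}))}\cong (R_\mathcal{S})_{\mathcal{T}}$ as graded rings, compatibly with the maps from $R_\mathcal{S}$.

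Finally, if $\mathcal{T}_1$ and $\mathcal{T}_2$ are homogeneously fully invertible systems on $R_\mathcal{S}$ with $h(i^{*}(\mathcal{T}_1))=h(i^{*}(\mathcal{T}_2))$, composing the two isomorphisms $R_{h(i^{*}(\mathcal{T}_j))}\to (R_\mathcal{S})_{\mathcal{T}_j}$ produces a graded isomorphism $(R_\mathcal{S})_{\mathcal{T}_1}\to (R_\mathcal{S})_{\mathcal{T}_2}$ commuting with the structure maps from $R_\mathcal{S}$; by the graded comparison of systems this forces $\mathcal{T}_1=\mathcal{T}_2$. Hence $\mathcal{T}\mapsto h(i_\mathcal{S}^{*}(\mathcal{T}))$ is a bijection, as required.

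The only real obstacle I foresee is bookkeeping with the grading rather than any new idea: one must be sure that $R_\mathcal{S}$ genuinely carries the graded universal property recorded in the graded preliminaries, and that the \textbf{homogeneously local} property of $\phi^{\mathcal{S}}$ from Proposition~\ref{gradedpullbacksystem P} part~2 is precisely what guarantees that pulling back the system induced by $R_\mathcal{S}\to R_{\mathcal{S}'}$ recovers $\mathcal{S}'$ on the nose and not something strictly larger. Granting the graded constructions of the previous subsection, every other step is formally identical to the ungraded argument of Proposition~\ref{F.I.SinA_S P}.
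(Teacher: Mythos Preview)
Your proposal is correct and matches the paper's approach: the paper states this proposition without proof among the graded preliminaries ``with similar proofs omitted,'' and what you have written is precisely the graded transcription of the proof of Proposition~\ref{F.I.SinA_S P} that the paper intends. Your closing remark about the homogeneously local property from Proposition~\ref{gradedpullbacksystem P} part~2 being the key bookkeeping point is apt.
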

A fraction $\begin{bf} a \end{bf} =(a_1,...,a_n)$ of $R$ is called
\textbf{homogeneous} if each $a_i$ is
a homogeneous element of $R_{\begin{bf} a \end{bf},i-1}$ (note that
the localization rings $R_{\begin{bf} a \end{bf},i-1}$ are naturally graded).
Given a homogeneously fully invertible
system $\mathcal{S}$ on $R$, set $R_{(\mathcal{S})}$
 to be the set of  homogeneous elements of  degree zero  in
$R_{\mathcal{S}}$. Clearly $R_{(\mathcal{S})}$ is  a subring of $R_{\mathcal{S}}$.
Likewise, given a homogeneous fraction
$\begin{bf} a \end{bf} $ of $R$, set $R_{(\begin{bf} a \end{bf})}$ to be the subring of
homogeneous  elements of  degree zero in $R_{\begin{bf} a \end{bf}}$.

Given any ring $A$, there is a trivial graded structure on $A$
in which all (nonzero) elements of $A$ are homogeneous of degree zero.
With this graded structure, the notions of fully invertible systems and
homogeneously fully invertible systems on $A$ coincide.

A graded ring $R=\bigoplus_{n\in\mathbb{Z}} R_n$ is called  a
$\mathbb{Z}$-crossed product ring if
$R$ has an invertible homogeneous element of degree one (see \cite{NV}).
To construct an example of a $\mathbb{Z}$-crossed product ring,
let's consider a ring $A$ with an automorphism
$\sigma:A\to A$. The twisted polynomial ring $A[x,x^{-1};\sigma]$
is defined to be the ring with the following presentation
$$A[x,x^{-1};\sigma]=\langle A,x,x^{-1}| xx^{-1}=x^{-1}x=1,\,
xa=\sigma(a)x \,\, \text{for all}\,\, a\in A \rangle. $$
So elements of $A[x,x^{-1};\sigma]$ are left polynomials of the form
$$a_m x^{m}+\cdots+a_{-1}x^{-1}+a_0+a_1x+\cdots+a_n x^n$$
for some $m\leq 0\leq n$ and $a_i\in A$. The twisted  polynomial ring $A[x,x^{-1};\sigma]$
has an obvious $\mathbb{Z}$-graded structure in which the set of homogeneous
elements of degree $n$ is the set $Ax^{n}=\{ax^n|a\in A\}$. Since $x$ is
an invertible homogeneous element of degree one, the ring $A[x,x^{-1};\sigma]$
is a $\mathbb{Z}$-crossed product ring. The following lemma shows that every
$\mathbb{Z}$-crossed product
ring is a
twisted polynomial ring as above, see also \cite{NV}.
\begin{lemma}
For any $\mathbb{Z}$-crossed product ring $R=\bigoplus_{n\in\mathbb{Z}} R_n$, there is an
isomorphism
$R\cong R_0[x,x^{-1};\sigma]$ of  graded rings, for some automorphism
$\sigma:R_0\to R_0$.
\end{lemma}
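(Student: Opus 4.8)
The plan is to fix an invertible homogeneous element $u\in R_1$, which exists by the definition of a $\mathbb{Z}$-crossed product ring, and use it to manufacture both the automorphism $\sigma$ and the graded isomorphism. First I would observe that $u^{-1}$ is automatically homogeneous of degree $-1$: writing $u^{-1}=\sum_i v_i$ in homogeneous components and comparing components in $uu^{-1}=1$ forces $v_i=0$ for $i\neq -1$. Hence $u^n$ is an invertible homogeneous element of degree $n$ for every $n\in\mathbb{Z}$, with inverse $u^{-n}$.

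Next I would define $\sigma:R_0\to R_0$ by $\sigma(a)=uau^{-1}$. This lands in $R_0$ by the degree count $1+0+(-1)=0$; it is clearly additive and multiplicative and sends $1$ to $1$; and $a\mapsto u^{-1}au$ is a two-sided inverse, so $\sigma$ is a ring automorphism. By construction $ua=\sigma(a)u$ for all $a\in R_0$, and more generally $u^na=\sigma^n(a)u^n$.

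Then I would show $R_n=R_0u^n$ for every $n$: the inclusion $\supseteq$ is the degree count $0+n=n$, and conversely for $r\in R_n$ the element $ru^{-n}$ lies in $R_0$, whence $r=(ru^{-n})u^n\in R_0u^n$. Moreover $a\mapsto au^n$ is a bijection $R_0\to R_n$, its inverse being $r\mapsto ru^{-n}$. Consequently $R=\bigoplus_n R_n=\bigoplus_n R_0u^n$, so every element of $R$ has a unique expression $\sum_n a_nu^n$ with $a_n\in R_0$, almost all zero.

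Finally I would invoke the presentation of the twisted polynomial ring: since $u$ is invertible in $R$ and satisfies $ua=\sigma(a)u$, there is a unique ring homomorphism $\phi:R_0[x,x^{-1};\sigma]\to R$ with $\phi|_{R_0}=\text{id}$ and $\phi(x)=u$; since $\deg x=1=\deg u$, it is a graded homomorphism. Surjectivity follows from $R=\sum_n R_0u^n$, and injectivity follows from the unique-expression statement above (the directness of $\bigoplus_n R_0u^n$ together with the injectivity of $a\mapsto au^n$), so $\phi$ is a graded isomorphism. I expect the main obstacle to be the careful homogeneity bookkeeping — pinning down that $R_n=R_0u^n$ and that this is a genuine direct-sum decomposition matching the twisted-polynomial expression — rather than any conceptual difficulty; the rest is an application of the universal property encoded in the given presentation.
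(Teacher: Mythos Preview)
Your proof is correct and follows essentially the same approach as the paper: pick an invertible $u\in R_1$, define $\sigma(a)=uau^{-1}$, and identify $R$ with $R_0[x,x^{-1};\sigma]$ via $x\leftrightarrow u$. The paper's proof is a two-line sketch that declares the isomorphism ``easy to see''; you have simply supplied the details (homogeneity of $u^{-1}$, the equality $R_n=R_0u^n$, and the bijectivity check via the presentation) that the paper leaves implicit.
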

\begin{proof}
Let $r\in R_1$ be an invertible element. Then the map $\sigma:R_0\to R_0$
defined by $\sigma(a)=rar^{-1}$ is an automorphism of $R_0$. Moreover, it is easy to see that
$R\cong R_0[x,x^{-1};\sigma]$ where $r$ is mapped to $x$.
\end{proof}

\begin{proposition}\label{homogeneousof A_a P}
Let $R=\bigoplus_{n\in\mathbb{Z}} R_n$ be a $\mathbb{Z}$-crossed product ring.
Then there is a
one-to-one correspondence
between homogeneously fully invertible subsets of $R$ and
(homogeneously) fully invertible subsets of $R_{0}$ (with the trivial graded structure),
 given by the pull back map
$S\mapsto
R_{0}\cap S$. Moreover the natural ring homomorphism
$(R_{0})_{ R_{0}\cap S}\to R _{(S)}$
is an isomorphism for any homogeneously fully invertible subset $S$ on $R $.
\end{proposition}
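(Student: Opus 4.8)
The plan is to reduce at once to the case of a twisted polynomial ring and then run a back-and-forth argument of the same shape as for Proposition \ref{F.I.SinA_Sgraded P}. By the preceding lemma there is a graded isomorphism $R\cong R_{0}[x,x^{-1};\sigma]$ which is the identity on $R_{0}$, and any such isomorphism carries homogeneously fully invertible subsets to homogeneously fully invertible subsets; so I may as well assume $R=R_{0}[x,x^{-1};\sigma]$. Throughout I use that a nonzero homogeneous element of degree $n$ in $R$, or in any graded localization of $R$, is uniquely of the form $ax^{n}$ with $a\in R_{0}$, and that $ax^{n}$ is invertible there if and only if $a$ is.

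First I would verify that $S\mapsto R_{0}\cap S$ lands among fully invertible subsets of $R_{0}$. Regarding the inclusion $\iota:R_{0}\hookrightarrow R$ as a graded homomorphism, with $R_{0}$ carrying the trivial grading so that $h(\,\cdot\,)$ is the identity on subsets of $R_{0}$, Proposition \ref{gradedpullback P} applied to $\iota$ shows that $R_{0}\cap S=h(R_{0}\cap S)$ is homogeneously fully invertible in $R_{0}$, i.e.\ fully invertible; the same proposition also produces the natural homomorphism $(R_{0})_{R_{0}\cap S}\to R_{S}$, whose image lies in $R_{(S)}$ because the elements of $R_{0}\cap S$ are degree-zero units of $R_{S}$.

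Next comes the structural point. Let $S$ be homogeneously fully invertible and put $T=R_{0}\cap S$. Since $x$ is an invertible homogeneous element, $x,x^{-1}\in h(\text{U}(R))\subset S$ by Proposition \ref{gradedirrinv P} part 1; multiplicative closedness (Proposition \ref{gradedirrinv P} part 2) then gives, on one hand, $S=\{\,ax^{n}:a\in T,\ n\in\mathbb{Z}\,\}$ --- any homogeneous $s=ax^{n}$ in $S$ has $a=sx^{-n}\in S\cap R_{0}=T$, and conversely $ax^{n}\in S$ when $a\in T$ --- and, on the other hand, $\sigma(T)=xTx^{-1}\subseteq S\cap R_{0}=T$, so $T$ is $\sigma$-stable. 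The displayed formula for $S$ shows that $S\mapsto R_{0}\cap S$ is injective. For the inverse, start from a $\sigma$-stable fully invertible $T\subseteq R_{0}$: by Lemma \ref{fully invertiblesubsets L}, $i_{T}$ sends $T$ to units of $(R_{0})_{T}$, and since $\sigma(T)=T=\sigma^{-1}(T)$ the composites $i_{T}\sigma$ and $i_{T}\sigma^{-1}$ also send $T$ to units, so $\sigma$ extends to an automorphism $\tilde\sigma$ of $(R_{0})_{T}$; then $(R_{0})_{T}[x,x^{-1};\tilde\sigma]$ is a nonzero graded ring (its degree-zero part $(R_{0})_{T}$ being nonzero) and the graded homomorphism $R\to(R_{0})_{T}[x,x^{-1};\tilde\sigma]$ given by $i_{T}$ and $x\mapsto x$ realizes $\{\,ax^{n}:a\in T\,\}$ as a homogeneously fully invertible subset $S$ with $R_{0}\cap S=T$. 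This establishes the bijection with the $\sigma$-stable fully invertible subsets of $R_{0}$.

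I expect the $\sigma$-bookkeeping in this last direction to be the main obstacle: the argument above shows $R_{0}\cap S$ is \emph{always} $\sigma$-stable, so to identify the image of the correspondence with \emph{all} fully invertible subsets of $R_{0}$ one must argue that in a $\mathbb{Z}$-crossed product every fully invertible subset of $R_{0}$ is preserved by $\sigma$ --- this holds, for instance, whenever conjugation by the chosen degree-one unit is already inner on $R_{0}$, since then every fully invertible subset contains that unit and is closed under conjugation by it --- and securing this point is where the real content lies. Granting it, the "moreover" statement follows by writing out the presentation of $R_{S}$: since $x\in S$ and every $s\in S$ equals $a_{s}x^{d_{s}}$ with $a_{s}=sx^{-d_{s}}\in T$, the only inverses one adjoins to $R=R_{0}[x,x^{-1};\sigma]$ in forming $R_{S}$ are those of the elements of $T$, giving a graded isomorphism $R_{S}\cong(R_{0})_{T}[x,x^{-1};\tilde\sigma]$; taking degree-zero parts yields $R_{(S)}\cong(R_{0})_{T}=(R_{0})_{R_{0}\cap S}$, and one checks that this is the natural homomorphism of the statement. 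The routine verifications I am omitting are that the localization maps in play are graded and that the rings exhibited satisfy the asserted universal properties.
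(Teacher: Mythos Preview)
Your instinct is correct, and the obstacle you flag is real. Take $R_{0}=k[y]$ with $k$ a field of characteristic zero, $\sigma(y)=y+1$, and $R=R_{0}[x,x^{-1};\sigma]$. The set $T=k[y]\setminus(y)$ is fully invertible in $R_{0}$ but not $\sigma$-stable. If some homogeneously fully invertible $S\subset R$ satisfied $R_{0}\cap S=T$, then $y+1\in T\subset S$ and $x^{\pm1}\in h(\text{U}(R))\subset S$, so by multiplicative closure $y=x^{-1}(y+1)x\in S\cap R_{0}=T$, contradicting $y(0)=0$. Hence $S\mapsto R_{0}\cap S$ is not onto the set of \emph{all} fully invertible subsets of $R_{0}$; its image consists exactly of the $\sigma$-stable ones, as your argument shows.

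The paper's own proof passes over precisely this point. In the surjectivity step for $(R_{0})_{T}\to R_{(S)}$, a degree-zero word in $R_{T}$ is rewritten by conjugating with powers of the degree-one unit $a$, and the conclusion that the result ``is in the image of $(R_{0})_{T}$ because $a^{n}x_{s}a^{-n}=x_{a^{n}sa^{-n}}$'' tacitly requires $a^{n}sa^{-n}\in T$, i.e.\ $\sigma$-stability of $T$, which is neither assumed nor, by the example above, automatic. Your injectivity argument and your proof of the ``moreover'' clause are sound (the latter only uses that $T=R_{0}\cap S$ is $\sigma$-stable, which you verified); what fails is the surjectivity onto arbitrary fully invertible subsets, and this is a defect of the statement rather than of your reasoning.
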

\begin{proof}
Let $a\in R_1$ be an invertible element.
Suppose that $S$ and $S_1$ are two homogeneously fully invertible subsets of $R$ such that
$R_0\cap S=R_0 \cap S_1 $. If $s\in S$ then  $sa^{-\deg(s)}\in R_0\cap S$. This implies that
$sa^{-\deg(s)}\in S_1$. Since $a\in S_1$ and $S_1$ is multiplicatively closed,
we must have $s\in S_1$. This implies that $S\subset S_1$ and similarly we have $S_1\subset S$.
So $S=S_1$ and hence the pull back map
$S\mapsto
R_{0} \cap S$ is  one-to-one. Now suppose that $T$ is  a fully invertible subset
of $R_0$. Considering $T$ as a homogeneous subset of $R$, we can construct the graded ring
$R_T$. Let $S$ be the homogeneously fully invertible subset induced by
the natural homomorphism $R\to R_T$. I claim that $S=R_{0} \cap T$ (which proves that
$S\mapsto
R_{0} \cap S$ is onto).  In order to show this, I first prove the last assertion
of this proposition, namely the natural homomorphism $(R_0)_T\to R_{(S)}$ is an isomorphism.
Note that  $R_{(S)}$ is the subring of degree zero  homogeneous elements of $R_T$.
Suppose that $y=r_1x_{s_1}r_2x_{s_2}...r_{k}x_{s_k}r_{{k+1}}$ is
a degree zero homogeneous element of $R_{T}$, where $s_i\in S$ and each $r_i\in R$ is homogeneous
such that $\deg(r_1)+\cdots+\deg(r_{k+1})=0$. Then $y$ can be written as
$$y=(r_1a^{-\deg(r_1)})
(a^{\deg(r_1)}x_{s_1}a^{-\deg(r_1)})
(a^{\deg(r_1)}r_2a^{-\deg(r_1)-\deg(r_2)})$$
$$...(a^{\deg(r_1)+\cdots+\deg(r_k)}x_{s_k}a^{-\deg(r_1)-\cdots-\deg(r_k)})
(a^{-\deg(r_{k+1})}r_{{k+1}})$$
which is in the image of the
ring homomorphism $(R_0)_T\to R_{(S)}$,
because $a^nx_s a^{-n}=x_{a^nsa^{-n}}$ for any $s\in S$ and $n\in\mathbb{Z}$. This proves that
$(R_0)_T\to R_{(S)}$ is onto.
Using the presentations of $(R_0)_T$ and $R_S$, and a
similar argument, one can show that $(R_0)_T\to R_{(S)}$
is injective. So  $(R_0)_T\to R_{(S)}$ is an isomorphism.
To show that $T=R_{0}\cap S$, note that
an element $r\in R_0$ is invertible in $R_S$ iff it is invertible in $R_{(S)}$.
So, using the isomorphism $(R_0)_T\to R_{(S)}$, we have $r\in R_0$ is invertible in $R_S$
iff $r$ is invertible in $(R_0)_T$ iff $r\in S$. This proves that $T= R_0 \cap S$.
\end{proof}
Since for any  homogeneously fully invertible subset $S$ of $R$,
the natural homomorphism $(R_0)_{R_0\cap S}\to R_{(S)}$ is an isomorphism, the natural homomorphism
$(R_0)_{R_0\cap S}\to R_{S}$ is a local homomorphism. Using this fact and a proof by induction,
we obtain the following proposition as well.
\begin{proposition}\label{homogeneoussystemsof A_a P}
Suppose that $R=\bigoplus_n R_n$ is a $\mathbb{Z}$-crossed product ring.
Then there is a
one-to-one correspondence
between homogeneously fully invertible systems on $R$ and
(homogeneously) fully invertible systems on $R_{0}$ (with the trivial graded structure),
 given by the pull back map
$\mathcal{S}\mapsto h(i^*(\mathcal{S}))=i^*(\mathcal{S}) $
where $i:R_0\to R$ is the inclusion homomorphism. Moreover the natural ring homomorphism
$(R_{0})_{i^*(\mathcal{S})}\to R _{(\mathcal{S})}$
is an isomorphism for any homogeneously fully invertible system $\mathcal{S}$ on $R $.
\end{proposition}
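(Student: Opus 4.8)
The plan is to reduce everything to the already-proved one-level statement, Proposition \ref{homogeneousof A_a P}, by an induction on the levels of a system, using at each stage the crossed-product structure of the intermediate localizations together with the ``localization of a localization'' results, Propositions \ref{F.I.SinA_S P} and \ref{F.I.SinA_Sgraded P}, and the graded pull-back construction of Proposition \ref{gradedpullbacksystem P}. Note first that $R_0$, carrying the trivial grading, has every element homogeneous of degree zero, so $h(i^*(\mathcal{S}))=i^*(\mathcal{S})$ and ``homogeneously fully invertible system on $R_0$'' just means ``fully invertible system on $R_0$''.

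The key structural fact is that localization preserves the crossed-product property: if $R$ has an invertible homogeneous element $x$ of degree one and $S$ is a homogeneously fully invertible subset of $R$, then the image of $x$ in $R_S$ is again an invertible homogeneous element of degree one, so $R_S$ is again a $\mathbb{Z}$-crossed product ring, whose degree-zero part is $R_{(S)}$; this property is inherited by direct limits, so every $R_{\mathcal{S},n}$ and $R_{\mathcal{S}}$ is a $\mathbb{Z}$-crossed product ring. Given a homogeneously fully invertible system $\mathcal{S}=(S_1,S_2,\dots)$ on $R$, I would build $i^*(\mathcal{S})=(T_1,T_2,\dots)$ level by level. Put $T_1=R_0\cap S_1$; by Proposition \ref{homogeneousof A_a P} this is a fully invertible subset of $R_0$ and the natural map $(R_0)_{T_1}\to R_{(S_1)}=(R_{S_1})_0$ is an isomorphism. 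Inductively, suppose $T_1,\dots,T_n$ have been produced together with an isomorphism $(R_0)_{i^*(\mathcal{S}),n}\to(R_{\mathcal{S},n})_0$. Applying Proposition \ref{homogeneousof A_a P} to the crossed-product ring $R_{\mathcal{S},n}$ and its homogeneously fully invertible subset $S_{n+1}$, one gets $T_{n+1}=(R_{\mathcal{S},n})_0\cap S_{n+1}$, a fully invertible subset of $(R_0)_{i^*(\mathcal{S}),n}$ under the inductive isomorphism, and an isomorphism $(R_0)_{i^*(\mathcal{S}),n+1}\to(R_{\mathcal{S},n+1})_0$. That the resulting sequence $i^*(\mathcal{S})$ is genuinely a fully invertible system on $R_0$ follows from the characterization of such systems by the compatibility relations $A_{\mathcal{S},n}\cap S_m=S_{n+1}$ together with Proposition \ref{F.I.SinA_S P}, which matches fully invertible systems on $(R_0)_{i^*(\mathcal{S}),n}$ with those on $R_0$ prolonging $i^*(\mathcal{S})$ through level $n$; the locality of the maps $(R_0)_{R_0\cap S}\to R_S$ noted just before the statement is what keeps these prolongations from acquiring spurious elements.

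For the ``moreover'' part, taking the direct limit over $n$ of the isomorphisms $(R_0)_{i^*(\mathcal{S}),n}\to(R_{\mathcal{S},n})_0$ gives a map $(R_0)_{i^*(\mathcal{S})}\to\underset{n}{\underrightarrow{\lim}}\,(R_{\mathcal{S},n})_0$, and since all transition maps are graded this limit equals $(\underset{n}{\underrightarrow{\lim}}\,R_{\mathcal{S},n})_0=R_{(\mathcal{S})}$; hence the natural homomorphism $(R_0)_{i^*(\mathcal{S})}\to R_{(\mathcal{S})}$ is an isomorphism. Bijectivity of $\mathcal{S}\mapsto i^*(\mathcal{S})$ then follows by the usual two-sided argument. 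Injectivity: the displayed isomorphisms show that $R_{(\mathcal{S})}$, hence --- since a $\mathbb{Z}$-crossed product ring is the twisted polynomial ring over its degree-zero part --- also $R_{\mathcal{S}}$ together with the map $R\to R_{\mathcal{S}}$, is determined by $i^*(\mathcal{S})$, and this in turn determines $\mathcal{S}$. Surjectivity: running the inductive construction in reverse, using the onto direction of Proposition \ref{homogeneousof A_a P} at each level and the graded pull-back and comparison results (Propositions \ref{gradedpullbacksystem P} and \ref{F.I.SinA_Sgraded P}), produces a homogeneously fully invertible system on $R$ pulling back to any prescribed fully invertible system on $R_0$.

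The step I expect to be the main obstacle is not conceptual but bookkeeping: one must check carefully that the subset $T_{n+1}$ computed inside $(R_{\mathcal{S},n})_0$ really coincides, under the iterated identifications, with the $(n+1)$-st entry of the graded pull-back $h(i^*(\mathcal{S}))$, and that the compatibility condition defining a fully invertible system is transported faithfully in both directions. Once the level-$n$ data --- the crossed-product structure, the isomorphism of degree-zero rings, and the subset correspondence --- are bundled into a single induction hypothesis, the inductive step is just an invocation of Proposition \ref{homogeneousof A_a P}, and the only new ingredient, commutation of direct limits with passage to the degree-zero component, is routine because every transition map in sight is graded.
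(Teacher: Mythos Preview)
Your proposal is correct and follows precisely the approach the paper indicates: the paper does not give a proof environment for this proposition but merely remarks, just before stating it, that the locality of $(R_0)_{R_0\cap S}\to R_S$ (a consequence of the isomorphism $(R_0)_{R_0\cap S}\cong R_{(S)}$ from Proposition~\ref{homogeneousof A_a P}) together with ``a proof by induction'' yields the result. You have faithfully expanded this sketch, carrying the crossed-product structure through each localization level, invoking Proposition~\ref{homogeneousof A_a P} at every stage, and passing to the direct limit for the degree-zero isomorphism; your caveat about the bookkeeping of identifying $T_{n+1}$ with the $(n{+}1)$-st entry of $h(i^*(\mathcal{S}))$ is exactly the content hidden in the paper's phrase ``proof by induction''.
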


\end{subsection}


\begin{subsection}{The construction of projective F-schemes}
Suppose that $R=R_0\oplus R_1\oplus ...$ is a (nonzero) graded ring. Let $\text{P}(R)$
be the set of all homogeneously fully invertible systems $\mathcal{S}$ on $R$ for
which $R_\mathcal{S}$ is a $\mathbb{Z}$-crossed product ring.
Given a homogeneous fraction $\begin{bf} a \end{bf}=(a_1,...,a_n)$,
set $D_+(\begin{bf} a \end{bf})=\{\mathcal{S}\in\text{P}(R)|\, \begin{bf} a
\end{bf}\in \mathcal{S} \}$. It is clear that
$D_+(1)=\text{P}(R)$ and $D_+(0)=\emptyset$.
Moreover, $D_+(\begin{bf}
  a \end{bf})\cap D_+(\begin{bf} b \end{bf})=D_+(\langle \begin{bf} a \end{bf},
  \begin{bf} b \end{bf}\rangle)$, for any two homogeneous fractions $\begin{bf} a \end{bf}$ and
$\begin{bf} b \end{bf}$   of $R$. So the subsets $D_+(\begin{bf} a \end{bf})$'s
(we only need the fractions $\begin{bf} a \end{bf}$ for which $R_{\begin{bf} a\end{bf}}$ is a
$\mathbb{Z}$-crossed product ring)
form a base for a topology on $\text{P}(R)$.  The following lemma is easy to check.
\begin{lemma}\label{fundamentalopensubsetsprojectie P}
Let   $\begin{bf} a \end{bf}=(a_1,...,a_n)$ and $\begin{bf} b \end{bf}
=(b_1,...,b_n)$ be two homogeneous fractions of $R$ such that    $R_{\begin{bf}
  b \end{bf}}$ is a $\mathbb{Z}$-crossed product ring. Then we have
$D_+(\begin{bf} b \end{bf})\subset D_+(\begin{bf} a \end{bf})$ iff  there is a graded homomorphism
$R_{\begin{bf} a \end{bf}}\to R_{\begin{bf} b \end{bf}}$ making the following diagram commutative
\begin{displaymath}
\xymatrix{ & R \ar[rd]  \ar[r] & R_{\begin{bf} a \end{bf}} \ar[d]  \\
           &   & R_{\begin{bf} b \end{bf}}}
 \end{displaymath}
\end{lemma}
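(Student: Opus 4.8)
The plan is to establish the lemma as the graded counterpart of part~3 of Proposition~\ref{fundamentalopensubsets P}, the only new ingredient being the hypothesis that $R_{\begin{bf} b \end{bf}}$ is a $\mathbb{Z}$-crossed product ring, which is precisely what forces a certain homogeneously fully invertible system to lie in $\text{P}(R)$. One direction is immediate: if $g:R_{\begin{bf} a \end{bf}}\to R_{\begin{bf} b \end{bf}}$ is a graded homomorphism commuting with the maps from $R$ and $\mathcal{S}\in D_+(\begin{bf} b \end{bf})$, then $\begin{bf} b \end{bf}\in\mathcal{S}$ yields a homomorphism $R_{\begin{bf} b \end{bf}}\to R_\mathcal{S}$ over $R$, and composing with $g$ shows every $a_i$ becomes invertible in $R_\mathcal{S}$, i.e.\ $\begin{bf} a \end{bf}\in\mathcal{S}$ and $\mathcal{S}\in D_+(\begin{bf} a \end{bf})$. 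This direction uses nothing about crossed products.

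For the converse, assume $D_+(\begin{bf} b \end{bf})\subset D_+(\begin{bf} a \end{bf})$; we may assume $R_{\begin{bf} b \end{bf}}\neq 0$, since otherwise the zero homomorphism $R_{\begin{bf} a \end{bf}}\to R_{\begin{bf} b \end{bf}}=0$ does the job. Let $\mathcal{S}_0$ be the homogeneously fully invertible system on $R$ induced by the graded homomorphism $i_{\begin{bf} b \end{bf}}:R\to R_{\begin{bf} b \end{bf}}$ (the graded analogue of the construction of Section~\ref{localization S}). Since $R_{\begin{bf} b \end{bf}}$ is obtained from $R$ by successively localizing at the homogeneous elements $b_1,\dots,b_n$, one has $\begin{bf} b \end{bf}\in\mathcal{S}_0$, and the usual universal-property bookkeeping (the graded analogue of the arguments behind Propositions~\ref{F.I.SinA_S P} and~\ref{F.I.SinA_Sgraded P}) shows that the natural graded homomorphism $R_{\mathcal{S}_0}\to R_{\begin{bf} b \end{bf}}$ is an isomorphism. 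Hence $R_{\mathcal{S}_0}\cong R_{\begin{bf} b \end{bf}}$ is a $\mathbb{Z}$-crossed product ring, so $\mathcal{S}_0\in\text{P}(R)$ and therefore $\mathcal{S}_0\in D_+(\begin{bf} b \end{bf})$.

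By the standing assumption $\mathcal{S}_0\in D_+(\begin{bf} a \end{bf})$, so each $a_i$ is invertible in $R_{\mathcal{S}_0}\cong R_{\begin{bf} b \end{bf}}$. Because $\begin{bf} a \end{bf}$ is a homogeneous fraction, $R_{\begin{bf} a \end{bf}}$ is built from $R$ by iterated localization at homogeneous elements, so applying in turn the graded universal property of each $R_{\begin{bf} a \end{bf},i}=(R_{\begin{bf} a \end{bf},i-1})_{a_i}$ produces, step by step, a graded homomorphism $R_{\begin{bf} a \end{bf}}\to R_{\begin{bf} b \end{bf}}$ compatible with the maps from $R$, which is the homomorphism required by the lemma.

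I expect the only delicate point to be the claim that $\mathcal{S}_0\in\text{P}(R)$, i.e.\ that $R_{\mathcal{S}_0}\cong R_{\begin{bf} b \end{bf}}$: this is exactly where the crossed-product hypothesis on $R_{\begin{bf} b \end{bf}}$ enters, and it has to be checked by the same graded universal-property arguments already developed for the full spectrum. Everything else — in particular, the compatibility of all the homomorphisms involved with the grading — is routine composition of universal properties, word for word parallel to the non-graded Proposition~\ref{fundamentalopensubsets P}(3).
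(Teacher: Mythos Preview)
Your proof is correct and follows the approach the paper clearly intends: the paper omits the proof (``easy to check''), but the argument is the graded analogue of its own proof of Proposition~\ref{fundamentalopensubsets P}(3), and your key move---taking $\mathcal{S}_0$ to be the homogeneously fully invertible system induced by $R\to R_{\begin{bf} b \end{bf}}$, which is exactly the center of $D_+(\begin{bf} b \end{bf})$---is the same idea the paper uses implicitly when it identifies the center of a fundamental open set with $A\cap\text{U}(A_{\begin{bf} a \end{bf}})$ just after Proposition~\ref{fundamentalopensubsets P}. The isomorphism $R_{\mathcal{S}_0}\cong R_{\begin{bf} b \end{bf}}$ that you flag as the delicate point is indeed the one place where something must be checked (two maps over $R$ in each direction, composing to the identity by the respective universal properties), and the crossed-product hypothesis is used exactly where you say, to ensure $\mathcal{S}_0\in\text{P}(R)$.
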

Similar to the
affine case (and the construction of projective schemes), one can define a sheaf of rings
$\mathcal{O}_{\text{P}(R)}$ on $\text{P}(R)$ whose stalk at $\mathcal{S}$ is just $R_{(\mathcal{S})}$.
More precisely, setting  $\mathcal{O}_{\text{P}(R)}(D_+(\begin{bf} a \end{bf}))=
R_{(\begin{bf} a \end{bf})} $ gives rise to a sheaf of rings on  $\text{P}(R)$.
So we obtain a ringed space $(\text{P}(R),\mathcal{O}_{\text{P}(R)})$.
\begin{proposition}\label{projectiveF-scheme P}
The ringed space $(\text{P}(R),\mathcal{O}_{\text{P}(R)})$  is an
F-scheme. Furthermore, for any homogeneous fraction $\begin{bf}
  a \end{bf}$ of $R$ such that   $R_{\begin{bf}
  a \end{bf}}$ is a $\mathbb{Z}$-crossed product ring, there is
an isomorphism $(D_+(\begin{bf}
  a \end{bf}),\mathcal{O}_{\text{P}(R)}|_{D_+(\begin{bf}
  a \end{bf})})\cong \text{F}(R_{(\begin{bf}
  a \end{bf})})$
as ringed spaces.
\end{proposition}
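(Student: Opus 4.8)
The plan is to reduce the global statement to the local isomorphism claim, and then to prove that local claim using the crossed-product structure theory developed in the previous subsection. First I would establish the second assertion, namely that for any homogeneous fraction $\begin{bf} a \end{bf}$ with $R_{\begin{bf} a \end{bf}}$ a $\mathbb{Z}$-crossed product ring, the ringed space $(D_+(\begin{bf} a \end{bf}),\mathcal{O}_{\text{P}(R)}|_{D_+(\begin{bf} a \end{bf})})$ is isomorphic to $\text{F}(R_{(\begin{bf} a \end{bf})})$. Once this is known, the first assertion follows immediately: the subsets $D_+(\begin{bf} a \end{bf})$ (ranging over those homogeneous fractions for which $R_{\begin{bf} a \end{bf}}$ is a crossed product) form an open covering of $\text{P}(R)$ by the discussion preceding the proposition, and each member of this covering is an affine F-scheme; since the structure sheaf restricts correctly, $\text{P}(R)$ is locally isomorphic to affine F-schemes, hence an F-scheme.

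For the local isomorphism, I would set $R' = R_{\begin{bf} a \end{bf}}$ (a $\mathbb{Z}$-crossed product ring) and $A = R_{(\begin{bf} a \end{bf})} = R'_0$, its degree-zero part. By Proposition \ref{homogeneoussystemsof A_a P}, applied to the crossed-product ring $R'$, there is a one-to-one correspondence between homogeneously fully invertible systems on $R'$ and fully invertible systems on $A = R'_0$, given by pull back along the inclusion $i: A \to R'$, and moreover the natural homomorphism $A_{i^*(\mathcal{S})} \to R'_{(\mathcal{S})}$ is an isomorphism for each such $\mathcal{S}$. By Proposition \ref{F.I.SinA_Sgraded P}, homogeneously fully invertible systems on $R'$ correspond to homogeneously fully invertible systems on $R$ containing $\begin{bf} a \end{bf}$ (with the localization rings matching up), and these are exactly the points of $D_+(\begin{bf} a \end{bf})$ — one must check here that the crossed-product condition is preserved, which holds because if $R_\mathcal{S}$ contains $\begin{bf} a \end{bf}$ then $R_\mathcal{S}$ is a localization of $R'$, and $R'$ already has an invertible homogeneous element of degree one whose image survives. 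Composing these two bijections gives a bijection between $\text{F}(A)$ and $D_+(\begin{bf} a \end{bf})$.

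Next I would check that this bijection is a homeomorphism. Fundamental open subsets $D(\begin{bf} b \end{bf})$ of $\text{F}(A)$ (for fractions $\begin{bf} b \end{bf}$ of $A = R'_0$) must be matched with the sets $D_+(\begin{bf} c \end{bf}) \cap D_+(\begin{bf} a \end{bf})$ for appropriate homogeneous fractions $\begin{bf} c \end{bf}$ of $R$; this uses that every element of $R'_0$ is, via the crossed-product structure, of the form $(\text{homogeneous element of } R')\cdot x^{-n}$, together with Lemma \ref{fundamentalopensubsetsprojectie P} and Proposition \ref{irrinv P} part 3 to translate inclusions of basic opens into graded homomorphisms between localization rings, exactly paralleling the argument in Proposition \ref{fundamentalopensubfschemes P}. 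Finally, for the structure sheaves, the isomorphisms $A_{i^*(\mathcal{S})} \xrightarrow{\sim} R'_{(\mathcal{S})}$ from Proposition \ref{homogeneoussystemsof A_a P} are exactly the stalk maps, and since both sheaves are determined by their values on fundamental open subsets (each of which has a center, by the remark following Proposition \ref{subsetswithcenter P} and Remark \ref{definitionofstructuresheaf R}), the sheaf isomorphism follows from the compatible system of isomorphisms $A_{\begin{bf} b \end{bf}} \cong R'_{(\begin{bf} c \end{bf})} = \mathcal{O}_{\text{P}(R)}(D_+(\begin{bf} c \end{bf}))$.

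The main obstacle I anticipate is the bookkeeping around fractions: one needs to verify that a homogeneous fraction $\begin{bf} c \end{bf}$ of $R$ refining $\begin{bf} a \end{bf}$ produces, via the degree-zero part, a genuine fraction of $A = R'_0$, and conversely that every fraction of $R'_0$ arises this way, using the crossed-product presentation $R' \cong R'_0[x,x^{-1};\sigma]$ to rewrite $x_s a^{-n}$-type expressions — essentially reproving the surjectivity/injectivity argument of Proposition \ref{homogeneousof A_a P} at the level of iterated (systemic) localizations and matching it against the topology. The algebra is routine given the propositions already proved, but getting the indices and the iterated-localization inductions to line up cleanly is where the care is needed; everything else is a direct transcription of the affine case treated in Proposition \ref{fundamentalopensubfschemes P}.
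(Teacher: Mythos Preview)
Your proposal is correct and follows essentially the same route as the paper: reduce to the second assertion, then compose Proposition \ref{F.I.SinA_Sgraded P} (identifying $D_+(\mathbf{a})$ with the homogeneously fully invertible systems on $R_{\mathbf{a}}$) with the crossed-product correspondence of Proposition \ref{homogeneoussystemsof A_a P} to obtain a bijection with $\text{F}(R_{(\mathbf{a})})$, and then verify that this is a homeomorphism and a sheaf isomorphism. Your write-up is in fact more careful than the paper's, which dispatches the homeomorphism and sheaf steps as ``easy to see'' and cites the subsets version (Proposition \ref{homogeneousof A_a P}) where the systems version is what is really being used.
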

\begin{proof}
Since the open subsets $D_+(\begin{bf}
  a \end{bf})$  ($\begin{bf}
  a \end{bf}$ as in the proposition) cover  $\text{P}(R)$, it is enough to prove
the second assertion. Note that for such a fraction $\begin{bf}
  a \end{bf}$, if a homogeneously fully invertible system $\mathcal{S}$ of $R$ contains
  $\begin{bf} a \end{bf}$ then  $R_{\mathcal{S}}$
  contains an invertible homogeneous element of degree one.
Using Proposition \ref{F.I.SinA_Sgraded P}, we can identify
$D_+(\begin{bf}
  a \end{bf})$  with the set of homogeneously fully invertible systems on $R_{\begin{bf}
  a \end{bf}}$.
Now, Proposition \ref{homogeneousof A_a P} implies that $D_+(\begin{bf}
  a \end{bf})$ can be identified with
$\text{F}(R_{(\begin{bf}
  a \end{bf})})$. It is easy to see that this establishes a homeomorphism from
$D_+(\begin{bf}
  a \end{bf})$ onto  $\text{F}(R_{(\begin{bf}
  a \end{bf})})$. The second part of Proposition \ref{homogeneousof A_a P}
implies that  this homeomorphism can be completed into an isomorphism of ringed spaces.
\end{proof}
The F-scheme $(\text{P}(R),\mathcal{O}_{\text{P}(R)})$ (or simply $\text{P}(R) $)
is called the \textbf{projective F-scheme} associated to $R$.  Given a ring $A$,
the \textbf{projectile $n$-space} over $A$ is defined to be
$\mathbb{P}^{n}_A:=\text{P}(A\langle x_0,x_1,...,x_n\rangle)$ where
the graded structure on the ring $A\langle x_0,x_1,...,x_n\rangle$ is given by
$\deg(a)=0$ and $\deg(x_i)=1$ for any nonzero $a\in A$, $i=0,1,...,n$.
Similarly, the   \textbf{commutative projective $n$-space} over $A$ is defined to be
$\mathbb{P}^{n}_{A,c}:=\text{P}(A[x_0,x_1,...,x_n])$. The following is easy to prove
(use Proposition \ref{projectiveF-scheme P} and a similar result in scheme theory).
\begin{proposition}
Suppose that $A=A_0\oplus A_1\oplus ...$ is a commutative graded ring such that $A$
is generated by $A_1$ as an $A_0$-algebra. Then there is a canonical isomorphism
  $$\text{L}(\text{P}(A))
  \cong \text{Proj}\,A$$
 as schemes.
\end{proposition}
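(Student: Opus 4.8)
The plan is to realize both $\text{Proj}\,A$ and $\text{L}(\text{P}(A))$ as schemes glued from the same family of affine charts, indexed by the elements $f$ of $A_1$. For each $f\in A_1$ the element $f$ is an invertible homogeneous element of degree one in the localization $A_f$, so $A_f$ is a $\mathbb{Z}$-crossed product ring; hence $D_+(f)$ is one of the distinguished open subsets of $\text{P}(A)$, and Proposition \ref{projectiveF-scheme P} gives an isomorphism of ringed spaces $(D_+(f),\mathcal{O}_{\text{P}(A)}|_{D_+(f)})\cong \text{F}(A_{(f)})$. Since $A$ is commutative, so is $A_{(f)}$, and Lemma \ref{Affine L} yields $\text{L}(\text{F}(A_{(f)}))\cong \text{Spec}(A_{(f)})$ as ringed spaces. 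Thus $\text{L}(D_+(f))\cong \text{Spec}(A_{(f)})$, which is precisely the standard affine chart of $\text{Proj}\,A$ attached to $f$.

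Next I would verify that these charts cover the two spaces in compatible ways. On the $\text{Proj}$ side this is classical: since $A$ is generated by $A_1$ over $A_0$, the irrelevant ideal is generated by $A_1$, so $\{D_+(f)\}_{f\in A_1}$ covers $\text{Proj}\,A$, with $D_+(f)\cap D_+(g)=D_+(fg)$ and $A_{(fg)}\cong (A_{(f)})_{g/f}$. On the $\text{L}(\text{P}(A))$ side I claim $\text{L}(\text{P}(A))=\bigcup_{f\in A_1}\text{L}(D_+(f))$. Indeed, if $\mathcal{S}\in\text{L}(\text{P}(A))$ then $\mathcal{O}_{\text{P}(A),\mathcal{S}}=A_{(\mathcal{S})}$ is a local ring and $A_\mathcal{S}$ is a $\mathbb{Z}$-crossed product, so, being commutative, $A_\mathcal{S}\cong A_{(\mathcal{S})}[t,t^{-1}]$ with $t$ a homogeneous unit of degree one; because $A$ is generated by $A_1$, the image of $A_1$ spans the degree-one component $A_{(\mathcal{S})}\cdot t$ as an $A_{(\mathcal{S})}$-module, so the elements $\{t^{-1}\varphi(f):f\in A_1\}$ (with $\varphi:A\to A_\mathcal{S}$ the structure map) generate the unit ideal of the local ring $A_{(\mathcal{S})}$. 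Hence some $\varphi(f)$ with $f\in A_1$ is a unit of $A_\mathcal{S}$, i.e. $\mathcal{S}\in D_+(f)$. For overlaps, $D_+(f)\cap D_+(g)=D_+(\langle f,g\rangle)=D_+(fgf)$, and $A_{fgf}=A_{\{f,g\}}=A_{fg}$ by Proposition \ref{gradedirrinv P} part 3, so $\text{L}(D_+(f)\cap D_+(g))\cong\text{Spec}(A_{(fg)})$, matching the $\text{Proj}$ overlap; the transition isomorphisms on both sides are induced by the same localization homomorphisms of $A$, hence coincide.

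Finally, since $\text{L}(\text{P}(A))$ and $\text{Proj}\,A$ are both obtained by gluing the family $\{\text{Spec}(A_{(f)})\}_{f\in A_1}$ along the same open immersions $\text{Spec}(A_{(fg)})\hookrightarrow\text{Spec}(A_{(f)})$, with identical gluing data, there is a unique isomorphism of schemes between them restricting to the identity on each chart; this is the canonical isomorphism $\text{L}(\text{P}(A))\cong\text{Proj}\,A$. The main obstacle is the covering statement for $\text{L}(\text{P}(A))$: in general $\text{P}(A)$ contains points that lie in no $D_+(f)$ with $f\in A_1$ (homogeneously fully invertible systems $\mathcal{S}$ for which $A_1$ generates the unit ideal of $A_{(\mathcal{S})}$ without any single $f\in A_1$ becoming invertible, which forces $A_{(\mathcal{S})}$ to be non-local), and the point of the argument is that these are precisely the non-local points and so are discarded on passing to $\text{L}$. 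This is exactly where the hypothesis that $A$ is generated in degree one does its essential work, together with the elementary observation that in a local ring a finite set of elements generating the unit ideal must contain a unit.
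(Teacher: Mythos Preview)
Your proof is correct and is precisely the elaboration the paper has in mind: the paper gives no argument beyond the hint ``use Proposition \ref{projectiveF-scheme P} and a similar result in scheme theory,'' and your chart-by-chart comparison via $D_+(f)\cong\text{F}(A_{(f)})$ and $\text{L}(\text{F}(A_{(f)}))\cong\text{Spec}(A_{(f)})$ is exactly that. The one nontrivial point not contained in the paper's hint is the covering statement $\text{L}(\text{P}(A))=\bigcup_{f\in A_1}\text{L}(D_+(f))$, and your argument for it (the images of $A_1$ generate the degree-one part of $A_\mathcal{S}$ over $A_{(\mathcal{S})}$, hence the $t^{-1}\varphi(f)$ generate the unit ideal of the local ring $A_{(\mathcal{S})}$, so one of them is a unit) is correct; note that the spanning claim holds because any homogeneous $a/s$ of degree one in $A_\mathcal{S}$ has $a\in A_{\deg(s)+1}=A_{\deg(s)}\cdot A_1$, so $a/s=\sum (g_i/s)\,\varphi(f_i)$ with $g_i/s\in A_{(\mathcal{S})}$.
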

Now suppose that $R=R_0\oplus R_1\oplus ...$ and $R=T_0\oplus T_1\oplus ...$
are two graded rings. Let $\phi:R\to T$ be a graded homomorphism. Set $U$ to be the
set of all $\mathcal{S}\in \text{P}(T)$ for which $h(\phi^*(\mathcal{S}))\in
\text{P}(R)$. I claim that $U$ is an open subset of  $\text{P}(T)$. Suppose that
$\mathcal{S}\in U$. Then there is a homogenous fraction $\begin{bf} a\end{bf}$
of $R$ such that $\begin{bf} a\end{bf}\in h(\phi^*(\mathcal{S}))$
and $R_{\begin{bf} a\end{bf}}$ has an invertible homogeneous element of degree one.
Clearly $\phi(\begin{bf} a\end{bf})$ is a homogeneous fraction of $T$ and
it is  easy to see that $\mathcal{S}\in D_{+}(\phi(\begin{bf} a\end{bf}))\subset U$.
So $U$ is an open subset of $\text{P}(T)$. Define the function
$f:U\to \text{P}(R)$ by $f(\mathcal{S})=h(\phi^*(\mathcal{S}))$. It is easy to see that
$f^{-1}(D_{+}( \begin{bf} a\end{bf}))=D_{+}(\phi(\begin{bf} a\end{bf})))$
for any homogeneous fraction $\begin{bf} a\end{bf}$
of $R$. So $f$ is continuous. Using the natural homomorphisms
$R_{\begin{bf} a\end{bf}}\to T_{\phi(\begin{bf} a\end{bf})}$,
we can extend $f$ to a morphism $f:U\to \text{P}(R)$ of F-schemes. Note that
for any homogeneous fraction $\begin{bf} a\end{bf}$
of $R$, the morphism
$$f:D_{+}(\phi(\begin{bf} a\end{bf})))\cong\text{F}(T_{(\phi(\begin{bf} a\end{bf}))})
\to  D_{+}(\begin{bf} a\end{bf})\cong\text{F}(R_{(\begin{bf} a\end{bf})})$$
is the morphism induced by the natural ring homomorphism $R_{(\begin{bf} a\end{bf})}\to
T_{(\phi(\begin{bf} a\end{bf}))}$.
\begin{proposition}\label{closedimmersionsprojective P}
Suppose that the homomorphism $\phi:R\to T$ is onto. Then
$U=\text{P}(T)$ and the morphism $f:\text{P}(T)\to\text{P}(R)$ is a closed
immersion.
\end{proposition}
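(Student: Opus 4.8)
The plan is to treat the two assertions in turn: first I would prove $U=\text{P}(T)$ by lifting an invertible homogeneous element of degree one along the pull-back homomorphism, using that the latter is homogeneously local; then I would prove that $f$ is a closed immersion by reducing, via Corollary \ref{closedimmersionopenconver C}, to the affine case, where it is enough to exhibit a surjection of rings.

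For the first assertion, fix $\mathcal{S}\in\text{P}(T)$. By Proposition \ref{gradedpullbacksystem P} there is a graded, homogeneously local homomorphism $\phi^{\mathcal{S}}:R_{h(\phi^{*}(\mathcal{S}))}\to T_{\mathcal{S}}$. The crucial point is that $\phi^{\mathcal{S}}$ is surjective because $\phi$ is. I would prove this by induction on the stages of the two localizations: the base map $R\to T$ is onto and, being graded, onto in every homogeneous component; at each successive stage a homogeneous $s\in S_{n}\subset T_{\mathcal{S},n-1}$ lifts, by the inductive graded surjectivity, to a homogeneous $r$ in $R_{h(\phi^{*}(\mathcal{S})),n-1}$ lying in $h(R_{h(\phi^{*}(\mathcal{S})),n-1}\cap S_{n})$, and then $x_{r}\mapsto x_{s}$, so the next stage is again graded-surjective; passing to the direct limit gives surjectivity of $\phi^{\mathcal{S}}$. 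Since $\mathcal{S}\in\text{P}(T)$, the ring $T_{\mathcal{S}}$ is a $\mathbb{Z}$-crossed product ring, hence contains an invertible homogeneous element $u$ of degree one; lift $u$ to a homogeneous element $v$ of degree one in $R_{h(\phi^{*}(\mathcal{S}))}$. As $\phi^{\mathcal{S}}$ is homogeneously local and $v$ is homogeneous with $\phi^{\mathcal{S}}(v)=u$ invertible, $v$ is invertible in $R_{h(\phi^{*}(\mathcal{S}))}$. Thus $R_{h(\phi^{*}(\mathcal{S}))}$ is a $\mathbb{Z}$-crossed product ring, i.e. $h(\phi^{*}(\mathcal{S}))\in\text{P}(R)$ and $\mathcal{S}\in U$. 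Hence $U=\text{P}(T)$ and $f$ is defined on all of $\text{P}(T)$.

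For the second assertion, I would apply Corollary \ref{closedimmersionopenconver C} to the open covering of $\text{P}(R)$ by the sets $D_{+}(\begin{bf} a \end{bf})$ with $R_{\begin{bf} a \end{bf}}$ a $\mathbb{Z}$-crossed product ring, so that it suffices to show each restriction $f^{-1}(D_{+}(\begin{bf} a \end{bf}))\to D_{+}(\begin{bf} a \end{bf})$ is a closed immersion. As observed before the statement, $f^{-1}(D_{+}(\begin{bf} a \end{bf}))=D_{+}(\phi(\begin{bf} a \end{bf}))$, and $T_{\phi(\begin{bf} a \end{bf})}$ is again a $\mathbb{Z}$-crossed product ring since the graded homomorphism $R_{\begin{bf} a \end{bf}}\to T_{\phi(\begin{bf} a \end{bf})}$ sends the invertible homogeneous degree-one element of $R_{\begin{bf} a \end{bf}}$ to one of $T_{\phi(\begin{bf} a \end{bf})}$; moreover, by Propositions \ref{projectiveF-scheme P} and \ref{homogeneousof A_a P}, this restriction is the morphism $\text{F}(T_{(\phi(\begin{bf} a \end{bf}))})\to\text{F}(R_{(\begin{bf} a \end{bf})})$ induced by the natural ring homomorphism $\psi:R_{(\begin{bf} a \end{bf})}\to T_{(\phi(\begin{bf} a \end{bf}))}$. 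Now $R_{\begin{bf} a \end{bf}}\to T_{\phi(\begin{bf} a \end{bf})}$ is a surjective graded homomorphism, by induction on the length of the fraction $\begin{bf} a \end{bf}$ (the base case being $\phi:R\to T$ onto, and at each step the new localizing element of $T_{\phi(\begin{bf} a \end{bf})}$ being the image of the corresponding one of $R_{\begin{bf} a \end{bf}}$), so it is onto in each degree; in particular its degree-zero component $\psi$ is onto. Finally, a surjective ring homomorphism $A\to B$ with kernel $I$ realizes $B\cong A/I$, so the induced morphism $\text{F}(B)\to\text{F}(A)$ is equivalent to $\text{F}(A/I)\to\text{F}(A)$, which is a closed immersion. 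Hence every restriction of $f$ is a closed immersion, and therefore so is $f$.

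The main obstacle is the first assertion: one must manufacture an invertible homogeneous element of degree one inside $R_{h(\phi^{*}(\mathcal{S}))}$, and this is precisely where both surjectivity of $\phi^{\mathcal{S}}$ (to lift such an element) and its homogeneously local property from Proposition \ref{gradedpullbacksystem P} (to conclude the lift is already invertible) are indispensable; the graded bookkeeping inside the two inductive surjectivity arguments is the only other point needing care.
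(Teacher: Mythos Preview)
Your proof is correct and follows essentially the same route as the paper: first show $\phi^{\mathcal{S}}$ is surjective and homogeneously local to lift an invertible degree-one element (hence $U=\text{P}(T)$), then use Corollary~\ref{closedimmersionopenconver C} on the cover by $D_{+}(\begin{bf} a \end{bf})$'s and conclude from surjectivity of $R_{(\begin{bf} a\end{bf})}\to T_{(\phi(\begin{bf} a\end{bf}))}$. The only difference is that you spell out the inductive arguments for the two surjectivity claims and explicitly verify that $T_{\phi(\begin{bf} a \end{bf})}$ is a $\mathbb{Z}$-crossed product ring, points the paper leaves implicit.
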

\begin{proof}
Since $\phi:R\to T$ is onto, the homomorphism
$R_{h(\phi^*(\mathcal{S}))}\to T_{\mathcal{S}}$
is onto for any
homogeneously fully invertible system $\mathcal{S}$ of $T$. Since this homomorphism is
homogeneously local, if $T_{\mathcal{S}}$ contains
an invertible homogeneous element of degree one then so does $R_{h(\phi^*(\mathcal{S}))}$.
This proves that $U= \text{P}(T)$. To prove that $f$ is a closed immersion,
it is enough to show that for any homogeneous fraction  $\begin{bf} a\end{bf}$
of $R$, the morphism
$$f:D_{+}(\phi(\begin{bf} a\end{bf})))\cong\text{F}(T_{(\phi(\begin{bf} a\end{bf}))})
\to  D_{+}(\begin{bf} a\end{bf})\cong\text{F}(R_{(\begin{bf} a\end{bf})})$$
is a closed immersion, see Corollary \ref{closedimmersionopenconver C}.
But this homomorphism is induced by the natural homomorphism  $R_{(\begin{bf} a\end{bf})}\to
T_{(\phi(\begin{bf} a\end{bf}))}$ which is onto. So $f$ is a closed immersion.
\end{proof}

\end{subsection}
 
\end{section}

\end{document}